\documentclass[11pt]{amsart}
\usepackage[margin=1in]{geometry}
\usepackage[utf8]{inputenc}
\usepackage[T1]{fontenc}
\usepackage[english]{babel}
\usepackage{lmodern}
\usepackage{graphicx}
\usepackage{graphbox}  
\usepackage{array}
\usepackage{wrapfig}
\usepackage[subrefformat=parens,labelformat=parens]{subfig}
\usepackage{caption}
\usepackage{paralist}
\usepackage{enumitem}  
\usepackage{float}
\usepackage[linesnumbered,ruled,vlined]{algorithm2e}
\usepackage{amsmath}
\usepackage{amsthm}
\usepackage{amssymb}
\usepackage{amsfonts}

\graphicspath{{./figures/}}

\makeatletter
\renewcommand{\p@enumii}{}
\makeatother

\usepackage[usenames]{color}
\definecolor{hellblau}{rgb}{0.2,0.4,1} 
\definecolor{dunkelblau}{rgb}{0,0,0.8}
\definecolor{dunkelgruen}{rgb}{0,0.5,0}

\usepackage[
pdftex,
colorlinks,
linkcolor=dunkelblau,
urlcolor=dunkelblau,
citecolor=dunkelgruen,
bookmarks=true,
linktocpage=true,
pdfauthor={On-Hei Solomon Lo},
pdfsubject={}
]{hyperref}
\usepackage{pdfpages}
\allowdisplaybreaks

\theoremstyle{plain}
\newtheorem{satz}{Satz}  
\newtheorem{theorem}[satz]{Theorem}
\newtheorem{lemma}[satz]{Lemma}

\newtheorem{proposition}[satz]{Proposition}
\newtheorem{corollary}[satz]{Corollary}

\theoremstyle{remark}

\theoremstyle{definition}

\newtheorem{conjecture}[satz]{Conjecture}

\numberwithin{satz}{section}

\begin{document}
	\title[Graphs with no $K_{3,4}$ minor]{A characterization of graphs with no $K_{3,4}$ minor}
	\author[O.-H.S. Lo]{On-Hei Solomon Lo}
	\thanks{This research was partially supported by the Fundamental Research Funds for the Central Universities.}
	\address{School of Mathematical Sciences,
		Key Laboratory of Intelligent Computing and Applications (Ministry of Education), Tongji University, 
		Shanghai 200092, China}
	\email{ohsolomon.lo@gmail.com}
		
	\date{}
	
	\maketitle

\begin{abstract}
A complete structural characterization of graphs with no $K_{3,4}$ minor is obtained, and the following consequences are established. 
Every $4$-connected non-planar graph with at least seven vertices and minimum degree at least five contains both $K_{3,4}$ and $K_6^-$ as minors, thereby proving a conjecture of Kawarabayashi and Maharry in a strengthened form. 
Moreover, every $4$-connected graph with no $K_{3,4}$ minor is hamiltonian-connected, extending a theorem of Thomassen, and admits an embedding on the torus.
\end{abstract}

\sloppy

\section{Introduction}
The theory of graph minors has long occupied a central position in structural graph theory. 
A classical point of departure is the Kuratowski--Wagner theorem~\cite{Kuratowski1930,Wagner1937}, which characterizes planar graphs as those containing neither $K_5$ nor $K_{3,3}$ as a minor. 
This theorem initiated the systematic study of excluded-minor structural characterizations and their consequences.

A decisive advance was achieved by Robertson and Seymour in their Graph Minors series. 
They proved that finite graphs are well-quasi-ordered under the minor relation, thereby extending the Kuratowski--Wagner theorem and settling Wagner’s conjecture that every minor-closed class is determined by a finite set of excluded minors~\cite{Robertson2004}. 
Beyond this fundamental finiteness result, they developed a far-reaching structural theory for graphs excluding a fixed minor. 
Their Minor Structure Theorem~\cite{Robertson2003,Robertson1985} shows that such graphs admit a tree-like decomposition into parts that are almost embeddable in surfaces of bounded genus. 
This yields a powerful global framework for minor-closed classes, but by its nature provides only an approximate structure rather than an exact characterization.

An important direction is therefore to obtain precise structural descriptions for particular excluded minors. 
This problem was explicitly emphasized by Lovász~\cite{Lovasz2006} as one of the fundamental research directions in graph minor theory. 
Exact structural characterizations are known only in a limited number of cases and are typically difficult to obtain.

Among the most prominent open problems are the structural characterizations of graphs with no $K_6$ minor and of graphs with no Petersen minor. 
These questions are closely tied to deep conjectures such as Hadwiger’s conjecture and Tutte’s $4$-flow conjecture. 
At present, a complete description of graphs excluding $K_6$ or the Petersen graph is still beyond reach. 
Nonetheless, the subject has continued to draw considerable interest, and a number of substantial partial results have been obtained; see, for example,~\cite{Kawarabayashi2018,Robertson2019}.

Notwithstanding these difficulties, exact structural characterizations are available for a number of smaller excluded minors. 
Wagner~\cite{Wagner1937,Wagner1937a} provided the classical characterizations for $K_5$ and $K_{3,3}$. 
Maharry~\cite{Maharry2000} obtained a corresponding characterization for the cube, Ding~\cite{Ding2013a} for the octahedron, Maharry and Robertson~\cite{Maharry2016} for the Wagner graph, and Ellingham, Marshall, Ozeki, and Tsuchiya~\cite{Ellingham2016} for $K_{2,4}$.
These structural results have led to a wide range of applications, including verifications of Hadwiger’s conjecture for graphs with no $K_5$ minor~\cite{Wagner1937}, investigations into the flexibility of embeddings~\cite{Maharry2017}, and results on hamiltonicity~\cite{Ellingham2019,Lo2024}.

Graphs excluding $K_{3,t}$ minors play a central role in graph minor theory. 
It is well known that every graph embeddable in a fixed surface excludes $K_{3,t}$ for some $t$. 
Moreover, Robertson and Seymour~\cite{Robertson2026} exhibited a finite family of obstructions, basically obtained by summing copies of $K_5$ and $K_{3,3}$, that characterizes graphs of bounded genus in terms of excluded minors.
These facts motivate a deeper investigation into the structure of graphs with no $K_{3,t}$ minor.

Along this line, Oporowski, Oxley, and Thomas~\cite{Oporowski1993} proved that every sufficiently large $3$-connected graph with no $K_{3,t}$ minor contains a large wheel. 
Böhme, Maharry, and Mohar~\cite{Bohme2002} showed that every sufficiently large $7$-connected graph of bounded tree-width necessarily contains a $K_{3,t}$ minor. 
More recently, Kawarabayashi and Maharry~\cite{Kawarabayashi2012} established that every sufficiently large almost $5$-connected non-planar graph contains both a $K_{3,4}$ minor and a $K_6^-$ minor, and they proposed the following strengthening.

\begin{conjecture}[\cite{Kawarabayashi2012}]\label{con:KM}
	Every $5$-connected non-planar graph with at least seven vertices contains both a $K_{3,4}$ minor and a $K_6^-$ minor.
\end{conjecture}

\begin{figure}[!ht]
	\centering
	\includegraphics[scale=1]{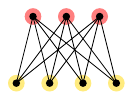}
	\hspace{30pt}
	\includegraphics[scale=1]{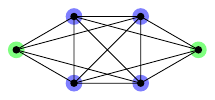}
	\caption{The graphs $K_{3,4}$ (left) and $K_6^-$ (right).}
	\label{fig:K34K6-}
\end{figure}

It has been remarked in~\cite{Chen2012} that a complete characterization of graphs with no $K_{3,t}$ minor ``seems extremely hard to obtain.'' 
Even for the first open case $K_{3,4}$, the problem is described as ``hard'' and ``very challenging'' in~\cite{Kawarabayashi2012,Ding2018}. 
Note that the graph $K_{3,4}$ lies between $K_{3,3}$ and the Petersen graph in the minor order. 

The main objective of this paper is to provide a complete structural characterization of graphs with no $K_{3,4}$ minor. We begin by analyzing the $4$-connected members of this class and then show how all graphs excluding $K_{3,4}$ can be constructed from these fundamental building blocks. Since Maharry and Slilaty~\cite{Maharry2012,Maharry2017a} have already characterized the projective-planar graphs with no $K_{3,4}$ minor, the central focus of our work is on the non-projective-planar case, for which preliminary progress was reported in our earlier paper~\cite{Lo2025}.

The main structural theorem of this paper is as follows. Here, patch graphs, introduced in~\cite{Maharry2012}, are recalled in Section~\ref{sec:pp}, and the family of oloidal graphs is defined in Section~\ref{sec:npp}.

\begin{theorem}\label{thm:4cK34mf}
	A graph $G$ is $4$-connected and has no $K_{3,4}$ minor if and only if one of the following holds:
	\begin{itemize}
		\item $G$ is a $4$-connected planar graph.
		\item $G$ is a $4$-connected non-planar subgraph of a reduced patch graph.
		\item $G$ is isomorphic to $K_6$.
		\item $G$ is an oloidal graph.
	\end{itemize}
\end{theorem}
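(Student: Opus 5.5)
The proof splits into the two directions, and I organize the hard direction by the projective-planar / non-projective-planar dichotomy.

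\emph{Sufficiency.} Each listed graph must be shown to be $4$-connected and $K_{3,4}$-minor-free.
\begin{itemize}
\item Planar graphs exclude $K_{3,3}$, hence also $K_{3,4}$.
\item $K_6$ has only six vertices, so it cannot have a seven-vertex minor.
\item For subgraphs of reduced patch graphs, I cite the Maharry--Slilaty analysis recalled in Section~\ref{sec:pp}: patch graphs have no $K_{3,4}$ minor, and minor-freeness passes to subgraphs.
\item For oloidal graphs, I check directly from the definition in Section~\ref{sec:npp} that they are $4$-connected and $K_{3,4}$-minor-free.
\end{itemize}
For the last item, note that an oloidal graph is built from a bounded ``core'' (presumably $K_6$-like or toroidal) with planar pieces glued along cycles or faces. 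I would argue as follows. Suppose a $K_{3,4}$ model exists. Any branch set lying in a planar piece can be contracted onto the piece's attachment vertices, since these pieces are separated from the core by small cuts. This reduces the question to finitely many small graphs, which can be checked by hand or by a degree/separation argument: $K_{3,4}$ needs three vertices of degree $4$ pairwise joined through four disjoint connectors.

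\emph{Necessity, projective-planar case.} Let $G$ be $4$-connected with no $K_{3,4}$ minor, and suppose $G$ is non-planar. If $G$ is projective-planar, I invoke the characterization of Maharry and Slilaty~\cite{Maharry2012,Maharry2017a}: projective-planar graphs with no $K_{3,4}$ minor are built from patch graphs by low-order sums. Since $G$ is $4$-connected, no proper $\le 3$-sum decomposition occurs. The only outcomes left are therefore a non-planar subgraph of a reduced patch graph or one of the small sporadic cases, and I would check that these sporadic cases reduce to $K_6$.

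\emph{Necessity, non-projective-planar case.} Here $G$ contains one of the 35 excluded minors for the projective plane. My plan:
\begin{enumerate}
\item Most of these 35 minors themselves contain $K_{3,4}$, which I verify case by case. Only a few survive; I expect $K_6$, $K_{3,5}$-free relatives, and sums of $K_5$ and $K_{3,3}$ such as $K_{4,4}^-$-type graphs.
\item For each surviving obstruction $H$, I use a splitter/chain-type theorem for $4$-connected graphs (in the spirit of Seymour's splitter theorem, adapted to the $4$-connected setting as in \cite{Lo2025}). This grows $H$ to $G$ through a sequence of $4$-connected (or nearly $4$-connected) graphs, each obtained from the previous one by adding an edge or splitting a vertex.
\item At each step I show that every $K_{3,4}$-minor-free extension of an oloidal graph (or of $K_6$) is again oloidal. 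This is done by showing that the new edge or split vertex must respect the oloidal structure: otherwise explicit rerouting produces a $K_{3,4}$ model.
\item $K_6$ itself has no $4$-connected $K_{3,4}$-free proper extension except ones that fall into the oloidal family. I would verify this by adding a vertex of degree $\ge4$ or splitting a vertex, and exhibiting $K_{3,4}$ otherwise.
\end{enumerate}

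\emph{Main obstacle.} I expect the hard step to be item 3: closure of the oloidal class under $4$-connected extensions. This requires a careful case analysis of where a new edge can attach relative to the core and the planar strips. I would handle it first by proving a lemma that any two vertices not sharing a ``face'' of the oloidal structure can be joined by an edge only at the cost of creating $K_{3,4}$, using the rich system of disjoint paths that $4$-connectivity supplies.
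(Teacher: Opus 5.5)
Your sufficiency argument for planar graphs, $K_6$ and patch graphs, and your projective-planar half of necessity, essentially match the paper. There, Theorem~\ref{thm:MS} is combined with Theorem~\ref{thm:4cnppK34mf}, noting that white vertices have degree at most three, so $4$-connected subgraphs of patch graphs lie in reduced patch graphs.

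The genuine gap is the non-projective-planar necessity, which carries the whole weight of the theorem. Your plan fails there at three points.

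\textbf{The obstructions.} The surviving obstructions are not the ones you expect. $K_6$ is projective-planar, so it is not among Archdeacon's $35$ graphs. $K_{4,4}^-$ contains $K_{3,4}$ as a subgraph. The relevant obstructions are $D_{17}$, $E_{20}$, $F_4$. Showing that a $4$-connected non-projective-planar $K_{3,4}$-free graph has one of them as a minor is itself a substantial result (Proposition~\ref{pro:DEF}, imported from \cite{Lo2025}).

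\textbf{The chain theorem.} The chain theorem you invoke is not available. For chains through $4$-connected graphs it is false. Take $C_8^2$, the square antiprism, which has the octahedron as a proper minor. It is $4$-regular, so no edge can be deleted while staying $4$-connected. Every edge lies in a triangle, so no edge is the new edge of a split. Hence no such chain can end at $C_8^2$. The internally $4$-connected analogue announced by Johnson and Thomas has never been proved in print.

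\textbf{The invariant.} Your invariant never starts. $D_{17}$, $E_{20}$, $F_4$ are only internally $4$-connected, hence not oloidal, and the graphs between them and $G$ are in general not oloidal either. So ``a $K_{3,4}$-free extension of an oloidal graph is oloidal'' is never applicable.

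\textbf{What the paper does instead.} The chain is replaced by Proposition~\ref{pro:DEF}. Using the absence of split-extensions of the obstruction as minors, it yields a spanning $D_{17}$, or a spanning JT-subdivision of $E_{20}$ or $F_4$. So $G$ is a spanning subdivision of the obstruction plus extra edges. Lemma~\ref{lem:JT} gives that segments are induced and that jumps between segments at degree-three branch vertices are restricted. Together with an exhaustive local analysis against the explicit configurations of Tables~\ref{tab:1}--\ref{tab:3}, this places every internal vertex and extra edge. The analysis ends at a spanning $\mathfrak{D}_{s^1,s^2}$, $\mathfrak{E}_s$ or $\mathfrak{F}$ with prescribed degree-four vertices, and a final case analysis finishes the proof.

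\textbf{Sufficiency for oloidal graphs.} This part also has a gap. Oloidal graphs are not a core with planar pieces attached along small cuts: they are $4$-connected. The interior of a long spine is cut off only by a $4$-cut, which a branch set of a degree-four vertex of $K_{3,4}$ can cross. So ``contract onto the attachment vertices'' is not automatically safe. This is exactly what Lemma~\ref{lem:to2} handles. Suppose a spine has more than two internal vertices, each of degree four and adjacent to two adjacent hubs. Then a $K_{3,4}$ model survives contracting a spine edge. Otherwise every internal spine vertex would be a singleton branch set on the $4$-side, and one flanked by two others would see at most two branch sets of the $3$-side. This reduces everything to finite checks of $\mathfrak{D}_{2,2}$, $\mathfrak{E}_2$, $\mathfrak{F}$ with maximal added edge sets. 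Likewise, $4$-connectivity for unbounded spines needs an argument such as Lemma~\ref{lem:VZ}, not a direct check.
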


Several consequences follow from Theorem~\ref{thm:4cK34mf}. 
First, we resolve Conjecture~\ref{con:KM} in a strengthened form.

\begin{theorem}\label{thm:K34K6-}
	Every $4$-connected non-planar graph $G$ with minimum degree at least five contains $K_6^-$ as a minor, and also contains $K_{3,4}$ as a minor if $G$ is not isomorphic to $K_6$.
\end{theorem}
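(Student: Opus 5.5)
The plan is to derive Theorem~\ref{thm:K34K6-} from Theorem~\ref{thm:4cK34mf}, treating the two minors separately.

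\textbf{The $K_{3,4}$ part.} Let $G$ be $4$-connected and non-planar with $\delta(G)\ge 5$, and assume $G\not\cong K_6$. We argue by contradiction: suppose $G$ has no $K_{3,4}$ minor. Theorem~\ref{thm:4cK34mf} then puts $G$ into one of its four classes. $G$ is not planar, and $G\not\cong K_6$ by assumption, so $G$ is either a $4$-connected non-planar subgraph of a reduced patch graph or an oloidal graph. The task is to show that every graph in these two families has a vertex of degree at most four. For patch graphs this should follow from the construction recalled in Section~\ref{sec:pp}. A reduced patch graph is built from a planar-like frame with patches glued along boundary cycles, and the frame contributes vertices of degree at most four. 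Concretely, I would exhibit, in each patch or at each cross-cap attachment, a vertex whose neighbourhood is confined to a bounded set of at most four vertices. Passing to a subgraph only lowers degrees, so this settles the first family. For oloidal graphs I would use the definition in Section~\ref{sec:npp} directly. These graphs should consist of a "belt" of degree-four vertices, or be built around a small core in which some vertex of degree $4$ is forced, possibly with finitely many small exceptional members to check by hand. In either family we get $\delta(G)\le 4$, a contradiction. Hence $G$ has a $K_{3,4}$ minor.

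\textbf{The $K_6^-$ part.} If $G\cong K_6$, then $K_6^-$ is a subgraph and we are done. Otherwise the first part gives a $K_{3,4}$ minor, but $K_{3,4}$ does not contain $K_6^-$ as a minor, so this does not directly help. Instead I would use an existing structural result for graphs with no $K_6^-$ minor in the $4$-connected setting, if such a result is proved later or available; I expect the paper to prove it. If none is available, the fallback is a direct argument. Start from the $K_{3,4}$ model together with $\delta\ge5$ and $4$-connectivity, and augment it. Take the branch sets of the $K_{3,4}$ model, with the three-side $A$ and the four-side $B$. Use extra edges guaranteed by minimum degree five and by $4$-connectivity to find paths between the $B$-branch sets that avoid the others. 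Contracting two such paths should yield $K_6^-$: six vertices in which all pairs are adjacent except one. A cleaner alternative is to show that every $4$-connected non-planar graph with $\delta\ge5$ contains a $K_{3,4}$ minor with "extra" connectivity, namely a $K_{3,4}$ plus one edge inside the four-side. Contracting that edge together with a branch set gives a graph containing $K_6^-$.

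\textbf{Main obstacle.} The hardest step is the $K_6^-$ part, because Theorem~\ref{thm:4cK34mf} does not control $K_6^-$ minors directly. I would first try to mirror the $K_{3,4}$ argument. That means establishing, by an analogous analysis, that every $4$-connected non-planar graph with $\delta\ge5$ lies outside the $K_6^-$-minor-free class, for instance by showing that the $4$-connected $K_6^-$-minor-free non-planar graphs are again among the patch/oloidal/small families, all of which have $\delta\le4$ except $K_6$. A secondary difficulty is checking carefully that the oloidal and reduced patch graphs really have minimum degree at most four, including their small exceptional members.
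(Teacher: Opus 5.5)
\textbf{Gap in the $K_6^-$ part.} Your $K_{3,4}$ argument follows the paper's route: apply Theorem~\ref{thm:4cK34mf} and show that every graph in the remaining classes has a vertex of degree at most four. For $K_6^-$, however, you give no working argument. Your one concrete suggestion fails by counting edges. $K_{3,4}$ plus one edge inside the four-side has $7$ vertices and $13$ edges, so every $6$-vertex minor of it has at most $12$ edges, while $K_6^-$ has $14$. The paper also proves no structure theorem for $K_6^-$-minor-free graphs that you could mirror.

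The missing idea is Mader's theorem (Theorem~\ref{thm:Mader}): minimum degree at least five forces a $K_6^-$ minor or an icosahedron minor. So you only need to handle a $4$-connected non-planar $G$ with an icosahedron minor. The icosahedron is $4$-connected and planar, so the Hegde--Thomas theorem (Theorem~\ref{thm:HT}) applies. Every face of the icosahedron is a triangle, so only three of its seven outcomes can occur:
\begin{itemize}
\item joining two non-cofacial vertices;
\item a non-planar split;
\item the two planar splits along triangles sharing an edge, followed by joining the new vertices.
\end{itemize}
Each resulting graph contains $K_6^-$ as a minor. This part does not use Theorem~\ref{thm:4cK34mf} at all.

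\textbf{Smaller gap in the $K_{3,4}$ part.} $G$ is only a subgraph of a reduced patch graph, not necessarily a spanning one. So finding one vertex of degree at most four in the patch graph proves nothing if that vertex is not in $G$; ``passing to a subgraph only lowers degrees'' does not cover this case. What you need is that every subgraph has such a vertex, i.e., that patch graphs are $4$-degenerate (Lemma~\ref{lem:pg4d}, used in Corollary~\ref{cor:Jorgensen}).

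This is not immediate, because vertices of a piece gain neighbours when later pieces are patched onto its patches. The paper proves it by induction on a normalized patching sequence, in which $I$-pieces are patched last onto the patches of $H$- and $Y$-pieces. It then deletes the square vertices of a suitable last-added piece one at a time, each of degree at most four when deleted.

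For oloidal graphs the difficulty does not arise. There $G$ is the oloidal graph itself, so exhibiting a single vertex of degree four in each of $\mathfrak{D}_{s^1,s^2}+A$, $\mathfrak{E}_s+A$ and $\mathfrak{F}+A$ suffices.
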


Our second application concerns hamiltonian properties under forbidden minors. 
Tutte~\cite{Tutte1956} proved that every $4$-connected graph with no $K_{3,3}$ minor is hamiltonian, and Thomassen~\cite{Thomassen1983} strengthened this result by showing that such graphs are hamiltonian-connected; that is, any two vertices can serve as the end-vertices of a hamiltonian path.
We extend Thomassen’s theorem by relaxing the excluded minor.

\begin{theorem}\label{thm:hc}
	Every $4$-connected graph with no $K_{3,4}$ minor is hamiltonian-connected.
\end{theorem}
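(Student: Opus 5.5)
We derive Theorem~\ref{thm:hc} from the structural characterization in Theorem~\ref{thm:4cK34mf}, handling the four outcomes one at a time. Let $G$ be $4$-connected with no $K_{3,4}$ minor.

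\textbf{Planar case.} If $G$ is planar, then $G$ has no $K_{3,3}$ minor. Thomassen's theorem~\cite{Thomassen1983} then shows that $G$ is hamiltonian-connected.

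\textbf{The case $G \cong K_6$.} This is immediate.

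\textbf{Patch-graph case.} Suppose $G$ is a $4$-connected non-planar subgraph of a reduced patch graph. I would use the explicit description of patch graphs from Section~\ref{sec:pp}: a planar-like structure with a bounded number of crosscap-type patches attached along a face. A shortcut is worth trying first. Show that a $4$-connected non-planar subgraph of a reduced patch graph either has no $K_{3,3}$ minor, so Thomassen applies, or equals one of finitely many small exceptional graphs, to be checked directly. That shortcut looks unlikely to hold in general, so the main route is different.

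Main route: adapt the Thomassen/Tutte bridge method, in the form of ``Tutte paths'' in $2$-connected planar graphs with prescribed end vertices and an edge on the outer face.
\begin{enumerate}
\item Delete or contract the patch region to obtain a $2$-connected planar graph $H$ whose outer face contains the attachment vertices of the patch.
\item Take a Tutte path in $H$ between the prescribed end vertices $x,y$, routed through a chosen outer edge. By $4$-connectivity every bridge of this path has at most three attachments, and bridges on the outer face have at most two.
\item Reinsert the patch and extend the path through it using a small local case analysis on how the patch is traversed. Here the patch structure supplies the required alternative routings.
\end{enumerate}
The end vertices $x,y$ may lie inside or outside the patch, and each possibility needs separate treatment.

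\textbf{Oloidal case.} For oloidal graphs (Section~\ref{sec:npp}) I expect a more explicit, presumably cyclic or ladder-like, family. If so, hamiltonian paths between any two vertices can be constructed directly by induction on the number of repeated blocks in the family. The base cases would be checked by hand, and the inductive step would insert a block and reroute the path locally through it.

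\textbf{Main obstacle.} I expect the patch-graph case to be hardest. It requires a strengthened Thomassen-type lemma: a planar graph with one face carrying extra crossing structure still admits a hamiltonian $x$--$y$ path under $4$-connectivity. The first approach would be to prove a version of Thomassen's lemma in which the outer face may carry one extra edge pair or a crosscap-type gadget, and to run Thomassen's induction with that extra invariant.
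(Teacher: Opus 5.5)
Your planar and $K_6$ cases are fine. Your plan for the oloidal case matches the paper, which also just verifies these graphs directly. Since adding edges preserves hamiltonian-connectedness, it suffices to treat the edge-minimal members $\mathfrak{D}_{s^1,s^2}$, $\mathfrak{E}_s$, and $\mathfrak{F}$. Their spines, which are paths all of whose vertices are adjacent to the same two hubs, are exactly what makes your induction on repeated blocks workable.

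The genuine gap is the patch-graph case, which you leave resting on an unproved ``strengthened Thomassen-type lemma.'' The missing idea is that no new lemma is needed. Patch graphs are, by definition, embedded in the projective plane, so every $4$-connected subgraph of a reduced patch graph is a $4$-connected projective-planar graph. The Kawarabayashi--Ozeki theorem (Theorem~\ref{thm:KO}) states that every such graph is hamiltonian-connected. The paper applies this one theorem to the planar graphs, the patch-graph subgraphs, and $K_6$ at once, leaving only the oloidal graphs to check.

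Your outlined route would also not work as stated.
\begin{itemize}
\item The non-planarity of a patch graph is not concentrated in a removable ``patch region.'' Patches are just $4$-faces into which planar pieces are glued. A non-planar patch graph is a planar graph with a facial $4$-cycle whose opposite vertices are identified (see the proof of Theorem~\ref{thm:T}). So there is no local crosscap gadget to delete and later reinsert.
\item A Tutte path is hamiltonian only because, in a $4$-connected planar graph, every bridge with at most three attachments must be a single edge. In your auxiliary graph $H$, obtained by deleting or contracting part of $G$, $4$-connectivity is lost, so nontrivial bridges can survive.
\item A hamiltonian path in a contracted graph does not in general lift to a hamiltonian path of $G$.
\end{itemize}
A Thomassen-type lemma for a plane graph with a crosscap on its outer face is essentially the content of the Thomas--Yu and Kawarabayashi--Ozeki work on $4$-connected projective-planar graphs. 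That work is long and delicate and cannot be replaced by a small local case analysis, so as written the patch-graph case is unproved.
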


Finally, we derive a consequence regarding the genus of graphs excluding a $K_{3,4}$ minor.

\begin{theorem}\label{thm:T}
	Every $4$-connected graph with no $K_{3,4}$ minor admits an embedding on the torus.
\end{theorem}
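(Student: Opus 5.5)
The natural route is through Theorem~\ref{thm:4cK34mf}: a $4$-connected graph $G$ with no $K_{3,4}$ minor is planar, or a $4$-connected non-planar subgraph of a reduced patch graph, or $K_6$, or oloidal. It then suffices to exhibit a toroidal embedding in each of the four cases. Since a subgraph of a toroidal graph is toroidal, I will treat the maximal members of each family and pass to subgraphs at the end.

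The first and third cases are immediate. Planar graphs embed in the sphere and hence in the torus, for example by adding a handle inside a face. For $K_6$, I will take the standard embedding of $K_7$ on the torus (seven hexagons, dual to the Heawood graph) and delete a vertex.

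For the patch-graph case, I will use the construction of Maharry and Slilaty, recalled in Section~\ref{sec:pp}. A patch graph is obtained from a fixed small projective-planar frame, in the literature essentially a M\"obius-type structure, by gluing planar ``patches'' into designated facial regions. A reduced patch graph restricts which patches and attachments occur. My plan is as follows.
\begin{enumerate}
\item Show that the frame together with its crosscap region can be re-embedded on the torus, with the crosscap's identification realized by a handle.
\item Check that every region into which a planar patch is glued is still bounded by a facial cycle of this torus embedding, so that each patch can be drawn inside its face.
\end{enumerate}
The main obstacle is this face-compatibility check. A projective-planar graph need not be toroidal, so the argument must exploit the specific shape of reduced patch graphs. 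I expect that the reduction forbids the configurations needing a genuine crosscap, namely patches attached along a boundary that wraps the crosscap an odd number of times in incompatible ways. I would first try to embed the universal reduced patch graph with all patches present and then delete edges. If that fails, I would split by the type of reduced patch graph as defined in Section~\ref{sec:pp}.

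For oloidal graphs, I will read off the definition in Section~\ref{sec:npp}. The name suggests a cyclic, cylinder-like arrangement of planar pieces whose two ends are glued with a twist or a rotation. I will close the cylinder into a torus and check that the gluing is orientation-preserving, so that the result is a torus and not a Klein bottle; the cylinder's end faces then carry the remaining edges. If the definition builds oloidal graphs from a finite list of base graphs by expansions, I will instead give explicit torus embeddings of the base graphs. I will then show that each expansion step (for instance subdividing a face or inserting a planar piece into a face) preserves toroidality, since it happens inside a face. Combining the four cases proves the theorem.
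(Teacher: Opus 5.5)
Your reduction through Theorem~\ref{thm:4cK34mf}, the passage to maximal members because subgraphs of toroidal graphs are toroidal, and the planar and $K_6$ cases (delete a vertex of the torus embedding of $K_7$) all agree with the paper. The gap is that the two cases with real content remain plans. For patch graphs, the plan also misidentifies why the result holds.

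The missing idea is the global shape of a patch graph. The initial construct is the dipole on $u,v$ with four parallel edges. Its only patch is a single face whose facial walk is $uvuv$, a closed walk rather than a cycle. Every patching glues a planar piece into the current patch, and the new patches are faces of that piece. Inductively, every patch graph is therefore a planar graph $P$ drawn in a disk bounded by a $4$-cycle $abcd$, with $a,c$ then identified to $u$ and $b,d$ to $v$; the boundary edges become the four parallel edges. This identification of opposite corners is the only non-planar feature, and the torus realizes it. In the square model of the torus, put $u$ at the corner, $v$ at the centre, and the four edges along the half-diagonals. The two triangles meeting along the horizontal side form a face with boundary walk $uvuv$, and $P$ is drawn in it. This is the observation the paper uses. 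With it, your steps (1) and (2) are immediate, since there is only one face to preserve. Without it, your guess that the \emph{reduction} excludes crosscap-requiring configurations points the wrong way: deleting white vertices plays no role, and unreduced patch graphs are toroidal for the same reason. Also, there is no ``universal'' reduced patch graph, since the family is infinite.

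For oloidal graphs you reason from the name instead of the definition. An oloidal graph is $\mathfrak{D}_{s^1,s^2}+A$, $\mathfrak{E}_s+A$, or $\mathfrak{F}+A$, with $A$ taken from explicit finite edge sets. So your second branch (explicit embeddings of base graphs) is the relevant one, and checking the orientation of a cylinder gluing is beside the point. What is needed, and what the paper supplies by explicit drawings, are torus embeddings of the edge-maximal members for all spine lengths:
\begin{itemize}
\item $\mathfrak{D}_{s^1,s^2}+\{\delta^1_1\delta^2_2,\delta^1_1\delta^2_3,\delta^1_1\delta^2_4\}$;
\item $\mathfrak{D}_{s^1,s^2}+\{\delta^1_1\delta^2_2,\delta^2_2\delta^1_3,\delta^1_3\delta^2_4,\delta^2_4\delta^1_1\}$;
\item $\mathfrak{F}+\{\varphi^1_3\varphi^2_3\}$.
\end{itemize}
The $\mathfrak{E}_s$ family is covered by its isomorphism with $\mathfrak{D}_{0,s+1}+\{\delta^1_1\delta^2_2,\delta^1_1\delta^2_3,\delta^1_1\delta^2_4\}$.

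Your ``base embedding plus face-local expansion'' idea can handle arbitrary spine lengths, but it needs a concrete invariant. One such invariant: for each spine, some spine edge has both of its triangles with that spine's two hub vertices (the vertices adjacent to every vertex of the spine) as faces. Inserting a spine vertex then subdivides that edge and adds one chord inside each of the two triangles, and the invariant persists. Until such base embeddings are actually exhibited, the oloidal case, like the patch-graph case, remains unproved.
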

\subsection*{Organization of the paper}

The paper is structured as follows. In Section~\ref{sec:definitions}, we fix the basic definitions and notation that will be used throughout. Section~\ref{sec:splitters} introduces Seymour's splitter theorem along with its variants, which form essential tools for our subsequent arguments. In Section~\ref{sec:4K34}, we establish the principal structural result, Theorem~\ref{thm:4cK34mf}, providing a characterization of $4$-connected graphs with no $K_{3,4}$ minor. Building upon this, Section~\ref{sec:K34} extends the characterization to all graphs excluding a $K_{3,4}$ minor, showing how they can be constructed from the $4$-connected building blocks. Finally, Section~\ref{sec:applications} is devoted to the derivation of the consequences of Theorem~\ref{thm:4cK34mf}, specifically Theorems~\ref{thm:K34K6-}, \ref{thm:hc}, and \ref{thm:T}.

\section{Definitions and notation}\label{sec:definitions}

Throughout this paper, we consider only finite simple graphs unless otherwise specified. 

A \emph{$k$-cut} in a connected graph $G$ is a subset $S \subseteq V(G)$ with $|S| = k$ such that $G - S$ is disconnected. The graph $G$ is \emph{$k$-connected} if $|V(G)| > k$ and $G$ has no $k'$-cut for any $k' < k$.
A $3$-connected graph $G$ is \emph{almost $4$-connected} if either $|V(G)| \le 6$, or every $3$-cut of $G$ separates $G$ into a single vertex and a remaining component.
A $3$-connected graph $G$ on at least five vertices is \emph{internally $4$-connected} if either $G$ is isomorphic to $K_{3,3}$, or every $3$-cut of $G$ is an independent set that separates $G$ into a single vertex and a remaining component. In particular, every $4$-connected graph is both almost $4$-connected and internally $4$-connected.

The \emph{degree} of a vertex in a graph $G$ is the number of its neighbors in $G$. A graph $G$ is \emph{$k$-degenerate} if every subgraph of $G$ contains a vertex of degree at most $k$. Equivalently, a graph on $n$ vertices is $k$-degenerate if and only if its vertices can be ordered as $v_1, v_2, \dots, v_n$ such that each vertex $v_i$ has at most $k$ neighbors among $v_1, \dots, v_{i-1}$.

A graph $H$ is a \emph{minor} of a graph $G$ if it can be obtained from $G$ by deleting vertices, deleting edges, and contracting edges. This is denoted by $G \succeq H$, and, when no ambiguity arises, we also say that $G$ contains $H$ as a minor, or $G$ has a minor of $H$.

It is well known that $G \succeq H$ if and only if there exists a mapping \( \mu \) that assigns to each vertex of \( H \) a pairwise disjoint subset of \( V(G) \) and to each edge of \( H \) a path in \( G \), such that for every \( v \in V(H) \), the induced subgraph \( G[\mu(v)] \) is connected, the paths corresponding to distinct edges are pairwise internally disjoint, and for each \( uv \in E(H) \), the path \( \mu(uv) \) joins a vertex in \( \mu(u) \) to a vertex in \( \mu(v) \) with no internal vertex contained in \( \bigcup_{w \in V(H)} \mu(w) \). We call such a mapping \( \mu \) a \emph{model} of $H$, witnessing that \( H \) is a minor of \( G \). Moreover, if \( G \) is connected, we may assume that $\bigcup_{w \in V(H)} \mu(w) = V(G)$; in this case, we call \( \mu \) a \emph{spanning model}.

Assume that $G$ contains a model $\mu$ of $H$. If a vertex $v \in V(H)$ has degree at most four, we may assume that the subgraph $G[\mu(v)]$ contains a spanning path $P$ such that, for every edge $uv \in E(H)$, the intersection of $\mu(v)$ with the path $\mu(uv)$ is an end-vertex of $P$, and each end-vertex of $P$ is incident with at least two paths corresponding to edges incident with $v$. If $\mu(v)$ consists of a single vertex for every $v \in V(H)$, then we say that $G$ contains a \emph{subdivision} of $H$. Note that if every vertex of $H$ has degree three, then the existence of an $H$-minor is equivalent to the existence of a subdivision of $H$.

Let $A$ be a set of edges not contained in $G$. We write $G + A$ for the graph obtained from $G$ by adding the edges in $A$. Let $D$ be a set of vertices or edges contained in $G$, and write $G - D$ for the graph obtained from $G$ by deleting the elements of $D$. When $A=\{a\}$ (respectively, $D=\{d\}$), we may write $G+a$ (respectively, $G-d$) for brevity.

For any path $P$ and vertices $u,v \in V(P)$, we denote by $P[u,v]$ the subpath of $P$ with end-vertices $u$ and $v$. A vertex of $P$ is \emph{internal} if it is not an end-vertex, and we write $P(u,v)$ for the (possibly empty) path obtained from $P[u,v]$ by deleting $u$ and $v$.

The \emph{length} of a walk, path, or cycle is the number of edge occurrences it contains. A cycle in a graph embedded on a surface that bounds a face is called a \emph{facial cycle}. More generally, the closed walk that traces the boundary of a face is called a \emph{facial walk}. The \emph{length} of a face is the length of its facial walk. Two vertices of an embedded graph are \emph{cofacial} if they are incident with a common face.

Two edges are \emph{independent} if they have no common end-vertex.

For any positive integer $k$, we write $[k]$ for the set of positive integers at most $k$.

\section{Splitter theorems}\label{sec:splitters}

In many situations the graph $G$ is not given explicitly; instead, we are given a known graph $H \neq G$ such that $G \succeq H$, and we wish to exploit this information to understand the structure of $G$. By definition, $H$ is obtained from $G$ by a non-trivial sequence of deletions and contractions. Equivalently, one may view $G$ as arising from $H$ by a sequence of minor extensions, each producing a larger graph that still occurs as a minor of $G$. In this way, starting from the fixed minor $H$, we obtain a family of graphs $H'$ with $H \preceq H' \preceq G$. This principle underpins splitter theorems and their variants, which \emph{fix} $H$ by growing it toward $G$.

Such tools have proved highly useful in the literature. The classical result, commonly known as Seymour's splitter theorem~\cite{Seymour1980,Negami1982}, fixes the size of $H$ when both $G$ and $H$ are $3$-connected. Hegde and Thomas~\cite{Hegde2018} introduced a variant that fixes the non-embeddability of $H$. We mention that Ding and Iverson~\cite{Ding2014} developed a connectivity-fixing tool that grows $H$ to certain \emph{twists}, which do not necessarily contain $H$ as a minor; they used it to characterize the minor-minimal internally $4$-connected non-projective-planar graphs. We refer to~\cite{Thomas1999,Norin2015} for a comprehensive exposition.

\subsection{Seymour's splitter theorem}

Let $H$ be a graph and let $v \in V(H)$. Construct a graph $H'$ from $H$ as follows: delete $v$, introduce two adjacent vertices $v_1$ and $v_2$, and distribute the neighbors of $v$ between $v_1$ and $v_2$ so that each former neighbor of $v$ is adjacent to exactly one of $v_1$ and $v_2$, and each of $v_1$ and $v_2$ receives at least two such neighbors. In particular, this requires that $v$ has degree at least four. We say that $H'$ is obtained from $H$ by \emph{splitting} the vertex $v$. The reverse operation consists of contracting an edge whose end-vertices have no common neighbor.

The theorem below is a graph-theoretic consequence of Seymour’s splitter theorem, and was also obtained independently by Negami. Recall that a \emph{wheel} is the graph formed from a cycle by adding a single vertex adjacent to every vertex of the cycle.

\begin{theorem}[\cite{Seymour1980,Negami1982}]\label{thm:Seymour}
	Let $G$ and $H$ be $3$-connected graphs. Suppose that $G$ contains $H$ as a minor and, moreover, that if $H$ is a wheel, then it is a largest wheel minor of $G$. Then $G$ can be obtained from $H$ by a sequence of operations, each of which either adds an edge between two non-adjacent vertices or splits a vertex.
\end{theorem}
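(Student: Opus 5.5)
We prove the theorem by induction on $|E(G)|-|E(H)|$, by the standard route through the graphic case of Seymour's splitter theorem. The key claim is a one-step reduction. If $G\neq H$, then $G$ has an edge $e$ such that one of the following holds:
\begin{itemize}
\item $G-e$ is $3$-connected and contains $H$ as a minor; or
\item $G/e$ is simple, $3$-connected and contains $H$ as a minor.
\end{itemize}

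Reversing such a step gives exactly one of the two operations in the statement. Reversing a deletion adds an edge between non-adjacent vertices. For a contraction, $G/e$ simple means the ends $x,y$ of $e$ have no common neighbour. Since $G$ is $3$-connected, $x$ and $y$ each have degree at least three, so each has at least two neighbours other than the other end. Hence $G$ arises from $G/e$ by splitting the contracted vertex. The wheel hypothesis is inherited by the smaller graph: a wheel minor of $G-e$ or $G/e$ is a wheel minor of $G$. So once the claim is established, induction applies to the pair $(G-e,H)$ or $(G/e,H)$.

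To prove the claim, fix a model of $H$ in $G$. Since $G\neq H$ and both are $3$-connected, some edge $e$ can be deleted or contracted while keeping an $H$-minor:
\begin{itemize}
\item an edge not used by the model can be deleted;
\item an edge inside some branch set $\mu(v)$ with $|\mu(v)|\ge 2$ can be contracted;
\item an internal vertex of some path $\mu(uv)$ lets us contract one of the path's edges.
\end{itemize}
Among all such edges we try to choose one for which the resulting graph, after deleting parallel edges, is $3$-connected. Suppose no choice works. Then for the chosen $e$, $G-e$ has a $2$-separation, or $G/e$ has a $2$-separation or a parallel pair, that is, a triangle through $e$. Since $H$ is $3$-connected, $H$ sits essentially on one side of this separation. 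The other side then contains further edges that can be deleted or contracted while preserving the $H$-minor. We iterate this, always choosing the separation with a minimal small side.

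This is Tutte's fan argument. Following the triangles and degree-$3$ vertices forced by the failures builds a longer and longer alternating fan. The argument closes up only when every rim vertex has degree $3$ and every spoke meets a common hub, i.e.\ when $G$ is a wheel.

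If $G$ is a wheel, every $3$-connected minor of $G$ is a wheel. Indeed, deleting a spoke or contracting a rim edge of a wheel, then taking the $3$-connected simplification, yields a smaller wheel. Hence $H$ is a wheel, and then the hypothesis says $H$ is a largest wheel minor of $G$, so $H=G$, a contradiction. Therefore the claim holds whenever $G\neq H$.

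The main obstacle is the fan/chain step: showing that persistent failure of both deletion and contraction forces $G$ to be a wheel. The delicate point is keeping control of the $H$-minor while switching between edges. My first attempt would be to use the minimality of the small side of each $2$-separation, together with Tutte's triangle lemma, to pass to an adjacent edge of the fan. If this becomes unwieldy, a cleaner fallback is to cite Seymour's matroid splitter theorem for the cycle matroids $M(G)$ and $M(H)$. One then translates single-element deletions and contractions back into edge additions and vertex splits, as in the first paragraph, noting that the only excluded case there is wheels and whirls, and whirls do not arise for graphic matroids.
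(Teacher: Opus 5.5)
Your reduction to a one-step claim is sound. Undoing a deletion adds an edge between non-adjacent vertices, undoing the contraction of $e=xy$ with $G/e$ simple is a split in which $x$ and $y$ each keep at least two old neighbours, and the wheel hypothesis passes to $G-e$ and $G/e$. The gap is that this one-step claim is not an intermediate lemma. It is itself the graphic case of Seymour's splitter theorem, and your fan paragraph asserts it rather than proves it. Three things are missing.
(i) If $G-e$ is not $3$-connected, you only get a $2$-cut of $G-e$ separating the ends of $e$. To show that a minimal small side is trivial (a single degree-$3$ vertex, or dually a triangle for a failed contraction), you must prove that otherwise some edge on that side can be deleted or contracted while keeping \emph{both} $3$-connectivity and an $H$-minor. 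That is exactly the part you leave out.
(ii) You do not say how the $H$-minor is carried along the fan. The standard step: if $G-e$ has an $H$-minor and $e$ is incident with a degree-$3$ vertex $w$, the other two edges $f,g$ at $w$ are in series in $G-e$. Because $H$ is $3$-connected, one of them is contracted in some $H$-model, and by the symmetry of a series pair both $G/f$ and $G/g$ have $H$-minors.
(iii) You do not handle a fan that stops before wrapping around into a wheel. There one has to exhibit an end edge whose removal preserves $3$-connectivity and the $H$-minor.
So the self-contained route breaks precisely at the step you flag as the main obstacle.

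Your fallback, by contrast, is correct and is exactly how the paper treats the statement. The paper gives no proof; it presents the statement as the graph-theoretic consequence of Seymour's matroid splitter theorem and notes Negami's independent proof. To complete the translation you need two facts. First, a graph with at least four vertices and no isolated vertices is simple and $3$-connected if and only if its cycle matroid is $3$-connected. Second, by Whitney's $2$-isomorphism theorem, a graph without isolated vertices whose cycle matroid is isomorphic to $M(H)$, or to the cycle matroid of a wheel, is itself isomorphic to $H$, respectively to that wheel. This transfers the wheel hypothesis to the matroid setting, and whirls never occur because they are non-binary. Then build the graphs top-down from $G$ along the matroid sequence, deleting or contracting the same edges, and read each step back through your first paragraph; this gives the theorem.
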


\subsection{Internally $4$-connected graphs}

While Seymour's splitter theorem applies to $3$-connected graphs, Johnson and Thomas~\cite{Johnson2002} asked for an analogue tailored to internally $4$-connected graphs. They announced such a theorem in~\cite[Section~3]{Johnson2002} (see also~\cite{Thomas1999}); however, a proof has not yet appeared in the literature. Here we collect several of their structural results that will be used later.

Recall that if $G$ contains $H$ as a minor but contains no graph obtained from $H$ by splitting a vertex as a minor, then $G$ contains a subdivision of $H$.

Now let $G$ and $H$ be graphs such that $G$ contains a subdivision of $H$. Then there exists a mapping $\eta$ that maps the vertices of $H$ to distinct vertices of $G$ and maps each edge of $H$ to a path in $G$, such that these paths are pairwise internally disjoint and, for every $uv \in E(H)$, the path $\eta(uv)$ has end-vertices $\eta(u)$ and $\eta(v)$ and no internal vertex of $\eta(uv)$ lies in $\{\eta(v) : v \in V(H)\}$.

Let $\eta(H)$ denote the subgraph of $G$ formed by the image of $\eta$; we also say that $\eta(H)$ is a subdivision of $H$ in $G$. Define $\eta(V(H)) := \{\eta(v) : v \in V(H)\}$. A vertex is called \emph{internal} if it lies in $V(G) \setminus \eta(V(H))$. The paths $\eta(e)$ for $e \in E(H)$ are called the \emph{segments} of $\eta$.

For $v \in V(H)$, the \emph{domain} of $v$ with respect to $\eta$ is the set consisting of $\eta(v)$ together with all internal vertices of the segments having $\eta(v)$ as an end-vertex. The \emph{closed domain} of $v$ with respect to $\eta$ is the set of all vertices on the segments having $\eta(v)$ as an end-vertex.

Johnson and Thomas~\cite{Johnson2002} introduced a special type of subdivision, called a \emph{lexicographically maximal subdivision}, which we refer to as a \emph{JT-subdivision}. For the technical definition of a JT-subdivision, we refer the reader to \cite{Johnson2002}. We mention two facts about JT-subdivisions, although the first is not needed in the proofs of this paper. 

\begin{proposition}[\cite{Johnson2002}] \label{pro:JT}
	Let $G$ and $H$ be graphs. Then the following statements hold:
	\begin{itemize}
		\item If \( G \) contains a subdivision of \( H \), then \( G \) also contains a JT-subdivision of \( H \). 
		\item If \( G \) contains \( H \) as a spanning subgraph, then this subgraph, when viewed as a subdivision of \( H \), is a JT-subdivision of \( H \).
	\end{itemize}
\end{proposition}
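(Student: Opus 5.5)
The plan is to argue directly from the definition of a JT-subdivision in \cite{Johnson2002}. We take the definition in the following form: a JT-subdivision of $H$ in $G$ is a subdivision $\eta(H)$ that maximizes, in the lexicographic order, a vector of integer parameters attached to $\eta$. These parameters depend only on the sizes of the (closed) domains of the vertices of $H$ with respect to $\eta$, listed in a fixed order, possibly preceded by the number of vertices of $G$ covered by $\eta(H)$. Both statements then reduce to counting arguments about this vector. The argument for the second item is sound only under this assumption on the definition, which I have not checked against \cite{Johnson2002}.

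For the first item, suppose $G$ contains a subdivision of $H$. Then the set $\mathcal{S}$ of all mappings $\eta$ witnessing a subdivision of $H$ in $G$ is nonempty. It is also finite, since $G$ is finite and each $\eta$ is determined by finitely many choices of vertices and paths. Each $\eta \in \mathcal{S}$ yields a vector of non-negative integers of fixed length, bounded by $|V(G)|$. The lexicographic order is a total order on such vectors, so some $\eta \in \mathcal{S}$ attains the maximum, and that $\eta$ is a JT-subdivision by definition. No real obstacle arises here beyond confirming that the definition ranges over all subdivisions and compares them by a total preorder.

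For the second item, suppose $G$ contains $H$ as a spanning subgraph, so that $|V(G)| = |V(H)|$. The key observation is that every subdivision $\eta$ of $H$ in $G$ uses the $|V(H)|$ distinct branch vertices $\eta(V(H))$. These already exhaust $V(G)$, so no segment can have an internal vertex, and every segment $\eta(uv)$ is a single edge. Hence every element of $\mathcal{S}$ is a spanning copy of $H$ with trivial segments. In each such copy, every domain is the singleton $\{\eta(v)\}$, every closed domain is $\eta(v)$ together with its $\deg_H(v)$ neighbours, and all vertices of $G$ are covered. So the parameter vector is the same for every member of $\mathcal{S}$. Consequently, any member of $\mathcal{S}$, and in particular the given spanning copy of $H$ viewed as a subdivision, attains the lexicographic maximum and is a JT-subdivision.

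The main obstacle is matching this argument to the precise technical definition in \cite{Johnson2002}, which the paper does not reproduce. If the lexicographic vector also involves data beyond domain sizes, the invariance step in the second item must be redone. Examples of such data are the identities of vertices or the bridges of $\eta(H)$ in $G$. Even then, I would expect it to survive, because when $\eta(H)$ is spanning with trivial segments, the only freedom left is the choice of the bijection $V(H) \to V(G)$. So I would first check that the quantities compared are invariant under changing this bijection among copies of $H$. If they are not, I would check that the definition breaks ties in a way that the given copy can be taken to realize.
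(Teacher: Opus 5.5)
The paper does not prove this proposition. Both items are quoted from \cite{Johnson2002}, and the paper does not even reproduce the definition of a JT-subdivision. So the comparison is between your self-contained verification and a bare citation.

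Your argument is the natural one and is correct in substance.

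\textbf{Item 1.} This only needs that a JT-subdivision is a maximizer, for a fixed lexicographic comparison of integer data, over the finite nonempty set of homeomorphic embeddings of $H$ into $G$. A global maximizer is also maximal for any more restricted comparison, such as one against reroutings of $\eta$. So the argument does not depend on whether the definition is phrased globally or locally.

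\textbf{Item 2.} The real content is your observation that when $|V(G)|=|V(H)|$, no segment can have an internal vertex. Every homeomorphic embedding is therefore an isomorphism onto a spanning subgraph, determined by a bijection $V(H)\to V(G)$.

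\textbf{Your caveat.} You rightly flag that the compared data must be checked for invariance under the choice of bijection. It holds for all the natural candidates:
\begin{itemize}
\item domain and closed-domain sizes;
\item segment lengths;
\item the number of covered vertices;
\item the sizes of the $\eta(H)$-bridges, each of which is now a single edge of $G$ outside the image, with exactly $|E(G)|-|E(H)|$ of them.
\end{itemize}
It would fail only for data such as ``number of bridges attached at $\eta(v_i)$,'' which depend on the bijection. The fact that item~2 is asserted for every spanning copy shows the Johnson--Thomas data cannot be of that kind.

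\textbf{What each route buys.} Your route gives a justification that does not depend on the finer details of the Johnson--Thomas definition. The citation guarantees that the same definition also yields Lemma~\ref{lem:JT}, which your argument neither addresses nor needs.
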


The following lemma collects two fundamental properties of JT-subdivisions, as established in~\cite[(5.2)]{Johnson2002} and~\cite[(7.2)]{Johnson2002}, and will be used repeatedly in Section~\ref{sec:npp}.

\begin{lemma}[\cite{Johnson2002}] \label{lem:JT}
	Let \( G \) and \( H \) be internally \(4\)-connected graphs such that \( G \) contains a JT-subdivision \( \eta(H) \) of \( H \). Then the following statements hold:
	\begin{itemize}
		\item If \( G \) does not contain, as a minor, any graph obtained from \( H \) by splitting a vertex, then every segment of \( \eta(H) \) is an induced path in \( G \).
		\item Let \( u \) be a vertex of \( H \) of degree three, and let \( e_1 \) and \( e_2 \) be two distinct edges of \( H \) incident with \( u \). If there exists an edge of \( G \) joining a vertex \( v \) in \( \eta(e_1) - \eta(u) \) to an internal vertex of \( \eta(e_2) \), then $v$ is adjacent to $\eta(u)$ in \( \eta(e_1) \).
	\end{itemize}
\end{lemma}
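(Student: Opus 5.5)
Since the paper does not restate the definition of a JT-subdivision, I would first recall the one from~\cite{Johnson2002}, in the form I will assume: among all subdivisions of $H$ in $G$ (with a fixed ordering of $V(H)$), choose one whose vector of domain sizes (or closed-domain sizes), read in that ordering, is lexicographically maximal. Both statements would then be proved by contradiction. In each case I would exhibit either a minor of $G$ obtained from $H$ by splitting a vertex, or a new subdivision $\eta'$ of $H$ whose domain vector is lexicographically larger than that of $\eta$.

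For the first statement, suppose a segment $P=\eta(xy)$ has a chord $ab$, with $a,b\in V(P)$ non-consecutive on $P$. Let $Q=P(a,b)$, which is non-empty. Since $G$ is internally $4$-connected, the set $\{a,b\}$ cannot separate $V(Q)$ from the rest of $\eta(H)$. Indeed, if $|V(Q)|=1$, its single vertex has degree at least three. Hence there is a path $R$ from an internal vertex $q$ of $Q$ to a vertex $z\in V(\eta(H))\setminus V(P[a,b])$, internally disjoint from $\eta(H)$. I would then split into cases by where $z$ lies.
\begin{itemize}
\item If $z$ lies on a segment not incident with $\eta(x)$ or $\eta(y)$, or $z$ is a branch vertex other than $\eta(x)$ and $\eta(y)$, then rerouting $P$ through the chord $ab$ and attaching $Q\cup R$ gives a model of $H$ in which $x$ or $y$ has been split. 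The branch set of that end is enlarged along the path to the chord, and the extra edge comes from $R$. This contradicts the hypothesis.
\item If $z$ lies in the closed domain of $x$ or of $y$, then rerouting $P$ through $ab$ while keeping the subdivision structure transfers vertices into a domain that comes earlier in the ordering. This yields a lexicographically larger subdivision, contradicting the choice of $\eta$.
\end{itemize}

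For the second statement, let $v\in\eta(e_1)-\eta(u)$ and let $w$ be an internal vertex of $\eta(e_2)$ with $vw\in E(G)$, and suppose $v$ is not the neighbour of $\eta(u)$ on $\eta(e_1)$. Let $\eta(u)u_1$ be the first edge of $\eta(e_1)$, and write $e_1=uy_1$, $e_2=uy_2$. I would construct $\eta'$ as follows. Replace $\eta(e_2)$ by $\eta(e_1)[\eta(u),v]\cup vw\cup\eta(e_2)[w,\eta(y_2)]$, and replace $\eta(e_1)$ by $\eta(e_1)[v,\eta(y_1)]$ with the branch vertex of $u$ moved to $v$. Since $u$ has degree three, moving the branch vertex only requires the third segment at $u$ to be extended along $\eta(e_1)[\eta(u),v]$, and this is consistent. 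Alternatively, if we keep $\eta(u)$ as the branch vertex, then $\eta(e_2)[\eta(u),w)$ becomes free. Either way the domain of $u$ or of its neighbours changes in a controlled manner. The configuration is the same as in the first statement, so the freed portion, namely $\eta(e_2)(\eta(u),w)$ or $\eta(e_1)(\eta(u),v)$, would by internal $4$-connectivity need an attachment elsewhere. An attachment inside the relevant domains contradicts lexicographic maximality; one outside them gives a split of $u$ (a degree-three vertex, so the split really concerns its neighbourhood) or forces the $3$-cut $\{\eta(u),v,w\}$, which separates more than one vertex.

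The main obstacle is the bookkeeping for the lexicographic order. For each rerouting I must check that the new domain vector is strictly larger, and in particular that no earlier vertex's domain shrinks. I would first settle the precise ordering convention used in~\cite{Johnson2002}, and then verify the inequality case by case.
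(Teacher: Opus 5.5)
The paper does not prove this lemma. Both bullets are quoted from (5.2) and (7.2) of~\cite{Johnson2002}, and they depend on the exact definition of a lexicographically maximal subdivision, which you leave unsettled. Beyond that, some of your steps fail as stated.

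\textbf{First bullet, ``far'' case.} This case does not produce a split. Suppose $Q\cup R$ joins the rerouted segment $\eta(xy)$ to a branch vertex $\eta(w)$ with $w\notin N_H(x)\cup N_H(y)$, or to the interior of a segment $\eta(wt)$ with $\{w,t\}\cap\{x,y\}=\emptyset$. Contracting $Q\cup R$ gives $H$ with $xy$ subdivided by a vertex adjacent to $w$, or to a new vertex on $wt$. Contracting further gives $H+xw$, $H+yw$, or an edge-to-edge extension. A split of $x$ only redistributes $N_H(x)$, but your new vertex sees $w\notin N_H(x)$. So in general the no-split hypothesis does not exclude this configuration. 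You also only used $3$-connectivity here. Internal $4$-connectivity gives more: since $ab\in E(G)$, no set $\{a,b,z\}$ is a $3$-cut, so $Q$ has two distinct attachments off $P[a,b]$. That still does not yield a split in general.

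\textbf{First bullet, ``near'' case.} This case rests on a lexicographic increase you never check, and under your own definition it goes the wrong way. Rerouting $\eta(xy)$ through $ab$ removes $Q$ from the domains of both $x$ and $y$. The domain vector therefore strictly decreases unless $Q$ is reabsorbed into some other segment, and you do not describe how.

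\textbf{Second bullet.} Let $y_3$ be the third neighbour of $u$. Moving the branch vertex of $u$ to $w$, or to $v$, has the following effect. It strictly shrinks the domain of $y_1$, does not enlarge those of $u$ and $y_2$, and enlarges only that of $y_3$. So nothing follows for an arbitrary ordering of $V(H)$. Your fallback, that an outside attachment ``gives a split of $u$'', is unavailable for two reasons. First, $u$ has degree three, so no graph is obtained from $H$ by splitting $u$. Second, the second bullet does not assume the absence of split minors at all; it must follow from the maximality of $\eta$ and internal $4$-connectivity alone.

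As written, the proposal proves neither bullet. In this paper the correct justification is the citation to~\cite{Johnson2002}. A self-contained proof would need their exact definition and their maximality arguments.
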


\subsection{Non-embeddable extensions}

Suppose that $G \succeq H$, where $H$ is embeddable on a surface but $G$ is not embeddable on that surface. In this situation, $G \neq H$, and we may exploit this to \emph{fix} the non-embeddability of $G$ relative to $H$. Hegde and Thomas~\cite{Hegde2018} developed a powerful framework for addressing such cases. For our purposes, we only need a restricted version: we assume that $G$ is non-planar, $H$ is planar, and both graphs are $4$-connected.

Let $H$ be a $4$-connected planar graph, and let $H'$ be obtained from $H$ by splitting a vertex $v \in V(H)$ into two vertices $v_1,v_2 \in V(H')$. We say that $H'$ is obtained by a \emph{planar split} of $v$ if $H'$ is planar, and by a \emph{non-planar split} otherwise. Equivalently, a split is planar if and only if the neighbors assigned to each of $v_1$ and $v_2$ form contiguous intervals in the cyclic order of the neighbors of $v$ in the planar embedding of $H$. 
After a planar split, the facial cycles of $H'$ correspond naturally to those of $H$, with all lengths preserved except for exactly two facial cycles $C_1$ and $C_2$ of $H$, which correspond to facial cycles in $H'$ whose lengths increase by one; these two cycles in $H'$ share precisely the edge $v_1v_2$. In this case, the planar split is said to be \emph{along $C_1$} (and \emph{along $C_2$}, respectively).

The following result is a weakened form of~\cite[Theorem~1.2]{Hegde2018} and plays a key role in the proof of Theorem~\ref{thm:K34K6-} in Section~\ref{sec:K34K6-}.

\begin{theorem}[\cite{Hegde2018}]\label{thm:HT}
	Let $G$ and $H$ be $4$-connected graphs. Suppose that $G$ contains $H$ as a minor, that $G$ is non-planar, and that $H$ is planar. Then $G$ contains a graph $H'$ as a minor such that one of the following holds:
	\begin{itemize}
		\item $H'$ is obtained from $H$ by joining two non-cofacial vertices.
		
		\item There exist distinct vertices $v_1,v_2,v_3,v_4$ appearing in this order on a facial cycle of $H$ such that $H'$ is obtained from $H$ by joining $v_1$ with $v_3$ and joining $v_2$ with $v_4$.
		
		\item $H'$ is obtained from $H$ by performing a non-planar split.
		
		\item There exist non-adjacent cofacial vertices $u,v$ of $H$ such that $H'$ is obtained by first performing a planar split of $v$ into $v_1,v_2$ so that, in the intermediate graph, $u$ and $v_2$ are non-cofacial, and then joining $u$ with $v_2$.
		
		\item There exist facial cycles $C_1$ and $C_2$ of $H$ sharing a common edge $uv$ such that $H'$ is obtained by performing a planar split of $u$ along $C_1$ into $u_1,u_2$ and a planar split of $v$ along $C_2$ into $v_1,v_2$, with $u_1$ adjacent to $v_1$ in the intermediate graph, and then joining $u_2$ with $v_2$.
		
		\item There exist distinct vertices $u,v,w$ on a facial cycle $C$ of $H$, where $u$ is adjacent to neither $v$ nor $w$, such that $H'$ is obtained by first performing a planar split of $u$ along $C$ into $u_1,u_2$ so that $u_1,u_2,v,w$ appear on the new facial cycle of the intermediate graph, and then joining $u_1$ with $v$ and joining $u_2$ with $w$.
		
		\item There exist non-adjacent vertices $u,v$ on a facial cycle $C$ of $H$ such that $H'$ is obtained by performing a planar split of $u$ into $u_1,u_2$ and a planar split of $v$ into $v_1,v_2$, both along $C$, so that $u_1,u_2,v_1,v_2$ appear on the new facial cycle of the intermediate graph, and then joining $u_1$ with $v_1$ and joining $u_2$ with $v_2$.
	\end{itemize}
\end{theorem}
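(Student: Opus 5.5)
The statement is a specialization of \cite[Theorem~1.2]{Hegde2018}, so the most economical proof checks that the hypotheses here imply theirs and that each of their outcomes yields one of the seven listed. Their theorem works with internally $4$-connected graphs and a general surface, in the plane case. A $4$-connected graph is internally $4$-connected, so the hypotheses transfer. The work is then a case check: each of their outcomes either appears verbatim above, or is excluded because $H$ has no $3$-cuts (their outcomes involving $3$-separations of $H$). I would carry out this check outcome by outcome. To make the result self-contained in spirit, I would also sketch why the list is the right one, along the following lines.

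First, since $H$ is $4$-connected it is not a wheel, so Theorem~\ref{thm:Seymour} applies. It gives a sequence $H=H_0,H_1,\dots,H_k=G$ of $3$-connected graphs, each obtained from the previous one by adding an edge or splitting a vertex. Let $i$ be the least index with $H_i$ non-planar. Since $H_{i-1}$ is a $3$-connected planar graph, it has a unique embedding by Whitney's theorem. Hence $H_i$ arises from $H_{i-1}$ either by joining two non-cofacial vertices or by a non-planar split. The steps $H_0\to H_{i-1}$ are planar operations: edges added inside faces and planar splits.

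Second, I would try to \emph{shorten} this planar prefix. Choose $H'$ with $H\preceq H'\preceq G$, $H'$ non-planar, and $H'$ minimal in the number of operations applied to $H$. Then undo, by contracting or deleting, every planar operation not needed to keep $H'$ non-planar. Minimality forces every remaining planar operation to be essential. The final non-planar step then involves only the new vertices or edges created by at most two earlier planar operations. If it involves none, we get the first or third outcome directly. If it involves one planar split, we get the fourth outcome. If it involves two operations, their relative position determines the rest: two planar splits sharing a face along a common edge give the fifth outcome, a single split with two chords gives the sixth, two splits on one face give the seventh, and two crossing chords in one face give the second.

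The main obstacle is the shortening step. Undoing a planar operation may turn a non-planar graph planar, or may break $3$-connectivity, which the splitter framework needs. Bounding the essential prefix to two operations requires $4$-connectivity of $H$ to reroute paths around faces. Concretely, I would exhibit a $K_5$- or $K_{3,3}$-subdivision in $H'$ and use disjoint paths in $H$ to replace any use of a nonessential split. This is exactly the delicate part of \cite{Hegde2018}, and in practice I would defer to their argument rather than reprove it.
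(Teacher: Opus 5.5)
Your proposal takes the same route as the paper: the paper gives no independent proof of Theorem~\ref{thm:HT} and imports it as a weakened form of \cite[Theorem~1.2]{Hegde2018}, so the only real content is your hypothesis-and-outcome check. Two small points on that check: the source is stated for weakly $4$-connected graphs rather than internally $4$-connected ones, but $4$-connectivity implies either, so the hypotheses still transfer. Your splitter-theorem sketch is a fair heuristic for the shape of the list, but it is not needed, and, as you say, its shortening step is precisely the substance of the cited work.
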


\section{$4$-connected graphs with no $K_{3,4}$ minor}\label{sec:4K34}

In this section, we establish our principal result, Theorem~\ref{thm:4cK34mf}. We begin by presenting the characterization of projective-planar graphs due to Maharry and Slilaty in Section~\ref{sec:pp}, and then proceed to the non-projective-planar case in Section~\ref{sec:npp}, which constitutes the most technical part of our analysis. The proof of Theorem~\ref{thm:4cK34mf} is then completed in Section~\ref{sec:4cK34mf}.

\subsection{The projective-planar case}\label{sec:pp}

Maharry and Slilaty established two characterizations of graphs embedded in the projective plane with no $K_{3,4}$ minor: one in terms of \emph{patch graphs}~\cite{Maharry2012} and the other in terms of \emph{M\"obius hyperladders}~\cite{Maharry2017a}. In this section we recall the former. We review the construction of patch graphs and state the main result of~\cite{Maharry2012}, Theorem~\ref{thm:MS}. Moreover, we prove that every patch graph is $4$-degenerate.

A \emph{construct} is a pair $(G,\mathcal{P})$ consisting of a multigraph $G$ embedded in the projective plane and a family $\mathcal{P}$ of faces of length $4$ in that embedding, which are called \emph{patches} of $G$. A construct $(G,\mathcal{P})$ can be transformed into another construct $(G',\mathcal{P}')$ by applying one of three operations, called \emph{$H$-}, \emph{$Y$-}, and \emph{$I$-patchings}. The graphs depicted in Figure~\ref{fig:HYI} are called the \emph{$H$-}, \emph{$Y$-}, and \emph{$I$-pieces}, respectively; each is bounded by a cycle of length $4$.

\begin{figure}[!ht]
	\centering
	\includegraphics[scale=1]{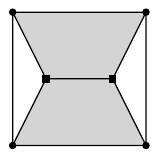}
	\hspace{30pt}
	\includegraphics[scale=1]{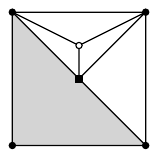}
	\hspace{30pt}
	\includegraphics[scale=1]{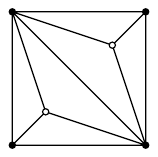}
	\caption{The $H$-piece (left), $Y$-piece (middle), and $I$-piece (right). Applying the corresponding patching adds two, one, or no new patches, respectively, indicated by the shaded regions.}
	\label{fig:HYI}
\end{figure}

To apply an $H$-patching to $(G,\mathcal{P})$, choose a patch $P \in \mathcal{P}$ and form $G'$ by identifying the facial walk of $P$ with the boundary cycle of the $H$-piece, allowing the latter to be flipped or rotated prior to the identification. We say that the $H$-piece is \emph{patched to} $P$. The new family $\mathcal{P}'$ is obtained from $\mathcal{P}$ by removing $P$ and adding the two new patches contained in the $H$-piece. The definitions of $Y$- and $I$-patchings are analogous.

The \emph{initial patch construct} $(G_0,\mathcal{P}_0)$ is the construct in which $G_0$, called the \emph{initial patch graph}, consists of two vertices joined by four parallel edges and is embedded in the projective plane as depicted in Figure~\ref{fig:icon}, and $\mathcal{P}_0$ consists of the unique face of length $4$. A \emph{patch construct} is a construct obtained from the initial patch construct by a sequence of $H$-, $Y$-, and $I$-patchings. A \emph{patch graph} is the multigraph $G$ arising from a patch construct $(G,\mathcal{P})$.

\begin{figure}[!ht]
	\centering
	\includegraphics[scale=1]{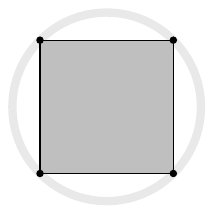}
	\caption{The initial patch construct $(G_0,\mathcal{P}_0)$ embedded in the projective plane. The outer circle represents the cross-cap, and the shaded region is the unique face of length $4$.}
	\label{fig:icon}
\end{figure}

Maharry and Slilaty~\cite{Maharry2012} proved the following result.\footnote{In the original setting of~\cite{Maharry2012}, an additional patching operation is considered. This operation can be simulated by first applying a $Y$-patching and then an $I$-patching, followed by deleting an appropriate degree three vertex from the $I$-piece. Consequently, the main theorem of~\cite{Maharry2012} remains valid under our simplified framework.}

\begin{theorem}[\cite{Maharry2012}]\label{thm:MS}
	Any almost $4$-connected, non-planar, projective-planar graph with no $K_{3,4}$ minor is a subgraph of a patch graph or is isomorphic to $K_6$. Moreover, every patch graph contains no $K_{3,4}$ minor.
\end{theorem}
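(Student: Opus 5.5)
Theorem~\ref{thm:MS} is quoted from~\cite{Maharry2012}, so in this paper we simply invoke it. If one wanted to reprove it, I would treat the two assertions separately and do the ``moreover'' part first, since the first part depends on it.

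\emph{Patch graphs have no $K_{3,4}$ minor.} I would argue by induction on the number of $H$-, $Y$- and $I$-patchings used to build the patch construct $(G,\mathcal{P})$. The initial graph $G_0$ has two vertices, so the base case is trivial. For the inductive step, let $G'$ arise from $G$ by patching a piece $Q$ into a patch $P$ with boundary $a_1a_2a_3a_4$, and suppose $G'$ has a model $\mu$ of $K_{3,4}$. The piece $Q$ meets the rest of $G'$ only in the four boundary vertices. Its interior, minus $\{a_1,\dots,a_4\}$, has at most two vertices, and these have low degree in the piece. The induction hypothesis must be strengthened so that the pieces can be handled locally. The stronger claim I would prove is that no patch construct has a $K_{3,4}$ model, even after adding into each patch either chord $a_1a_3$ or $a_2a_4$, or a new vertex joined to all four corners of that patch. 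This is exactly the ``rooted'' behaviour that a patched-in piece can imitate.

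With this stronger hypothesis, I would case-split on how many branch sets of $\mu$ meet $V(Q)\setminus\{a_1,\dots,a_4\}$. In each case I would reroute $\mu$ inside $Q$ so that $Q$ is replaced by one of the gadgets above, which is allowed by induction. The new patches of $Q$ also carry gadgets, so every gadget configuration on $G'$ reduces to one on $G$. What needs checking is that every way a model can traverse an $H$-piece, as a path system between its four corners, is dominated by a chord or a $4$-star gadget. This is a finite check.

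\emph{The structural part.} Let $G$ be almost $4$-connected, non-planar, projective-planar, and without a $K_{3,4}$ minor, with $G\not\cong K_6$. Since $G$ is $3$-connected and non-planar, it contains a $K_{3,3}$ minor, or else $G\cong K_5$; and $K_5$ is itself a subgraph of a small patch graph. I would apply Theorem~\ref{thm:Seymour}, starting from $K_{3,3}$ or from a small wheel-free non-planar base, to build a chain $K_{3,3}=H_0\preceq H_1\preceq\dots\preceq H_m=G$. Each step is an edge addition or a vertex split, and every $H_i$ is projective-planar and $K_{3,4}$-free. The invariant to maintain is that each $H_i$ is a subgraph of some patch graph whose embedding extends the unique-up-to-equivalence embedding of $H_i$.

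For the inductive step I would show the following. Take any edge addition or split of a subgraph $H$ of a patch construct $(G',\mathcal{P}')$. If the result is still projective-planar and $K_{3,4}$-free, then it either lies inside a face of $G'$ that can be refined by patching into a patch, or it yields an explicit $K_{3,4}$ minor. Embedding faces of $G'$ that are not patches are triangles or behave rigidly, and this is where $K_6$ appears as the exceptional outcome.

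I expect the main obstacle to be connectivity. Theorem~\ref{thm:Seymour} only keeps the intermediate graphs $3$-connected, whereas almost $4$-connectivity of $G$ is needed to force new edges into patches rather than into triangular faces near $3$-cuts. I would first try working with the JT-subdivision machinery of Lemma~\ref{lem:JT}. If that falls short, I would fall back on a splitter theorem for internally $4$-connected graphs, at the cost of a larger list of base graphs. Along that route I would also allow intermediate $3$-cuts and show they only ever cut off degree-three vertices of $I$-pieces.
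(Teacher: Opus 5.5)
\textbf{Invoking the theorem.} Citing Maharry and Slilaty is exactly what the paper does; it gives no proof of Theorem~\ref{thm:MS}. The one thing it adds, which you should add too, is a reconciliation of definitions. The original paper allows one further patching operation. The paper simulates it by a $Y$-patching, then an $I$-patching, then deletion of a degree-three vertex of the $I$-piece. Hence original patch graphs are subgraphs of patch graphs in the present sense, which gives the first assertion. Conversely, the present patch graphs are original ones, which gives the second.

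\textbf{The ``moreover'' part of your reproof.} This part of your optional reproof outline rests on a false local claim, so the induction would fail.
\begin{itemize}
\item Label the $H$-piece so that $xy$ is an edge, $x\sim a_1,a_4$ and $y\sim a_2,a_3$. Its new patches are then $a_1a_2yx$ and $a_4xya_3$. Put your $4$-star gadgets $c$ and $c'$ into them.
\item Then $\{x,c\}$ and $\{y,c'\}$ are disjoint connected sets. The first is adjacent to $a_1,a_2,a_4$, the second to $a_2,a_3,a_4$.
\item Suppose the part of the construct outside $P$ contains disjoint connected sets $X_1\ni a_2$, $X_2\ni a_4$ and $X_3\supseteq\{a_1,a_3\}$. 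Suppose it also contains two further connected sets, each adjacent to all three. Then $G'$ with gadgets has a $K_{3,4}$ model with two degree-three branch sets strictly inside $Q$.
\item A chord, or a single gadget vertex in $P$, cannot host two branch sets. With a $4$-star you only recover $K_{3,3}$. So the rerouting ``replace $Q$ by a gadget'' is impossible here.
\item Your hypothesis, that $G$ plus gadgets is $K_{3,4}$-free, does not exclude this rooted $K_{3,2}$ outside $P$. Topology does not exclude it either, since $K_{3,4}$ is projective-planar.
\end{itemize}
So the finite check you describe is false and the induction does not close. A working invariant would have to control which rooted structures the complement of a patch can carry. That information must come from the global structure of the construct, not from local gadgets.

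\textbf{The structural part.} This half is a plan rather than an argument. The invariant that each $H_i$ is a subgraph of a patch graph is not preserved along a chain from Theorem~\ref{thm:Seymour}. The intermediate graphs are only $3$-connected, and for such graphs that conclusion fails in general. This is why the statement assumes almost $4$-connectivity, and why Section~\ref{sec:K34} needs clique sums. You name this obstacle yourself but leave it unresolved. Also, a $3$-connected projective-planar graph need not have a unique embedding, so ``the'' embedding of $H_i$ that you want to extend is not well defined.
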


Let the pieces obtained from the $H$-, $Y$-, and $I$-pieces by removing the white vertices indicated in Figure~\ref{fig:HYI} be called the \emph{reduced $H$-}, \emph{$Y$-}, and \emph{$I$-pieces}. Thus, the reduced $H$-piece coincides with the $H$-piece, while the reduced $Y$- and $I$-pieces are obtained by deleting one and two vertices, respectively. A \emph{reduced patch graph} is defined analogously to a patch graph, except that only \emph{reduced pieces} are used in the patching operations. Equivalently, a reduced patch graph can be obtained from a patch graph by removing all white vertices from the pieces used in the patching operations. It is easy to see that any $4$-connected subgraph of a patch graph is also a subgraph of a reduced patch graph.

While every projective-planar graph is $5$-degenerate, we establish that patch graphs have strictly smaller degeneracy. This property will be used in the proof of Theorem~\ref{thm:K34K6-} in Section~\ref{sec:K34K6-}.

\begin{lemma}\label{lem:pg4d}
	Every patch graph is $4$-degenerate.
\end{lemma}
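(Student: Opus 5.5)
The plan is to exploit the recursive construction of patch graphs and produce the degeneracy ordering directly from the order in which vertices are created. Let $(G,\mathcal{P})$ be a patch construct obtained from the initial construct $(G_0,\mathcal{P}_0)$ by a sequence of patchings $\pi_1,\dots,\pi_m$. I will order $V(G)$ as follows. First list the two vertices of $G_0$. Then, for $i=1,\dots,m$, append the vertices of the piece used in $\pi_i$ that do not lie on its boundary $4$-cycle; these are exactly the new vertices created by $\pi_i$. Within a single piece, these new vertices are listed in a suitably chosen order. I will use the ordering characterization of $k$-degeneracy from Section~\ref{sec:definitions}, so it suffices to show that every vertex has at most four neighbors preceding it in this ordering. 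Since we only count distinct neighbors, parallel edges play no role. The two vertices of $G_0$ trivially have at most one earlier neighbor.

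The key observation is that every edge of $G$ arises either in $G_0$ or inside exactly one piece. A patching identifies the boundary of the piece with the facial walk of a patch and adds no other edges. Consider a vertex $x$ created by $\pi_i$, and let $y$ be a neighbor of $x$ that precedes it. If $y$ were created by a later patching $\pi_j$ with $j>i$, it would come after $x$; so this case does not arise. Hence $y$ is either a boundary vertex of the patch to which the piece of $\pi_i$ was attached, or a new vertex of the same piece listed before $x$. In particular, any edge from $x$ to an earlier vertex is an edge of that one piece. The question therefore reduces to a local statement about each of the three pieces.

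\emph{Local claim.} For each of the $H$-, $Y$-, and $I$-pieces, the interior vertices can be ordered so that each interior vertex has at most four neighbors among the four boundary vertices together with the interior vertices listed before it. I will verify this by direct inspection of Figure~\ref{fig:HYI}. The boundary has only four vertices, so any interior vertex adjacent to no earlier interior vertex automatically satisfies the bound. An interior vertex adjacent to earlier interior vertices needs a small count. For example, in the $H$-piece the two interior vertices are adjacent to each other and each to two boundary vertices, so the second one has at most three earlier neighbors. For the $Y$- and $I$-pieces I would list first the interior vertex of smallest degree within the piece, or alternatively use the white vertices, which have few neighbors.

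The main obstacle is exactly this case check on the pieces, since the argument depends on their precise adjacencies, including the white vertices. If some interior vertex is adjacent to all four boundary vertices and also to another interior vertex, I would place it first among the interior vertices of its piece. Then only its boundary neighbors count, giving at most four, and I would confirm that each remaining interior vertex still meets the bound. Once the local claim is established, the global ordering shows that every patch graph is $4$-degenerate. Since degeneracy passes to subgraphs, the same conclusion holds for subgraphs of patch graphs and for reduced patch graphs.
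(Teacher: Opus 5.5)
\textbf{The gap.} Your proof breaks at the claim that every edge from a newly created vertex $x$ to an earlier vertex is an edge of the piece that created $x$. A piece may contain an edge joining two of its own boundary vertices. The $I$-piece contains the edge (the ``I'') joining two opposite corners of its boundary $4$-cycle; the reduced $I$-piece consists of nothing else. So a later $I$-patching adds an edge between two vertices that both already exist. This raises the number of earlier neighbors of a vertex you have already placed. Your local claim inspects only the piece that creates a vertex, so it cannot see these edges.

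Here is a concrete failure. Take a patch $abcd$ with four distinct vertices; one $H$-patching of $G_0$ already produces one. Let an $H$-patching on it create $x$ adjacent to $a,b$ and $y$ adjacent to $c,d$, with $xy$ an edge. The new patches are $axyd$ and $bxyc$. Now patch an $I$-piece to each, rotated so that the chords are $ay$ and $by$. If $x$ precedes $y$ in your ordering, then $y$ has five earlier neighbors $a,b,c,d,x$. The $H$-piece is symmetric in $x$ and $y$, so any order chosen by looking at the piece alone fails on one of the two mirror-image graphs.

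\textbf{What is needed.} You must control these later chords globally. Three facts do this:
\begin{itemize}
\item An $H$- or $Y$-patching on a patch containing a vertex $v$ leaves $v$ on at most one of the new patches, and an $I$-patching creates no new patch. Hence $v$ receives at most one chord to an earlier-created vertex for each patch it lies on right after its creation: one for a $Y$-centre, two for an $H$-square vertex.
\item For an $H$-piece, each of its two patches contributes at most one such chord to $\{x,y\}$ in total. If a $Y$-patching drops one of $x,y$, that vertex's chain ends with no chord. After an $H$-split, no vertex older than the piece is ever opposite $x$ or $y$.
\item You must place the $Y$-centre before any white vertex of its piece.
\end{itemize}
Then put first the square vertex of each $H$-piece that receives more such chords. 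With these choices every vertex has at most four earlier neighbors.

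The paper handles the same issue inside its induction. It normalizes the patching sequence so that an $I$-piece closing a patch of an $H$- or $Y$-piece can be removed together with that piece. It then deletes first the square vertex of smaller degree in $G_t$.
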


\begin{proof}
	Observe that in a patch graph, the white vertices originating from a $Y$- or $I$-piece always have degree at most $3$. Therefore, it suffices to show that every reduced patch graph is $4$-degenerate.
	
	Let $G$ be a reduced patch graph. There exists a sequence of reduced patch graphs $G_0, G_1, \dots, G_t$, where $G_0$ is the initial patch graph, $G_t = G$, and for each $i \in [t]$, the graph $G_i$ is obtained from $G_{i-1}$ by patching a reduced $R_i$-piece with $R_i \in \{H,Y,I\}$ to a patch $P_{i-1}$ of $G_{i-1}$. Moreover, we may assume the following:
	\begin{itemize}
\item If $R_i = H$ and, in the reduced $H$-piece used for the patching, one patch $P$ is patched with a reduced $I$-piece while the other patch is patched with a reduced $H$- or $Y$-piece, then $P = P_i$.
		\item If $R_i = Y$ and the patch in the reduced $Y$-piece is patched with a reduced $I$-piece, then $P = P_i$.
	\end{itemize}
	
	We prove the assertion by induction on $t$. Clearly, $G_0$ is $4$-degenerate. Assume $t \ge 1$ and that the assertion holds for all smaller values of $t$.

	First assume that $R_t = H$. By deleting the two square vertices of the reduced $H$-piece (as indicated in Figure~\ref{fig:HYI}), we obtain $G_{t-1}$. By the induction hypothesis, $G_{t-1}$ is $4$-degenerate. Since the deleted vertices have degree at most $3$ in $G_t$, it follows that $G_t$ is also $4$-degenerate.
	
	Next assume that $R_t = Y$. Deleting the square vertex of the reduced $Y$-piece yields $G_{t-1}$. Again, by the induction hypothesis, $G_t$ is $4$-degenerate.
	
	Now assume that $R_t = I$. If $t = 1$, then $G_t$ is immediately seen to be $4$-degenerate. We therefore assume $t > 1$, and hence $P_{t-1}$ does not belong to the patch family of the initial patch construction.
	
	Suppose that $P_{t-1}$ comes from a reduced $H$-piece, and let $P$ denote the other patch of this piece. By our assumptions on the sequence, no reduced $H$- or $Y$-piece is patched to $P$. We delete the square vertices of this reduced $H$-piece so that the first deleted vertex has degree at most $4$ and the second has degree at most $5$ in $G_t$, thereby obtaining a smaller reduced patch graph. The induction hypothesis then implies that $G_t$ is $4$-degenerate.
	
	Finally, suppose that $P_{t-1}$ comes from a reduced $Y$-piece. By our assumptions on the reduced patch graph sequence, this reduced $Y$-piece was patched to $P_{t-2}$ during the reduced $R_{t-1}$-patching. Deleting the square vertex of this reduced $Y$-piece, which has degree at most $4$ in $G_t$, yields either the reduced patch graph $G_{t-2}$ or that obtained from $G_{t-2}$ by patching a reduced $I$-piece to $P_{t-2}$. In either case, the induction hypothesis applies and implies that $G_t$ is $4$-degenerate.
	\end{proof}

\subsection{The non-projective-planar case}\label{sec:npp}

Since projective-planar graphs with no $K_{3,4}$ minor have been characterized by Maharry and Slilaty, we extend the study to the non-projective-planar case. After introducing a class of graphs called \emph{oloidal graphs}, we show that they characterize the $4$-connected non-projective-planar graphs with no $K_{3,4}$ minor.

\subsubsection{Three Archdeacon graphs and oloidal graphs}

We consider the graphs $D_{17}$, $E_{20}$, and $F_4$, shown in Figure~\ref{fig:DEF} with the vertex labels as indicated. Each of these graphs is internally $4$-connected and appears in Archdeacon’s list of \emph{minor-minimal} non-projective-planar graphs~\cite{Archdeacon1981}; the graph names originate from the notation introduced in~\cite{Glover1979}.

\begin{figure}[!ht]
	\centering
	\includegraphics[scale=1]{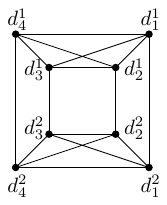}
	\hspace{30pt}
	\includegraphics[scale=1]{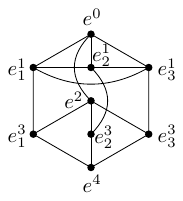}
	\hspace{30pt}
	\includegraphics[scale=1]{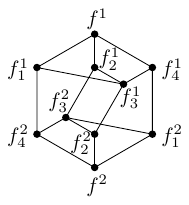}
	\caption{The internally $4$-connected non-projective-planar graphs $D_{17}$ (left), $E_{20}$ (middle), and $F_4$ (right).}
	\label{fig:DEF}
\end{figure}

The symmetries of these graphs facilitate the reduction of the case analysis.

The graph $D_{17}$ has 48 automorphisms, each either fixing $\{d^1_1, d^1_2, d^1_3, d^1_4\}$ or mapping it to $\{d^2_1, d^2_2, d^2_3, d^2_4\}$. Those automorphisms that fix $\{d^1_1, d^1_2, d^1_3, d^1_4\}$ correspond bijectively to the permutations of these four vertices.

The graph $E_{20}$ has 6 automorphisms, corresponding bijectively to the permutations of $\{e^1_1, e^1_2, e^1_3\}$.

The graph $F_4$ has 4 automorphisms, each mapping $f^1_1$ to one of $f^1_1$, $f^1_4$, $f^2_1$, or $f^2_4$.

The following proposition summarizes the results of~\cite[Sections~4 and~6]{Lo2025}.

\begin{proposition}[\cite{Lo2025}]\label{pro:DEF}
	Let \( G \) be a \( 4 \)-connected non-projective-planar graph with no \( K_{3,4} \) minor. Then the following hold:
	\begin{itemize}
		\item \( G \) has a minor of \( D_{17} \), \( E_{20} \), or \( F_4 \).
		\item If \( G \) has a minor of \( D_{17} \) but no minor of \( E_{20} \), then \( G \) contains a spanning subgraph isomorphic to \( D_{17} \).
		\item If \( G \) has a minor of \( E_{20} \) but no minor of \( F_4 \), then \( G \) contains a spanning JT-subdivision of \( E_{20} \), and does not contain as a minor any graph obtained from \( E_{20} \) by splitting a vertex.
		\item If \( G \) has a minor of \( F_4 \), then \( G \) contains a spanning JT-subdivision of \( F_4 \), and does not contain as a minor any graph obtained from \( F_4 \) by splitting a vertex.
	\end{itemize}
\end{proposition}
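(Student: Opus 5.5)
The plan is to combine the Ding--Iverson list of minor-minimal internally $4$-connected non-projective-planar graphs with Seymour's splitter theorem (Theorem~\ref{thm:Seymour}) and a finite case analysis over the automorphism classes listed above.

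\textbf{First item.} Since $G$ is $4$-connected, it is in particular internally $4$-connected. Because it is non-projective-planar, Ding and Iverson's theorem gives that $G$ contains, as a minor, one graph $M$ from their explicit finite list. For each $M$ on that list other than $D_{17}$, $E_{20}$ and $F_4$, I would check one of two things:
\begin{itemize}
\item $M$ itself contains $K_{3,4}$ as a minor, which rules it out directly; or
\item $M$ has low connectivity that $G$ must repair, and every graph obtained from $M$ by adding an edge or splitting a vertex while moving towards a $4$-connected graph contains $K_{3,4}$ or one of $D_{17},E_{20},F_4$.
\end{itemize}
For the second check I would invoke Theorem~\ref{thm:Seymour} with $H=M$; wheels are not an issue here. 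The members of the list are small, so this is a finite, if tedious, verification. The symmetries of $D_{17}$, $E_{20}$ and $F_4$ recorded above cut down the cases.

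\textbf{Second item.} Apply Theorem~\ref{thm:Seymour} with $H=D_{17}$. Then $G$ arises from $D_{17}$ by edge additions and vertex splits, and I would show that no split ever occurs. Up to the $48$ automorphisms of $D_{17}$, I would enumerate all single splits and all edge additions followed by a split. In every case the resulting graph contains $K_{3,4}$ or $E_{20}$, and both are excluded. Since edge additions create no new vertices, $G$ is $D_{17}$ plus edges, so it contains $D_{17}$ as a spanning subgraph.

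\textbf{Third and fourth items.} First I show that no graph obtained from $E_{20}$ (respectively $F_4$) by splitting a vertex is a minor of $G$. This is again a finite check, modulo the $6$ (respectively $4$) automorphisms: each such split must yield $K_{3,4}$ or $F_4$ (respectively $K_{3,4}$). Consequently $G$ contains a subdivision of $H$, and by Proposition~\ref{pro:JT} a JT-subdivision $\eta(H)$.

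It remains to show that $\eta(H)$ is spanning. Suppose a vertex $x$ lies off $\eta(H)$ and let $B$ be its component in $G-V(\eta(H))$. By $4$-connectivity, $B$ has at least four attachments on $\eta(H)$, and these attachments cannot all lie within one closed domain. Using Lemma~\ref{lem:JT}, which says segments are induced and restricts chords near degree-three vertices, I would route the attachments to branch vertices or segment interiors. Each placement of four attachments should then produce either a split of $H$, giving a contradiction, or a $K_{3,4}$ minor with $B$ contracted to a vertex.

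I expect this spanning argument for $E_{20}$ and $F_4$ to be the main obstacle. The attachments may lie on interiors of segments in many configurations, and the JT-maximality has to be used carefully to reroute the subdivision so as to absorb $B$. My first attempt would be to replace a segment by a path through $B$, which contradicts lexicographic maximality unless the attachments are confined to a single domain, and that confinement in turn contradicts $4$-connectivity.
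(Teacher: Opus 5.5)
The paper does not prove this proposition. It imports it from \cite{Lo2025}, so your plan has to stand on its own.

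Two of your reductions are legitimate finite checks. For the second item, you take the first split in the sequence from Theorem~\ref{thm:Seymour}, delete the added edges not incident to the split vertex, and test the finitely many resulting graphs. The ``no split'' halves of the third and fourth items are handled the same way.

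\textbf{Main gap: the spanning assertion for $E_{20}$ and $F_4$.} This is the real content of the third and fourth items. You leave it as a sketch, and its one concrete mechanism fails.
\begin{itemize}
\item Confinement of the attachments of $B$ to a single domain does not contradict $4$-connectivity. Four-connectivity only gives $|N(B)|\ge 4$. The domain of a degree-four branch vertex, or of a degree-three one with long enough segments, can host four attachments.
\item Rerouting a segment through $B$ needs two attachments on the same segment. It generally discards the stretch of the segment between them. So it contradicts nothing until you fix and actually use the Johnson--Thomas lexicographic order, which you never do.
\item When the attachments lie on different segments, contracting $B$ yields a split of $H$ only in special positions, for instance two attachments on segments at a common branch vertex of degree at least four. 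At the degree-three branch vertices ($e^3_1,e^3_2,e^3_3,e^4$ in $E_{20}$, and every vertex of $F_4$ except $f^1_3,f^2_3$) no split arises. You would then have to exhibit $K_{3,4}$, or $F_4$ in the $E_{20}$ case, in each such one-vertex extension of $H$.
\end{itemize}
None of this is carried out, so the existence of a spanning JT-subdivision is asserted rather than proved.

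\textbf{Secondary flaw: the first item.} Theorem~\ref{thm:Seymour} only says that $G$ is reached from $M$ by some sequence of edge additions and splits. It gives no control over which operation comes first. Suppose some $K_{3,4}$-free member $M$ of the Ding--Iverson list other than $D_{17},E_{20},F_4$ has a vertex of degree three. Then checking only the extensions ``moving towards a $4$-connected graph'' proves nothing, because an arbitrary first step may create none of the target minors and the sequence can be long. You need one of two things:
\begin{itemize}
\item verify that every other member of the list already contains $K_{3,4}$; or
\item make a local argument. Fix a model of $M$ in $G$ and a degree-three vertex $v$. Use $4$-connectivity of $G$ to obtain a fourth path leaving the branch set of $v$, and classify where it lands. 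This is precisely the situation that produces Ding and Iverson's twists.
\end{itemize}
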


We define three classes of graphs that play a central role in our characterization of $4$-connected non-projective-planar graphs with no $K_{3,4}$ minor. For $s^1,s^2,s \ge 0$, let $\mathfrak{D}_{s^1,s^2}$, $\mathfrak{E}_s$, and $\mathfrak{F}$ be the graphs depicted in Figure~\ref{fig:DEF4c}. 

The graph $\mathfrak{D}_{s^1,s^2}-\{\delta^1_1,\delta^1_3,\delta^2_2,\delta^2_4\}$ consists of the disjoint union of a path on $s^1+2$ vertices with end-vertices $\delta^1_2$ and $\delta^1_4$, and a path on $s^2+2$ vertices with end-vertices $\delta^2_1$ and $\delta^2_3$. Moreover, every vertex on the former (respectively, latter) path is adjacent to both $\delta^1_1$ and $\delta^1_3$ (respectively, $\delta^2_2$ and $\delta^2_4$). Similarly, $\mathfrak{E}_s-\{\varepsilon^1_1,\varepsilon^1_3,\varepsilon^2,\varepsilon^3_1,\varepsilon^3_2,\varepsilon^3_3,\varepsilon^4\}$ is a path on $s+2$ vertices with end-vertices $\varepsilon^0$ and $\varepsilon^1_2$, where each vertex on the path is adjacent to both $\varepsilon^1_1$ and $\varepsilon^1_3$. 

These paths are referred to as the \emph{spines} of $\mathfrak{D}_{s^1,s^2}$ and the \emph{spine} of $\mathfrak{E}_s$, respectively. We denote the spines of $\mathfrak{D}_{s^1,s^2}$ by $\sigma^1_0\sigma^1_1\cdots\sigma^1_{s^1}\sigma^1_{s^1+1}$ and $\sigma^2_0\sigma^2_1\cdots\sigma^2_{s^2}\sigma^2_{s^2+1}$, where $\delta^1_2=\sigma^1_0$, $\delta^1_4=\sigma^1_{s^1+1}$, $\delta^2_1=\sigma^2_0$, and $\delta^2_3=\sigma^2_{s^2+1}$. The spine of $\mathfrak{E}_s$ is denoted by $\sigma_0\sigma_1\cdots\sigma_s\sigma_{s+1}$, where $\varepsilon^0=\sigma_0$ and $\varepsilon^1_2=\sigma_{s+1}$. A spine is \emph{trivial} if it has length one.

\begin{figure}[!ht]
	\centering
	\includegraphics[scale=1]{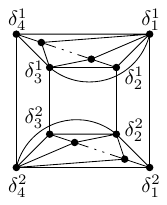}
	\hspace{30pt}
	\includegraphics[scale=1]{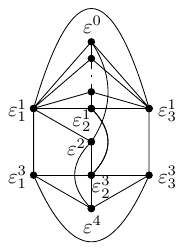}
	\hspace{30pt}
	\includegraphics[scale=1]{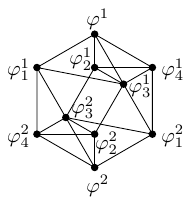}
	\caption{The $4$-connected non-projective-planar graphs $\mathfrak{D}_{s^1,s^2}$ (left), $\mathfrak{E}_{s}$ (middle), and $\mathfrak{F}$ (right).}
	\label{fig:DEF4c}
\end{figure}

We show that the graphs defined above are $4$-connected, non-projective-planar, and contain no $K_{3,4}$ minor.

\begin{lemma}\label{lem:VZ}
	Let $G$ be a graph in which $v_1$ and $v_2$ are adjacent vertices of degree at least four. Let $H$ be obtained from $G$ by contracting the edge $v_1v_2$. If $H$ is $4$-connected, then $G$ is also $4$-connected.
\end{lemma}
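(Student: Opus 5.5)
We argue directly from the definition of a separator. Let $w$ denote the vertex of $H$ obtained by contracting $v_1v_2$. First, $|V(G)| = |V(H)|+1 \ge 6 > 4$, so the vertex-count condition holds. It remains to show that $G$ has no $k$-cut with $k \le 3$. Suppose for contradiction that $S \subseteq V(G)$ with $|S| \le 3$ and $G-S$ disconnected. We split into two cases according to how $S$ meets $\{v_1,v_2\}$.

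\emph{Case 1: $S \cap \{v_1,v_2\} = \emptyset$.} Then $v_1$ and $v_2$ lie in the same component of $G-S$, since they are adjacent. Contracting $v_1v_2$ maps $G-S$ onto $H-S$, and this map preserves connectivity. Indeed, distinct components of $G-S$ stay in distinct components of $H-S$, because the only identification happens inside a single component. Hence $H-S$ is disconnected with $|S| \le 3$, contradicting the $4$-connectivity of $H$.

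\emph{Case 2: $S$ contains $v_1$ or $v_2$.}

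If $S \supseteq \{v_1,v_2\}$, let $S' = (S \setminus \{v_1,v_2\}) \cup \{w\}$. Then $|S'| \le 2$ and $H - S' = G - S$ is disconnected, a contradiction.

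So we may assume, by symmetry, that $v_1 \in S$ and $v_2 \notin S$. Let $S' = (S\setminus\{v_1\}) \cup \{w\}$. Then $|S'| \le 3$, and $H-S' = G-S-v_2$. Let $C$ be the component of $G-S$ containing $v_2$.
\begin{itemize}
\item If $C$ has a vertex other than $v_2$, then $G-S-v_2$ still contains a vertex of $C$ and a vertex of another component. Thus $H-S'$ is disconnected, a contradiction.
\item Otherwise $C=\{v_2\}$, so every neighbour of $v_2$ other than $v_1$ lies in $S\setminus\{v_1\}$. This set has at most two vertices. Hence $v_2$ has degree at most three, contradicting the hypothesis that $v_2$ has degree at least four.
\end{itemize}

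The only delicate point is this last subcase, where $v_2$ is isolated by the cut. This is precisely where the degree hypothesis on $v_1$ and $v_2$ is needed; every other case reduces to a cut of $H$ of size at most three.
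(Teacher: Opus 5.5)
Your proof is correct and follows essentially the same route as the paper: a small cut $S$ of $G$ becomes a cut of $H$ of size at most three once $S\cap\{v_1,v_2\}$ is replaced by the contracted vertex. The degree hypothesis is used, exactly as in the paper, to rule out a component of $G-S$ consisting of just one of $v_1,v_2$. Your explicit case split and your check that $|V(G)|>4$ simply spell out what the paper's one-paragraph argument leaves implicit.
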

\begin{proof}
Suppose for a contradiction that $G$ has a $k$-cut $S$ with $k<4$. Denote by $v \in V(H)$ the vertex obtained by contracting $v_1v_2$. Define $S':=S$ if $\{v_1,v_2\}\cap S=\emptyset$, and $S':=(S\setminus\{v_1,v_2\})\cup\{v\}$ otherwise. Since each of $v_1$ and $v_2$ has degree at least four, no component of $G-S$ consists of precisely one of $v_1, v_2$. It follows that $S'$ is a $(k-1)$- or $k$-cut of $H$, a contradiction.
\end{proof}

\begin{lemma}\label{lem:DEF4con}
	For any $s^1, s^2 \ge 0$, the graph $\mathfrak{D}_{s^1,s^2}$ is $4$-connected and non-projective-planar. For any $s \ge 0$, the graph $\mathfrak{E}_s$ is $4$-connected and non-projective-planar. The graph $\mathfrak{F}$ is $4$-connected and non-projective-planar.
\end{lemma}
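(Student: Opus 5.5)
We handle connectivity and non-projective-planarity separately, treating all three families in parallel.

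\emph{Non-projective-planarity.} This is monotone under minors, so it suffices to find a minor of a known non-projective-planar graph in each graph. Contracting every spine of $\mathfrak{D}_{s^1,s^2}$ and $\mathfrak{E}_s$ to a single edge gives $\mathfrak{D}_{0,0}$ and $\mathfrak{E}_0$. We then check directly from Figure~\ref{fig:DEF4c} that $\mathfrak{D}_{0,0}$ contains $D_{17}$ as a (spanning) subgraph and that $\mathfrak{E}_0$ contains $E_{20}$. Likewise $\mathfrak{F}$ contains $F_4$, presumably as a spanning subgraph or after deleting a few edges. Since $D_{17}$, $E_{20}$ and $F_4$ are on Archdeacon's list~\cite{Archdeacon1981}, this settles non-projective-planarity. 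The work here is only matching vertex labels $\delta,\varepsilon$ with $d,e,f$. If some edge is missing, we instead contract a suitable edge and identify the resulting graph.

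\emph{$4$-connectivity.} We argue by induction on spine length using Lemma~\ref{lem:VZ}. Suppose a spine is nontrivial, say $\sigma^1_0\cdots\sigma^1_{s^1+1}$ with $s^1\ge1$. Contracting the spine edge $\sigma^1_0\sigma^1_1$ yields exactly $\mathfrak{D}_{s^1-1,s^2}$: the merged vertex inherits the neighbors of $\delta^1_2$ and stays adjacent to $\delta^1_1,\delta^1_3$ and to the next spine vertex. Both ends of the contracted edge have degree at least four. An internal spine vertex is adjacent to its two spine neighbors and to $\delta^1_1,\delta^1_3$, and the endpoint $\delta^1_2$ has at least that. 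So Lemma~\ref{lem:VZ} reduces $\mathfrak{D}_{s^1,s^2}$ to $\mathfrak{D}_{0,0}$, and by the same argument $\mathfrak{E}_s$ to $\mathfrak{E}_0$.

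It then remains to verify $4$-connectivity of the three fixed graphs $\mathfrak{D}_{0,0}$, $\mathfrak{E}_0$ and $\mathfrak{F}$. For this we show that for every set $S$ of at most three vertices, $G-S$ is connected. The symmetries help: $\mathfrak{D}_{0,0}$ is built from $D_{17}$, which has a large automorphism group. One systematic route is to exhibit a Hamilton cycle $C$ in each graph together with enough chords. Deleting at most three vertices splits $C$ into at most three arcs, and we check that chords reconnect any such arcs. An alternative is to check via Menger that every pair of nonadjacent vertices is joined by four internally disjoint paths.

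The main obstacle is this finite verification for the base graphs, which must be done carefully but is routine. The label matching in the minor containment is the other place where an error could occur.
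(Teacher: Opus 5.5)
Your proposal is correct and follows essentially the same route as the paper: you reduce $\mathfrak{D}_{s^1,s^2}$ and $\mathfrak{E}_s$ to $\mathfrak{D}_{0,0}$ and $\mathfrak{E}_0$ by contracting spine edges via Lemma~\ref{lem:VZ}, verify the three base graphs directly, and get non-projective-planarity because $\mathfrak{D}_{0,0}$, $\mathfrak{E}_0$, $\mathfrak{F}$ contain $D_{17}$, $E_{20}$, $F_4$ as spanning subgraphs (the paper asserts exactly this, so no extra contraction is needed). One small imprecision: the spine endpoint $\delta^1_2$ has only one spine neighbour, so its degree is at least four not for the reason you give, but because it keeps its additional neighbour(s) from the $4$-connected graph $\mathfrak{D}_{0,0}$.
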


\begin{proof}
	By successively contracting edges of the spines (respectively, the spine), Lemma~\ref{lem:VZ} implies that $\mathfrak{D}_{s^1,s^2}$ (respectively, $\mathfrak{E}_s$) is $4$-connected whenever $\mathfrak{D}_{0,0}$ (respectively, $\mathfrak{E}_0$) is $4$-connected. It therefore remains to verify that each of $\mathfrak{D}_{0,0}$, $\mathfrak{E}_0$, and $\mathfrak{F}$ is $4$-connected, which can be done straightforwardly.
	
The non-projective-planarity of the graphs follows from the facts that $\mathfrak{D}_{s^1,s^2}$ and $\mathfrak{E}_s$ contain $\mathfrak{D}_{0,0}$ and $\mathfrak{E}_0$ as minors, respectively, and that $\mathfrak{D}_{0,0}$, $\mathfrak{E}_0$, and $\mathfrak{F}$ contain the non-projective-planar graphs $D_{17}$, $E_{20}$, and $F_4$ as spanning subgraphs, respectively.
\end{proof}

\begin{lemma}\label{lem:to2}
	Let $G$ be a graph with distinct vertices $v_1, v_2, w_0, w_1, \dots, w_s, w_{s+1}$, where $s>2$, such that $P := w_0 w_1 \dots w_s w_{s+1}$ is an induced path, every vertex of $P$ has degree four and is adjacent to both $v_1$ and $v_2$, and $v_1$ and $v_2$ are adjacent. Let $H$ be the graph obtained from $G$ by contracting $w_0 w_1$. If $G$ contains $K_{3,4}$ as a minor, then $H$ also contains $K_{3,4}$ as a minor.
\end{lemma}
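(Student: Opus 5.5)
The plan is to reduce the claim to finding \emph{some} edge of the spine-like path whose contraction preserves a $K_{3,4}$ model, and then to use the symmetry of the structure around $P$.

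\textbf{Symmetry reduction.} Contracting $w_0w_1$ produces a vertex adjacent to $v_1$, $v_2$, $w_2$ and the fourth neighbour $x$ of $w_0$. The result is again a path $w_{01}w_2\cdots w_{s+1}$ with the same properties, one vertex shorter. The same holds, up to an isomorphism fixing everything outside $P\cup\{v_1,v_2\}$, if we contract any other edge $w_iw_{i+1}$ with $0\le i\le s$. Indeed, for $1\le i\le s-1$ all of $w_1,\dots,w_s$ have neighbourhood exactly $\{v_1,v_2,w_{i-1},w_{i+1}\}$, since they have degree four and $P$ is induced. Contracting $w_sw_{s+1}$ is symmetric to contracting $w_0w_1$. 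So it suffices to show that $G/e\succeq K_{3,4}$ for \emph{some} edge $e$ of $P$.

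\textbf{Easy cases.} Fix a model $\mu$ of $K_{3,4}$ in $G$, chosen to minimise the total number of vertices used. Contracting an edge $e=ab$ preserves the model in three situations:
\begin{itemize}
\item both $a$ and $b$ lie in the same branch set $\mu(u)$;
\item $e$ is an edge of some path $\mu(uu')$;
\item one of $a,b$ is used by no branch set and no path.
\end{itemize}
Otherwise every edge $w_iw_{i+1}$ with $1\le i<s$ is one of two kinds. It is either an unused edge between two used vertices, or a path of length one joining two different branch sets, i.e.\ it realises an edge of $K_{3,4}$. In particular, every interior vertex $w_1,\dots,w_s$ is used.

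\textbf{Main obstacle: the residual case.} This is where the counting argument is needed, and it uses $s\ge 3$. Each interior $w_i$ has only four neighbours, namely $v_1$, $v_2$, $w_{i-1}$ and $w_{i+1}$.
\begin{itemize}
\item Suppose $w_i$ is an internal vertex of a path $\mu(uu')$. It cannot enter or leave along a $P$-edge (that would be the second easy case), so it lies on a subpath $v_1w_iv_2$. This can happen for at most one $i$, and then replacing $w_i$ by any other route through $v_1,v_2$ is forced.
\item Suppose $w_i$ lies in a branch set $\mu(u)$. Minimality forces $\mu(u)=\{w_i\}$, since a larger branch set would contain a $P$-edge or an edge to $v_1$ or $v_2$ that can be shortened. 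Then $u$ has degree at least $3$ in $K_{3,4}$, so at least one $P$-edge at $w_i$ goes directly to another singleton branch set.
\end{itemize}
Hence among $w_1,w_2,w_3$ we get singleton branch sets that are consecutive along $P$, alternating sides of the bipartition. Each needs degree at least $3$ using only $v_1$, $v_2$ and its two path neighbours. The vertices $v_1$ and $v_2$ lie in at most two branch sets or paths, and they are adjacent to each other.

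The final step is a short case check. In it one derives either a contradiction with bipartiteness, since $w_{i-1},w_i,w_{i+1}$ together with $v_1$ would force an odd cycle in $K_{3,4}$, or a contradiction with the degree count (one of $w_1,w_2,w_3$ would be a degree-$4$ vertex of $K_{3,4}$ whose neighbours would all have to lie among $v_1,v_2$ and two path vertices). Failing these, one finds an exchange that moves one $w_i$ into a neighbouring branch set or onto a path. That exchange yields a model of the same minimal size in which one of the easy cases applies. With an easy case in hand, the symmetry reduction transfers the model to $H$.
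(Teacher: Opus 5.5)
Your reduction is sound and matches the paper's starting point. Contracting any edge of $P$ gives a graph isomorphic to $H$, so it suffices to find one edge of $P$ whose contraction preserves a $K_{3,4}$ model. In the residual case every interior $w_i$ is then a singleton branch set. The paper reaches this more cheaply by assuming $H\not\succeq K_{3,4}$ and taking a \emph{spanning} model, so there are no unused vertices or path interiors to track. There, $w_i,w_{i+1}$ cannot share a branch set because $G/w_iw_{i+1}\cong H$. Likewise $w_i,v_j$ cannot share one, because $N(w_i)\setminus\{v_j\}\subseteq N(v_j)$ and $G-w_i$ is, up to isomorphism, a subgraph of $H$.

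The genuine gap is the final step, which is the only place $s>2$ is used and which you leave as ``a short case check''. The mechanisms you sketch do not work:
\begin{itemize}
\item No odd cycle is forced, because the edges from $v_1$ to $w_{i-1},w_i,w_{i+1}$ need not be used by the model.
\item A singleton $\{w_i\}$ realising a degree-four vertex via $v_1,v_2,w_{i-1},w_{i+1}$ is not by itself contradictory.
\item ``Alternating sides'' is unjustified, since consecutive singletons joined by an unused $P$-edge may lie on the same side.
\item The fallback ``exchange'' is never specified.
\end{itemize}
In the path case you should also conclude that \emph{no} $w_i$ is a path interior: replacing the subpath $v_1w_iv_2$ by the edge $v_1v_2$ gives a smaller model. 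As written, ``at most one $i$'' leaves an exception that your later claim about $w_1,w_2,w_3$ ignores.

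What is missing is the following counting argument. Write $X,Y$ for the parts of sizes $3$ and $4$.
\begin{itemize}
\item Suppose some interior singleton $\{w_i\}$ realises a vertex of $X$. Then all four edges at $w_i$ carry paths to $Y$-branch sets. So $w_{i-1},w_{i+1}$ lie in $Y$-branch sets, and each of $v_1,v_2$ is either in a $Y$-branch set or interior to one of these paths.
\item Take whichever of $w_{i+1},w_{i-1}$ is interior; one exists since $s\ge2$. It is a $Y$-singleton that can reach $X$ only through its two $P$-neighbours. Hence it has at most two neighbours in $X$, a contradiction.
\item So every interior singleton realises a vertex of $Y$. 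Now consider $w_2$: its $P$-neighbours $w_1,w_3$ are interior $Y$-singletons because $s\ge3$. So $w_2$ can reach $X$ only through $v_1$ and $v_2$, again a contradiction.
\end{itemize}
Hence the residual case is impossible outright, and no exchange is needed. This is exactly the paper's argument; without it your proof is incomplete.
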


\begin{proof}
	Suppose, for a contradiction, that $H$ has no $K_{3,4}$ minor. 
	
	Denote the vertex set of $K_{3,4}$ by $X \cup Y$, where $|X| = 3$ and $|Y| = 4$, such that two vertices of $K_{3,4}$ are adjacent if and only if they belong to different partite sets $X$ and $Y$.
	
	Clearly, we may assume that $G$ is connected. Let $\mu$ be a spanning model of $K_{3,4}$ in $G$.
	
	Note that contracting any edge of $P$ in $G$ yields a graph isomorphic to $H$. For any $i \in \{0,1,\dots,s\}$ and any $v \in V(K_{3,4})$, we have $\{w_i, w_{i+1}\} \not\subseteq \mu(v)$; otherwise, the graph $H$, obtained from $G$ by contracting the edge $w_i w_{i+1}$, would contain a $K_{3,4}$ minor.
	
	For any $i \in [s]$ and $j \in [2]$, if there exists $v \in V(K_{3,4})$ such that $\{w_i, v_j\} \subseteq \mu(v)$, then, since every neighbor of $w_i$ other than $v_j$ is also a neighbor of $v_j$, the graph obtained from $G$ by deleting $w_i$ would contain a $K_{3,4}$ minor. This is a contradiction, because the graph obtained from $G$ by removing $w_i$ and joining $w_{i-1}$ and $w_{i+1}$ is isomorphic to $H$.
	
	Therefore, for each $i \in [s]$, there exists $v \in V(K_{3,4})$ such that $\mu(v) = \{w_i\}$. We claim that every such vertex $v$ belongs to $Y$. Without loss of generality, assume that $i < s$. If the claim fails, then $v \in X$ has degree four in $K_{3,4}$. Since $w_i$ has degree four in $G$, it follows that
	$\{v_1, v_2, w_{i-1}, w_{i+1}\} \subseteq \bigcup_{u \in Y} \mu(u)$.
	Consequently, there exists $w \in Y$ such that $\mu(w) = \{w_{i+1}\}$. Then $w$ is adjacent to at most two vertices of $X$, which is impossible.
	
	From the above discussion, we conclude that for each $i \in [s]$, there exists $y_i \in Y$ such that $\mu(y_i) = \{w_i\}$. Since $s > 2$, the vertex $y_2 \in Y$ is adjacent to at most two vertices of $X$, a contradiction.
\end{proof}

\begin{lemma}\label{lem:DEFK34mf}
	For any $s^1, s^2 \ge 0$, the graph $\mathfrak{D}_{s^1,s^2} + \{\delta^1_1 \delta^2_2,\delta^2_2 \delta^1_3,\delta^1_3 \delta^2_4,\delta^2_4 \delta^1_1\}$ does not contain $K_{3,4}$ as a minor. For any $s \ge 0$, the graph $\mathfrak{E}_s +\{\varepsilon^1_1 \varepsilon^3_2,\varepsilon^1_3 \varepsilon^3_2,\varepsilon^1_3 \varepsilon^2\}$ does not contain $K_{3,4}$ as a minor. The graph $\mathfrak{F}+\{\varphi^1_3 \varphi^2_3\}$ does not contain $K_{3,4}$ as a minor.
\end{lemma}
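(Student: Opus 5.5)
We argue by reduction to finitely many small graphs and then a finite check for each. Write $\mathfrak{D}^+_{s^1,s^2}$, $\mathfrak{E}^+_s$ and $\mathfrak{F}^+$ for the three augmented graphs in the statement.

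\emph{Reduction to short spines.} The key tool is Lemma~\ref{lem:to2}. In $\mathfrak{D}^+_{s^1,s^2}$, the first spine $\sigma^1_0\cdots\sigma^1_{s^1+1}$ is an induced path, and each of its vertices is adjacent to both $\delta^1_1$ and $\delta^1_3$. We need the spine vertices to have degree four and $\delta^1_1\delta^1_3$ to be an edge. If $\delta^1_1\delta^1_3$ is not present in the figure, we first add it; this is harmless because adding edges only makes the claim stronger. The end-vertices $\sigma^1_0=\delta^1_2$ and $\sigma^1_{s^1+1}=\delta^1_4$ may have larger degree. In that case we apply Lemma~\ref{lem:to2} only to the internal spine vertices, or we slightly adapt its proof: the proof only uses degree four at $w_1,\dots,w_s$ and at the vertex $w_{i+1}$ used in the argument, so taking $i<s$ chosen away from the ends makes the argument go through.

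Lemma~\ref{lem:to2} says that if $G$ has a $K_{3,4}$ minor, then so does the graph obtained by contracting one spine edge whenever $s>2$, i.e.\ whenever the spine has more than $4$ vertices. Contracting a spine edge turns $\mathfrak{D}^+_{s^1,s^2}$ into $\mathfrak{D}^+_{s^1-1,s^2}$, since the added edges only involve $\delta^1_1,\delta^1_3,\delta^2_2,\delta^2_4$, which are off the spines. By induction on $s^1+s^2$, it therefore suffices to treat $s^1,s^2\le 2$. The same argument reduces $\mathfrak{E}^+_s$ to $s\le 2$.

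\emph{Finite verification.} We are left with nine graphs $\mathfrak{D}^+_{s^1,s^2}$ with $s^1,s^2\le2$ (six up to the symmetry exchanging the two spines), the graphs $\mathfrak{E}^+_s$ with $s\le 2$, and $\mathfrak{F}^+$. Each has at most about $14$ vertices. For each one we show there is no $K_{3,4}$ minor by a structural argument rather than brute force:
\begin{itemize}
\item A $K_{3,4}$ minor in a graph on $n$ vertices requires at least $12$ edge-disjoint connections between seven branch sets. Every vertex of $X$ must lie in a branch set of total degree at least four, and every vertex of $Y$ in one of total degree at least three.
\item Following the pattern of Lemma~\ref{lem:to2}, we analyse which branch sets can contain degree-four vertices. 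For example, a singleton branch set $\{w\}$ with $\deg w=4$ must be a $Y$-vertex unless all its neighbours lie in distinct $Y$-sets.
\item We use small separations: each graph has a $4$-cut, such as $\{\delta^1_1,\delta^1_3,\delta^2_2,\delta^2_4\}$ in the $\mathfrak{D}$ family, whose sides are nearly planar strips. A $K_{3,4}$ model must distribute its branch sets across this cut, and counting the paths that cross the cut rules out each distribution.
\end{itemize}
Alternatively, we may simply cite a computer verification for these finitely many graphs.

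\emph{Main obstacle.} The hardest step is this finite verification, especially for the largest base cases $\mathfrak{D}^+_{2,2}$ and $\mathfrak{E}^+_2$. A clean hand proof requires a careful case analysis on how $X$ is placed relative to the $4$-cut. I would first try placing the three $X$-branch sets. Since the spine sides are $K_{3,4}$-free strips attached along the cut, at least two of the $X$-branch sets must meet the cut. Then I would count the vertex-disjoint paths to the four $Y$-sets available on each side. If this becomes unwieldy, I would fall back to a computer check.
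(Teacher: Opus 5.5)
Your proposal is correct and is essentially the paper's argument: Lemma~\ref{lem:to2}, applied via a minimal counterexample (equivalently, induction on spine lengths), shortens any spine with more than four vertices, reducing to $s^1,s^2\le 2$ and $s\le 2$; the finitely many remaining graphs and $\mathfrak{F}+\{\varphi^1_3\varphi^2_3\}$ are then settled by a direct, in practice computer-assisted, check. Your hedges are unnecessary, since the hubs $\delta^1_1,\delta^1_3$ (resp.\ $\varepsilon^1_1,\varepsilon^1_3$) are adjacent and every spine vertex, ends included, has degree four, so Lemma~\ref{lem:to2} applies verbatim; also, each base graph arises from $\mathfrak{D}^+_{2,2}$ or $\mathfrak{E}^+_2$ by contracting spine edges, so only those two and $\mathfrak{F}^+$ need checking, and since your hand sketch for the base cases is a plan rather than a proof, the computer check is what actually carries that step, exactly as in the paper.
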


\begin{proof}
	Suppose, to the contrary, that for some $s^1, s^2 \ge 0$, the graph $\mathfrak{D}_{s^1,s^2} + \{\delta^1_1 \delta^2_2,\delta^2_2 \delta^1_3,\delta^1_3 \delta^2_4,\delta^2_4 \delta^1_1\}$ contains a $K_{3,4}$ minor. Choose such a graph with $s^1+s^2$ minimum. One can directly verify (preferably with a computer) that $\mathfrak{D}_{2,2} + \{\delta^1_1 \delta^2_2,\delta^2_2 \delta^1_3,\delta^1_3 \delta^2_4,\delta^2_4 \delta^1_1\}$ contains no $K_{3,4}$ minor. At least one of $s^1$ and $s^2$ is greater than $2$; otherwise, $\mathfrak{D}_{s^1,s^2} + \{\delta^1_1 \delta^2_2, \delta^2_2 \delta^1_3, \delta^1_3 \delta^2_4, \delta^2_4 \delta^1_1\}$ would be a minor of $\mathfrak{D}_{2,2} + \{\delta^1_1 \delta^2_2, \delta^2_2 \delta^1_3, \delta^1_3 \delta^2_4, \delta^2_4 \delta^1_1\}$, which contains no $K_{3,4}$ minor. Without loss of generality, assume $s^1>2$. Every vertex on the spine with end-vertices $\delta^1_2$ and $\delta^1_4$ has degree four and is adjacent to both $\delta^1_1$ and $\delta^1_3$, and moreover $\delta^1_1$ and $\delta^1_3$ are adjacent. Therefore, contracting the edge $\sigma^1_{s^1}\sigma^1_{s^1+1}$ produces a graph that contains a $K_{3,4}$ minor by Lemma~\ref{lem:to2}. This contradicts the minimality of $s^1+s^2$, since the resulting graph is isomorphic to $\mathfrak{D}_{s^1-1,s^2} + \{\delta^1_1 \delta^2_2, \delta^2_2 \delta^1_3, \delta^1_3 \delta^2_4, \delta^2_4 \delta^1_1\}$.

	Similarly, suppose to the contrary that for some $s \ge 0$, the graph $\mathfrak{E}_s +\{\varepsilon^1_1 \varepsilon^3_2,\varepsilon^1_3 \varepsilon^3_2,\varepsilon^1_3 \varepsilon^2\}$ contains a $K_{3,4}$ minor. Choose such an $s$ minimum. One can directly verify that $\mathfrak{E}_2 +\{\varepsilon^1_1 \varepsilon^3_2,\varepsilon^1_3 \varepsilon^3_2,\varepsilon^1_3 \varepsilon^2\}$ contains no $K_{3,4}$ minor. Hence $s>2$. Again, we may contract a spine edge to obtain a graph isomorphic to $\mathfrak{E}_{s-1} + \{\varepsilon^1_1 \varepsilon^3_2, \varepsilon^1_3 \varepsilon^3_2, \varepsilon^1_3 \varepsilon^2\}$. By the minimality of $s$, this graph contains no $K_{3,4}$ minor, contradicting Lemma~\ref{lem:to2}.
	
	Finally, one can directly check that the graph $\mathfrak{F}+\{\varphi^1_3 \varphi^2_3\}$ does not contain $K_{3,4}$ as a minor.
\end{proof}

A graph $G$  is \emph{oloidal} if one of the following holds:
\begin{itemize}
	\item $G$ is isomorphic to $\mathfrak{D}_{0,s^2} + A$ with $s^2\in\{0,1\}$, and $A \subseteq \{\delta^1_1 \delta^2_2, \delta^1_1 \delta^2_3, \delta^1_1 \delta^2_4\}$ or $A \subseteq \{\delta^1_1 \delta^2_2, \delta^2_2 \delta^1_3, \delta^1_3 \delta^2_4, \delta^2_4 \delta^1_1\}$.
	\item $G$ is isomorphic to $\mathfrak{D}_{s^1,s^2} + A$ with $s^1,s^2\ge0$, $s^1+s^2\ge2$, and $A \subseteq \{\delta^1_1 \delta^2_2, \delta^2_2 \delta^1_3, \delta^1_3 \delta^2_4, \delta^2_4 \delta^1_1\}$.
	\item $G$ is isomorphic to $\mathfrak{E}_s+A$ with $s\ge0$ and $A\subseteq \{\varepsilon^1_1 \varepsilon^3_2, \varepsilon^1_3 \varepsilon^3_2, \varepsilon^1_3 \varepsilon^2\}$.
	\item $G$ is isomorphic to $\mathfrak{F}+A$ with $A\subseteq \{\varphi^1_3 \varphi^2_3\}$.
\end{itemize}

Note that for $s\ge0$, $\mathfrak{E}_s+\{\varepsilon^1_1 \varepsilon^3_2, \varepsilon^1_3 \varepsilon^3_2, \varepsilon^1_3 \varepsilon^2\}$ is isomorphic to $\mathfrak{D}_{0,s+1} + \{\delta^1_1 \delta^2_2, \delta^1_1 \delta^2_3, \delta^1_1 \delta^2_4\}$.

\begin{proposition}\label{pro:olo}
	Every oloidal graph is $4$-connected and non-projective-planar and contains no $K_{3,4}$ minor.
\end{proposition}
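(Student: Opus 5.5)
The plan is to reduce the three assertions to Lemmas~\ref{lem:VZ}, \ref{lem:DEF4con} and~\ref{lem:DEFK34mf}, using that $4$-connectivity and non-projective-planarity survive edge additions, while having no $K_{3,4}$ minor survives passing to subgraphs.

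\textbf{$4$-connectivity and non-projective-planarity.} Every oloidal graph has the form $\mathfrak{D}_{s^1,s^2}+A$, $\mathfrak{E}_s+A$ or $\mathfrak{F}+A$ with $A$ a set of new edges. Adding edges keeps the vertex set unchanged and cannot create a cut, so $4$-connectivity is inherited from the base graph by Lemma~\ref{lem:DEF4con}. Adding edges also cannot destroy non-embeddability: the base graph is a spanning subgraph of $G$, and it is non-projective-planar by Lemma~\ref{lem:DEF4con}. So these two properties hold for all four bullets in the definition at once.

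\textbf{No $K_{3,4}$ minor.} Since the class of graphs with no $K_{3,4}$ minor is closed under taking subgraphs, it suffices to embed each oloidal graph as a subgraph of a graph already handled by Lemma~\ref{lem:DEFK34mf}. The four bullets are treated as follows.
\begin{itemize}
\item For the second bullet, and for the second alternative for $A$ in the first bullet, $G$ is a subgraph of $\mathfrak{D}_{s^1,s^2}+\{\delta^1_1\delta^2_2,\delta^2_2\delta^1_3,\delta^1_3\delta^2_4,\delta^2_4\delta^1_1\}$, which has no $K_{3,4}$ minor by Lemma~\ref{lem:DEFK34mf}.
\item The third bullet is covered by the $\mathfrak{E}_s$ part of Lemma~\ref{lem:DEFK34mf}.
\item The fourth bullet is covered by the $\mathfrak{F}$ part of Lemma~\ref{lem:DEFK34mf}.
\item The remaining case is $\mathfrak{D}_{0,s^2}+A$ with $s^2\in\{0,1\}$ and $A\subseteq\{\delta^1_1\delta^2_2,\delta^1_1\delta^2_3,\delta^1_1\delta^2_4\}$. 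Here I use the remark preceding the proposition: $\mathfrak{D}_{0,s+1}+\{\delta^1_1\delta^2_2,\delta^1_1\delta^2_3,\delta^1_1\delta^2_4\}$ is isomorphic to $\mathfrak{E}_s+\{\varepsilon^1_1\varepsilon^3_2,\varepsilon^1_3\varepsilon^3_2,\varepsilon^1_3\varepsilon^2\}$. For $s^2=1$, taking $s=0$, the graph $G$ is a subgraph of a graph with no $K_{3,4}$ minor.
\end{itemize}

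For $s^2=0$, I claim that $\mathfrak{D}_{0,0}+\{\delta^1_1\delta^2_2,\delta^1_1\delta^2_3,\delta^1_1\delta^2_4\}$ is a minor of $\mathfrak{D}_{0,1}+\{\delta^1_1\delta^2_2,\delta^1_1\delta^2_3,\delta^1_1\delta^2_4\}$. The minor is obtained by contracting the spine edge $\sigma^2_0\sigma^2_1$ and then deleting any surplus edges. The contraction merges $\delta^2_1=\sigma^2_0$ with $\sigma^2_1$, and the neighbours of $\sigma^2_1$ are $\delta^2_2$, $\delta^2_4$ and the two spine neighbours, so the merged vertex acquires only edges that can be deleted. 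The edges $\delta^1_1\delta^2_j$ are unaffected, because $\delta^2_3$ is the other spine end and is not contracted. Minor-closedness then finishes this case. If the endpoint bookkeeping in this contraction fails, the fallback is to verify the single fixed graph $\mathfrak{D}_{0,0}+\{\delta^1_1\delta^2_2,\delta^1_1\delta^2_3,\delta^1_1\delta^2_4\}$ directly, as the paper does for the base cases of Lemma~\ref{lem:DEFK34mf}.

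The main obstacle is this last case: the isomorphism quoted before the proposition has to be matched to the labelling in Figure~\ref{fig:DEF4c}, and one must check that the spine contraction does not disturb the added edges at $\delta^1_1$. Everything else is a direct inheritance argument.
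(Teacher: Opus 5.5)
Your proposal is correct and follows the paper's proof: $4$-connectivity and non-projective-planarity are inherited from Lemma~\ref{lem:DEF4con}, and $K_{3,4}$-freeness is reduced to Lemma~\ref{lem:DEFK34mf} via subgraphs and the isomorphism $\mathfrak{D}_{0,1}+\{\delta^1_1\delta^2_2,\delta^1_1\delta^2_3,\delta^1_1\delta^2_4\}\cong\mathfrak{E}_0+\{\varepsilon^1_1\varepsilon^3_2,\varepsilon^1_3\varepsilon^3_2,\varepsilon^1_3\varepsilon^2\}$, for which the paper writes out the explicit vertex correspondence. Your contraction of $\sigma^2_0\sigma^2_1$ in the $s^2=0$ case is valid: it yields exactly $\mathfrak{D}_{0,0}+\{\delta^1_1\delta^2_2,\delta^1_1\delta^2_3,\delta^1_1\delta^2_4\}$ with no surplus edges, makes explicit a step the paper leaves implicit, and means the fallback is not needed.
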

\begin{proof}
	Lemma~\ref{lem:DEF4con} implies that every oloidal graph is $4$-connected and non-projective-planar.
	
	Observe that $\mathfrak{D}_{0,1} + \{\delta^1_1 \delta^2_2, \delta^1_1 \delta^2_3, \delta^1_1 \delta^2_4\}$ is isomorphic to $\mathfrak{E}_0 + \{\varepsilon^1_1 \varepsilon^3_2, \varepsilon^1_3 \varepsilon^3_2, \varepsilon^1_3 \varepsilon^2\}$, where the vertices
	$\delta^1_1, \delta^1_2, \delta^1_3, \delta^1_4, \delta^2_1, \delta^2_2, \delta^2_3, \delta^2_4, \sigma^2_1$
	correspond to
	$\varepsilon^3_2, \varepsilon^3_1, \varepsilon^4, \varepsilon^3_3, \varepsilon^1_2, \varepsilon^1_1, \varepsilon^2, \varepsilon^1_3, \varepsilon^0$,
	respectively. Thus, it follows from Lemma~\ref{lem:DEFK34mf} that every oloidal graph has no $K_{3,4}$ minor.
\end{proof}

We will establish the converse of Proposition~\ref{pro:olo}.

\begin{theorem}\label{thm:4cnppK34mf}
	A graph is $4$-connected, non-projective-planar, and has no $K_{3,4}$ minor if and only if it is oloidal.
\end{theorem}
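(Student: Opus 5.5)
The ``if'' direction is exactly Proposition~\ref{pro:olo}, so the plan concerns the converse. Let $G$ be $4$-connected, non-projective-planar, with no $K_{3,4}$ minor. By Proposition~\ref{pro:DEF}, $G$ has a minor of $D_{17}$, $E_{20}$ or $F_4$. I would split into three cases according to the ``largest'' of these minors, in the order $F_4$, then $E_{20}$ without $F_4$, then $D_{17}$ without $E_{20}$. In each case Proposition~\ref{pro:DEF} supplies a rigid starting object: a spanning JT-subdivision $\eta$ of $F_4$ (respectively $E_{20}$) with no split-extension minor, or a spanning subgraph isomorphic to $D_{17}$. The goal is to show that $G$ is then, respectively, $\mathfrak{F}+A$, $\mathfrak{E}_s+A$, or $\mathfrak{D}_{s^1,s^2}+A$ with $A$ as in the definition of oloidal graphs.

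\textbf{Case $F_4$.} Every vertex of $G$ lies on $\eta(F_4)$, and by Lemma~\ref{lem:JT} every segment is an induced path. Extra edges of $G$ join vertices of distinct segments, and the second part of Lemma~\ref{lem:JT} restricts edges between segments at a common degree-three branch vertex. I would first show that every segment has length at most two. The reason is that $4$-connectivity forces each internal vertex of a long segment to have two further neighbours. Using the symmetries of $F_4$, I would check that each possible attachment creates either a vertex-split of $F_4$ (excluded) or a $K_{3,4}$ minor. This leaves finitely many configurations, which should reduce to $\mathfrak{F}$ and the optional edge $\varphi^1_3\varphi^2_3$; every other chord yields $K_{3,4}$. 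This last step can be verified by computer if necessary.

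\textbf{Case $E_{20}$ without $F_4$.} I would proceed as in the $F_4$ case, but now one family of segments may be arbitrarily long. Subdividing vertices attached to $\varepsilon^1_1,\varepsilon^1_3$ should form the spine of $\mathfrak{E}_s$. The key step is to prove that internal vertices of the segments forming the spine are all adjacent to the same two branch vertices. This is a local analysis, since any other neighbour produces $F_4$, a split of $E_{20}$, or $K_{3,4}$. After that, the admissible extra edges are identified as the set $\{\varepsilon^1_1\varepsilon^3_2,\varepsilon^1_3\varepsilon^3_2,\varepsilon^1_3\varepsilon^2\}$.

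\textbf{Case $D_{17}$ without $E_{20}$.} Here $G$ contains $D_{17}$ as a spanning subgraph, so $|V(G)|=|V(D_{17})|$. The task is to classify which additional edges can be added while preserving the absence of $K_{3,4}$ and $E_{20}$ minors, using the $48$ automorphisms to cut down the cases. The target is then to recognise $G$ as $\mathfrak{D}_{0,s^2}+A$ with $s^2\le1$, or as a small $\mathfrak{D}_{s^1,s^2}$; larger spines should already contain $E_{20}$.

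I expect the main obstacle to be the $E_{20}$ case, and the $F_4$ analogue of it. There, unbounded spines must be controlled by a structural argument rather than finite checking. I would attempt an induction on $|V(G)|$ in the spirit of Lemma~\ref{lem:to2}: contract a spine edge while keeping $4$-connectivity via Lemma~\ref{lem:VZ}, apply induction to recognise the contracted graph as oloidal, and then uncontract. This argument also needs to account for the $\mathfrak{D}_{s^1,s^2}$ family with long spines. That family presumably arises in the $E_{20}$ case, since $\mathfrak{D}_{0,s+1}+\ldots\cong\mathfrak{E}_s+\ldots$, so the spine-growing argument must handle two spines simultaneously.
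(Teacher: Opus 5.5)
There is a genuine gap, and it sits in the $F_4$ case. You claim that every segment of $\eta(F_4)$ has length at most two, so that a finite check reduces the case to $\mathfrak{F}+A$. This claim is false.

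The graphs $\mathfrak{D}_{s^1,s^2}$ with $s^1,s^2\ge1$ and $\mathfrak{E}_s$ with $s\ge1$ are oloidal, hence $4$-connected, non-projective-planar and $K_{3,4}$-free. They also contain $F_4$ as a minor: $F_4$ is a spanning subgraph of $\mathfrak{D}_{1,1}$ and of $\mathfrak{E}_1$, and spine edges can be contracted. In a spanning subdivision of $F_4$ in $\mathfrak{D}_{s^1,s^2}$, for each $i\in[2]$ a spine runs through the four segments $\eta(f^{3-i}_3f^i_2)$, $\eta(f^i_2f^i)$, $\eta(f^if^i_4)$, $\eta(f^i_4f^{3-i}_1)$. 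These segments therefore carry arbitrarily many internal vertices. Each such vertex has exactly two neighbours off its segment, namely $\eta(f^i_1)$ and $\eta(f^i_3)$. So $4$-connectivity gives you nothing, and no split of $F_4$ and no $K_{3,4}$ minor appears.

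Consequently the step ``every segment has length at most two'' cannot be proved, and the conclusion $G\cong\mathfrak{F}+A$ is false for these graphs. Both two-spine families and all $\mathfrak{E}_s$ with $s\ge1$ live in the $F_4$ case. Your proposal in fact later contradicts itself on this point, when it speaks of unbounded spines in ``the $F_4$ analogue''.

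Your bookkeeping elsewhere is shifted accordingly. Without an $F_4$ minor, internal vertices of $\eta(E_{20})$ produce $\mathfrak{D}_{0,s^2}$ with one trivial spine, not $\mathfrak{E}_s$; $\mathfrak{E}_s$ occurs there only for $s=0$. In the $D_{17}$ case, $|V(G)|=8$ leaves only $\mathfrak{D}_{0,0}$, since $\mathfrak{D}_{0,1}$ already has nine vertices.

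The induction you propose as a fallback does not repair this. Lemma~\ref{lem:VZ} transfers $4$-connectivity from $G/e$ to $G$, not from $G$ to $G/e$. Lemma~\ref{lem:to2} preserves $K_{3,4}$-freeness when a long spine is lengthened, which serves the sufficiency direction; for necessity, $K_{3,4}$-freeness of $G/e$ is automatic. What you would actually need is not supplied: an edge of $G$ whose contraction keeps $G$ both $4$-connected and non-projective-planar, together with a classification of all $K_{3,4}$-free uncontractions of oloidal graphs. Moreover, a ``spine edge'' cannot even be identified before the structure is known.

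No induction is needed. The local analysis you sketch for the $E_{20}$ case already controls unbounded spines, because it fixes the neighbourhood of each internal vertex individually. The paper carries this out for both $E_{20}$ and $F_4$ (Lemmas~\ref{lem:E20int}, \ref{lem:F4int}, \ref{lem:F4int2}, \ref{lem:F4int3}, \ref{lem:F4int4}). Every internal vertex has degree four and is adjacent to two specified branch vertices, so the spines assemble directly. This gives six spanning-subgraph outcomes:
\[
\mathfrak{D}_{0,0},\quad \mathfrak{D}_{0,1},\quad \mathfrak{D}_{s^1,s^2}\ (s^1+s^2\ge2),\quad \mathfrak{E}_0,\quad \mathfrak{E}_s\ (s\ge1),\quad \mathfrak{F}.
\]
In each outcome most vertices are known to have degree four. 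The final step then only has to determine the extra edges among the few remaining vertices, using a handful of further obstructions in the small cases.
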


To prove Theorem~\ref{thm:4cnppK34mf}, we apply Proposition~\ref{pro:DEF}. To this end, in Section~\ref{sec:E20} (respectively, Section~\ref{sec:F4}) we study the structure of $4$-connected graphs with no $K_{3,4}$ minor that contain a spanning JT-subdivision of $E_{20}$ (respectively, $F_4$). Finally, we complete the proof of Theorem~\ref{thm:4cnppK34mf} in Section~\ref{sec:nppolo}.

\subsubsection{Containing a spanning subdivision of $E_{20}$}\label{sec:E20}

In this section we study the structure of $4$-connected graphs that have no $K_{3,4}$ minor, no $F_4$ minor, and no minor of any graph obtained from $E_{20}$ by splitting a vertex, yet contain a spanning JT-subdivision of $E_{20}$. Our purpose is to establish Proposition~\ref{pro:E20}, beginning with a sequence of lemmas, some of which have already been given in~\cite{Lo2025}.

A collection of graphs is presented in Table~\ref{tab:1}. The graph in row $i$ and column $j$ is denoted by $(i,j)$. These graphs are frequently referenced in our arguments. For instance, given a graph $G$ with no $K_{3,4}$ minor that contains a spanning subdivision $\eta(E_{20})$ of $E_{20}$, we can deduce that no vertex in the domain of $e^0$ is adjacent to a vertex in the domain of $e^4$; otherwise, $G \succeq (2,2) \succeq K_{3,4}$, contradicting the assumption. By a slight abuse of notation, we may simply write: ``by~$(2,2)$, no vertex in the domain of $e^0$ is adjacent to a vertex in the domain of $e^4$.''

\begin{table}[ht!]
	\centering
	\caption{A collection of graphs. The graphs in the first row contain $F_4$ as a spanning subgraph, while all others contain $K_{3,4}$ as a minor.}
	\label{tab:1}
	\renewcommand{\arraystretch}{2} 
	\begin{tabular}{ccccccc}
		& {\footnotesize 1} & {\footnotesize 2} & {\footnotesize 3} & {\footnotesize 4} & {\footnotesize 5} & {\footnotesize 6} \\
		\noalign{\bigskip}
		{\footnotesize 1} &
		\includegraphics[scale=1, align=c]{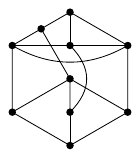} &
		\includegraphics[scale=1, align=c]{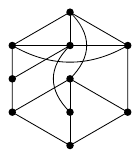} & & & & \\
		\noalign{\bigskip}
		{\footnotesize 2} &
		\includegraphics[scale=1, align=c]{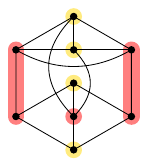} &
		\includegraphics[scale=1, align=c]{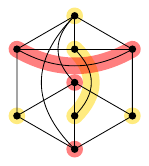} &
		\includegraphics[scale=1, align=c]{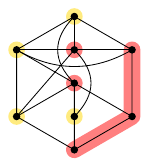} &
		\includegraphics[scale=1, align=c]{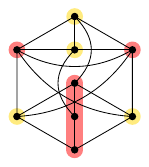} &
		\includegraphics[scale=1, align=c]{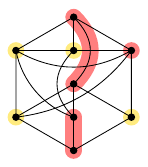} &
		\includegraphics[scale=1, align=c]{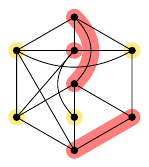} \\
		\noalign{\bigskip}
		{\footnotesize 3} &
		\includegraphics[scale=1, align=c]{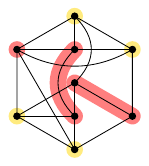} &
		\includegraphics[scale=1, align=c]{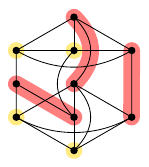} &
		\includegraphics[scale=1, align=c]{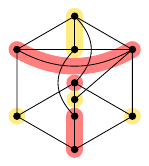} &
		\includegraphics[scale=1, align=c]{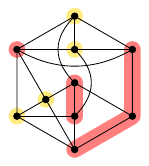} &
		\includegraphics[scale=1, align=c]{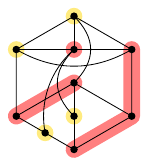} &
		\includegraphics[scale=1, align=c]{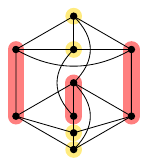} \\
	\end{tabular}
\end{table}

\begin{lemma}[\cite{Lo2025}] \label{lem:E20ni1}
	Let \( G \) be a \( 4 \)-connected graph that contains neither $K_{3,4}$, \( F_4 \), nor any graph obtained from \( E_{20} \) by splitting a vertex as a minor.  
	Suppose \( G \) contains a spanning JT-subdivision \( \eta(E_{20}) \) of \( E_{20} \).  
	Then the segment \( \eta(e) \) has no internal vertex for any  
	\( e \in \{ e^0 e^1_1,\, e^0 e^1_2,\, e^0 e^1_3,\, e^1_1 e^1_2,\, e^1_2 e^1_3,\, e^1_3 e^1_1 \} \).
\end{lemma}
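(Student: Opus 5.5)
We argue by contradiction. Suppose some segment $\eta(e)$ with $e$ in the listed set has an internal vertex. All six edges lie in the subgraph induced by $\{e^0,e^1_1,e^1_2,e^1_3\}$. The automorphisms of $E_{20}$ permute $e^1_1,e^1_2,e^1_3$ arbitrarily and fix $e^0$, so they act transitively on the three spokes $e^0e^1_i$ and on the three rim edges $e^1_ie^1_j$. It therefore suffices to treat two cases: $e=e^0e^1_1$ and $e=e^1_1e^1_2$.

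Fix such an $e$ and let $x$ be an internal vertex of $\eta(e)$ adjacent to one of its ends. Since $G$ is $4$-connected, $x$ has degree at least four. By Lemma~\ref{lem:JT}(i), $\eta(e)$ is an induced path, so $x$ has at least two neighbours off $\eta(e)$. Because the subdivision is spanning, each such neighbour lies in the domain or closed domain of some vertex of $E_{20}$.

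We then run through the possible locations of a neighbour $y\notin V(\eta(e))$ of $x$. If $e^0$ or $e^1_i$ has degree three in $E_{20}$, Lemma~\ref{lem:JT}(ii) sharply restricts edges from $\eta(e)$ into the other segments at that branch vertex; it forces $x$ to be the neighbour of the branch vertex, which is why we chose $x$ next to an end. For each remaining location of $y$, we identify the graph $E_{20}+$(new vertex on $e$ joined to the segment containing $y$). By a routine check, this graph contains $K_{3,4}$ or $F_4$ as a minor; many such configurations appear in Table~\ref{tab:1}, and we cite them in the style ``by $(i,j)$''. The only locations that survive this check are those adjacent to the ends of $\eta(e)$, for example a branch vertex incident with $e$ or the start of a neighbouring segment.

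Next we use the surviving options together with the degree bound. Since $x$ needs two external neighbours, we consider pairs of allowed locations. For each pair we either find a $K_{3,4}$ or $F_4$ minor again, or contract appropriately to obtain a graph arising from $E_{20}$ by splitting a vertex. The latter case occurs, for instance, when $x$ together with the end $e^1_1$ of $\eta(e)$ can serve as the two halves of a split $e^1_1$, each receiving at least two neighbours. Both outcomes contradict the hypotheses. If needed, we also apply the same reasoning to the other end-neighbour of $\eta(e)$, or we use a $3$-cut argument: when every internal vertex of $\eta(e)$ sends edges only to a small set, the two ends plus one further vertex would separate the internal vertices, contradicting $4$-connectivity.

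We expect the main obstacle to be the exhaustive case analysis of where $x$'s external neighbours go. There we must exhibit, in every case, an explicit $K_{3,4}$, $F_4$, or vertex-split minor, and the rim-edge case has less symmetry than the spoke case. We would handle it by first proving the spoke case, then reducing the rim case to it where possible: once the spokes are known to be unsubdivided, $e^0$ is adjacent directly to $e^1_1$ and $e^1_2$, which gives short rerouting paths and makes many minors easier to find.
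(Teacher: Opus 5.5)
The paper does not prove this lemma; it is quoted from \cite{Lo2025}. So I judge your plan on its own.

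Your outline is the right shape: take an internal vertex $v$ of $\eta(e)$; by Lemma~\ref{lem:JT} and $4$-connectivity it has at least two neighbours off $\eta(e)$; then exclude each location. But every actual verification is replaced by ``a routine check'', and the specific structure you describe targets the wrong cases. All four vertices $e^0,e^1_1,e^1_2,e^1_3$ have degree four in $E_{20}$: each has three neighbours in the $K_4$, plus $e^2$ or $e^3_i$. So Lemma~\ref{lem:JT}(ii) never applies, and choosing $v$ next to an end gains nothing.

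The missing idea is this. Because both ends $u$ of $\eta(e)$ have degree four, a \emph{single} edge from $v$ to $\eta(f)-\eta(u)$, for any other edge $f$ of $E_{20}$ at $u$, already gives a subdivision of a graph obtained from $E_{20}$ by splitting $u$. The new branch vertex $v$ takes the far ends of $e$ and $f$, while $u$ keeps its other two neighbours. So ``the start of a neighbouring segment'' is not a survivor needing pairwise analysis; it is excluded at once. A branch vertex incident with $e$ is an end of the induced path $\eta(e)$, so it cannot be an extra neighbour at all. Your list of surviving locations is therefore essentially backwards.

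After the split argument, two kinds of location remain. First, internal vertices of segments sharing no end with $e$; these segments are independent of $e$, so Lemma~\ref{lem:E20ind} excludes them, and you never invoke it. Second, the branch vertices outside the closed domains of the two ends of $e$: $\eta(e^3_2),\eta(e^3_3),\eta(e^4)$ for $e=e^0e^1_1$, and $\eta(e^2),\eta(e^3_3),\eta(e^4)$ for $e=e^1_1e^1_2$. These are the only places where an explicit minor must be produced, and this is exactly what your proposal leaves undone.

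They are easy. For $e=e^0e^1_1$ with $v$ adjacent to $\eta(e^3_2)$, the sets $\{e^0\},\{e^1_1\},\{e^3_2\}$ against $\{v\},\{e^1_2\},\{e^1_3,e^3_3,e^4\},\{e^2,e^3_1\}$ form a $K_{3,4}$ model. Analogous models exclude $\eta(e^4)$ in the spoke case and $\eta(e^2),\eta(e^3_3)$ in the rim case. Then $v$ has at most one possible neighbour off $\eta(e)$ (in the rim case, $\eta(e^4)$ at worst), contradicting $\deg(v)\ge 4$. No pairwise case analysis, $3$-cut argument, or reduction of the rim case to the spoke case is needed.
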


\begin{lemma} \label{lem:E20ni2}
	Let \( G \) be a \( 4 \)-connected graph that contains neither $K_{3,4}$, \( F_4 \), nor any graph obtained from \( E_{20} \) by splitting a vertex as a minor.  
	Suppose \( G \) contains a spanning JT-subdivision \( \eta(E_{20}) \) of \( E_{20} \).  
	Then the segment \( \eta(e) \) has no internal vertex for any  
	\( e \in \{ e^0 e^2,\, e^3_1 e^2,\, e^3_2 e^2,\, e^3_3 e^2 \} \).
\end{lemma}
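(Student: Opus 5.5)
We argue by contradiction, in the same style as Lemma~\ref{lem:E20ni1}. Fix $e \in \{e^0e^2, e^3_1e^2, e^3_2e^2, e^3_3e^2\}$ and suppose the segment $\eta(e)$ has an internal vertex $x$. All four edges are incident with $e^2$, and $E_{20}$ has no automorphism moving $e^0$ into $\{e^3_1,e^3_2,e^3_3\}$. So two cases are needed: $e=e^0e^2$, and $e=e^3_1e^2$ (the remaining two follow from the permutation symmetry of $\{e^3_1,e^3_2,e^3_3\}$ that comes with the automorphisms permuting $\{e^1_1,e^1_2,e^1_3\}$).

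In each case we use three facts about $x$.
\begin{itemize}
\item Since $G$ is $4$-connected, $x$ has degree at least four.
\item Since $\eta(E_{20})$ is spanning, every neighbour of $x$ lies on the subdivision.
\item By the first part of Lemma~\ref{lem:JT}, $\eta(e)$ is induced, so $x$ has at most two neighbours on $\eta(e)$.
\end{itemize}
Hence $x$ has at least two neighbours off $\eta(e)$, each lying in some domain or closed domain of a vertex of $E_{20}$.

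Lemma~\ref{lem:E20ni1} says the segments among $\{e^0,e^1_1,e^1_2,e^1_3\}$ have no internal vertices, so those neighbours are either branch vertices or internal vertices of the remaining segments. The second part of Lemma~\ref{lem:JT}, applied at each degree-three end of $e$, rules out edges from $x$ to internal vertices of segments sharing that end, except when $x$ is next to the branch vertex. This confines the possible chords to a short list of types: $x$ joined to $\eta(w)$ for a branch vertex $w$ not incident with $e$, or $x$ joined to an internal vertex of a segment disjoint from $e$.

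For each type of chord, we exhibit a minor from Table~\ref{tab:1}:
\begin{itemize}
\item a row-one graph, giving $F_4$;
\item a graph from rows two or three, giving $K_{3,4}$;
\item or a graph obtained from $E_{20}$ by splitting a vertex.
\end{itemize}
Each of these contradicts the hypotheses. The argument mirrors the example stated before the lemma, where $(2,2)$ forbids any edge between the domains of $e^0$ and $e^4$. For some chord types, contracting $\eta(e)$ onto $x$ together with one adjacent branch vertex turns the extra edge into a vertex split of $E_{20}$. This is forbidden by hypothesis, so those types are excluded without needing the table.

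The main obstacle is completeness of the case analysis. One must check that every possible location for the two extra neighbours of $x$ is killed by one of the table graphs or by a split, including the degenerate cases where a neighbour is a branch vertex adjacent to an end of $e$. If a single chord is not enough, we use both extra neighbours together: first place one chord, then show the second forces one of $(2,j)$ or $(3,j)$.
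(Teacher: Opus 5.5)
Your treatment of $e=e^0e^2$ works. Every neighbour of $x$ off $\eta(e)$ lies in the domain of some $e^1_j$, $e^3_j$, or $e^4$, and each such single chord is excluded by $(1,1)$, $(2,1)$, or $(2,2)$. The first of these is in fact a split of $e^0$, as you anticipate.

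The gap is in the case $e=e^3_ie^2$. Chords from $x$ into the domains of $e^0$, $e^1_j$, and $e^3_j$ ($j\neq i$) are excluded by $(2,1)$ and $(3,3)$. After that, the only possible targets lie on $\eta(e^3_ie^1_i)-\eta(e^3_i)$ and $\eta(e^3_ie^4)-\eta(e^3_i)$. The second part of Lemma~\ref{lem:JT} then forces $x$ to have exactly two neighbours off $\eta(e)$, namely the neighbours $a$ and $b$ of $\eta(e^3_i)$ on those two segments. This is exactly the exception of Lemma~\ref{lem:JT} that you mention and then omit from your ``short list of chord types.''

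This configuration is not excluded by any forbidden minor. Take $E_{20}$, subdivide $e^3_ie^2$, $e^3_ie^1_i$, $e^3_ie^4$ by $x,a,b$, and add $xa$ and $xb$. The resulting graph contains none of $K_{3,4}$, $F_4$, or a split of $E_{20}$ as a minor (a short spanning-model argument confirms this). So neither your single-chord step nor your fallback of ``using both extra neighbours together'' can reach a contradiction. The pair of chords is the surviving configuration itself.

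The missing idea is to apply $4$-connectivity to the branch vertex $\eta(e^3_i)$ rather than to $x$. Once the neighbourhood of $x$ is pinned down, $\eta(e^3_i)$ has only the three neighbours $x,a,b$ inside its closed domain, because its three segments are induced. It therefore needs a fourth neighbour in the domain of $e^0$, or in the domain of $e^1_j$ or $e^3_j$ for some $j\neq i$. Combined with the chords $xa$ and $xb$, these give $(2,1)$, $(2,3)$, and $(3,4)$ respectively.

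The chords are genuinely needed at this step. An edge from $\eta(e^3_i)$ to $\eta(e^1_j)$ on its own is harmless: it is exactly what produces $\mathfrak{E}_0$ in Case~1.1 of Proposition~\ref{pro:E20}.
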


\begin{proof}
	Suppose, to the contrary, that for some  
	\( e \in \{ e^0 e^2,\, e^3_1 e^2,\, e^3_2 e^2,\, e^3_3 e^2 \} \),  
	the segment \( \eta(e) \) has an internal vertex \( v \).
	
	By Lemma~\ref{lem:JT}, each segment of \( \eta(E_{20}) \) is an induced path in $G$. Since \( G \) is \( 4 \)-connected, the vertex \( v \) has at least two neighbors outside \( \eta(e) \).
	
	\smallskip
	
	\noindent\textbf{Case~1.} \( e = e^0 e^2 \). 
	
	\smallskip
	
	If follows that some neighbor of \( v \) lies in the domain of \( e^1_j \) or \( e^3_j \) for some \( j \in [3] \), or in the domain of \( e^4 \).  
	If a neighbor of \( v \) lies in the domain of \( e^1_j \) with \( j \in [3] \), then \( G \) must contain $(1,1)$ as a minor, since the graph obtained from \( \eta(E_{20}) \) by adding the edge joining \( v \) and its neighbor in the domain of \( e^1_j \), contracting the domain of \( e^1_j \) into a new vertex \( w \), and finally deleting the edge joining \( w \) and \( \eta(e^0) \), is a subdivision of $(1,1)$.  
	(Such details are often omitted if obvious.)  
	Hence \( G \) contains \( F_4 \) as a minor, which is a contradiction.  
	Similarly, if a neighbor of \( v \) lies in the domain of \( e^3_j \) for some \( j \in [3] \), or in the domain of \( e^4 \), then \( G \) must contain a minor of $(2,1)$ or $(2,2)$, and hence a minor of \( K_{3,4} \), again a contradiction.
	
	\smallskip
	
	\smallskip
	
\noindent\textbf{Case~2.} \( e = e^3_i e^2 \) for some \( i \in [3] \).

\smallskip

It follows that any neighbor of \( v \) does not lie in the domain of \( e^0 \) or \( e^1_j \) with \( j \in [3] \setminus \{i\} \); otherwise, \( G \) would contain a minor of $(2,1)$ or $(3,3)$, and hence a \( K_{3,4} \) minor, which is impossible.  
	Similarly, any neighbor of \( v \) does not lie in the domain of \( e^3_j \) with \( j \in [3] \setminus \{i\} \); otherwise, one obtains a subdivision of $(2,1)$ from \( \eta(E_{20}) \) by adding the edge joining \( v \) and the domain of \( e^3_j \), contracting the domain of \( e^3_j \) into a new vertex \( w_1 \), contracting \( \eta(e^2 e^3_{6-i-j}) \) into a new vertex \( w_2 \), and finally deleting the edge joining \( w_1 \) and \( w_2 \).
	
	Thus, any neighbor of \( v \) must lie in  
	\( \eta(e^3_i e^1_i) - \eta(e^3_i) \) or \( \eta(e^3_i e^4) - \eta(e^3_i) \).
	
	In fact, by Lemma~\ref{lem:JT}, the vertex \( v \) has exactly two neighbors outside \( \eta(e) \), namely the neighbors of \( \eta(e^3_i) \) in \( \eta(e^3_i e^1_i) \) and \( \eta(e^3_i e^4) \).  
	Moreover, since \( G \) is \( 4 \)-connected, \( \eta(e^3_i) \) has a neighbor in the domain of \( e^0 \), or in the domain of \( e^1_j \) or \( e^3_j \) for some \( j \in [3] \setminus \{i\} \).  
	This implies that \( G \) contains $(2,1)$, $(2,3)$, or $(3,4)$ as a minor, a contradiction.
	
	\smallskip
	
	This completes the proof.
\end{proof}

\begin{lemma} \label{lem:E20ip}
	Let \( G \) be a \( 4 \)-connected graph that contains neither $K_{3,4}$ nor any graph obtained from \( E_{20} \) by splitting a vertex as a minor.  
	Suppose \( G \) contains a spanning JT-subdivision \( \eta(E_{20}) \) of \( E_{20} \).  
	Then, for any \( i \in [3] \), the union of the segments \( \eta(e^1_i e^3_i) \) and \( \eta(e^3_i e^4) \) is an induced path in \( G \).
\end{lemma}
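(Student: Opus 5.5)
Fix $i \in [3]$ and set $Q := \eta(e^1_i e^3_i) \cup \eta(e^3_i e^4)$. This is a path from $\eta(e^1_i)$ through $\eta(e^3_i)$ to $\eta(e^4)$. By the first part of Lemma~\ref{lem:JT}, each of the two segments is induced, since $G$ contains no graph obtained from $E_{20}$ by splitting a vertex as a minor. So it only remains to rule out a chord $xy$ of $Q$ with $x \in \eta(e^1_i e^3_i) - \eta(e^3_i)$ and $y \in \eta(e^3_i e^4) - \eta(e^3_i)$.

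In $E_{20}$ the vertex $e^3_i$ has degree three, with neighbours $e^1_i$, $e^2$ and $e^4$. The second part of Lemma~\ref{lem:JT}, applied with $u = e^3_i$, $e_1 = e^3_i e^1_i$ and $e_2 = e^3_i e^4$, gives the following: if $y$ is an internal vertex of $\eta(e^3_i e^4)$, then $x$ is the neighbour of $\eta(e^3_i)$ on $\eta(e^3_i e^1_i)$. Applying the lemma symmetrically gives the mirror statement. We are therefore left with three configurations:
\begin{itemize}
\item $x$ is the neighbour $x_0$ of $\eta(e^3_i)$ and $y$ is the neighbour $y_0$ of $\eta(e^3_i)$, so the chord forms a triangle with $\eta(e^3_i)$;
\item $x = \eta(e^1_i)$ and $y$ is the neighbour $y_0$ of $\eta(e^3_i)$ (or the mirror case with $y = \eta(e^4)$ and $x = x_0$);
\item $x = \eta(e^1_i)$ and $y = \eta(e^4)$, meaning $e^1_i$ and $e^4$ are joined directly.
\end{itemize}

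\emph{Third configuration.} Here I would look for a $K_{3,4}$ minor directly, by adding the edge $e^1_i e^4$ to $E_{20}$. I expect that a graph in Table~\ref{tab:1} (for instance $(2,2)$ or $(3,x)$) is a minor of $E_{20} + e^1_ie^4$, possibly after contracting $e^3_i$ onto a neighbour.

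\emph{First and second configurations.} These are where $4$-connectivity is needed. In each case one of the closed domains becomes degenerate. Consider the set $S$ consisting of $\eta(e^3_i)$ together with the chord endpoints, plus the vertex adjacent to $\eta(e^3_i)$ on $\eta(e^3_i e^2)$, which is $\eta(e^2)$ by Lemma~\ref{lem:E20ni2} if that lemma applies (it needs the $F_4$-free hypothesis, which this lemma drops). Together these separate the internal vertices of $\eta(e^3_i e^1_i)$ or $\eta(e^3_i e^4)$ lying between the chord ends and $\eta(e^3_i)$.

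\begin{itemize}
\item If such internal vertices exist, each one needs a neighbour outside $Q$. By the second part of Lemma~\ref{lem:JT} and the induced-path property, those neighbours must lie in domains of other branch vertices. Each such adjacency produces one of the minors in Table~\ref{tab:1}, as in the case analysis of Lemma~\ref{lem:E20ni2}.
\item If no such internal vertices exist, then $\eta(e^3_i)$ has degree at least four. So it has a neighbour in the domain of $e^0$, of $e^2$, or of some $e^1_j$ or $e^3_j$ with $j \neq i$. With the chord, this gives a $K_{3,4}$ minor.
\end{itemize}
The alternative is that the chord lets us reroute $Q$ to bypass $\eta(e^3_i)$, giving a subdivision of $E_{20}$ with $\eta(e^3_i)$ free to be split. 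That would contradict the assumption that no split of $E_{20}$ is a minor, or contradict the JT lexicographic maximality.

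The main obstacle is the triangle configuration. The rerouting argument seems the cleanest: replace the segment through $\eta(e^3_i)$ by the chord $x_0y_0$, so that $\eta(e^3_i)$ together with $x_0$ or $y_0$ becomes a split vertex. Since $F_4$ is not excluded here, I would first try to show that the edge $x_0y_0$, together with a fourth neighbour of $\eta(e^3_i)$ guaranteed by $4$-connectivity, yields a graph obtained from $E_{20}$ by splitting $e^3_i$'s neighbour $e^2$ or $e^3_i$ itself. Failing that, I would show it yields a $K_{3,4}$ minor from Table~\ref{tab:1}.
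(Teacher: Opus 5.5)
There is a genuine gap. Your plan for the third configuration cannot work: $E_{20}+e^1_ie^4$ has only nine vertices, so it contains neither $F_4$ nor a split of $E_{20}$. It also contains no $K_{3,4}$ minor. To see this, take a spanning $K_{3,4}$ model. The degree-three vertex $e^3_i$ cannot share a branch set with $e^1_i$ or $e^4$, because contracting that edge yields a minor of $E_{20}$, which is $K_{3,4}$-free as a spanning subgraph of $\mathfrak{E}_0$. If $e^3_i$ is a singleton, it lies on the side of size four. Its three neighbours then lie in the three branch sets of the other side, so the added edge joins two of those sets and can be deleted. In the remaining case $e^3_i$ shares a branch set with $e^2$, and a direct check shows that $(E_{20}+e^1_ie^4)/e^3_ie^2$ has no $K_{3,4}$ minor. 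So the chord alone gives no contradiction, and $4$-connectivity is genuinely needed.

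Your treatment of the first two configurations is not yet a proof either. The separator you propose uses $\eta(e^2)$ via Lemma~\ref{lem:E20ni2}, which, as you note, is unavailable here. The claim that each adjacency of the interior vertices ``produces one of the minors in Table~\ref{tab:1}'' is unverified. The rerouting/splitting or lexicographic-maximality argument for the triangle is only a hope.

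The missing observation is that all your configurations reduce to one. Whatever the chord $xy$ is, contract $Q[\eta(e^1_i),x]$ into $\eta(e^1_i)$ and $Q[y,\eta(e^4)]$ into $\eta(e^4)$. This leaves $\eta(e^3_i)$ a branch vertex and turns the chord into an edge $e^1_ie^4$. Separately, $\eta(e^3_i)$ has degree at least four by $4$-connectivity. Its three segments are induced paths, so its fourth neighbour lies outside its closed domain: in the domain of $e^0$, or of $e^1_j$ or $e^3_j$ for some $j\neq i$. The first case already gives $(2,1)$. In the other two, this edge together with the chord gives $(2,6)$ or $(3,1)$.

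This is exactly the paper's proof. Your sub-bullet ``$\eta(e^3_i)$ has degree at least four, so it has a neighbour \dots; with the chord, this gives a $K_{3,4}$ minor'' is in fact the whole argument. It needs no hypothesis about interior vertices, no case split on where the chord lies, and no use of the second part of Lemma~\ref{lem:JT}.
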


\begin{proof}
	By Lemma~\ref{lem:JT}, every segment of \( \eta(E_{20}) \) is an induced path in $G$.
	
	Suppose, to the contrary, that for some \( i \in [3] \), the union of the segments  
	\( \eta(e^1_i e^3_i) \) and \( \eta(e^3_i e^4) \) is not an induced path in \( G \).  
	That is, there exists an edge joining a vertex in  
	\( \eta(e^1_i e^3_i) - \eta(e^3_i) \) to a vertex in  
	\( \eta(e^3_i e^4) - \eta(e^3_i) \).
	
	Since \( G \) is \( 4 \)-connected, \( \eta(e^3_i) \) has a neighbor in the domain of \( e^0 \), or in the domain of \( e^1_j \) or \( e^3_j \) for some \( j \in [3] \setminus \{i\} \).  
	This implies that \( G \) contains $(2,1)$, $(2,6)$, or $(3,1)$ as a minor, a contradiction.
\end{proof}

\begin{lemma} \label{lem:E20e4e2}
	Let \( G \) be a \( 4 \)-connected graph that contains neither $K_{3,4}$, \( F_4 \), nor any graph obtained from \( E_{20} \) by splitting a vertex as a minor.
	Suppose \( G \) contains a spanning JT-subdivision \( \eta(E_{20}) \) of \( E_{20} \).  
	Then \( \eta(e^4) \) is adjacent to \( \eta(e^2) \) and has degree four in $G$.
\end{lemma}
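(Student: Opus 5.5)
The plan is to locate every neighbor of $\eta(e^4)$ in $G$ and show that, apart from its three neighbors along its own segments, the only possible neighbor is $\eta(e^2)$. In $E_{20}$ the vertex $e^4$ has degree three, with neighbors $e^3_1,e^3_2,e^3_3$. By Lemma~\ref{lem:JT}, each segment $\eta(e^3_ie^4)$ is an induced path in $G$, so $\eta(e^4)$ has exactly one neighbor on each of these three segments. Since $\eta(E_{20})$ is spanning, every other vertex of $G$ lies in the closed domain of some other branch vertex. Since $G$ is $4$-connected, $\eta(e^4)$ has degree at least four, so it has at least one further neighbor $x$.

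I will rule out every location for $x$ except $\eta(e^2)$, using the symmetry of $E_{20}$ (permutations of the index in $[3]$) to reduce to one representative index.
\begin{itemize}
\item $x$ cannot lie in the domain of $e^0$, by~$(2,2)$.
\item $x$ cannot be an internal vertex of another segment $\eta(e^3_je^4)$. By the second part of Lemma~\ref{lem:JT} applied at the degree-three vertex $e^4$, such an $x$ would be the neighbor of $\eta(e^4)$ on that segment, which is already counted.
\item Suppose $x$ lies in $\eta(e^3_j)$, is an internal vertex of $\eta(e^1_je^3_j)$, or equals $\eta(e^1_j)$. By Lemma~\ref{lem:E20ip}, $\eta(e^1_je^3_j)\cup\eta(e^3_je^4)$ is an induced path, so $x$ cannot lie on it. In the remaining placements, I add the edge $\eta(e^4)x$ and contract suitable domains, as in the proofs of Lemmas~\ref{lem:E20ni2} and~\ref{lem:E20ip}. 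This should yield a minor of one of the graphs of Table~\ref{tab:1}, namely $F_4$ via row~1 or $K_{3,4}$ via rows~2 and~3, which is a contradiction.
\item By Lemmas~\ref{lem:E20ni1} and~\ref{lem:E20ni2}, the segments inside the triangle on $e^1_1,e^1_2,e^1_3$, the segments from $e^0$ to that triangle, and all segments at $e^2$ have no internal vertices. Hence no other vertices remain to consider.
\end{itemize}

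It follows that $x=\eta(e^2)$, so $\eta(e^4)$ is adjacent to $\eta(e^2)$. Moreover, any extra neighbor must equal $\eta(e^2)$ and $G$ is simple, so $\eta(e^4)$ has exactly four neighbors: one on each of its three segments, together with $\eta(e^2)$.

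I expect the main obstacle to be the third case, where $x$ lies on the $e^1_j$--$e^3_j$ side for $j\neq i$. There one must choose the contraction carefully to produce a Table~\ref{tab:1} graph, and the choice may depend on whether $x$ is $\eta(e^1_j)$, $\eta(e^3_j)$, or an interior vertex of that segment. I would first try contracting the whole closed domain of $e^3_j$ into one vertex, which should give~$(2,1)$ or~$(3,4)$. If that fails, I would fall back on contracting $\eta(e^2e^3_k)$ for the third index $k$, as in Case~2 of Lemma~\ref{lem:E20ni2}.
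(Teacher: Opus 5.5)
Your proposal is correct and is essentially the paper's proof: induced segments give exactly three neighbors of $\eta(e^4)$ in its closed domain, $(2,2)$ excludes $\eta(e^0)$, Lemmas~\ref{lem:E20ni1} and~\ref{lem:E20ni2} leave only $\eta(e^2)$ and the paths $\eta(e^1_je^3_j)\cup\eta(e^3_je^4)$ as candidates, and Lemma~\ref{lem:E20ip} excludes those paths. Lemma~\ref{lem:E20ip} holds for every $j\in[3]$, and all three placements in your third bullet lie on that induced path, so there are no ``remaining placements'', no Table~\ref{tab:1} contraction is needed, and there is no ``$j\neq i$'' obstacle. Also, your second bullet follows from the first part of Lemma~\ref{lem:JT} (segments are induced), not the second part, which concerns edges from vertices of $\eta(e_1)-\eta(u)$ and so excludes $\eta(e^4)$ itself.
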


\begin{proof}
	By Lemma~\ref{lem:JT}, every segment of \( \eta(E_{20}) \) is an induced path in $G$. 
	 
	Since \( G \) is \( 4 \)-connected, \( \eta(e^4) \) has a neighbor \( u \) outside the closed domain of \( e^4 \). By $(2,2)$, \( u \neq \eta(e^0) \).  
	By Lemmas~\ref{lem:E20ni1}, \ref{lem:E20ni2}, and \ref{lem:E20ip}, we conclude that \( u = \eta(e^2) \).  
	This completes the proof.
\end{proof}

\begin{lemma}[\cite{Lo2025}] \label{lem:E20ind}
	Let \( H \) be a graph obtained from \( E_{20} \) by subdividing two independent edges, each with one new vertex, and then joining these two new vertices.  
	Then \( H \) contains \( K_{3,4} \) or \( F_4 \) as a minor.
\end{lemma}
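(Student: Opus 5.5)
This statement is a finite check, so I would prove it by an explicit case analysis over the pairs of independent edges of $E_{20}$, reduced using the automorphism group of $E_{20}$. Since the six automorphisms correspond to the permutations of $\{e^1_1,e^1_2,e^1_3\}$, I would first list the edges of $E_{20}$ as they appear in Figure~\ref{fig:DEF} and Section~\ref{sec:E20}. These are the edges $e^0e^1_i$, $e^1_ie^1_j$, $e^0e^2$, $e^3_ie^2$, $e^1_ie^3_i$ and $e^3_ie^4$, with indices in $[3]$. I would then sort the unordered pairs of independent edges into orbits under this $S_3$ action. This leaves a manageable list of representatives, for instance $\{e^0e^1_1,\,e^1_2e^1_3\}$, $\{e^0e^1_1,\,e^3_2e^4\}$, $\{e^1_1e^1_2,\,e^3_3e^2\}$, $\{e^0e^2,\,e^1_1e^3_1\}$, $\{e^1_1e^3_1,\,e^3_2e^4\}$, and so on. For each representative, let $H$ be the resulting graph, with new vertices $a$ on the first edge and $b$ on the second, and with the edge $ab$ added.

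For each representative I would exhibit either an $F_4$ minor or a $K_{3,4}$ minor in $H$. Two cheap reductions come first.

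\textbf{Step 1.} If one of the two subdivided edges, say $xy$, can be discarded, the problem shrinks to adding a single edge. Deleting the edge $ax$ and contracting $ay$ gives $E_{20}$ plus an edge from $y$ to a subdivision vertex $b$ of the other edge. So whenever the graphs in Table~\ref{tab:1} (e.g.\ $(1,1)$, $(2,1)$, $(2,2)$, $(3,3)$) already record that such an edge yields $F_4$ or $K_{3,4}$, the case is done. Many cases in the proofs of Lemmas~\ref{lem:E20ni2}--\ref{lem:E20e4e2} are exactly of this form.

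\textbf{Step 2.} If instead contracting both $ax$ and $bz$ gives $E_{20}+xz$ for a non-adjacent pair $x,z$, I would check whether this graph contains $K_{3,4}$, for instance via $(2,2)$ when $\{x,z\}=\{e^0,e^4\}$.

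\textbf{Step 3.} The remaining representatives, where neither reduction applies, need a hand-built model. Here I would try to use $a$ and $b$ as two of the three degree-four vertices of a $K_{3,4}$. Each of $a$ and $b$ has degree three in $H$, so each needs a fourth neighbour, which I would obtain by merging it with an adjacent vertex. I would then find the third branch set and the four degree-three branch vertices among the remaining vertices of $E_{20}$, using its high symmetry around $e^2$ and $e^4$. Failing that, I would look for $F_4$ by locating the two triangles of $F_4$ near $e^1_1e^1_2e^1_3$.

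I expect this last group of cases to be the main obstacle. There is no uniform argument for them, and each requires an explicit model. Honestly, the cleanest rigorous route is a computer check over the orbit representatives, just as the paper uses for Lemma~\ref{lem:DEFK34mf}. The plan is to state the orbit list and give the minor (or table entry) for each case.
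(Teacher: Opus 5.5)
The paper gives no proof of this lemma; it is quoted from \cite{Lo2025}. Your proposal therefore has to stand on its own, and as written it does not. No case is actually checked. The list of representatives stops at ``and so on'', although there are $18$ orbits of independent pairs under the $S_3$ action. The argument then defers to a computer search that is never carried out.

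Step~1 is also misstated. Deleting $ax$ and then contracting $ay$ destroys the edge $xy$, so you get $E_{20}-xy$ plus an edge. What you want is to contract $ax$, which gives $E_{20}$ with the other edge subdivided by $b$ and the edge $xb$ added. With this fix Step~1 is quite strong. In every orbit but one, you can choose which half-edge to contract so that the result is an attachment the paper already records. Two examples:
\begin{itemize}
\item For $\{e^1_1e^3_1,e^1_2e^3_2\}$, contract $a$ into $e^1_1$; this is the $(1,2)$ situation from Case~1 of Lemma~\ref{lem:E20int}.
\item For $\{e^2e^3_1,e^3_2e^4\}$, contract $b$ into $e^3_2$; this is the $(2,1)$ situation from Case~2 of Lemma~\ref{lem:E20ni2}.
\end{itemize}
The remaining orbits use $(1,1)$, $(2,2)$, $(3,3)$ and $(3,5)$ in the same way.

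The genuine gap is the one remaining orbit, $\{e^0e^1_1,e^1_2e^1_3\}$. These are opposite edges of the $K_4$ on $\{e^0,e^1_1,e^1_2,e^1_3\}$.
\begin{itemize}
\item Step~2 only adds an edge of this $K_4$, so it returns $E_{20}$ itself.
\item Step~1 yields $E_{20}$ with a $K_4$-edge subdivided and its new vertex joined to a third $K_4$ vertex. The paper records no table entry for this attachment.
\item Step~3 cannot succeed as planned. If $a$ and $b$ lie in two different degree-four branch sets, then $ab$ joins two non-adjacent branch sets and is unused. The model then lives in $H-ab$, a subdivision of $E_{20}$. But $E_{20}$ is a spanning subgraph of the oloidal graph $\mathfrak{E}_0$, so it has no $K_{3,4}$ minor by Proposition~\ref{pro:olo}.
\item Your $F_4$ fallback is misdirected. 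Reading off the segments used in Section~\ref{sec:F4}, $F_4$ is two copies of $K_{2,3}$, namely $\{f^i,f^i_3\}$ versus $\{f^i_1,f^i_2,f^i_4\}$. These are joined by the edges $f^1_1f^2_4$, $f^2_1f^1_4$, $f^1_2f^2_3$ and $f^2_2f^1_3$. In particular, $F_4$ has no triangles.
\end{itemize}

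The missing idea is that $E_{20}$ already contains one such $K_{2,3}$ together with its four outgoing edges. It is $\{e^2,e^4\}$ versus $\{e^3_1,e^3_2,e^3_3\}$, leaving via $e^2e^0$ and $e^3_ie^1_i$. In this orbit, $K_4\cup\{a,b\}$ becomes $K_{3,3}$ with sides $\{e^0,e^1_1,b\}$ and $\{e^1_2,e^1_3,a\}$, and this supplies the second copy. Explicitly, the following branch sets form an $F_4$ model:
\[
\begin{aligned}
&f^1=\{a\},\quad f^1_1=\{e^1_1\},\quad f^1_2=\{e^0\},\quad f^1_3=\{e^1_2\},\quad f^1_4=\{b,e^1_3\},\\
&f^2=\{e^4\},\quad f^2_1=\{e^3_3\},\quad f^2_2=\{e^3_2\},\quad f^2_3=\{e^2\},\quad f^2_4=\{e^3_1\}.
\end{aligned}
\]
In this model the edge $ab$ realizes $f^1f^1_4$, which closes the last case.
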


\begin{lemma} \label{lem:E20int}
	Let \( G \) be a \( 4 \)-connected graph that contains neither $K_{3,4}$, \( F_4 \), nor any graph obtained from \( E_{20} \) by splitting a vertex as a minor.  
	Suppose \( G \) contains a spanning JT-subdivision \( \eta(E_{20}) \) of \( E_{20} \), and \( \eta(e^1_i e^3_i) \) or \( \eta(e^3_i e^4) \), with \( i \in [3] \), contains an internal vertex \( v \). Then \( v \) is adjacent to \( \eta(e^2) \) and \( \eta(e^3_j) \) for some \( j \in [3] \setminus \{i\} \) and has degree four in $G$.
\end{lemma}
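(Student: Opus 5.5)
We argue in the same style as Lemmas~\ref{lem:E20ni2} and~\ref{lem:E20e4e2}, by locating the neighbours of $v$. By Lemma~\ref{lem:JT}, every segment of $\eta(E_{20})$ is an induced path in $G$. By Lemma~\ref{lem:E20ip}, the union $Q_i:=\eta(e^1_ie^3_i)\cup\eta(e^3_ie^4)$ is an induced path. Hence $v$ has exactly two neighbours on $Q_i$. Since $G$ is $4$-connected, $v$ therefore has at least two neighbours outside $Q_i$. The task is to show that these are exactly $\eta(e^2)$ and a single vertex $\eta(e^3_j)$ with $j\neq i$.

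\textbf{Step 1: exclude the other domains.} Suppose $v$ has a neighbour $u\notin Q_i$. We list the possible locations of $u$ as in the proof of Lemma~\ref{lem:E20ni2}: the domain of $e^0$, the domain of $e^1_j$, or the domain of $e^3_j$ for some $j\in[3]$. We do not need to consider $e^2$ or $e^4$ separately, because by Lemmas~\ref{lem:E20ni1}, \ref{lem:E20ni2} and~\ref{lem:E20e4e2}, $\eta(e^2)$ is only reachable as a vertex and $\eta(e^4)$ has degree four. For each location we add the edge $vu$ to $\eta(E_{20})$, contract suitable domains, and check that the result is a subdivision of one of the graphs in Table~\ref{tab:1}, giving an $F_4$ or $K_{3,4}$ minor. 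In particular:
\begin{itemize}
\item a neighbour in the domain of $e^0$ should yield $(2,1)$ or $(2,2)$;
\item a neighbour in the domain of $e^1_j$, or an internal vertex of $\eta(e^1_je^3_j)$ or $\eta(e^3_je^4)$ with $j\ne i$, should be excluded by Lemma~\ref{lem:E20ind}, since $vu$ then joins interiors of two independent edges.
\end{itemize}
When $u$ is an interior vertex of an edge sharing an endpoint with the edge containing $v$, we instead use the second part of Lemma~\ref{lem:JT}, which forces $u$ or $v$ to be adjacent to the branch vertex; this case is then absorbed into the other cases. After these exclusions, the only remaining possible neighbours of $v$ off $Q_i$ are $\eta(e^2)$ and the branch vertices $\eta(e^3_j)$ with $j\neq i$.

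\textbf{Step 2: pin down the degree.} It remains to exclude two configurations.
\begin{itemize}
\item $v$ is adjacent to both $\eta(e^3_j)$ and $\eta(e^3_{j'})$ with $\{j,j'\}=[3]\setminus\{i\}$. I expect this to produce $K_{3,4}$, found by a direct model with the three vertices $\eta(e^3_\cdot)$ on one side, perhaps via one of $(3,\cdot)$.
\item $v$ is not adjacent to $\eta(e^2)$. Then $v$ has both $\eta(e^3_j)$ and $\eta(e^3_{j'})$ as neighbours, which is the previous configuration.
\end{itemize}
Once both are excluded, $v$ has exactly the two path neighbours, $\eta(e^2)$, and one $\eta(e^3_j)$, so $v$ has degree four.

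The main obstacle is Step~1. Each adjacency pattern must be matched against a specific graph of Table~\ref{tab:1}, and the argument must treat separately whether $v$ lies on $\eta(e^1_ie^3_i)$ or on $\eta(e^3_ie^4)$, since the resulting contractions differ. For each case I would first try adding the single edge $vu$, contracting $Q_i$ minus $v$ suitably, and comparing the outcome with $(2,j)$ and $(3,j)$, falling back on Lemma~\ref{lem:E20ind} whenever the two segments involved are independent.
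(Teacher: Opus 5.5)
Your outline follows the paper's strategy: $Q_i$ is an induced path, and the outside neighbours of $v$ are located using the table graphs. However, two steps fail as stated, and they are exactly where the paper's proof does its real work.

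\textbf{First gap: $v$ on $\eta(e^3_ie^4)$.} Here the second part of Lemma~\ref{lem:JT}, applied at the degree-three vertex $e^4$, does not eliminate a neighbour $w$ that is an internal vertex of $\eta(e^3_je^4)$ with $j\neq i$. It only says that $v$ and $w$ are then the neighbours of $\eta(e^4)$ on their respective segments. This case is not ``absorbed'' by any other case.

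So your conclusion that the only remaining candidates are $\eta(e^2)$ and the \emph{branch} vertices $\eta(e^3_j)$ is unjustified. The correct list is $\eta(e^2)$ and the neighbour of $\eta(e^4)$ on $\eta(e^3_je^4)$, which equals $\eta(e^3_j)$ only when that segment is trivial. The paper excludes two such neighbours at once by $(3,6)$, but it must still prove that the remaining neighbour $w$ equals $\eta(e^3_j)$.

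It does this by contradiction, as follows.
\begin{itemize}
\item If $w$ is internal, the analysis already done applies to $w$. So $w$ has degree four, with neighbours $\eta(e^4)$, its other path neighbour, $v$, and $\eta(e^2)$.
\item Hence the third branch vertex $\eta(e^3_k)$, $k\notin\{i,j\}$, is adjacent to neither $v$ nor $w$. By $4$-connectivity it has a neighbour outside its closed domain.
\item Up to symmetry, that neighbour lies in the domain of $e^3_i$ or of $e^1_i$. The former gives $(3,2)$. The latter, via Lemma~\ref{lem:E20ni1} and then the fourth neighbour of $\eta(e^3_i)$, gives $(2,1)$, $(2,4)$, $(2,5)$ or $(3,2)$.
\end{itemize}
The missing idea in your plan is this use of $4$-connectivity at branch vertices other than $v$.

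\textbf{Second gap: Step~2.} You expect a direct $K_{3,4}$ model when $v\in\eta(e^1_ie^3_i)$ is adjacent to both $\eta(e^3_j)$ and $\eta(e^3_k)$. No such model exists.

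Take the smallest instance: all other segments trivial, the edge $\eta(e^2)\eta(e^4)$ present, and even the edge $v\eta(e^2)$ allowed. This graph has no $K_{3,4}$ minor. To see this, note that it lies inside $\mathfrak{D}_{0,1}$ plus a degree-three vertex attached to the triangle $\eta(e^2)\,v\,\eta(e^4)$. Indeed, deleting $\eta(e^3_i)$ and completing the triangle on its neighbours gives exactly $\mathfrak{D}_{0,1}$. A vertex attached to a triangle can only serve in a $K_{3,4}$ model as a singleton $Y$-set with its three neighbours in distinct $X$-sets, and a short check rules that out.

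The paper instead uses that $\eta(e^3_i)$, whose only neighbours inside its closed domain are its three segment neighbours, needs a fourth neighbour outside that closed domain. It then excludes every possible location by $(2,1)$, $(2,4)$ and $(3,2)$.

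\textbf{A smaller error.} A neighbour $u=\eta(e^1_j)$ is a branch vertex, not an interior vertex, so Lemma~\ref{lem:E20ind} does not apply to it. The paper handles this case with $(1,2)$ when $v\in\eta(e^1_ie^3_i)$ and with $(3,5)$ when $v\in\eta(e^3_ie^4)$.
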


\begin{proof}
	By Lemma~\ref{lem:JT}, every segment of \( \eta(E_{20}) \) is an induced path in $G$. 
	
	By Lemma~\ref{lem:E20e4e2}, $\eta(e^2)$ and $\eta(e^4)$ are adjacent.
		
	Without loss of generality, assume \( i = 1 \). So $v$ is an internal vertex of $\eta(e)$, where $e$ is either \( e^1_1 e^3_1 \) or \( e^3_1 e^4 \). Clearly, \( v \) has at least two neighbors outside \( \eta(e) \). Let \( u \) be such a neighbor.

	\smallskip
	
	\noindent\textbf{Case~1.} \( e = e^1_1 e^3_1 \). 
	
	\smallskip
	
	By $(2,1)$ and $(1,2)$, \( u \) does not lie in the domain of \( e^0 \) or \( e^1_j \) with \( j \in \{2,3\} \). Moreover, by Lemmas~\ref{lem:E20ip} and~\ref{lem:E20ind}, \( u \) is not in the domain of \( e^4 \). It follows from Lemma~\ref{lem:E20ni2} that \( u \) must be \( \eta(e^2) \), \( \eta(e^3_2) \), or \( \eta(e^3_3) \).
	
	Suppose \( v \) is adjacent to both \( \eta(e^3_2) \) and \( \eta(e^3_3) \). It is clear that \( \eta(e^3_1) \) has a neighbor \( w \) outside the closed domain of $e^3_1$, and \( w \) is not in the domain of \( e^0 \). Thus \( w \) must lie in the domain of \( e^1_2 \) or \( e^1_3 \), or in the domain of \( e^3_2 \) or \( e^3_3 \), which is impossible by $(2,4)$ and $(3,2)$. Therefore, \( v \) is adjacent to \( \eta(e^2) \) and \( \eta(e^3_j) \) for some \( j \in \{2,3\} \) and has degree four in $G$.

	\smallskip
	
	\smallskip
	
	\noindent\textbf{Case~2.} \( e = e^3_1 e^4 \). 
	
	\smallskip
	
	By $(2,1)$ and $(3,5)$, \( u \) does not lie in the domain of \( e^0 \), \( e^1_2 \), or \( e^1_3 \). Consequently, by Lemma~\ref{lem:E20ip}, \( u \) is not in the domain of \( e^1_1 \). It follows from Lemmas~\ref{lem:JT} and~\ref{lem:E20ni2} that \( u \) must be \( \eta(e^2) \) or the neighbor of \( \eta(e^4) \) in \( \eta(e^3_2 e^4) \) or \( \eta(e^3_3 e^4) \).
	
	In fact, \( v \) is not adjacent to both the neighbors of \( \eta(e^4) \) in \( \eta(e^3_2 e^4) \) and in \( \eta(e^3_3 e^4) \), by $(3,6)$. This implies that \( v \) has degree four and is adjacent to \( \eta(e^2) \) and the neighbor of \( \eta(e^4) \) in \( \eta(e^4 e^3_j) \) for some \( j \in \{2,3\} \). Without loss of generality, assume \( j = 3 \).  Denote by $w$ the neighbor of \( \eta(e^4) \) in \( \eta(e^4 e^3_3) \).
	
	We claim that \( w = \eta(e^3_3) \); in other words, the segment \( \eta(e^4 e^3_3) \) has no internal vertex. Suppose otherwise. By the above, \( w \) is adjacent to \( \eta(e^2) \). Note that, by Lemma~\ref{lem:JT}, $v$ and $w$ are neighbors of $\eta(e^4)$ in $\eta(e^3_1 e^4)$ and $\eta(e^3_3 e^4)$, respectively. Moreover, \( \eta(e^3_2) \) is adjacent to neither \( v \) nor \( w \) (as the neighbors of \( v \) and \( w \) have already been determined).  
	
	It is clear that \( \eta(e^3_2) \) has a neighbor outside the closed domain of \( e^3_2 \), and, by $(2,1)$, this neighbor does not lie in the domain of \( e^0 \). By symmetry, it suffices to consider the cases where \( \eta(e^3_2) \) joins the domain of \( e^1_1 \) or the domain of \( e^3_1 \).
	
If \( \eta(e^3_2) \) joins the domain of \( e^1_1 \), then by Lemma~\ref{lem:E20ni1},  
\( \eta(e^3_2) \) is adjacent to a vertex in \( \eta(e^1_1 e^3_1) - \eta(e^3_1) \).  
It is clear that \( \eta(e^3_1) \) has a neighbor outside the closed domain of \( e^3_1 \).  
By \((2,1)\), \((2,4)\), \((2,5)\), and \((3,2)\), this neighbor of \( \eta(e^3_1) \) does not lie in the domain of  
\( e^0 \), \( e^1_2 \), \( e^1_3 \), or \( e^3_2 \).  
If that neighbor lies in \( \eta(e^3_3 e^4) - \{w, \eta(e^4)\} \), then \( G \) contains a minor of \((2,5)\) (this can be seen by contracting the union of \( \eta(e^1_3 e^3_3) \) and \( \eta(e^3_3 e^4) - \{w, \eta(e^4)\} \)).  
As \( \eta(e^3_1) \) and \( w \) are not adjacent, we have considered all cases.  
In each case, we obtain a contradiction to the assumption that \( G \) has no \( K_{3,4} \) minor.

	If \( \eta(e^3_2) \) joins the domain of \( e^3_1 \), then \( G \) would contain a minor of $(3,2)$, a contradiction.
	
	\smallskip
	
	This completes the proof.
\end{proof}

\begin{proposition} \label{pro:E20}
	Let \( G \) be a \( 4 \)-connected graph that contains neither $K_{3,4}$, \( F_4 \), nor any graph obtained from \( E_{20} \) by splitting a vertex as a minor.  
	Suppose \( G \) contains a spanning JT-subdivision \( \eta(E_{20}) \) of \( E_{20} \).
	Then one of the following holds:
	\begin{itemize}
		\item \( G \) contains \( \mathfrak{D}_{0,1} \) as a spanning subgraph such that each of $\delta^1_2$, $\delta^1_4$, and \( \sigma^2_1 \) has degree four in $G$.
		\item \( G \) contains \( \mathfrak{D}_{0,s^2} \), with $s^2=|V(G)|-8\ge2$, as a spanning subgraph such that every vertex on either spine has degree four in $G$.
		\item \( G \) contains \( \mathfrak{E}_0 \) as a spanning subgraph such that each of \( \varepsilon^0 \) and \( \varepsilon^4 \) has degree four in $G$.
	\end{itemize}
\end{proposition}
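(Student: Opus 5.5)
The proof will assemble the preceding lemmas into a global description of $G$. By Lemmas~\ref{lem:E20ni1} and~\ref{lem:E20ni2}, the segments among $e^0, e^1_1,e^1_2,e^1_3, e^2$ and from $e^2$ to each $e^3_i$ are single edges. Hence every internal vertex of the spanning subdivision lies on one of the six paths $\eta(e^1_i e^3_i)$ or $\eta(e^3_i e^4)$. By Lemma~\ref{lem:E20ip}, the union of $\eta(e^1_ie^3_i)$ and $\eta(e^3_ie^4)$ is an induced path $Q_i$ from $\eta(e^1_i)$ to $\eta(e^4)$. By Lemma~\ref{lem:E20int}, every internal vertex on $Q_i$ has degree four and is adjacent to $\eta(e^2)$ and to exactly one branch vertex $\eta(e^3_j)$ with $j\ne i$. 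Lemma~\ref{lem:E20e4e2} adds that $\eta(e^4)\eta(e^2)$ is an edge and that $\eta(e^4)$ has degree four.

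The first main step is to show that at most one path $Q_i$ carries internal vertices, and that all of them attach to a single $\eta(e^3_j)$. If internal vertices lay on two different $Q_i$, or attached to two different branch vertices, I would exhibit one of the tabulated graphs as a minor, using the contraction style of Lemma~\ref{lem:E20int} Case~2. From that case we already know that an internal vertex of $\eta(e^3_1e^4)$ attached to $\eta(e^3_3)$ forces $\eta(e^3_3e^4)$ to be trivial, and the same style of argument should handle the remaining configurations. I also need the adjacencies of the branch vertices $\eta(e^3_i)$ beyond their closed domains. By $(2,1)$, $(2,4)$, $(3,2)$ and 4-connectivity, each $\eta(e^3_i)$ without internal neighbours must reach into another $Q_j$, and this is exactly what produces the spine structure. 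After relabelling by the $S_3$ symmetry of $E_{20}$, we may assume every internal vertex lies on $Q_1$ and is adjacent to $\eta(e^2)$ and $\eta(e^3_2)$. I expect this step to be the main obstacle: it requires a careful exhaustive case analysis against Table~\ref{tab:1}, in particular ruling out internal vertices on both $\eta(e^1_1e^3_1)$ and $\eta(e^3_1e^4)$ when they would attach to different $\eta(e^3_j)$.

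Next, split by the number $m = |V(G)|-10$ of internal vertices. Since $|V(E_{20})|=10$, these vertices together with the segment through $\eta(e^3_1)$ form a path whose vertices are all adjacent to both $\eta(e^2)$ and $\eta(e^3_2)$, and $\eta(e^2)\eta(e^3_2)$ is an edge. This is exactly the spine pattern of $\mathfrak{D}_{0,s^2}$, with $\delta^2_2,\delta^2_4$ corresponding to $\eta(e^2),\eta(e^3_2)$.

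If $m=0$, the remaining adjacency forced on $\eta(e^4)$, $\eta(e^0)$ and the $\eta(e^3_i)$ should give $\mathfrak{E}_0$ as a spanning subgraph with $\varepsilon^0=\eta(e^0)$ and $\varepsilon^4=\eta(e^4)$. Here the degree of $\eta(e^0)$ is four by Lemma~\ref{lem:E20ni1} and $(2,1)$/$(2,2)$, and the degree of $\eta(e^4)$ is four by Lemma~\ref{lem:E20e4e2}. If $m\ge1$, I would write down an explicit isomorphism from the resulting graph onto $\mathfrak{D}_{0,s^2}$ with $s^2=|V(G)|-8$, mirroring the correspondence in Proposition~\ref{pro:olo}. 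Vertices of degree four are those on the spine, together with $\delta^1_2$ and $\delta^1_4$ when $s^2=1$; this splits into the first and second alternatives according to whether $s^2=1$ or $s^2\ge2$. Finally I would check that no further edges touch the claimed degree-four vertices, again by Lemma~\ref{lem:E20int} and Lemma~\ref{lem:E20e4e2}.
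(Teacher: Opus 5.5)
Your outline has the right overall shape, but the case bookkeeping is wrong and two steps fail.

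First, $E_{20}$ has nine vertices ($e^0,e^1_1,e^1_2,e^1_3,e^2,e^3_1,e^3_2,e^3_3,e^4$), not ten. So $|V(G)|-8$ is one more than the number of internal vertices: $\mathfrak{D}_{0,1}$ occurs only when there are \emph{no} internal vertices, and a single internal vertex already forces $s^2\ge 2$. The case without internal vertices must therefore yield both the first and the third alternative, whereas you assign it only $\mathfrak{E}_0$. The paper splits it using $(2,1)$: each $\eta(e^3_i)$ has a further neighbour $\eta(e^1_j)$ or $\eta(e^3_j)$ with $j\neq i$.
\begin{itemize}
\item If some $\eta(e^3_i)\eta(e^1_j)$ is an edge, then $(2,4)$ and $(2,5)$ make the other two $\eta(e^3_k)$ adjacent, and one gets $\mathfrak{E}_0$.
\item Otherwise the $\eta(e^3_k)$ contain a path of length two, and one gets $\mathfrak{D}_{0,1}$ with $\delta^1_4=\eta(e^0)$ and $\sigma^2_1=\eta(e^4)$. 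This is where your $(2,1)$/$(2,2)$ argument for $\eta(e^0)$ belongs.
\end{itemize}
In the $\mathfrak{E}_0$ case your identification $\varepsilon^0=\eta(e^0)$ is impossible. $\mathfrak{E}_0$ has a single non-trivial automorphism, swapping $\varepsilon^0\leftrightarrow\varepsilon^4$, and $\eta(e^0)$ always plays $\varepsilon^1_2$ or $\varepsilon^3_1$. After normalizing so that $\eta(e^3_1)\eta(e^1_2)\in E(G)$, one has $\{\varepsilon^0,\varepsilon^4\}=\{\eta(e^1_1),\eta(e^4)\}$, and nothing you cite bounds the degree of $\eta(e^1_1)$. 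The missing idea is to use that involution. It gives a second spanning copy of $E_{20}$ in $G$ in which $\eta(e^1_1)$ plays $e^4$. By Proposition~\ref{pro:JT} this copy is again a JT-subdivision, and Lemma~\ref{lem:E20e4e2} applied to it gives degree four.

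Second, your ``first main step'' asks for more than the tabulated minors can deliver, because internal vertices may lie on two of the paths $Q_i$. In $\mathfrak{D}_{0,3}$ take $\eta(e^1_k)=\delta^1_k$ for $k\in[3]$, $\eta(e^0)=\delta^1_4$, $\eta(e^2)=\delta^2_4$, $\eta(e^3_2)=\delta^2_2$, and $\eta(e^3_1),\eta(e^4),\eta(e^3_3)=\sigma^2_1,\sigma^2_2,\sigma^2_3$. This spanning subdivision satisfies the conclusions of every lemma you invoke, yet has internal vertices $\sigma^2_0\in Q_1$ and $\sigma^2_4\in Q_3$. $\mathfrak{D}_{0,3}$ has no $K_{3,4}$ minor (Lemma~\ref{lem:DEFK34mf}) and is precisely a graph the second alternative must cover, so no argument through the tables can exclude this configuration.

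What is true, and what the paper proves in Case~2.1, is weaker: all internal vertices attach to a single $\eta(e^3_j)$, and $Q_j$ has no internal vertex. The non-trivial spine is then $(Q_i\cup Q_k)-\{\eta(e^1_i),\eta(e^1_k)\}$, running through $\eta(e^3_i)$, $\eta(e^4)$, $\eta(e^3_k)$. Even then, two things remain to be proved:
\begin{itemize}
\item $\eta(e^3_i)$ and $\eta(e^3_k)$ are adjacent to $\eta(e^3_j)$ and have degree four. For one of them the paper re-routes the subdivision so that it becomes internal to a segment $\eta'(e^1_3e^3_3)$, and then applies $(1,2)$.
\item For $s^2\ge 2$, the vertices $\eta(e^0)$ and $\eta(e^1_2)$ of the trivial spine have degree four. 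Your plan mentions this only for $s^2=1$.
\end{itemize}
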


\begin{proof}
	Recall that, by Lemma~\ref{lem:JT}, every segment of $\eta(E_{20})$ is an induced path and, by Lemma~\ref{lem:E20e4e2}, $\eta(e^4)$ has degree four and is adjacent to $\eta(e^2)$.
	
We consider two cases, depending on whether \( \eta(E_{20}) \) has any internal vertices.

	\smallskip
	
	\noindent\textbf{Case~1.} \( |V(G)| = |V(E_{20})| \). 
	
	\smallskip

	In this case, there is no internal vertex.
	
	By $(2,1)$, each \( \eta(e^3_i) \) with \( i \in [3] \) has a neighbor that is \( \eta(e^1_j) \) or \( \eta(e^3_j) \) for some \( j \in [3] \setminus \{i\} \).

\smallskip

\noindent\textbf{Case~1.1.} There exist distinct \( i, j \in [3] \) such that \( \eta(e^3_i) \) is adjacent to \( \eta(e^1_j) \).

\smallskip

Without loss of generality, assume \( i = 1 \) and \( j = 2 \). By \((2,4)\) and \((2,5)\), \( \eta(e^3_2) \) is adjacent to \( \eta(e^3_3) \). Hence \( G \) contains \( \mathfrak{E}_0 \) as a spanning subgraph. More precisely, \( \mathfrak{E}_0 \) is isomorphic to the graph obtained from \( \eta(E_{20}) \) by adding the edges \( \eta(e^3_1)\eta(e^1_2) \), \( \eta(e^3_2)\eta(e^3_3) \), and \( \eta(e^2)\eta(e^4) \), where the vertices $\varepsilon^0$, $\varepsilon^1_1$, $\varepsilon^1_2$, $\varepsilon^1_3$, $\varepsilon^2$, $\varepsilon^3_1$, $\varepsilon^3_2$, $\varepsilon^3_3$, $\varepsilon^4$ correspond to $\eta(e^1_1)$, $\eta(e^1_2)$, $\eta(e^0)$, $\eta(e^1_3)$, $\eta(e^3_1)$, $\eta(e^3_2)$, $\eta(e^2)$, $\eta(e^3_3)$, $\eta(e^4)$, respectively. 

The vertex \( \varepsilon^4 \), corresponding to \( \eta(e^4) \), has degree four in $G$ by Lemma~\ref{lem:E20e4e2}. 

Observe that \( \mathfrak{E}_0 \) has an automorphism mapping $\varepsilon^0$, $\varepsilon^1_1$, $\varepsilon^1_2$, $\varepsilon^1_3$, $\varepsilon^2$, $\varepsilon^3_1$, $\varepsilon^3_2$, $\varepsilon^3_3$, $\varepsilon^4$ to $\varepsilon^4$, $\varepsilon^3_2$, $\varepsilon^3_1$, $\varepsilon^3_3$, $\varepsilon^2$, $\varepsilon^1_2$, $\varepsilon^1_1$, $\varepsilon^1_3$, $\varepsilon^0$, respectively. Consequently, by Proposition~\ref{pro:JT}, we may consider another spanning JT-subdivision \( \eta'(E_{20}) \) of \( E_{20} \) with \( \eta'(e^4)=\eta(e^1_1) \), and conclude that \( \varepsilon^0 \), corresponding to \( \eta(e^1_1) \), has degree four in $G$.

\smallskip

\noindent\textbf{Case~1.2.} There do not exist distinct \( i, j \in [3] \) such that \( \eta(e^3_i) \) is adjacent to \( \eta(e^1_j) \).

\smallskip

	Without loss of generality, we assume $\eta(e^3_1)\eta(e^3_2), \eta(e^3_2)\eta(e^3_3) \in E(G)$. In fact, \( \mathfrak{D}_{0,1} \) is isomorphic to the graph obtained from \( \eta(E_{20}) \) by adding the edges $\eta(e^3_1)\eta(e^3_2)$, $\eta(e^3_2)\eta(e^3_3)$, and \( \eta(e^2)\eta(e^4) \), where the vertices $\delta^1_1$, $\delta^1_2$, $\delta^1_3$, $\delta^1_4$, $\delta^2_1$, $\delta^2_2$, $\delta^2_3$, $\delta^2_4$, $\sigma^2_1$ correspond to $\eta(e^1_1)$, $\eta(e^1_2)$, $\eta(e^1_3)$, $\eta(e^0)$, $\eta(e^3_1)$, $\eta(e^3_2)$, $\eta(e^3_3)$, $\eta(e^2)$, $\eta(e^4)$, respectively. So \( G \) contains \( \mathfrak{D}_{0,1} \) as a spanning subgraph. 
	
	By Lemma~\ref{lem:E20e4e2}, the vertex $\sigma^2_1$, corresponding to $\eta(e^4)$, has degree four in $G$. 
	
By $(2,1)$ and $(2,2)$, the vertex $\delta^1_4$, corresponding to $\eta(e^0)$, has degree four in $G$. Note that the subgraph of $G$ isomorphic to $\mathfrak{D}_{0,1}$ admits an automorphism that swaps $\eta(e^0)$ and $\eta(e^1_2)$, swaps $\eta(e^2)$ and $\eta(e^3_2)$, and fixes all other vertices. Therefore, again by $(2,1)$ and $(2,2)$, we deduce that the vertex $\delta^1_2$, corresponding to $\eta(e^1_2)$, also has degree four in $G$.

	\smallskip

	\smallskip
	
	\noindent\textbf{Case~2.} \( |V(G)| > |V(E_{20})| \). 
	
	\smallskip
	
By Lemmas~\ref{lem:E20ni1},~\ref{lem:E20ni2}, and~\ref{lem:E20int}, there exist distinct \( i, j \in [3] \) such that \( \eta(e) \), where \( e = e^1_i e^3_i \) or \( e^3_i e^4 \), has an internal vertex \( v \) adjacent to $\eta(e^2)$ and \( \eta(e^3_j) \). Without loss of generality, assume \( i = 1 \) and \( j = 2 \).

	\smallskip
	
	\noindent\textbf{Case~2.1.} $e = e^1_1 e^3_1$.
	
	\smallskip

	By $(2,4)$, $(2,5)$, and Lemma~\ref{lem:E20int}, \( \eta(e^1_2 e^3_2) \) has no internal vertex.
	
	If \( \eta(e^3_2 e^4) \) had an internal vertex, then by Lemma~\ref{lem:E20int}, that internal vertex would be adjacent to $\eta(e^2)$ and \( G \) would contain as a minor the graph obtained from \( E_{20} \) by subdividing \( e^1_1 e^3_1 \) and \( e^1_2 e^3_2 \), each with one new vertex, and joining these new vertices with an edge. This yields a contradiction by Lemma~\ref{lem:E20ind}. Hence \( \eta(e^3_2 e^4) \) has no internal vertex.
	
	By $(3,2)$ and Lemma~\ref{lem:E20int}, it is straightforward to show that any internal vertex of \( \eta(e^1_1 e^3_1) \), \( \eta(e^3_1 e^4) \), \( \eta(e^1_3 e^3_3) \), or \( \eta(e^3_3 e^4) \) is adjacent to \( \eta(e^3_2) \). 
	
By $(2,1)$, $(2,4)$, $(2,5)$, $(3,2)$, Lemma~\ref{lem:E20ni2}, and the fact that \( \eta(e^1_2 e^3_2) \) and \( \eta(e^3_2 e^4) \) have no internal vertex, we have that \( \eta(e^3_1) \) has degree four and is adjacent to \( \eta(e^3_2) \).

By $(3,2)$ and by the fact that no internal vertex of \( \eta(e^1_1 e^3_1) \) is adjacent to \( \eta(e^3_3) \), \( \eta(e^3_3) \) does not join the domain of \( e^3_1 \). By $(2,1)$, \( \eta(e^3_3) \) does not join the domain of \( e^0 \). By $(1,2)$ and by the fact that no internal vertex of \( \eta(e^1_1 e^3_1) \) is adjacent to \( \eta(e^3_3) \), we have that \( \eta(e^3_3) \) joins neither the domain of \( e^1_1 \) nor that of \( e^1_2 \). (To see this, consider another subdivision \( \eta'(E_{20}) \) of \( E_{20} \) such that \( \eta'(e^3_1) = v \), \( \eta'(e^4) = \eta(e^3_1) \), and \( \eta'(e^3_3) = \eta(e^4) \); then \( \eta(e^3_3) \) becomes an internal vertex of \( \eta'(e^1_3 e^3_3) \).) Therefore, by Lemma~\ref{lem:E20ni2} and the fact that \( \eta(e^1_2 e^3_2) \) and \( \eta(e^3_2 e^4) \) have no internal vertex, \( \eta(e^3_3) \) has degree four and is adjacent to \( \eta(e^3_2) \).

	Thus, $G$ contains $\mathfrak{D}_{0,s^2}$, where $s^2 = |V(G)| - 8 \ge 2$, as a spanning subgraph. This subgraph is obtained from $\eta(E_{20})$ by adding the edges $\eta(e^2)\eta(e^4)$, $\eta(e^3_1)\eta(e^3_2)$, $\eta(e^3_2)\eta(e^3_3)$, and the edges incident to internal vertices. The vertices $\delta^1_1$, $\delta^1_2$, $\delta^1_3$, $\delta^1_4$, $\delta^2_1$, $\delta^2_2$, $\delta^2_3$, $\delta^2_4$ correspond to \( \eta(e^1_1) \), \( \eta(e^1_2) \), \( \eta(e^1_3) \), \( \eta(e^0) \), the neighbor of \( \eta(e^1_1) \) in \( \eta(e^1_1 e^3_1) \), \( \eta(e^3_2) \), the neighbor of \( \eta(e^1_3) \) in \( \eta(e^1_3 e^3_3) \), \( \eta(e^2) \), respectively, and the non-trivial spine, with end-vertices \( \delta^2_1 \) and \( \delta^2_3 \), corresponds to the union of \( \eta(e^1_1 e^3_1) - \eta(e^1_1) \), \( \eta(e^3_1 e^4) \), \( \eta(e^4 e^3_3) \), and \( \eta(e^3_3 e^1_3) - \eta(e^1_3) \). 
	
	As shown above, every vertex of the non-trivial spine has degree four.

Recall that no internal vertex is adjacent to $\eta(e^0)$ or $\eta(e^1_2)$. Note that the subgraph isomorphic to $\mathfrak{D}_{0,s^2}$ admits an automorphism that swaps $\eta(e^0)$ and $\eta(e^1_2)$, swaps $\eta(e^2)$ and $\eta(e^3_2)$, and fixes all other vertices. Hence, by $(2,1)$ and $(2,2)$, we conclude that the vertices $\delta^1_4$ and $\delta^1_2$ of the trivial spine, corresponding to $\eta(e^0)$ and $\eta(e^1_2)$, respectively, both have degree four.

\smallskip

\noindent\textbf{Case~2.2.} None of \( \eta(e^1_1 e^3_1) \), \( \eta(e^1_2 e^3_2) \), or \( \eta(e^1_3 e^3_3) \) has an internal vertex, and $e = e^1_1 e^3_1$.

\smallskip

Most arguments are similar to those in Case~2.1. 

Recall that \( v \) is an internal vertex of \( \eta(e^3_1 e^4) \) and is adjacent to \( \eta(e^3_2) \).

	It follows from Lemmas~\ref{lem:E20int} and~\ref{lem:E20ind} that \( \eta(e^3_2 e^4) \) has no internal vertex. Moreover, by Lemma~\ref{lem:E20int} and $(3,2)$, any internal vertex of \( \eta(e^3_1 e^4) \) or \( \eta(e^3_3 e^4) \) is adjacent to \( \eta(e^3_2) \).
	
	Using $(2,1)$, $(1,2)$, and $(3,2)$, one can deduce that both \( \eta(e^3_1) \) and \( \eta(e^3_3) \) have degree four and are adjacent to \( \eta(e^3_2) \).
	
Therefore, we conclude that \( G \) contains \( \mathfrak{D}_{0,s^2} \) with \( s^2 = |V(G)| - 8 \ge 2 \) as a spanning subgraph, such that $\delta^1_1$, $\delta^1_2$, $\delta^1_3$, $\delta^1_4$, $\delta^2_1$, $\delta^2_2$, $\delta^2_3$, $\delta^2_4$ correspond to \( \eta(e^1_1) \), \( \eta(e^1_2) \), \( \eta(e^1_3) \), \( \eta(e^0) \), \( \eta(e^3_1) \), \( \eta(e^3_2) \), \( \eta(e^3_3) \), \( \eta(e^2) \), respectively, and the non-trivial spine, with end-vertices \( \delta^2_1 \) and \( \delta^2_3 \), corresponds to the union of \( \eta(e^3_1 e^4) \) and \( \eta(e^4 e^3_3) \). 

Every vertex on a non-trivial spine has degree four. Moreover, by the same argument as in Case~2.1, the vertices on the trivial spine also have degree four.
\end{proof}

\subsubsection{Containing a spanning subdivision of $F_4$}\label{sec:F4}

In this section we analyze $4$-connected graphs with no $K_{3,4}$ minor, with no minor of any graph obtained from $F_4$ by splitting a vertex, and containing a spanning JT-subdivision of $F_4$. Our goal is to prove Proposition~\ref{pro:F4}; to this end, we establish a sequence of lemmas.

Analogously to the previous section, we prepare a collection of graphs in Table~\ref{tab:2}, using the same conventions as before.

\begin{table}[ht!]
	\centering
	\caption{A collection of graphs, each containing $K_{3,4}$ as a minor.}
	\label{tab:2}
	\renewcommand{\arraystretch}{2} 
	\begin{tabular}{ccccccc}
		& {\footnotesize 1} & {\footnotesize 2} & {\footnotesize 3} & {\footnotesize 4} & {\footnotesize 5} & {\footnotesize 6} \\
		\noalign{\bigskip}
		{\footnotesize 4} &
		\includegraphics[scale=1, align=c]{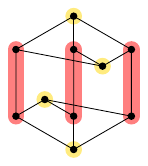} &
		\includegraphics[scale=1, align=c]{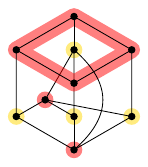} &
		\includegraphics[scale=1, align=c]{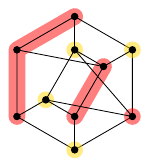} &
		\includegraphics[scale=1, align=c]{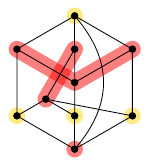} &
		\includegraphics[scale=1, align=c]{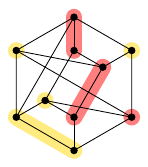} &
		\includegraphics[scale=1, align=c]{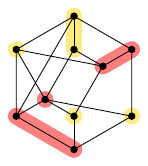} \\
		\noalign{\bigskip}
		{\footnotesize 5} &
		\includegraphics[scale=1, align=c]{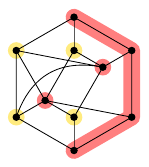} &
		\includegraphics[scale=1, align=c]{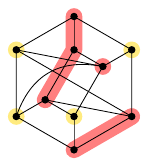} &
		\includegraphics[scale=1, align=c]{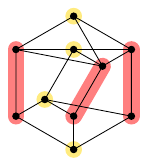} &
		\includegraphics[scale=1, align=c]{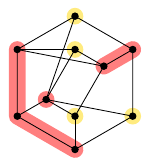} &
		\includegraphics[scale=1, align=c]{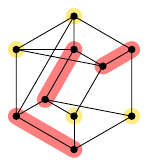} &
		\includegraphics[scale=1, align=c]{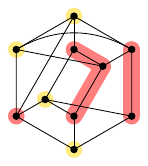} \\
		\noalign{\bigskip}
		{\footnotesize 6} &
		\includegraphics[scale=1, align=c]{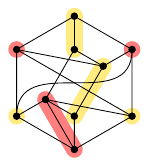} &
		\includegraphics[scale=1, align=c]{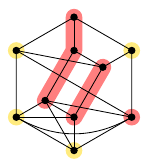} &
		\includegraphics[scale=1, align=c]{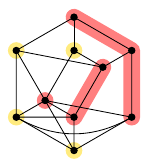} &
		\includegraphics[scale=1, align=c]{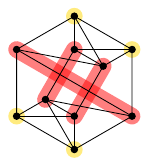} &
		\includegraphics[scale=1, align=c]{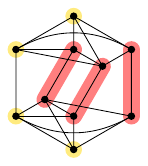} &
		\includegraphics[scale=1, align=c]{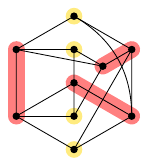} \\
		\noalign{\bigskip}
		{\footnotesize 7} &
		\includegraphics[scale=1, align=c]{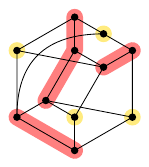} &
		\includegraphics[scale=1, align=c]{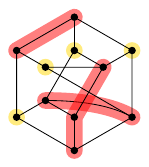} &
		\includegraphics[scale=1, align=c]{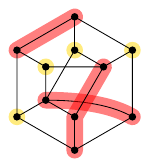} &
		\includegraphics[scale=1, align=c]{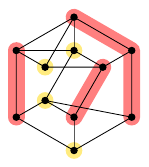} &
		\includegraphics[scale=1, align=c]{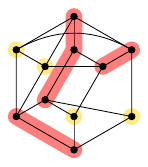} &
		\includegraphics[scale=1, align=c]{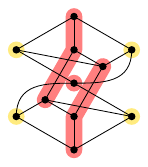} \\
		\noalign{\bigskip}
		{\footnotesize 8} &
		\includegraphics[scale=1, align=c]{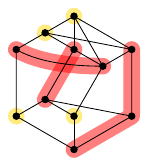} &
		\includegraphics[scale=1, align=c]{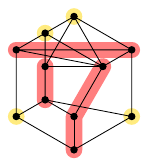} & & & & \\
	\end{tabular}
\end{table}

\begin{lemma} \label{lem:F4LR}
	Let \( G \) be a \( 4 \)-connected graph that contains neither \( K_{3,4} \) nor any graph obtained from \( F_4 \) by splitting a vertex as a minor.  
	Suppose \( G \) contains a spanning JT-subdivision \( \eta(F_4) \) of \( F_4 \).  
	For any \( i \in [2] \), \( \eta(f^i_2) \) joins to at least one of \( \eta(f^i_1 f^i) - \eta(f^i) \), \( \eta(f^i_1 f^i_3) - \eta(f^i_3) \), \( \eta(f^i_4 f^i) - \eta(f^i) \), or \( \eta(f^i_4 f^i_3) - \eta(f^i_3) \).
\end{lemma}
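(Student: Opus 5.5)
The plan is to use $4$-connectivity to produce an extra neighbor of $\eta(f^i_2)$, and then to exclude every possible location of that neighbor except the four sets named in the statement. By the symmetry of $F_4$ (its automorphisms interchange the superscripts $1$ and $2$), it suffices to treat $i=1$.

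\textbf{Step 1: an extra neighbor exists.} By Lemma~\ref{lem:JT}, every segment of $\eta(F_4)$ is an induced path in $G$, since $G$ contains no graph obtained from $F_4$ by splitting a vertex as a minor. In $F_4$ the vertex $f^1_2$ has degree three, so $\eta(f^1_2)$ has exactly three neighbors on the segments incident with it. Since $G$ is $4$-connected, $\eta(f^1_2)$ has degree at least four. Because $\eta(F_4)$ is spanning, $\eta(f^1_2)$ therefore has a neighbor $u$ outside its closed domain.

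\textbf{Step 2: locate $u$ using the second part of Lemma~\ref{lem:JT}.} That lemma says that if a vertex on a segment at $f^1_2$ is adjacent to an internal vertex of another segment at $f^1_2$, then it is the vertex next to $\eta(f^1_2)$. Applying it inside the closed domain of $f^1_2$, together with the fact that $u$ lies outside that closed domain, $u$ must lie in one of the following:
\begin{itemize}
\item the domain of some vertex of $F_4$ not adjacent to $f^1_2$;
\item a segment between two such vertices;
\item a segment incident with a neighbor of $f^1_2$ but not with $f^1_2$ itself.
\end{itemize}
I expect that $f^1_2$ is adjacent to $f^1$ and $f^1_3$ in $F_4$. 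This would explain why the endpoints $\eta(f^1)$ and $\eta(f^1_3)$ are removed in the four target sets: those vertices are already neighbors along segments.

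\textbf{Step 3: exclude every other location via Table~\ref{tab:2}.} For each remaining region outside the four target sets, I add the edge $\eta(f^1_2)u$ to $\eta(F_4)$. I then contract the relevant domain, or the relevant segment minus an endpoint, to a single vertex, and delete a redundant edge if needed. The result should be a subdivision of one of the graphs in rows $4$--$8$ of Table~\ref{tab:2}, so $G\succeq K_{3,4}$, a contradiction. The regions to treat are:
\begin{itemize}
\item the domains of all vertices on the side $i=2$;
\item the segments joining the two sides;
\item $\eta(f^1)$ and $\eta(f^1_3)$ themselves, which are excluded because the segments are induced paths.
\end{itemize}
The automorphisms of $F_4$ fixing $f^1_2$ should reduce this list. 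In particular, I expect the swap of $f^1_1$ and $f^1_4$ to cut the work roughly in half.

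\textbf{Expected obstacle.} The main difficulty is the case bookkeeping: matching each region to the correct table graph, and handling regions where $u$ is an internal vertex adjacent to an endpoint, since there contracting the whole domain may not yield the required minor. In those cases I would first try re-routing to a different subdivision $\eta'(F_4)$, as was done in the proof of Proposition~\ref{pro:E20}, so that $u$ becomes a branch vertex and the table applies directly.
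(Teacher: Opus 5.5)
Your plan is correct and is essentially the paper's proof. Lemma~\ref{lem:JT} and $4$-connectivity give a neighbor of $\eta(f^i_2)$ outside its closed domain, and since the third neighbor of $f^i_2$ in $F_4$ is $f^{3-i}_3$, every possible position of that neighbor other than the four target sets lies in the domain of $f^{3-i}$, $f^{3-i}_1$, $f^{3-i}_2$, or $f^{3-i}_4$; the paper excludes exactly these with $(4,1)$, $(4,2)$, and $(4,3)$. Contracting that whole domain always yields $F_4$ plus one edge from $f^i_2$ to the corresponding branch vertex (the $f^{3-i}_1$ and $f^{3-i}_4$ cases are interchanged by the automorphism swapping $f^i_1\leftrightarrow f^i_4$ and $f^{3-i}_1\leftrightarrow f^{3-i}_4$), so neither the second part of Lemma~\ref{lem:JT} in your Step~2 nor the re-routing you anticipate as an obstacle is needed.
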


\begin{proof}
	By Lemma~\ref{lem:JT}, every segment of \( \eta(F_4) \) is an induced path in $G$. As $G$ is $4$-connected, \( \eta(f^i_2) \) has a neighbor outside the closed domain of \( f^i_2 \). 
	
By \( (4,1) \), \( (4,2) \), and \( (4,3) \), 
\( \eta(f^i_2) \) must join to one of 
\( \eta(f^i_1 f^i) - \eta(f^i) \), 
\( \eta(f^i_1 f^i_3) - \eta(f^i_3) \), 
\( \eta(f^i_4 f^i) - \eta(f^i) \), or 
\( \eta(f^i_4 f^i_3) - \eta(f^i_3) \).
\end{proof}

\begin{lemma} \label{lem:F4nooppo}
	Let \( G \) be a \( 4 \)-connected graph that contains neither \( K_{3,4} \) nor any graph obtained from \( F_4 \) by splitting a vertex as a minor.  
	Suppose \( G \) contains a spanning JT-subdivision \( \eta(F_4) \) of \( F_4 \).  
	For any \( i \in [2] \), no vertex in the domain of \( f^i \) joins to any vertex in the domain of \( f^{3-i} \) or in the domain of \( f^{3-i}_j \) with \( j \in [4] \).
\end{lemma}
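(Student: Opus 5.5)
We argue by contradiction, exactly as in Lemmas~\ref{lem:F4LR} and~\ref{lem:E20ni2}. By Lemma~\ref{lem:JT}, every segment of \( \eta(F_4) \) is an induced path in \( G \). Suppose that a vertex \( x \) in the domain of \( f^i \) is adjacent to a vertex \( y \) in the domain of \( f^{3-i} \) or of some \( f^{3-i}_j \). We will add the edge \( xy \) to \( \eta(F_4) \), contract suitable subpaths, and show that the result is a subdivision of one of the graphs in Table~\ref{tab:2}, hence has a \( K_{3,4} \) minor. This contradicts the hypothesis on \( G \). The automorphisms of \( F_4 \) mapping \( f^1_1 \) to \( f^1_1, f^1_4, f^2_1, f^2_4 \) let us assume \( i=1 \), and they also identify several of the possible positions of \( y \).

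Next we split according to where \( x \) and \( y \) lie. For \( x \), either \( x=\eta(f^1) \) or \( x \) is an internal vertex of a segment \( \eta(f^1 w) \) with \( w \) a neighbor of \( f^1 \). For \( y \), either \( y \) lies in the domain of \( f^2 \), or \( y \) is a branch vertex \( \eta(f^2_j) \), or \( y \) is an internal vertex of a segment incident with \( f^2_j \). When \( x \) or \( y \) is internal, we contract the part of its segment on one side, so that the endpoint of the new edge becomes a vertex of a graph obtained from \( F_4 \) by subdividing an edge once. Every configuration then reduces to adding one edge between two specified positions of a once- or twice-subdivided \( F_4 \). Each such graph should be a minor of one of the relevant entries, which I expect to be among \( (4,4) \)--\( (5,6) \). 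Where two orientations of a segment give different pictures, we choose the contraction direction that yields the listed graph.

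The main obstacle is the case where both \( x \) and \( y \) are internal vertices of segments. Naively this needs a graph obtained from \( F_4 \) by subdividing two edges and joining the new vertices, which may not appear directly in the table. For this case I would first try the second part of Lemma~\ref{lem:JT}, at degree-three vertices of \( F_4 \) if applicable, together with \( 4 \)-connectivity, to push the endpoint toward a branch vertex. If that fails, I would use a rerouting trick as in Case~2.1 of Proposition~\ref{pro:E20}: choose a different subdivision \( \eta' \) in which \( x \) becomes a branch vertex. This reduces the case to one already handled. Finally, the case where \( y \) is in the domain of \( f^2 \) itself should be symmetric under the automorphism exchanging the two sides of \( F_4 \).
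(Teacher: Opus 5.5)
Your plan has a genuine gap. You claim that every configuration reduces to \emph{one} added edge on a once- or twice-subdivided $F_4$, which then yields a graph from Table~\ref{tab:2}. This is false, and it fails exactly in the cases that carry the weight of the lemma.

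Take $x=\eta(f^1)$ and $y=\eta(f^2_3)$; this is allowed, since $y$ lies in the domain of $f^2_3$. In $F_4$ the set $\{f^1,f^1_3,f^2_3\}$ is the neighborhood of the cubic vertex $f^1_2$, hence a $3$-cut, and $f^1f^2_3\notin E(F_4)$. Since $F_4\subseteq\mathfrak{F}$ has no $K_{3,4}$ minor (Proposition~\ref{pro:olo}), Lemma~\ref{lem:3conK34mf} shows that $F_4+f^1f^2_3$ has no $K_{3,4}$ minor. With only ten vertices it contains no split of $F_4$ either. The same holds for $F_4+f^1f^2_4$, via the $3$-cut $\{f^1,f^1_3,f^2_4\}$ around $f^1_1$. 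So when, say, $|V(G)|=10$ and $\eta(f^1)\eta(f^2_3)\in E(G)$, no contraction of $\eta(F_4)+xy$ can produce a contradiction.

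Your fallback steps do not help. The second part of Lemma~\ref{lem:JT} and the rerouting trick push $x$ and $y$ onto neighbors of the cubic branch vertex $\eta(f^1_2)$ or $\eta(f^1_1)$, that is, \emph{into} these triangle configurations rather than out of them. The real obstacle is not ``both $x$ and $y$ internal''. It is an edge that closes a triangle at a cubic branch vertex.

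What is missing is a further use of $4$-connectivity to break that $3$-cut. The paper uses single-edge arguments only where they work:
\begin{itemize}
\item $(4,2)$ and $(4,4)$ exclude the domains of $f^{3-i}$ and $f^{3-i}_2$;
\item $(5,4)$ handles an internal vertex of $\eta(f^1f^1_1)$ or $\eta(f^1f^1_4)$ joined to the domain of $f^2_3$;
\item $(4,3)$ and $(7,1)$ force $x\in\eta(f^if^i_1)-\eta(f^i_1)$ for the cross segment $\eta(f^i_1f^{3-i}_4)$.
\end{itemize}
In the remaining cases it brings in more edges forced by $4$-connectivity:
\begin{itemize}
\item If $x\in\eta(f^1f^1_2)-\eta(f^1_2)$, Lemma~\ref{lem:F4LR} gives $\eta(f^1_2)$ a neighbor on the $f^1_1$- or $f^1_4$-side, and this produces $(5,4)$.
\item In the cross-segment case, a neighbor $u$ of $\eta(f^i_1)$ outside its closed domain is forced by $(5,5)$, $(5,6)$, $(4,5)$, $(4,3)$, $(4,6)$ into $\eta(f^{3-i}f^{3-i}_4)-\eta(f^{3-i}_4)$. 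Lemma~\ref{lem:F4LR} is then applied to both $\eta(f^i_2)$ and $\eta(f^{3-i}_2)$, and a four-way case analysis using $(5,5)$ ends in $(6,6)$.
\end{itemize}
Without this chain of forced adjacencies, your one-edge table lookup cannot close these cases.
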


\begin{proof}
	Let \( v \) be a vertex in the domain of \( f^i \). 
	
	By $(4,2)$ and $(4,4)$, \( v \) does not join to the domain of \( f^{3-i} \) or \( f^{3-i}_2 \).

	We claim that \( v \) does not join the domain of \( f^{3-i}_3 \). 
	Suppose otherwise. If \( v \) were an internal vertex of \( \eta(f^1 f^1_1) \), 
	then \( G \) would contain a minor of \( (5,4) \) (to see this, contract 
	\( \eta(f^1 f^1_4) \)), a contradiction. Thus \( v \) is not an internal vertex 
	of \( \eta(f^1 f^1_1) \), and, by symmetry, not of \( \eta(f^1 f^1_4) \). 
	Hence \( v \) lies in \( \eta(f^1 f^1_2) - \eta(f^1_2) \). 
	By Lemma~\ref{lem:F4LR}, \( G \) would then again contain a minor of \( (5,4) \), 
	a contradiction.

By symmetry, it remains to show that \( v \) does not join any vertex in \( \eta(f^i_1 f^{3-i}_4) - \eta(f^i_1) \). Suppose otherwise. As \( G \) contains neither \( (4,3) \) nor \( (7,1) \) as a minor, \( v \) lies in \( \eta(f^i f^i_1) - \eta(f^i_1) \).

	Clearly, \( \eta(f^i_1) \) has a neighbor \( u \) outside the closed domain of \( f^i_1 \). By $(5,5)$, $(5,6)$, $(4,5)$, $(4,3)$, and $(4,6)$, \( u \) is not in the domain of \( f^i_2 \), \( f^i_4 \), \( f^{3-i}_1 \), \( f^{3-i}_2 \), or \( f^{3-i}_3 \). Thus \( u \) lies in \( \eta(f^{3-i} f^{3-i}_4) - \eta(f^{3-i}_4) \).

	By Lemma~\ref{lem:F4LR}, \( \eta(f^i_2) \) (respectively, \( \eta(f^{3-i}_2) \)) has a neighbor either in  
	\( \eta(f^i_1 f^i) - \eta(f^i) \) or \( \eta(f^i_1 f^i_3) - \eta(f^i_3) \) (respectively, in \( \eta(f^{3-i}_1 f^{3-i}) - \eta(f^{3-i}) \) or \( \eta(f^{3-i}_1 f^{3-i}_3) - \eta(f^{3-i}_3) \)),  
	or in  
	\( \eta(f^i_4 f^i) - \eta(f^i) \) or \( \eta(f^i_4 f^i_3) - \eta(f^i_3) \) (respectively, in \( \eta(f^{3-i}_4 f^{3-i}) - \eta(f^{3-i}) \) or \( \eta(f^{3-i}_4 f^{3-i}_3) - \eta(f^{3-i}_3) \)).
	
	By $(5,5)$, if \( \eta(f^i_2) \) (respectively, \( \eta(f^{3-i}_2) \)) has a neighbor in  
	\( \eta(f^i_1 f^i) - \eta(f^i) \) or \( \eta(f^i_1 f^i_3) - \eta(f^i_3) \) (respectively, in \( \eta(f^{3-i}_4 f^{3-i}) - \eta(f^{3-i}) \) or \( \eta(f^{3-i}_4 f^{3-i}_3) - \eta(f^{3-i}_3) \)),  
	then \( v \) (repsectively, $u$) must be an internal vertex of \( \eta(f^i f^i_1) \) (respectively, of \( \eta(f^{3-i} f^{3-i}_4) \)). 
	
We use the above observation to analyze the different cases according to the neighbors of \( \eta(f^i_2) \) and \( \eta(f^{3-i}_2) \).

If \( \eta(f^i_2) \) has a neighbor in \( \eta(f^i_1 f^i) - \eta(f^i) \) or \( \eta(f^i_1 f^i_3) - \eta(f^i_3) \), and \( \eta(f^{3-i}_2) \) has a neighbor in \( \eta(f^{3-i}_4 f^{3-i}) - \eta(f^{3-i}) \) or \( \eta(f^{3-i}_4 f^{3-i}_3) - \eta(f^{3-i}_3) \), then \( v \) is an internal vertex of \( \eta(f^i f^i_1) \) and \( u \) is an internal vertex of \( \eta(f^{3-i} f^{3-i}_4) \); hence \( G \) contains a minor of \( (6,6) \) (to see this, contract \( \eta(f^i f^i_4) \) and \( \eta(f^{3-i} f^{3-i}_1) \)), a contradiction.

If \( \eta(f^i_2) \) has a neighbor in \( \eta(f^i_1 f^i) - \eta(f^i) \) or \( \eta(f^i_1 f^i_3) - \eta(f^i_3) \), and \( \eta(f^{3-i}_2) \) has a neighbor in \( \eta(f^{3-i}_1 f^{3-i}) - \eta(f^{3-i}) \) or \( \eta(f^{3-i}_1 f^{3-i}_3) - \eta(f^{3-i}_3) \), then \( G \) also contains a minor of \( (6,6) \) (to see this, contract \( \eta(f^i f^i_4) \)), a contradiction. The case where \( \eta(f^i_2) \) has a neighbor in \( \eta(f^i_4 f^i) - \eta(f^i) \) or \( \eta(f^i_4 f^i_3) - \eta(f^i_3) \), and \( \eta(f^{3-i}_2) \) has a neighbor in \( \eta(f^{3-i}_4 f^{3-i}) - \eta(f^{3-i}) \) or \( \eta(f^{3-i}_4 f^{3-i}_3) - \eta(f^{3-i}_3) \), can be handled similarly. 

Finally, if \( \eta(f^i_2) \) has a neighbor in \( \eta(f^i_4 f^i) - \eta(f^i) \) or \( \eta(f^i_4 f^i_3) - \eta(f^i_3) \), and \( \eta(f^{3-i}_2) \) has a neighbor in \( \eta(f^{3-i}_1 f^{3-i}) - \eta(f^{3-i}) \) or \( \eta(f^{3-i}_1 f^{3-i}_3) - \eta(f^{3-i}_3) \), then one directly sees that \( G \) contains a minor of \( (6,6) \), a contradiction.

This completes the proof.
\end{proof}

\begin{lemma} \label{lem:F4noint}
	Let \( G \) be a \( 4 \)-connected graph that contains neither \( K_{3,4} \) nor any graph obtained from \( F_4 \) by splitting a vertex as a minor.  
	Suppose \( G \) contains a spanning JT-subdivision \( \eta(F_4) \) of \( F_4 \).  
	Then the segment \( \eta(e) \) has no internal vertex for any  
	\( e \in \{ f^1_1 f^1_3, f^1_2 f^1_3, f^1_4 f^1_3, f^2_1 f^2_3, f^2_2 f^2_3, f^2_4 f^2_3 \} \).
\end{lemma}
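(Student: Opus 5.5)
We follow the pattern of Lemma~\ref{lem:E20ni2}. Suppose, for a contradiction, that for some $i\in[2]$ and some $j\in\{1,2,4\}$ the segment $\eta(f^i_j f^i_3)$ has an internal vertex $v$. By Lemma~\ref{lem:JT}, every segment of $\eta(F_4)$ is an induced path in $G$. Since $G$ is $4$-connected, $v$ has at least two neighbors outside $\eta(f^i_j f^i_3)$. The automorphisms of $F_4$ map $f^1_1$ to any of $f^1_1,f^1_4,f^2_1,f^2_4$. Hence we may assume $i=1$, and it suffices to treat two cases: $j=1$ (which covers $j=4$) and $j=2$.

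In each case we first locate the neighbors of $v$ using the forbidden configurations of Table~\ref{tab:2}. Lemma~\ref{lem:F4nooppo} gives no direct information, since it concerns the domain of $f^1$. Instead we argue directly: if $v$ joins a vertex in the domain of $f^2$ or of some $f^2_k$, then adding this edge to $\eta(F_4)$ and contracting suitable segments gives a subdivision of one of the graphs in rows $4$ to $8$ of Table~\ref{tab:2}. For this step we would consult $(4,1)$--$(4,6)$ and $(7,1)$--$(7,6)$. For neighbors on the same side:
\begin{itemize}
\item A neighbor in the domain of $f^1$ or of $f^1_k$ with $k\neq j$ is excluded by the corresponding graphs in rows $4$ to $6$.
\item A neighbor in the domain of $f^1_3$ but off the segment $\eta(f^1_jf^1_3)$ is controlled by the second part of Lemma~\ref{lem:JT}, because $f^1_3$ has degree three. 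Such a neighbor must be the neighbor of $\eta(f^1_3)$ on another segment, and only when $v$ is adjacent to $\eta(f^1_3)$.
\end{itemize}

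The conclusion we expect is that $v$ has at most one admissible neighbor off its segment. More precisely, the only possibilities are vertices adjacent to $\eta(f^1_3)$ on $\eta(f^1_kf^1_3)$ for $k\neq j$. If $v$ has two such neighbors, namely one on each of the other two segments at $f^1_3$, then $\eta(f^1_3)$ is left with degree at most three outside its segments. We then use $4$-connectivity at $\eta(f^1_3)$, as in Case~2 of Lemma~\ref{lem:E20ni2}: $\eta(f^1_3)$ must have a neighbor outside its closed domain. Lemma~\ref{lem:F4LR} shows where $\eta(f^1_2)$ attaches, and combining this with the attachments already found yields one of the minors $(5,1)$--$(5,6)$, $(6,1)$--$(6,6)$ or $(8,1)$, $(8,2)$, a contradiction.

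The main obstacle is the case $j=2$. There the attachment of $\eta(f^1_2)$ guaranteed by Lemma~\ref{lem:F4LR} interacts with the segment containing $v$. We may also need to re-choose the subdivision $\eta'$ to symmetrize, replacing $\eta(f^1_3)$ by $v$. This is allowed by Proposition~\ref{pro:JT} only if the rerouted structure is again spanning, and it is what reduces the case to $j=1$. We would first try this rerouting trick, falling back on explicit enumeration of neighbor locations otherwise.
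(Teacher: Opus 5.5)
\textbf{There is a genuine gap.} It stems from a misreading of $F_4$: the vertex $f^1_3$ has degree \emph{four}, not three. Its neighbours are $f^1_1,f^1_2,f^1_4$ and $f^2_2$, the last via the segment $\eta(f^2_2f^1_3)$.

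Consequences of this misreading:
\begin{itemize}
\item The second part of Lemma~\ref{lem:JT} cannot be applied at $f^1_3$.
\item Your conclusion that the only admissible neighbours of $v$ are the neighbours of $\eta(f^1_3)$ on the other segments at $f^1_3$ is backwards. Because $f^1_3$ has degree four, the hypothesis that $G$ has no minor obtained from $F_4$ by splitting a vertex applies to $f^1_3$. An edge from $v\in\eta(f^1_jf^1_3)$ to the domain of $f^1_k$ with $k\neq j$ directly yields a split of $f^1_3$, with one new vertex taking $f^1_j,f^1_k$ and the other taking the remaining two neighbours. This is how the paper rules those neighbours out.
\item Your remark that Lemma~\ref{lem:F4nooppo} gives no information is also mistaken. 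The vertex $v$ lies in the domain of $f^1_j$, so that lemma with $i=2$ forbids edges from $v$ to the domain of $f^2$.
\end{itemize}

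What survives the paper's minor analysis is governed by Lemma~\ref{lem:JT} applied at the degree-three end $f^1_j$. The vertex $v$ has exactly two outside neighbours: the neighbours of $\eta(f^1_j)$ on its two other segments. These lie in the domain of $f^1$ and in the domain of $f^2_4$ (for $j=1$) or of $f^2_3$ (for $j=2$). These are precisely the neighbours your plan proposes to discard via rows $4$--$8$ of Table~\ref{tab:2}. No such exclusion is available, and these configurations are exactly the ones that need a further argument.

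The finishing step is therefore misplaced as well. Four-connectivity at $\eta(f^1_3)$ gives nothing, since $\eta(f^1_3)$ already has four neighbours on its own segments. The paper uses connectivity at the degree-three end instead:
\begin{itemize}
\item For $j=1$: $\eta(f^1_1)$ has a neighbour $u$ outside the closed domain of $f^1_1$. Given the two edges at $v$, the graphs $(7,4)$, $(7,5)$, $(5,2)$, $(4,3)$, $(5,1)$ together with Lemma~\ref{lem:F4nooppo} leave no place for $u$.
\item For $j=2$: Lemma~\ref{lem:F4LR} gives an edge from $\eta(f^1_2)$ into $\eta(f^1_kf^1)-\eta(f^1)$ or $\eta(f^1_kf^1_3)-\eta(f^1_3)$ for some $k\in\{1,4\}$. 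Contracting $\eta(f^1f^1_{5-k})$ then produces $(5,4)$.
\end{itemize}

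Your proposed rerouting to reduce $j=2$ to $j=1$ also cannot work. The vertex $f^1_2$ is not in the automorphism orbit $\{f^1_1,f^1_4,f^2_1,f^2_4\}$ of $f^1_1$. Replacing $\eta(f^1_3)$ by $v$ does not change which degree-three vertex sits at the other end of the segment. The case $j=2$ has to be handled directly, as above.
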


\begin{proof}
Suppose to the contrary that \( \eta(e) \) contains an internal vertex \( v \). Clearly, \( v \) has at least two neighbors outside \( \eta(e) \).

	By symmetry, it suffices to treat the cases \( e = f^1_1 f^1_3 \) and \( e = f^1_2 f^1_3 \).

	\smallskip
	
	\noindent\textbf{Case~1.} \( e = f^1_1 f^1_3 \).
	
	\smallskip
	
The vertex \( v \) joins neither the domain of \( f^1_2 \) nor \( f^1_4 \) since \( G \) does not contain, as a minor, any graph obtained from \( F_4 \) by splitting the vertex $f^1_3$. Moreover, \( v \) does not join the domain of any of \( f^2_1, f^2_2, f^2_3 \) by $(7,2)$, $(4,3)$, and $(7,3)$. By Lemma~\ref{lem:F4nooppo}, \( v \) also does not join the domain of \( f^2 \).

	Thus, by Lemma~\ref{lem:JT} and the fact that $G$ is $4$-connected, \( v \) has exactly two neighbors outside \( \eta(e) \), namely the neighbor of \( \eta(f^1_1) \) in \( \eta(f^1_1 f^1) \) and  
	the neighbor of \( \eta(f^1_1) \) in \( \eta(f^1_1 f^2_4) \).
	
It is clear that \( \eta(f^1_1) \) has a neighbor \( u \) outside the closed domain of \( f^1_1 \). By $(7,4)$, $(7,5)$, $(5,2)$, $(4,3)$, and $(5,1)$, \( u \) does not lie in the domain of \( f^1_2 \), \( f^1_4 \), \( f^2_1 \), \( f^2_2 \), or \( f^2_3 \). Hence \( u \) must lie in the domain of \( f^2 \), which contradicts Lemma~\ref{lem:F4nooppo}.

	\smallskip
	
	\smallskip
	
	\noindent\textbf{Case~2.} \( e = f^1_2 f^1_3 \).

	\smallskip
	
	As \( G \) does not contain as a minor a graph obtained from \( F_4 \) by splitting  $f^1_3$, $v$  does not join the domain of \( f^1_1 \) or \( f^1_4 \). By $(4,2)$, $(4,3)$, and $(4,1)$, \( v \) does not join the domain of \( f^2 \), \( f^2_1 \), \( f^2_2 \), or \( f^2_4 \). 
	
	By Lemma~\ref{lem:JT} and the fact that $G$ is $4$-connected,  \( v \) has exactly two neighbors outside \( \eta(e) \), namely the neighbor of \( \eta(f^1_2) \) in \( \eta(f^1_2 f^1) \) and the neighbor of \( \eta(f^1_2) \) in \( \eta(f^1_2 f^2_3) \).
	
By Lemma~\ref{lem:F4LR}, \( \eta(f^1_2) \) joins the union of \( \eta(f^1_j f^1) - \eta(f^1) \) and \( \eta(f^1_j f^1_3) - \eta(f^1_3) \) for some \( j \in \{1,4\} \). This implies that \( G \) contains a minor of $(5,4)$ (to see this, contract \( \eta(f^1 f^1_{5-j}) \)), a contradiction.
\end{proof}

\begin{lemma} \label{lem:F403}
	Let \( G \) be a \( 4 \)-connected graph that contains neither \( K_{3,4} \) nor any graph obtained from \( F_4 \) by splitting a vertex as a minor.  
	Suppose \( G \) contains a spanning JT-subdivision \( \eta(F_4) \) of \( F_4 \).  
	Then for any \( i \in [2] \), \( \eta(f^i) \) is adjacent to \( \eta(f^i_3) \) and has degree four in $G$.
\end{lemma}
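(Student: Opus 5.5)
The plan mirrors the proof of Lemma~\ref{lem:E20e4e2}. By symmetry of $F_4$ (its automorphisms interchange the two sides $i=1,2$), it suffices to treat $i=1$. By Lemma~\ref{lem:JT}, every segment of $\eta(F_4)$ is an induced path in $G$. In $F_4$ the vertex $f^1$ has degree three, with neighbors $f^1_1$, $f^1_2$, $f^1_4$. So the closed domain of $f^1$ accounts for only three neighbors of $\eta(f^1)$, and $4$-connectivity forces $\eta(f^1)$ to have at least one neighbor $u$ outside this closed domain. The aim is to show that $u=\eta(f^1_3)$ is the only possibility. That gives the adjacency and shows $\eta(f^1)$ has exactly four neighbors.

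To locate $u$, I would go through the domains of the other vertices of $F_4$ in turn.

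\begin{itemize}
\item By Lemma~\ref{lem:F4nooppo}, $u$ is not in the domain of $f^2$ or of any $f^2_j$ with $j\in[4]$.
\item By Lemma~\ref{lem:F4noint}, the segments $\eta(f^1_jf^1_3)$ for $j\in\{1,2,4\}$ have no internal vertices. Hence the domain of $f^1_3$ is just $\{\eta(f^1_3)\}$ together with the internal vertices of $\eta(f^1_3f^2_3)$, if any.
\item An internal vertex of $\eta(f^1_3f^2_3)$ would also have to be excluded. I would do this either with a suitable graph from Table~\ref{tab:2}, or by passing to the alternative subdivision in which that vertex plays the role of $\eta(f^1_3)$, as was done in Case~2.1 of Proposition~\ref{pro:E20}.
\item The remaining candidates for $u$ lie in the domains of $f^1_1$, $f^1_2$, $f^1_4$, but outside the closed domain of $f^1$.
\end{itemize}

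For those remaining candidates, Lemma~\ref{lem:JT} (second item, since $f^1$ has degree three) limits which chords can appear near $\eta(f^1)$. Any neighbor of $\eta(f^1)$ in the domain of $f^1_j$ for $j\in\{1,2,4\}$ that is not on the segment $\eta(f^1f^1_j)$ lies on another segment at $\eta(f^1_j)$: either $\eta(f^1_jf^1_3)$, which has no internal vertex, or a cross segment such as $\eta(f^1_1f^2_4)$ or $\eta(f^1_2f^2_3)$. Such an edge, together with $\eta(F_4)$, should produce one of the Table~\ref{tab:2} graphs (rows 4--7) after contracting appropriate segments. This would need to be checked picture by picture. If $u=\eta(f^1_j)$ itself and $\eta(f^1f^1_j)$ has internal vertices, the JT property excludes it. This leaves $u=\eta(f^1_3)$.

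For the degree statement, the same exclusions apply to every neighbor of $\eta(f^1)$ outside its closed domain. Since $\eta(f^1)$ has exactly one neighbor on each of its three segments (these are induced paths), its degree is $3+1=4$.

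I expect the main obstacle to be the cross-segment cases, namely $u$ an internal vertex of $\eta(f^1_1f^2_4)$ or of $\eta(f^1_2f^2_3)$. These do not follow directly from the earlier lemmas. For them I would first try the mechanism of Lemma~\ref{lem:F4nooppo}: use Lemma~\ref{lem:F4LR} to find where $\eta(f^1_2)$ attaches, then contract a side segment $\eta(f^1f^1_{5-j})$ to reach $(5,4)$, $(5,5)$ or $(6,6)$.
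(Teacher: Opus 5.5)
Your outline matches the paper's proof. Since the segments are induced, $\eta(f^1)$ has exactly three neighbours in the closed domain of $f^1$, so $4$-connectivity gives a neighbour $u$ outside it, and the task is to force $u=\eta(f^1_3)$. The gap is that you never dispose of the internal vertices of the cross segments $\eta(f^1_1f^2_4)$, $\eta(f^1_4f^2_1)$, $\eta(f^1_2f^2_3)$, nor of the cross segment at $\eta(f^1_3)$. (In $F_4$ that last one is $\eta(f^2_2f^1_3)$, not $\eta(f^1_3f^2_3)$.) You assert that these cases ``do not follow directly from the earlier lemmas'' and defer them to unspecified Table~\ref{tab:2} checks or a change of subdivision. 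That assertion is wrong, and it comes from misreading \emph{domain}. The domain of a branch vertex contains the internal vertices of every segment ending at it. So each of these internal vertices lies in the domain of its side-$2$ end: $f^2_4$, $f^2_1$, $f^2_3$, $f^2_2$ respectively. Lemma~\ref{lem:F4nooppo} with $i=1$, applied to $\eta(f^1)$ (which lies in the domain of $f^1$), excludes all of them at once. This is the same lemma you already cite in your first bullet.

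Once this is seen, the count is immediate and is exactly the paper's two-line proof:
\begin{itemize}
\item $\eta(F_4)$ is spanning.
\item The domains of $f^2,f^2_1,\dots,f^2_4$ cover all side-$2$ branch vertices and all internal vertices of side-$2$ and cross segments.
\item The segments $\eta(f^1_jf^1_3)$, $j\in\{1,2,4\}$, have no internal vertices by Lemma~\ref{lem:F4noint}.
\item Everything else, including $\eta(f^1_1),\eta(f^1_2),\eta(f^1_4)$, lies in the closed domain of $f^1$.
\end{itemize}
So $\eta(f^1_3)$ is the only vertex outside the closed domain of $f^1$ that $\eta(f^1)$ can be adjacent to. This gives both the adjacency and degree exactly four. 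No Table~\ref{tab:2} graphs, no second item of Lemma~\ref{lem:JT}, and no alternative subdivision are needed. As written, however, your proposal leaves its self-declared ``main obstacle'' open rather than proving it.
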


\begin{proof}
	It is clear that \( \eta(f^i) \) has a neighbor outside the closed domain of \( f^i \).  
	By Lemma~\ref{lem:F4nooppo} and Lemma~\ref{lem:F4noint}, the only possible such neighbor is \( \eta(f^i_3) \). Thus \( \eta(f^i) \) has degree four and is adjacent to \( \eta(f^i_3) \).
\end{proof}

\begin{lemma} \label{lem:F4int}
	Let \( G \) be a \(4\)-connected graph that contains neither \( K_{3,4} \) nor any graph obtained from \( F_4 \) by splitting a vertex as a minor. Suppose \( G \) contains a spanning JT-subdivision \( \eta(F_4) \) of \( F_4 \), and that \( \eta(f^i f^i_j) \), with \( i \in [2] \) and \( j \in \{1,4\} \), contains an internal vertex \( v \). Then \( \eta(f^i f^i_{5-j}) \) has no internal vertex, and \( v \) is adjacent to \( \eta(f^i_{5-j}) \) and \( \eta(f^i_3) \) and has degree four in $G$.
\end{lemma}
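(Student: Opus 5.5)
We follow the template of Lemma~\ref{lem:E20int}. By the symmetries of $F_4$ (its automorphisms send $f^1_1$ to any of $f^1_1,f^1_4,f^2_1,f^2_4$), we may assume $i=1$ and $j=1$. So $v$ is an internal vertex of $\eta(f^1 f^1_1)$. By Lemma~\ref{lem:JT}, every segment is an induced path. By Lemma~\ref{lem:F403}, $\eta(f^1)$ has degree four and is adjacent to $\eta(f^1_3)$. Since $G$ is $4$-connected, $v$ has at least two neighbors outside $\eta(f^1f^1_1)$. The plan is to show there are exactly two, namely $\eta(f^1_4)$ and $\eta(f^1_3)$.

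\textbf{Locating the neighbors of $v$.} Let $u$ be a neighbor of $v$ off the segment. By Lemma~\ref{lem:F4nooppo}, $u$ does not lie in the domain of $f^2$ or of any $f^2_k$. By Lemma~\ref{lem:F4noint}, the segments $\eta(f^1_kf^1_3)$ have no internal vertices. The remaining possibilities are:
\begin{itemize}
\item the domain of $f^1_2$, that is $\eta(f^1_2)$ or an internal vertex of $\eta(f^1_2f^2_3)$ or $\eta(f^1_2f^1)$;
\item $\eta(f^1_3)$;
\item the domain of $f^1_4$ (including internal vertices of $\eta(f^1f^1_4)$ and of $\eta(f^1_4f^2_1)$);
\item internal vertices of $\eta(f^1_1f^2_4)$.
\end{itemize}
The second part of Lemma~\ref{lem:JT}, applied at the degree-three vertex $\eta(f^1)$, rules out edges from $v$ to internal vertices of $\eta(f^1f^1_2)$ or $\eta(f^1f^1_4)$ unless $v$ is the neighbor of $\eta(f^1)$. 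Adjacency to the domain of $f^1_2$, or to $\eta(f^1_1f^2_4)$, should produce one of the configurations in Table~\ref{tab:2}, namely $(5,4)$, $(5,5)$ or $(7,\cdot)$, or a split of $F_4$ at $f^1_1$. This is to be checked by contracting $\eta(f^1f^1_4)$ or $\eta(f^1f^1_2)$ as in the proof of Lemma~\ref{lem:F4nooppo}. What survives is that $v$'s outside neighbors lie in $\{\eta(f^1_3)\}\cup(\text{domain of }f^1_4)$. Moreover, a neighbor in the domain of $f^1_4$ other than $\eta(f^1_4)$ itself is excluded by Lemma~\ref{lem:JT} or by a table graph. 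Hence $v$ has degree four and is adjacent to both $\eta(f^1_4)$ and $\eta(f^1_3)$.

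\textbf{No internal vertex on $\eta(f^1f^1_4)$.} Suppose $w$ were an internal vertex of $\eta(f^1 f^1_4)$. By the symmetric argument, $w$ is adjacent to $\eta(f^1_1)$ and $\eta(f^1_3)$. Together with $v\eta(f^1_4)$, this yields two crossing chords between the paths $\eta(f^1_1f^1f^1_4)$ and the rest. Contracting suitably should give a split of $F_4$ at $f^1$, or one of $(6,\cdot)$, $(8,\cdot)$, and hence a contradiction. Alternatively, we can use $4$-connectivity at $\eta(f^1)$, which by Lemma~\ref{lem:F403} has only the neighbors on its three segments and $\eta(f^1_3)$. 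Then $\{v',w',\eta(f^1_3),\ldots\}$ would give a small separator, where $v',w'$ are the neighbors of $\eta(f^1)$.

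\textbf{Main obstacle.} The hard part is the first step: certifying, for each candidate location of $u$ (especially internal vertices of $\eta(f^1_1f^2_4)$ and of $\eta(f^1_4f^2_1)$), that a specific graph from Table~\ref{tab:2} is a minor. To do this I would first apply Lemma~\ref{lem:F4LR} to fix where $\eta(f^1_2)$ attaches, since in the proof of Lemma~\ref{lem:F4nooppo} that case split was what made the $(5,4)$ and $(6,6)$ contractions available. I expect the same split to settle each location here.
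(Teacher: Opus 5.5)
Your proposal has a genuine gap. Both steps that carry the content of the lemma are deferred to unspecified ``table graphs'', and the first is not a local obstruction at all.

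Start with the reduction, which you only partly make. Internal vertices of $\eta(f^1_1f^2_4)$, $\eta(f^1_4f^2_1)$ and $\eta(f^1_2f^2_3)$ lie in the domains of $f^2_4$, $f^2_1$, $f^2_3$, so Lemma~\ref{lem:F4nooppo} already excludes them; they are not the obstacle. Combine Lemma~\ref{lem:F4noint} with the second part of Lemma~\ref{lem:JT} at $f^1$, taking $e_1\in\{f^1f^1_2,f^1f^1_4\}$ and $e_2=f^1f^1_1$ (the direction you do not use). Then every neighbor of $v$ off its segment lies in $\{a,b,\eta(f^1_3)\}$, where $a$ and $b$ are the neighbors of $\eta(f^1)$ on $\eta(f^1f^1_2)$ and $\eta(f^1f^1_4)$. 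By $(8,1)$, $v$ cannot see both $a$ and $b$, so $v$ has degree four and is adjacent to $\eta(f^1_3)$ and exactly one of $a,b$.

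The case $v\sim a$ is where the real work lies, and your suggested exclusions do not reach it. Since $f^1_1$ has degree three in $F_4$, there is no split at $f^1_1$; only $f^1_3$ and $f^2_3$ can be split. Lemma~\ref{lem:F4LR} applied to $\eta(f^1_2)$ also gives nothing when $a=\eta(f^1_2)$, because $v$ itself is then the promised neighbor. The paper rules this case out globally:
\begin{itemize}
\item $4$-connectivity at $\eta(f^1_1)$ and $\eta(f^1_4)$, together with $(5,3)$, $(8,1)$, $(8,2)$, $(4,3)$ and Lemmas~\ref{lem:F4nooppo} and~\ref{lem:F4noint}, forces their outside neighbors to be $\eta(f^2_1)$ or $\eta(f^2_3)$, respectively $\eta(f^2_3)$ or $\eta(f^2_4)$.
\item If both are $\eta(f^2_3)$, Lemma~\ref{lem:F4LR} for $\eta(f^2_2)$ plus further forbidden graphs leave $\eta(f^2_4)$ with no neighbor outside its closed domain, contradicting $4$-connectivity.
\item Otherwise $(5,2)$ or $(6,1)$ appears.
\end{itemize}

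Your treatment of the $f^1_4$ side is circular. Lemma~\ref{lem:JT} does allow $v\sim b$ with $b$ an internal vertex of $\eta(f^1f^1_4)$. Excluding this is exactly the assertion that $\eta(f^1f^1_4)$ has no internal vertex. Conversely, if you already had $v\sim\eta(f^1_4)$, the second part of Lemma~\ref{lem:JT} would immediately give $\eta(f^1_4)\eta(f^1)\in E(G)$. So your second step is redundant, while the actual argument is hidden in ``excluded by Lemma~\ref{lem:JT} or by a table graph''. The tools you float there do not supply it: $f^1$ also has degree three, so it cannot be split, and the separator at $\eta(f^1)$ is unspecified.

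What works is the following. If $b$ is internal, the analysis above applied to $b$ gives $b\sim v$ and $b\sim\eta(f^1_3)$, with both of degree four. Hence neither is adjacent to $\eta(f^1_2)$, and $v$ is the neighbor of $\eta(f^1)$ on its segment. Lemma~\ref{lem:F4LR} then gives an edge from $\eta(f^1_2)$ to $\eta(f^1_1f^1)-\{\eta(f^1),v\}$ or to $\eta(f^1_4f^1)-\{\eta(f^1),b\}$. Take $v$ (respectively $b$) as the new image of $f^1$ and absorb $\eta(f^1)$ into the branch of $f^1_2$; this exhibits a $(5,3)$ minor.
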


\begin{proof}
It suffices to prove the case where \( v \) lies in \( \eta(e) \) with \( e = f^1 f^1_1 \).

Recall that $\eta(f^1) \eta(f^1_3), \eta(f^2) \eta(f^2_3) \in E(G)$ by  Lemma~\ref{lem:F403}.
	
	By Lemmas~\ref{lem:JT}, \ref{lem:F4nooppo}, and \ref{lem:F4noint}, \( v \) has at least two neighbors outside \( \eta(e) \), each of which is either the neighbor of \( \eta(f^1) \) in \( \eta(f^1 f^1_2) \), the neighbor of \( \eta(f^1) \) in \( \eta(f^1 f^1_4) \), or \( \eta(f^1_3) \).
	
By $(8,1)$, \( v \) is not adjacent to both neighbors of \( \eta(f^1) \) in \( \eta(f^1 f^1_2) \) and in \( \eta(f^1 f^1_4) \). Hence \( v \) has exactly two neighbors outside \( \eta(e) \), and one of them is \( \eta(f^1_3) \).

\smallskip

\noindent\textbf{Case~1.} \( v \) is adjacent to the neighbor of \( \eta(f^1) \) in \( \eta(f^1 f^1_2) \).

\smallskip

We show that this case cannot occur.

It is clear that \( \eta(f^1_1) \) (respectively, \( \eta(f^1_4) \)) has a neighbor outside the closed domain of \( f^1_1 \) (respectively, \( f^1_4 \)). By $(5,3)$, $(8,2)$, $(4,3)$, and Lemma~\ref{lem:F4nooppo}, \( \eta(f^1_1) \) does not join to any domain of \( f^1_2 \), \( f^1_4 \), \( f^2_2 \), or \( f^2 \). Thus, using Lemma~\ref{lem:F4noint}, we conclude that \( \eta(f^1_1) \) joins to \( \eta(f^2_1) \) or \( \eta(f^2_3) \). By $(8,1)$ and $(8,2)$, \( \eta(f^1_4) \) does not join to the domain of \( f^1_1 \). By $(5,3)$, $(4,3)$, and Lemma~\ref{lem:F4nooppo}, \( \eta(f^1_4) \) does not join to any domain of \( f^1_2 \), \( f^2_2 \), or \( f^2 \). It then follows from Lemma~\ref{lem:F4noint} that \( \eta(f^1_4) \) joins to \( \eta(f^2_3) \) or \( \eta(f^2_4) \). Hence we may consider the cases depending on where \( \eta(f^1_1) \) and \( \eta(f^1_4) \) join.

\smallskip

\noindent\textbf{Case~1.1.} Both \( \eta(f^1_1) \) and \( \eta(f^1_4) \) join to \( \eta(f^2_3) \).

\smallskip

By Lemmas~\ref{lem:F4LR} and~\ref{lem:F4noint}, \( \eta(f^2_2) \) has a neighbor either in \( \eta(f^2_1 f^2)-\eta(f^2) \) or in \( \eta(f^2_4 f^2)-\eta(f^2) \). We assume the former; the latter is analogous. By Lemma~\ref{lem:F4nooppo} and by $(4,3)$, $(5,1)$, $(5,2)$, $(6,3)$, and $(5,3)$, any neighbor of \( \eta(f^2_4) \) outside the closed domain of \( f^2_4 \) cannot join the domain of \( f^1 \), \( f^1_2 \), \( f^1_3 \), \( f^1_4 \), \( f^2_1 \), or \( f^2_2 \). Hence \( \eta(f^2_4) \) has no neighbor outside the closed domain of \( f^2_4 \), a contradiction.

\smallskip

\noindent\textbf{Case~1.2.} At least one of \( \eta(f^1_1) \) and \( \eta(f^1_4) \) does not join to \( \eta(f^2_3) \).  

\smallskip

If either \( \eta(f^1_1) \) or \( \eta(f^1_4) \) joins to \( \eta(f^2_3) \), then \( G \) contains \( (5,2) \) as a minor. Otherwise, $G$ contains a minor of \( (6,1) \). In either case this is impossible.

\smallskip

\smallskip

\noindent\textbf{Case~2.} \( v \) is adjacent to the neighbor of \( \eta(f^1) \) in \( \eta(f^1 f^1_4) \).

\smallskip

	Denote by \( u \) the neighbor of \( \eta(f^1) \) in \( \eta(f^1 f^1_4) \). 
	
We claim that \( \eta(f^1 f^1_4) \) has no internal vertex and hence \( u = \eta(f^1_4) \), which completes the proof. Suppose otherwise. By the observations above, \( v \) and \( u \) are adjacent, and each has degree four and joins to \( \eta(f^1_3) \). In particular, neither \( v \) nor \( u \) joins to \( \eta(f^1_2) \). By Lemma~\ref{lem:JT}, \( v \) is the neighbor of \( \eta(f^1) \) in \( \eta(f^1 f^1_1) \). It then follows from Lemma~\ref{lem:F4LR} that \( G \) contains a minor of \( (5,3) \), a contradiction.
\end{proof}

\begin{lemma} \label{lem:F4LRE}
	Let \( G \) be a \( 4 \)-connected graph that contains neither \( K_{3,4} \) nor any graph obtained from \( F_4 \) by splitting a vertex as a minor.  
	Suppose \( G \) contains a spanning JT-subdivision \( \eta(F_4) \) of \( F_4 \).  
	For any \( i \in [2] \), \( \eta(f^i_2) \) is adjacent to either \( \eta(f^i_1) \) or \( \eta(f^i_4) \) and has degree four in $G$.
\end{lemma}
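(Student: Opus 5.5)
By symmetry between $f^1$ and $f^2$ (the automorphism of $F_4$ exchanging the two halves), it suffices to treat $i=1$. The plan is to combine Lemma~\ref{lem:F4LR} with the structural information already obtained. By Lemma~\ref{lem:F4LR}, $\eta(f^1_2)$ has a neighbor in $\eta(f^1_j f^1)-\eta(f^1)$ or in $\eta(f^1_j f^1_3)-\eta(f^1_3)$ for some $j\in\{1,4\}$; by the left–right symmetry of $F_4$ (the automorphism fixing $f^1_2,f^1_3,f^1$ and swapping $f^1_1$ with $f^1_4$), I assume $j=1$. By Lemma~\ref{lem:F4noint}, the segment $\eta(f^1_1f^1_3)$ has no internal vertex, so the neighbor $x$ of $\eta(f^1_2)$ lies in $\eta(f^1_1 f^1)-\eta(f^1)$.

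Next I show $x=\eta(f^1_1)$. If $x$ were an internal vertex of $\eta(f^1f^1_1)$, then Lemma~\ref{lem:F4int} gives that $x$ has degree four with neighbors exactly its two segment neighbors, $\eta(f^1_4)$, and $\eta(f^1_3)$; in particular, $x$ is not adjacent to $\eta(f^1_2)$, a contradiction. Hence $\eta(f^1_2)\eta(f^1_1)\in E(G)$.

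It remains to show that $\eta(f^1_2)$ has degree four, i.e., that it has no further neighbor outside its closed domain beyond $\eta(f^1_1)$, and that its segments have no internal vertices adjacent to it other than the segment neighbors (this follows from Lemma~\ref{lem:JT}, since segments are induced). I first rule out, using the splitting constraint and Lemma~\ref{lem:F4noint}, any neighbor in $\eta(f^1_2f^1_3)$ beyond $\eta(f^1_3)$. Next, Lemma~\ref{lem:F4nooppo} and Lemma~\ref{lem:F403} show that $\eta(f^1)$ has degree four, so $\eta(f^1_2)$ is not adjacent to $\eta(f^1)$ beyond its segment. By Lemma~\ref{lem:F4int}, every internal vertex of $\eta(f^1f^1_1)$ or $\eta(f^1f^1_4)$ has degree four with prescribed neighbors, none of which is $\eta(f^1_2)$. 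So a second extra neighbor $y$ of $\eta(f^1_2)$ would have to be $\eta(f^1_4)$, or lie in a domain of $f^2$, $f^2_1$, $f^2_2$, $f^2_4$, or an internal vertex of $\eta(f^1_1f^2_4)$ or $\eta(f^1_4f^2_1)$ (with segment labels as in $F_4$). The neighbors in domains of $f^2,f^2_1,f^2_2,f^2_4$ are excluded by $(4,1)$, $(4,2)$, $(4,3)$ exactly as in Lemma~\ref{lem:F4LR}.

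The main obstacle is the case $y=\eta(f^1_4)$ (and its neighboring positions on the segments from $\eta(f^1_1)$, $\eta(f^1_4)$ toward the other half), where $\eta(f^1_2)$ is adjacent to both $\eta(f^1_1)$ and $\eta(f^1_4)$. Here I would exhibit a minor from Table~\ref{tab:2} (I expect $(8,1)$ or $(5,3)$, obtained after contracting $\eta(f^1f^1_4)$ as in earlier proofs), or alternatively use $4$-connectivity. In the latter approach, since $\eta(f^1)$ and all spine vertices now have fixed degree four, $\{\eta(f^1_1),\eta(f^1_4),\eta(f^1_3)\}$ plus one more vertex would separate the domain of $f^1$ together with $\eta(f^1_2)$ from the rest, contradicting $4$-connectivity or producing $K_{3,4}$. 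Once $y$ is excluded, $\eta(f^1_2)$ has exactly four neighbors: its two segment neighbors toward $f^1$ and $f^1_3$ (the latter is $\eta(f^1_3)$), its segment neighbor toward $f^2_3$, and $\eta(f^1_1)$. This gives degree four.
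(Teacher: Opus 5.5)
There is a genuine gap, and it sits exactly at the step you flag as the main obstacle.

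Up to that point your argument follows the paper's. Lemma~\ref{lem:JT} makes the segments induced. Lemma~\ref{lem:F4LR}, via the $(4,1)$--$(4,3)$ exclusions behind it, together with Lemma~\ref{lem:F4noint} and Lemma~\ref{lem:F4int}, then leaves $\eta(f^1_1)$ and $\eta(f^1_4)$ as the only possible neighbours of $\eta(f^1_2)$ outside its closed domain. What remains is to exclude $\eta(f^1_2)$ being adjacent to both, and you do not prove this. You only guess a table graph. Your suggested contraction of $\eta(f^1f^1_4)$ points the wrong way: $\eta(f^1)$ and $\eta(f^1_4)$ end up on opposite sides of the model below. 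Your fallback is also invalid. A separator consisting of $\{\eta(f^1_1),\eta(f^1_4),\eta(f^1_3)\}$ plus one more vertex is a $4$-cut, which a $4$-connected graph may have. So no contradiction follows, and ``or producing $K_{3,4}$'' is unsupported. There are also no ``spine vertices'' in this setting.

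The missing ingredient is Lemma~\ref{lem:F403}, which gives the edge $\eta(f^1)\eta(f^1_3)$. The paper settles this case with Lemma~\ref{lem:F403} and $(5,3)$. Concretely, suppose both $\eta(f^1_2)\eta(f^1_1)$ and $\eta(f^1_2)\eta(f^1_4)$ are edges. Take the connected sets $\eta(f^1_1f^2_4)$, $\eta(f^1_4f^2_1)$, $\eta(f^1_3f^2_2)$ (whole segments) as one side of $K_{3,4}$. Take $\eta(f^1)$, $\eta(f^1_2)$, $\eta(f^2)$, $\eta(f^2_3)$ as the other side. The twelve required connections are:
\begin{itemize}
\item the segments $\eta(f^1f^1_1)$, $\eta(f^1f^1_4)$, $\eta(f^1_2f^1_3)$;
\item the segments $\eta(f^2f^2_j)$ and $\eta(f^2_jf^2_3)$ for $j\in\{1,2,4\}$;
\item the three edges $\eta(f^1)\eta(f^1_3)$, $\eta(f^1_2)\eta(f^1_1)$, $\eta(f^1_2)\eta(f^1_4)$.
\end{itemize}
These are pairwise internally disjoint, so $G\succeq K_{3,4}$, a contradiction. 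Hence exactly one of the two edges is present, and your count of three segment neighbours plus one extra neighbour gives degree four.
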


\begin{proof}
It follows directly from Lemmas~\ref{lem:JT},~\ref{lem:F4LR},~\ref{lem:F4noint}, and~\ref{lem:F4int} that any neighbor of $\eta(f^i_2)$ outside the closed domain of $f^i_2$ must be $\eta(f^i_1)$ or $\eta(f^i_4)$. By Lemma~\ref{lem:F403} and $(5,3)$, $\eta(f^i_2)$ is adjacent to exactly one of $\eta(f^i_1)$ or $\eta(f^i_4)$, and has degree four.
\end{proof}

\begin{lemma} \label{lem:F4int2}
	Let \( G \) be a \(4\)-connected graph that contains neither \( K_{3,4} \) nor any graph obtained from \( F_4 \) by splitting a vertex as a minor.  
	Suppose \( G \) contains a spanning JT-subdivision \( \eta(F_4) \) of \( F_4 \), and that \( \eta(f^i f^i_2) \), with \( i \in [2] \), contains an internal vertex \( v \).  
	Then \( v \) is adjacent to \( \eta(f^i_3) \) and either \( \eta(f^i_1) \) or \( \eta(f^i_4) \), and has degree four in $G$.
\end{lemma}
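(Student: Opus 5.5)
By symmetry between $f^1$ and $f^2$ (the automorphisms of $F_4$), I may assume $i=1$. I would then run the same template as Lemma~\ref{lem:F4int}: first restrict where the neighbors of $v$ outside $\eta(f^1 f^1_2)$ can lie, and then use $4$-connectivity to pin them down.

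\textbf{Step 1: candidate neighbors.} By Lemma~\ref{lem:JT}, every segment of $\eta(F_4)$ is an induced path. Since $G$ is $4$-connected, $v$ has at least two neighbors outside $\eta(f^1f^1_2)$. I would rule out the possible locations as follows.
\begin{itemize}
\item Lemma~\ref{lem:F4nooppo} excludes the domains of $f^2$ and of $f^2_j$ for $j\in[4]$.
\item Lemma~\ref{lem:F4noint} says the segments $\eta(f^1_jf^1_3)$ have no internal vertices.
\item Lemma~\ref{lem:F403} gives $\eta(f^1)\eta(f^1_3)\in E(G)$ and $\deg\eta(f^1)=4$.
\item Lemma~\ref{lem:F4int} says internal vertices of $\eta(f^1f^1_1)$ and $\eta(f^1f^1_4)$ are adjacent only to $\eta(f^1_3)$, to one of $\eta(f^1_1),\eta(f^1_4)$, and to their path neighbors.
\item The second part of Lemma~\ref{lem:JT}, applied at the degree-three vertex $f^1$, restricts edges from $v$ into the other segments at $\eta(f^1)$. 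If $v$ is not adjacent to $\eta(f^1)$, such an edge is forbidden. If $v$ is adjacent to $\eta(f^1)$, the edge can only go to the neighbor of $\eta(f^1)$ on $\eta(f^1f^1_1)$ or on $\eta(f^1f^1_4)$. But such a vertex is internal, and by Lemma~\ref{lem:F4int} it has degree four with all neighbors already determined, so this is impossible.
\end{itemize}
What remains is that the neighbors of $v$ outside its segment lie in $\{\eta(f^1_1),\eta(f^1_3),\eta(f^1_4)\}$.

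\textbf{Step 2: $v$ is adjacent to $\eta(f^1_3)$.} Suppose not. Then $v$ is adjacent to both $\eta(f^1_1)$ and $\eta(f^1_4)$. I expect this to give a minor of $(5,3)$ or $(8,1)$ from the tables, using the same contraction as in the proof of Lemma~\ref{lem:F4LRE}, where adjacency of $\eta(f^1_2)$ to both $\eta(f^1_1)$ and $\eta(f^1_4)$ was excluded by $(5,3)$. To arrange this, contract $\eta(f^1f^1_2)[v,\eta(f^1_2)]$ so that $v$ plays the role of $\eta(f^1_2)$.

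\textbf{Step 3: $v$ is not adjacent to both $\eta(f^1_1)$ and $\eta(f^1_4)$.} This follows by the same contraction, again invoking $(5,3)$. Hence $v$ has exactly two outside neighbors: $\eta(f^1_3)$ and exactly one of $\eta(f^1_1),\eta(f^1_4)$. Together with its two path neighbors, $v$ has degree four.

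\textbf{Main obstacle.} The delicate point is the bookkeeping in Steps 2 and 3. Contracting the subpath between $v$ and $\eta(f^1_2)$ must produce exactly the configuration of $(5,3)$, and for this I need the edge $\eta(f^1_2)\eta(f^1_j)$ from Lemma~\ref{lem:F4LRE} to remain available. If the contraction does not give $(5,3)$ directly, I would instead reroute, taking a new subdivision $\eta'$ with $\eta'(f^1_2)=v$, and reapply Lemma~\ref{lem:F4LRE} to $\eta'$. This requires checking that $\eta'$ is again a JT-subdivision, or avoiding that requirement by arguing via the explicit minors. A similar subtlety arises in Step 1, in excluding neighbors of $v$ among internal vertices of $\eta(f^1f^1_1)$ and $\eta(f^1f^1_4)$ beyond those adjacent to $\eta(f^1)$; there I would again lean on the degree-four conclusion of Lemma~\ref{lem:F4int}.
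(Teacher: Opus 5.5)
Your proposal is correct and is essentially the paper's own argument, which simply cites Lemmas~\ref{lem:JT}, \ref{lem:F4nooppo}, \ref{lem:F4noint}, \ref{lem:F4int}, \ref{lem:F403} and $(5,3)$: you confine the outside neighbors of $v$ to $\{\eta(f^i_1),\eta(f^i_3),\eta(f^i_4)\}$, then exclude adjacency to both $\eta(f^i_1)$ and $\eta(f^i_4)$ by contracting $\eta(f^if^i_2)[v,\eta(f^i_2)]$. The hedging in your last paragraph is unnecessary, for two reasons: that contraction, together with the edge $\eta(f^i)\eta(f^i_3)$, already gives the $(5,3)$ configuration without Lemma~\ref{lem:F4LRE} or any new JT-subdivision, and Lemma~\ref{lem:F4int} by itself rules out every internal vertex of $\eta(f^if^i_1)$ and $\eta(f^if^i_4)$ as a neighbor of $v$.
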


\begin{proof}
It follows immediately from Lemmas~\ref{lem:JT},~\ref{lem:F4nooppo},~\ref{lem:F4noint},~\ref{lem:F4int},~\ref{lem:F403}, and $(5,3)$.
\end{proof}

\begin{lemma}[\cite{Lo2025}] \label{lem:F4ind}
	Let \( H \) be a graph obtained from \( F_4 \) by subdividing two independent edges, each with one new vertex, and then joining these two new vertices.  
	Then \( H \) contains \( K_{3,4} \) as a minor.
\end{lemma}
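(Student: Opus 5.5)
The plan is a finite case analysis over pairs of independent edges of $F_4$, organized so that almost every case collapses onto a graph already known to contain $K_{3,4}$. Let $e=ab$ and $e'=cd$ be independent edges of $F_4$. Let $H$ be obtained by subdividing $e$ with a new vertex $x$ and $e'$ with a new vertex $y$, and then adding the edge $xy$.

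The first step is a reduction to ``$F_4$ plus one edge''. Contracting $xa$ and $yc$ in $H$ returns $F_4$ together with the extra edge $ac$, provided $a$ and $c$ are non-adjacent in $F_4$. The same holds for each of the four endpoint choices $\{a,b\}\times\{c,d\}$. So whenever some choice gives a non-adjacent pair $u,w$ with $F_4+uw\succeq K_{3,4}$, we are done. Similarly, contracting only $xa$ gives $F_4$ with $e'$ subdivided and the subdivision vertex joined to $a$; this is exactly the kind of configuration recorded in Table~\ref{tab:2}, where a vertex joins the interior of a segment. The rows of Table~\ref{tab:2} (for instance $(4,1)$--$(4,6)$, $(5,1)$--$(5,6)$, $(7,1)$--$(7,5)$), as used in the proofs of Lemmas~\ref{lem:F4nooppo}--\ref{lem:F4int}, should therefore dispose of most pairs directly.

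The second step uses symmetry to bound the work. Since $F_4$ has exactly four automorphisms, generated by the maps sending $f^1_1$ to $f^1_4$, $f^2_1$, or $f^2_4$, I would first list the independent pairs $\{e,e'\}$ up to these automorphisms. I would split them by how the two edges sit relative to the two ``halves'' $\{f^1,f^1_1,\dots,f^1_4\}$ and $\{f^2,f^2_1,\dots,f^2_4\}$:
\begin{itemize}
\item both edges in the same half;
\item one edge in each half;
\item at least one edge among the cross edges joining the halves.
\end{itemize}
For each orbit representative I would check whether the reduction of the first step applies. I expect that pairs with endpoints in opposite halves, or involving $f^i$ together with the other side, fall immediately under the cases already excluded in Lemma~\ref{lem:F4nooppo}.

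The main obstacle is the residual pairs where every endpoint choice yields an adjacent pair, or only configurations not in the table. These are typically both edges inside one half around $f^i$ and $f^i_3$, for example $f^1f^1_1$ with $f^1_2f^1_3$. There the subdivision vertices are genuinely needed. For these I would build explicit $K_{3,4}$ models in $H$ by hand:
\begin{itemize}
\item take $\{x,y\}$ together with a high-degree vertex such as $f^i_3$ or its opposite counterpart as candidates for the side of size $3$;
\item route four branch sets through the other half using the cross edges;
\item verify each branch set is connected and meets all three of the side-$3$ vertices.
\end{itemize}
Each such verification is finite, and I would confirm it mechanically (preferably with a computer, as in Lemma~\ref{lem:DEFK34mf}). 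This completes the check over all orbit representatives.
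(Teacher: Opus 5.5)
Your skeleton is the right kind of argument for this finite statement: reduce modulo the four automorphisms of $F_4$ to orbit representatives of independent pairs, then check each one. The paper itself gives no proof; it only cites the lemma from \cite{Lo2025}, and it settles comparable claims such as Lemma~\ref{lem:DEFK34mf} by direct verification. However, both devices you use to dispose of most representatives fail, so what remains is an exhaustive check you have not carried out.

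\textbf{The appeal to Lemma~\ref{lem:F4nooppo} is invalid.} That lemma is not an unconditional statement about augmentations of $F_4$. It concerns a $4$-connected $G$ with no $K_{3,4}$ minor and no split-$F_4$ minor, and its proof uses that certain branch vertices have further neighbours, which is where $4$-connectivity enters. For example, a vertex of $\eta(f^1f^1_2)$ joined to the domain of $f^2_3$ is ruled out there only after Lemma~\ref{lem:F4LR} supplies an extra edge at $\eta(f^1_2)$, and Lemma~\ref{lem:F4LR} rests on $4$-connectivity. Your $H$ is not $4$-connected ($x$ and $y$ have degree three) and has no such extra edge. So a pair like $\{f^1f^1_2,\ f^2_1f^2_3\}$, exactly of the ``opposite halves, involving $f^i$'' type you expected to be free, is not handled. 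Only steps that read a $K_{3,4}$ off a single graph of Table~\ref{tab:2} transfer to $H$.

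\textbf{The reduction to ``$F_4$ plus one edge'' helps much less than you expect.} Take the identifications in the proof of Proposition~\ref{pro:F4} with no internal vertices. They show that $F_4$ is a spanning subgraph of $\mathfrak{F}+\varphi^1_3\varphi^2_3$, of $\mathfrak{D}_{1,1}$ and of $\mathfrak{E}_1$, and each of these has no $K_{3,4}$ minor by Lemma~\ref{lem:DEFK34mf}. Hence $F_4+uw$ has no $K_{3,4}$ minor for $uw$ among $f^1f^1_3$, $f^1_1f^1_2$, $f^1_1f^1_4$, $f^1_4f^2_3$, $f^1_3f^2_3$, $f^1_1f^2_1$, and their images under the automorphisms. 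Consequently, for several pairs no endpoint contraction succeeds, your example $\{f^1f^1_1,\ f^1_2f^1_3\}$ and also $\{f^1_1f^2_4,\ f^1_4f^1_3\}$ among them, and an explicit model in $H$ is required.

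\textbf{How to build those models.} Note that $H-xy$ is a subdivision of $F_4$, hence has no $K_{3,4}$ minor, because $K_{3,4}$ has minimum degree three. So every model must actually use $xy$, either inside a branch set or on a connecting path. In particular:
\begin{itemize}
\item $x$ and $y$ cannot lie in two distinct branch sets on the same side;
\item neither can be a singleton on the $3$-side, since each has degree three;
\item the natural ansatz is $\{x,y\}$ inside one branch set.
\end{itemize}
With the appeal to Lemma~\ref{lem:F4nooppo} removed, your plan becomes a legitimate proof only once every orbit representative has actually been verified in $H$ itself.
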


\begin{lemma} \label{lem:F4ind2}
	Let \( H \) be the graph obtained from \( F_4 \) by subdividing \( f^1_1 f^2_4 \) with a new vertex \( v \) and adding four edges \( v f^1_3 \), \( f^1 f^1_3 \), \( f^1_1 f^1_2 \), and \( f^1_4 u \), where \( u \in \{f^1_1, f^1_2, f^2_2, v\} \).  
	Then \( H \) contains \( K_{3,4} \) as a minor.
\end{lemma}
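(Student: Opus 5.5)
The statement concerns four specific finite graphs, one for each choice of $u$. I would prove it by exhibiting an explicit $K_{3,4}$ model in each of the four graphs $H$ determined by $u \in \{f^1_1, f^1_2, f^2_2, v\}$.

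Since I cannot see Figure~\ref{fig:DEF}, I will rely on the structure visible from the segment names used throughout this section. The edges of $F_4$ are $f^i f^i_j$ for $j\in\{1,2,4\}$, $f^i_j f^i_3$ for $j \in\{1,2,4\}$, and the cross edges $f^1_1f^2_4$, $f^1_2f^2_3$, $f^1_4 f^2_1$ (and symmetric ones). From these I would fix the full edge list of $H$ for each $u$; every such $H$ has eleven vertices. The added edges $f^1f^1_3$, $vf^1_3$ and $f^1_1f^1_2$ make the side $i=1$ very dense around $f^1_3$. This suggests choosing $f^1_3$ as one of the three vertices of the $X$-side of $K_{3,4}$.

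To keep the search small I would use two earlier results.

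\begin{itemize}
\item \textbf{Reduction for $u=f^1_2$.} The edges $vf^1_3$ and $f^1_4 f^1_2$ together resemble the configuration of Lemma~\ref{lem:F4ind}: an edge of the subdivided segment $f^1_1f^2_4$ attached to a segment at $f^1_3$ while $f^1_4$ is joined across. I would check whether contracting $vf^1_1$ (or $f^1 f^1_3$) produces a graph that contains the configuration of Lemma~\ref{lem:F4ind}, or one of the Table~\ref{tab:2} graphs $(5,3)$ or $(8,2)$, as a subgraph. If so, this case follows directly from the earlier result.
\item \textbf{Reduction for $u=v$.} The edge $f^1_4v$ joins $f^1_4$ to an internal vertex of $f^1_1f^2_4$. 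This is exactly the pattern ruled out in Lemma~\ref{lem:F4int}, Case~1, via graphs such as $(8,1)$. I would try to match this case to $(8,1)$ or $(5,2)$ after contracting $f^1f^1_3$.
\end{itemize}

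For the remaining values of $u$, I would construct the model by hand. The plan is to take $X=\{f^1_3,\; x_2,\; x_3\}$, where $x_2$ and $x_3$ are branch sets in the $i=2$ side, for example $\{f^2_3\}$ and a connected set containing $f^2$. The $Y$-side would use four branch sets built from $\{f^1\}$, $\{v\}$, $\{f^1_1\}$ or $\{f^1_2\}$, and $\{f^1_4\}$ together with a neighbour. The new edge $f^1_4u$ is what supplies the otherwise missing adjacency. With eleven vertices, the remaining vertices either augment branch sets or are deleted, and each required adjacency can be checked directly against the edge list.

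The main obstacle is that without the figure I cannot be certain of the exact adjacencies of $F_4$, so finding the right partition may require trial. The fallback is a computer verification, in the same spirit as Lemma~\ref{lem:DEFK34mf}: each of the four graphs has only eleven vertices, so an exhaustive check of whether it has a $K_{3,4}$ minor is immediate.
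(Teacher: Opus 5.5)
Your plan (a separate finite check for each of the four values of $u$) is the right kind of argument, and the paper's proof is of the same nature: for $u=f^1_1,f^1_2,f^2_2,v$ it shows that $H$ contains, respectively, the graphs $(6,3)$, $(5,3)$, $(4,3)$, $(8,2)$ of Table~\ref{tab:2}. As written, however, your proposal establishes none of the four cases, so there is a genuine gap.

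\begin{itemize}
\item The two suggested reductions are unchecked guesses. Lemma~\ref{lem:F4ind} does not apply as stated, since $vf^1_3$ joins a subdivision vertex to a branch vertex, not two subdivision vertices.
\item The computer search is only announced, not carried out.
\item The one concrete hand construction you describe fails for every $u$, however $x_2,x_3$ and the leftover vertices are chosen. In that template $f^1_3$ is in an $X$-set, while $f^1$, $v$, $f^1_4$ and one of $f^1_1,f^1_2$ lie in four different $Y$-sets.
\begin{itemize}
\item If the $Y$-singleton is $\{f^1_1\}$: its neighbours in $H$ are $f^1,f^1_2,f^1_3,v$, plus $f^1_4$ when $u=f^1_1$. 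Since $f^1$, $v$ (and $f^1_4$) are in other $Y$-sets, it meets at most two $X$-sets.
\item If the $Y$-singleton is $\{f^1_2\}$: its neighbours are $f^1,f^1_1,f^1_3,f^2_3$, plus $f^1_4$ when $u=f^1_2$. So $f^1_1$ and $f^2_3$ must lie in the two $X$-sets other than that of $f^1_3$. But then all of $N_H(f^1)=\{f^1_1,f^1_2,f^1_3,f^1_4\}$ is used, so the $Y$-set of $f^1$ is $\{f^1\}$ and meets only two $X$-sets.
\end{itemize}
\end{itemize}

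The fix is to do roughly the opposite: put $f^1_4$ and $v$ on the $X$-side, and use degree-three vertices of the $f^2$-side as $Y$-singletons. Your edge list for $F_4$ agrees with the segment names used in the paper. From it one checks directly that the following are $K_{3,4}$ models: each $X$-set induces a connected subgraph, and each $Y$-vertex has a neighbour in all three $X$-sets.
\begin{itemize}
\item $u=f^1_1$: $X$-sets $\{f^2,f^2_1,f^1_4\}$, $\{f^2_3,f^1_2\}$, $\{f^1_3,v\}$; $Y$-vertices $f^2_2$, $f^2_4$, $f^1$, $f^1_1$.
\item $u=f^1_2$: $X$-sets $\{f^2_1,f^1_4\}$, $\{f^2_2,f^1_3\}$, $\{f^2_4,v,f^1_1\}$; $Y$-vertices $f^2$, $f^2_3$, $f^1$, $f^1_2$.
\item $u=f^2_2$: $X$-sets $\{f^2,f^2_4,v\}$, $\{f^2_3,f^1_2\}$, $\{f^1_4,f^1\}$; $Y$-vertices $f^2_1$, $f^2_2$, $f^1_1$, $f^1_3$.
\item $u=v$: $X$-sets $\{f^2,f^2_2,f^1_3\}$, $\{f^2_3,f^1_2\}$, $\{f^1_4,v\}$; $Y$-vertices $f^2_1$, $f^2_4$, $f^1$, $f^1_1$.
\end{itemize}
For instance, in the first model $f^1_1$ reaches the three $X$-sets through the edge $f^1_4u=f^1_4f^1_1$, the added edge $f^1_1f^1_2$, and $f^1_1f^1_3$. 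In the last model, $\{f^1_4,v\}$ is connected precisely because of the edge $f^1_4u=f^1_4v$.

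With these four models your direct approach gives a complete, self-contained proof. The paper's route instead reuses graphs from Table~\ref{tab:2}, whose $K_{3,4}$ minors are certified once and then invoked throughout Section~\ref{sec:F4}.
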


\begin{proof}
	According as \( u \) is \( f^1_1 \), \( f^1_2 \), \( f^2_2 \), or \( v \), the graph \( H \) contains, respectively, \((6,3)\), \((5,3)\), \((4,3)\), or \((8,2)\) as a minor.  
	Hence \( H \) contains a \( K_{3,4} \) minor.
\end{proof}
\begin{lemma} \label{lem:F4ind3}
	Let \( H \) be the graph obtained from \( F_4 \) by subdividing \( f^1_1 f^2_4 \) with a new vertex \( v \) and adding four edges \( v f^1_3 \), \( f^1 f^1_3 \), \( f^1_2 f^1_4 \), and \( f^1_1 u \), where \( u \in \{f^1_2, f^2_1, f^2_2, f^2_3\} \).  
	Then \( H \) contains \( K_{3,4} \) as a minor.
\end{lemma}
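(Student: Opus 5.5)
The plan mirrors the proof of Lemma~\ref{lem:F4ind2}: for each of the four choices of $u$, exhibit one of the graphs from Table~\ref{tab:2} (or $K_{3,4}$ directly) as a minor of $H$. The base graph $H_0$ is common to all four cases. It is $F_4$ with $f^1_1f^2_4$ subdivided by $v$, together with the edges $vf^1_3$, $f^1f^1_3$ and $f^1_2f^1_4$. It differs from the base graph of Lemma~\ref{lem:F4ind2} only in that the chord at $f^1_2$ goes to $f^1_4$ instead of $f^1_1$. Under the automorphism of $F_4$ that swaps $f^1_1\leftrightarrow f^1_4$ and fixes $f^1$, $f^1_2$, $f^1_3$, the new chord becomes $f^1_1f^1_2$. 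However, the subdivided edge then becomes $f^1_4 f^2_1$ rather than $f^1_1f^2_4$, so the earlier lemma cannot simply be quoted. Instead I will recompute the relevant small minors for each $u$.

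\textbf{Case $u=f^2_2$.} The edge $f^1_1f^2_2$ joins the domain of $f^1_1$ to that of $f^2_2$. This is exactly the configuration excluded by $(4,3)$, which was used repeatedly in Lemmas~\ref{lem:F4nooppo} and~\ref{lem:F4int} to forbid $\eta(f^1_1)$ from joining the domain of $f^2_2$. So $H\succeq(4,3)$ follows just from $F_4$ plus this edge, ignoring $v$.

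\textbf{Case $u=f^1_2$.} Here $f^1_2$ is adjacent to both $f^1_1$ and $f^1_4$. This is the situation excluded by $(5,3)$ in Lemma~\ref{lem:F4LRE}: together with $f^1f^1_3$ (Lemma~\ref{lem:F403}), it says $\eta(f^1_2)$ cannot be adjacent to both $\eta(f^1_1)$ and $\eta(f^1_4)$. Hence $H\succeq(5,3)$.

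\textbf{Cases $u=f^2_1$ and $u=f^2_3$.} The edge $f^1_1u$ goes from $f^1_1$ to the opposite side, and these are the cases analysed in Case~1 of Lemma~\ref{lem:F4int}. There, with $v$ a degree-four vertex adjacent to $f^1_3$ on a subdivided edge at $f^1_1$, the joins $\eta(f^1_1)\eta(f^2_1)$ and $\eta(f^1_1)\eta(f^2_3)$ were handled via $(5,1)$, $(5,2)$, $(6,1)$ and $(6,3)$. In our graph $v$ lies on $f^1_1f^2_4$ instead of $f^1f^1_1$. So I expect to contract $vf^2_4$ or, alternatively, use $v$ as the branch vertex formerly played by $f^1_1$ and let $f^1_1$ serve as a subdivision vertex. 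Either way, $f^1_3$ receives the extra neighbour required in $(5,2)$ or $(6,3)$. The choices I will try are $(5,2)$ for $u=f^2_3$ and $(6,3)$ for $u=f^2_1$.

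The main obstacle is this last pair of cases. There the correct identification of branch sets is not forced by symmetry, and I cannot see the pictures in the table. If no tabulated graph fits cleanly, I will build a $K_{3,4}$ model directly, as a fallback. The three degree-four side vertices would be taken as $f^1_3$, $f^1_1$ (merged with $v$ if needed) and $f^2_3$. The four other-side branch sets would be chosen from $\{f^1\}$, $\{f^1_2\}$, $\{f^1_4\}$ and a connected set in the $f^2$-half. Each adjacency would then be verified by hand, with a computer check as the final confirmation, in keeping with the paper's practice in Lemma~\ref{lem:DEFK34mf}.
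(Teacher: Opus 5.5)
\textbf{There is a genuine gap.} Your treatment of $u=f^2_2$ and $u=f^1_2$ agrees with the paper, which also cites $(4,3)$ and $(5,3)$ there. The two remaining cases are not proved: for them you only name the table entries you expect to find.

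Those choices also differ from the paper's. The paper uses $(5,2)$ for $u=f^2_1$ and $(5,1)$ for $u=f^2_3$, whereas you propose $(6,3)$ and $(5,2)$. In Lemma~\ref{lem:F4ind2} the paper obtains $(6,3)$ from the chord $f^1_1f^1_4$, which is absent here. Nothing in your proposal shows that $H$ contains either graph you name.

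The explicit fallback you describe cannot succeed. In $H$ one has $N_H(f^1)=\{f^1_1,f^1_2,f^1_3,f^1_4\}$. Suppose $f^1_1$ and $f^1_3$ lie in two of the $X$-sets and $\{f^1\},\{f^1_2\},\{f^1_4\}$ are $Y$-sets. Then the third $X$-set, the one containing $f^2_3$, can never be adjacent to $\{f^1\}$. In addition, for $u\in\{f^2_1,f^2_3\}$ the set $\{f^1_1,v\}$ has no edge to $f^1_2$ or to $f^1_4$.

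\textbf{The missing idea} is to let branch sets cross between the two halves of $F_4$. They can do so along the edges $f^1_4f^2_1$, $f^1_2f^2_3$ and $f^2_2f^1_3$, and along the new edge $f^1_1u$. In the models below the three $X$-sets are listed first, then the four $Y$-sets.
\begin{itemize}
\item $u=f^2_3$: $X$-sets $\{f^1_3\}$, $\{f^2_3\}$, $\{f^1,f^1_4,f^2_1,f^2\}$; $Y$-sets $\{f^1_1\}$, $\{f^1_2\}$, $\{f^2_2\}$, $\{v,f^2_4\}$.
\item $u=f^2_1$: $X$-sets $\{f^1_3\}$, $\{f^1_1,f^2_1\}$, $\{f^1_2,f^2_3,f^2_4\}$; $Y$-sets $\{f^1\}$, $\{v\}$, $\{f^1_4\}$, $\{f^2,f^2_2\}$.
\end{itemize}
In both models every branch set is connected, and each of the twelve required $X$--$Y$ adjacencies is a single edge of $H$. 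For example, when $u=f^2_1$:
\begin{itemize}
\item $\{f^1_2,f^2_3,f^2_4\}$ meets $\{f^1_4\}$ through the added chord $f^1_2f^1_4$, and meets $\{v\}$ through $f^2_4v$;
\item $\{f^1_3\}$ meets $\{f^1\}$ and $\{v\}$ through the added edges $f^1f^1_3$ and $vf^1_3$;
\item $\{f^1_1,f^2_1\}$ meets $\{f^1_4\}$ through $f^2_1f^1_4$, and meets $\{f^2,f^2_2\}$ through $f^2_1f^2$.
\end{itemize}
With these two explicit models, together with your two cases that agree with the paper, the lemma is complete without relying on the unseen table.
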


\begin{proof}
	According as \( u \) is \( f^1_2 \), \( f^2_1 \), \( f^2_2 \), or \( f^2_3 \), the graph \( H \) contains, respectively, \((5,3)\), \((5,2)\), \((4,3)\), or \((5,1)\) as a minor.  
	Hence \( H \) contains a \( K_{3,4} \) minor.
\end{proof}

\begin{lemma} \label{lem:F4int3}
	Let \( G \) be a \(4\)-connected graph that contains neither \( K_{3,4} \) nor any graph obtained from \( F_4 \) by splitting a vertex as a minor.  
	Suppose \( G \) contains a spanning JT-subdivision \( \eta(F_4) \) of \( F_4 \), and that \( \eta(f^i_1 f^{3-i}_4) \), with \( i \in [2] \), contains an internal vertex \( v \).  
	Then \( v \) is adjacent either to \( \eta(f^i_3) \) and \( \eta(f^i_4) \), or to \( \eta(f^{3-i}_3) \) and \( \eta(f^{3-i}_1) \), and has degree four in $G$.
\end{lemma}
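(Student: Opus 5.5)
By symmetry of $F_4$ (the automorphism exchanging the superscripts $1$ and $2$ together with $f^i_1 \leftrightarrow f^i_4$), it suffices to treat $i=1$, so $v$ is an internal vertex of $e:=\eta(f^1_1 f^2_4)$. The plan follows the pattern of Lemmas~\ref{lem:F4int} and~\ref{lem:F4int2}. First pin down the possible neighbours of $v$ outside $e$. Then show that exactly two survive, forming one of the two stated pairs.

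By Lemma~\ref{lem:JT}, $e$ is an induced path, and by $4$-connectivity $v$ has at least two neighbours outside $e$. I will first show that $v$ has no neighbour in the domains of $f^1$, $f^2$, $f^1_2$, $f^2_2$. For $f^1$ and $f^2$ this follows from Lemma~\ref{lem:F4nooppo}, read with the roles reversed: a vertex of the domain of $f^2$ cannot join the domain of $f^1_1$, and symmetrically for $f^1$ and $f^2_4$. For $f^1_2$ and $f^2_2$ I would use the excluded graphs from Table~\ref{tab:2}, most likely $(4,3)$, $(5,2)$ and their mirror images. I would also use Lemmas~\ref{lem:F4noint}, \ref{lem:F403}, \ref{lem:F4int}, \ref{lem:F4LRE} and~\ref{lem:F4int2}, which describe every internal vertex and every edge at $\eta(f^1)$, $\eta(f^2)$, $\eta(f^1_2)$ and $\eta(f^2_2)$, and so leave no room for an extra edge to $v$. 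Consequently the only candidate neighbours are $\eta(f^1_3)$, $\eta(f^1_4)$ (or an internal vertex of $\eta(f^1 f^1_4)$, which Lemma~\ref{lem:F4int} again restricts), $\eta(f^2_3)$, $\eta(f^2_1)$, and internal vertices of the other cross segment $\eta(f^2_1 f^1_4)$. Lemma~\ref{lem:JT} (second item), applied at the degree-three vertices $\eta(f^1_1)$ and $\eta(f^2_4)$, forces adjacency to the neighbour of $\eta(f^1_1)$, resp.\ $\eta(f^2_4)$, whenever $v$ joins another segment at that branch vertex.

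Next, exclude the mixed configurations. An edge from $v$ to an internal vertex of $\eta(f^2_1 f^1_4)$ joins two independent subdivided edges, giving a $K_{3,4}$ minor by Lemma~\ref{lem:F4ind}. The remaining mixed options pair one of $\eta(f^1_3)$, $\eta(f^1_4)$ with one of $\eta(f^2_3)$, $\eta(f^2_1)$, and I plan to kill these with Lemmas~\ref{lem:F4ind2} and~\ref{lem:F4ind3}. Suppose, say, $v$ is adjacent to $\eta(f^1_3)$. Together with $\eta(f^1)\eta(f^1_3)$ from Lemma~\ref{lem:F403} and the edge from $\eta(f^1_2)$ to $\eta(f^1_1)$ or $\eta(f^1_4)$ from Lemma~\ref{lem:F4LRE}, we obtain exactly the base configuration of Lemma~\ref{lem:F4ind2} or Lemma~\ref{lem:F4ind3}. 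Now a second neighbour of $v$ on the far side, or the forced extra neighbour of $\eta(f^1_4)$ (resp.\ $\eta(f^1_1)$) coming from $4$-connectivity, plays the role of $u$ and yields $K_{3,4}$. So if $v$ meets $\eta(f^1_3)$, its other neighbour must be $\eta(f^1_4)$; symmetrically, if it meets $\eta(f^2_3)$, its other neighbour must be $\eta(f^2_1)$. It remains to rule out $v$ adjacent only to $\{\eta(f^1_4),\eta(f^2_1)\}$, or to three or more of the candidates. This I expect to handle with $(8,1)$/$(8,2)$-type graphs, or by rerouting the subdivision so that $v$ becomes a branch vertex and a forbidden split of $F_4$ appears.

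The main obstacle is this last step. In Lemma~\ref{lem:F4ind2} the neighbour $u$ of $f^1_4$ ranges only over $\{f^1_1,f^1_2,f^2_2,v\}$, and in Lemma~\ref{lem:F4ind3} the neighbour $u$ of $f^1_1$ ranges only over $\{f^1_2,f^2_1,f^2_2,f^2_3\}$. So I must check carefully that $4$-connectivity, combined with all the earlier lemmas, forces the extra edge into precisely these sets. Once that is done, $v$ has exactly two neighbours outside $e$, hence degree four, and they form one of the two pairs claimed.
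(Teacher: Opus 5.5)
\textbf{Verdict: genuine gap in the second stage.} Your first stage is sound and agrees with the paper. Since $v$ lies in the domains of both $f^1_1$ and $f^2_4$, Lemma~\ref{lem:F4nooppo}, Lemma~\ref{lem:F4noint}, Lemma~\ref{lem:F4ind} and $(4,3)$ leave only $\eta(f^1_3)$, $\eta(f^1_4)$, $\eta(f^2_1)$, $\eta(f^2_3)$ as neighbours of $v$ off the segment. Two small slips:
\begin{itemize}
\item Your extra candidate, an internal vertex of $\eta(f^1f^1_4)$, lies in the domain of $f^1$ and is already excluded.
\item The automorphism you name, the superscript swap combined with $f^i_1\leftrightarrow f^i_4$, maps the edge $f^1_1f^2_4$ to itself. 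To reduce $i=2$ to $i=1$, use the superscript swap alone.
\end{itemize}

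The gap is the exclusion of the mixed pairs, which is the real content of the lemma. Lemmas~\ref{lem:F4ind2} and~\ref{lem:F4ind3} cannot finish this the way you describe, for two reasons.
\begin{itemize}
\item In both lemmas the extra vertex $u$ is a neighbour of $f^1_4$ (resp.\ $f^1_1$). So a second neighbour of $v$ such as $\eta(f^2_3)$ or $\eta(f^2_1)$ never plays the role of $u$.
\item Your hope that $4$-connectivity pushes the forced extra neighbour of $\eta(f^1_4)$ (resp.\ $\eta(f^1_1)$) into the $u$-set is false. Assume $v\eta(f^1_3)\in E(G)$. If $\eta(f^1_2)\eta(f^1_1)\in E(G)$, then Lemma~\ref{lem:F4ind2}, together with Lemmas~\ref{lem:JT}, \ref{lem:F4nooppo}, \ref{lem:F4noint} and~\ref{lem:F403}, only narrows the extra neighbour of $\eta(f^1_4)$ to $\eta(f^2_3)$ or $\eta(f^2_4)$. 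If instead $\eta(f^1_2)\eta(f^1_4)\in E(G)$, Lemma~\ref{lem:F4ind3} only narrows the extra neighbour of $\eta(f^1_1)$ to $\eta(f^1_4f^1)-\eta(f^1)$.
\end{itemize}
Nothing in your argument rules out these residual options, so no $K_{3,4}$ has appeared yet.

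The paper uses the subdivision lemmas only for this localization, and then does further work.
\begin{itemize}
\item For the pair $\{\eta(f^1_3),\eta(f^2_3)\}$ it also splits on which of $\eta(f^2_1)$, $\eta(f^2_4)$ is adjacent to $\eta(f^2_2)$, using Lemma~\ref{lem:F4LRE} on the other side. It then localizes the extra neighbour of $\eta(f^2_1)$ or $\eta(f^2_4)$ by the mirror images of the same lemmas. Each combination is killed by explicit configurations that include $v$'s mixed neighbour: $(5,1)$, $(5,2)$, $(6,1)$, $(6,3)$, $(6,5)$, $(7,1)$.
\item The pair $\{\eta(f^1_3),\eta(f^2_1)\}$ needs $(5,2)$, $(6,1)$, $(6,2)$ on top of the localization.
\item The pair $\{\eta(f^1_4),\eta(f^2_1)\}$, which you leave as a guess, needs its own excluded graph, $(7,6)$.
\end{itemize}
Without these case splits and the additional $K_{3,4}$-containing configurations, your proposal does not rule out the mixed pairs. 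Hence it establishes neither that $v$ has exactly two outside neighbours forming one of the stated pairs nor that $v$ has degree four.
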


\begin{proof}
	Without loss of generality, assume \( i = 1 \).
		
	By Lemma~\ref{lem:JT}, \( v \) has at least two neighbors outside \( \eta(f^1_1 f^2_4) \).  
	By Lemmas~\ref{lem:F4nooppo},~\ref{lem:F4noint}, and~\ref{lem:F4ind}, and by $(4,3)$, any such neighbor must be  
	\( \eta(f^1_3) \), \( \eta(f^1_4) \), \( \eta(f^2_1) \), or \( \eta(f^2_3) \).
	
	Suppose to the contrary that the lemma fails.  
	Then \( v \) is adjacent to \( \eta(f^1_3) \) and \( \eta(f^2_3) \), to \( \eta(f^1_3) \) and \( \eta(f^2_1) \), to \( \eta(f^1_4) \) and \( \eta(f^2_3) \), or to \( \eta(f^1_4) \) and \( \eta(f^2_1) \).  
	We show that each possibility is impossible.

	\smallskip
	
	\noindent\textbf{Case~1.} \( v \) is adjacent to \( \eta(f^1_3) \) and \( \eta(f^2_3) \).
	
	\smallskip
	
	Recall from Lemma~\ref{lem:F4LRE} that \( \eta(f^1_2) \) is adjacent to exactly one of \( \eta(f^1_1) \) and \( \eta(f^1_4) \), and that \( \eta(f^2_2) \) is adjacent to exactly one of \( \eta(f^2_1) \) and \( \eta(f^2_4) \). Accordingly, depending on the possible choices of these neighbors, we distinguish the following cases.
	
	\smallskip
	
	\noindent\textbf{Case~1.1.} \( \eta(f^1_2) \) joins to \( \eta(f^1_1) \) and \( \eta(f^2_2) \) joins to \( \eta(f^2_4) \).
	
	\smallskip

By Lemmas~\ref{lem:F403},~\ref{lem:F4ind2},~\ref{lem:F4nooppo},~\ref{lem:F4noint}, and~\ref{lem:JT}, \( \eta(f^1_4) \) joins to \( \eta(f^2_3) \) or \( \eta(f^2_4) \), and \( \eta(f^2_1) \) joins to \( \eta(f^1_3) \) or \( \eta(f^1_1) \), which forces \( G \) to contain $(5,1)$, $(5,2)$, or $(6,1)$ as a minor, a contradiction.

\smallskip

\noindent\textbf{Case~1.2.} \( \eta(f^1_2) \) joins to \( \eta(f^1_4) \) and \( \eta(f^2_2) \) joins to \( \eta(f^2_1) \).

\smallskip

By Lemmas~\ref{lem:F403},~\ref{lem:F4ind3},~\ref{lem:F4nooppo},~\ref{lem:F4noint}, and~\ref{lem:JT}, \( \eta(f^1_1) \) joins to \( \eta(f^1_4 f^1) - \eta(f^1) \) and \( \eta(f^2_4) \) joins to \( \eta(f^2_1 f^2) - \eta(f^2) \), which is impossible by $(6,5)$.

\smallskip

\noindent\textbf{Case~1.3.} \( \eta(f^1_2) \) joins to \( \eta(f^1_1) \) and \( \eta(f^2_2) \) joins to \( \eta(f^2_1) \), or \( \eta(f^1_2) \) joins to \( \eta(f^1_4) \) and \( \eta(f^2_2) \) joins to \( \eta(f^2_4) \).

\smallskip

Without loss of generality, we assume the former holds. Then, by Lemmas~\ref{lem:F403},~\ref{lem:F4ind2},~\ref{lem:F4ind3},~\ref{lem:F4nooppo},~\ref{lem:F4noint}, and~\ref{lem:JT}, \( \eta(f^1_4) \) joins to \( \eta(f^2_3) \) or \( \eta(f^2_4) \), and \( \eta(f^2_4) \) joins to \( \eta(f^2_1 f^2) - \eta(f^2) \). Consequently, \( G \) contains a minor of $(6,3)$ or $(7,1)$, a contradiction.

\smallskip

\smallskip

\noindent\textbf{Case~2.} \( v \) is adjacent to \( \eta(f^1_4) \) and \( \eta(f^2_1) \).  

\smallskip

	Then \( G \) contains \((7,6)\) as a minor, a contradiction.

\smallskip

\smallskip

\noindent\textbf{Case~3.} \( v \) is adjacent to \( \eta(f^1_3) \) and \( \eta(f^2_1) \), or to \( \eta(f^1_4) \) and \( \eta(f^2_3) \).  

\smallskip

By symmetry, it suffices to consider the case where \( v \) is adjacent to \( \eta(f^1_3) \) and \( \eta(f^2_1) \). By Lemma~\ref{lem:F4LRE}, \( \eta(f^1_2) \) joins to either \( \eta(f^1_1) \) or \( \eta(f^1_4) \). If it joins to \( \eta(f^1_1) \), then, as in the previous arguments, \( \eta(f^1_4) \) joins to \( \eta(f^2_3) \) or \( \eta(f^2_4) \), forcing \( G \) to contain \((5,2)\) or \( (6,1) \) as a minor. If it joins to \( \eta(f^1_4) \), then, again as before, \( \eta(f^1_1) \) joins to \( \eta(f^1_4 f^1)-\eta(f^1) \), forcing \( G \) to contain \( (6,2) \) as a minor. In either case we obtain a contradiction.
\end{proof}

\begin{lemma} \label{lem:F4int4}
	Let \( G \) be a \(4\)-connected graph that contains neither \( K_{3,4} \) nor any graph obtained from \( F_4 \) by splitting a vertex as a minor.  
	Suppose \( G \) contains a spanning JT-subdivision \( \eta(F_4) \) of \( F_4 \), and that \( \eta(f^i_2 f^{3-i}_3) \), with \( i \in [2] \), contains an internal vertex \( v \).  
	Then \( v \) is adjacent to \( \eta(f^i_3) \) and either \( \eta(f^i_1) \) or \( \eta(f^i_4) \), and has degree four in $G$.
\end{lemma}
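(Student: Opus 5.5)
By symmetry it suffices to treat $i=1$, so $v$ is an internal vertex of $\eta(f^1_2 f^2_3)$. I follow the template of Lemmas~\ref{lem:F4int} and~\ref{lem:F4int3}: first restrict where the neighbours of $v$ can lie, then use the degree-four adjacencies already established to pin them down. By Lemma~\ref{lem:JT}, every segment is an induced path. Since $G$ is $4$-connected, $v$ has at least two neighbours outside $\eta(f^1_2 f^2_3)$. The second part of Lemma~\ref{lem:JT} applies when the relevant branch vertex has degree three. Because $f^2_3$ has degree four, I use it only at the end $f^1_2$ (if $f^1_2$ has degree three). There it forces any neighbour of $v$ in another segment at $\eta(f^1_2)$, namely $\eta(f^1_2 f^1)$, to be adjacent to $\eta(f^1_2)$; the condition concerns $v$'s position on $\eta(f^1_2 f^2_3)$.

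\textbf{Step 1: restricting the possible neighbours.} I rule out the following locations.
\begin{itemize}
\item The domain of $f^2$ and the domains of $f^2_j$ for $j\in[4]$, apart from $\eta(f^2_3)$ itself: by Lemma~\ref{lem:F4nooppo} applied to the other side, by Lemma~\ref{lem:F4noint} (the segments at $f^2_3$ have no internal vertices), and by the table graphs of type $(4,\cdot)$, which were used for edges crossing between the two halves.
\item Internal vertices of $\eta(f^1 f^1_j)$: by Lemmas~\ref{lem:F4int} and~\ref{lem:F4int2}, these have all four neighbours determined already, and none of them is $v$.
\item Internal vertices of $\eta(f^1_1 f^2_4)$ and $\eta(f^1_4 f^2_1)$: by Lemma~\ref{lem:F4int3}.
\item The domain of $f^1$, and edges to $\eta(f^1_1)$ or $\eta(f^1_4)$ that would create a split of $f^1_2$: the latter would yield a graph obtained from $F_4$ by splitting a vertex, contrary to hypothesis.
\end{itemize}
After these exclusions, the candidates are $\eta(f^1_3)$, $\eta(f^1_1)$, $\eta(f^1_4)$, the neighbour of $\eta(f^1_2)$ in $\eta(f^1_2 f^1)$, and $\eta(f^2_3)$-adjacent positions. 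Edges to $\eta(f^1_2 f^1)$ are killed by Lemma~\ref{lem:F4LRE} together with $(5,3)$, since $\eta(f^1_2)$ has degree four.

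\textbf{Step 2: pinning down $N(v)$.} By Step~1, the two or more outside neighbours of $v$ lie in $\{\eta(f^1_3),\eta(f^1_1),\eta(f^1_4)\}$. The key observation is that contracting the part of $\eta(f^1_2 f^2_3)$ between $\eta(f^1_2)$ and $v$ gives a new JT-type subdivision in which $v$ plays the role of $f^1_2$. Lemma~\ref{lem:F4LRE} then transfers to $v$: $v$ is adjacent to exactly one of $\eta(f^1_1),\eta(f^1_4)$. Adjacency to both is also excluded directly by $(5,3)$, as in the proof of Lemma~\ref{lem:F4LRE}. Hence $v$ has at most one neighbour in $\{\eta(f^1_1),\eta(f^1_4)\}$, so $v$ must be adjacent to $\eta(f^1_3)$ and to exactly one of $\eta(f^1_1),\eta(f^1_4)$. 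Since $v$ has exactly two neighbours on its own segment, this gives degree four.

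\textbf{Main obstacle.} I expect the hardest part to be excluding neighbours of $v$ near the $f^2_3$ end, in particular internal vertices of $\eta(f^2_1 f^2)$ and $\eta(f^2_4 f^2)$, or the vertex $\eta(f^2_3)$ itself. Lemma~\ref{lem:F4nooppo} does not directly cover the domain of $f^1_2$. For these cases I would first try Lemma~\ref{lem:F4ind}, treating the offending edge as joining subdivision vertices of two independent edges. If that fails, I would exhibit one of the graphs $(4,3)$, $(5,1)$, or $(7,2)$ by contracting $\eta(f^2 f^2_2)$, as in Case~1 of Lemma~\ref{lem:F4noint}.
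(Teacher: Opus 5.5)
Your proof is essentially the paper's. The outside neighbours of $v$ are confined to $\eta(f^1_1)$, $\eta(f^1_3)$, $\eta(f^1_4)$ by Lemmas~\ref{lem:F4nooppo} and~\ref{lem:F4noint} and the crossing graphs (the paper uses $(4,1)$ and $(4,3)$), after which $(5,3)$ forbids adjacency to both $\eta(f^1_1)$ and $\eta(f^1_4)$ and the degree count finishes; your extra appeals to Lemmas~\ref{lem:F4int}, \ref{lem:F4int2}, \ref{lem:F4int3} are valid but redundant.

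Several side remarks are wrong, though none is load-bearing:
\begin{itemize}
\item Your ``main obstacle'' is illusory. $v$ lies in the domains of both $f^1_2$ and $f^2_3$, so Lemma~\ref{lem:F4nooppo} already excludes the whole domains of $f^2$ and $f^1$. This is the lemma your Step~1 already invoked, so your claim that it does not cover the domain of $f^1_2$ is mistaken. Also, $\eta(f^2_3)$ is an end of $v$'s own induced segment, so it needs no separate treatment.
\item The same lemma, not Lemma~\ref{lem:F4LRE} plus a degree argument, is what excludes $\eta(f^1_2 f^1)$.
\item The ``split of $f^1_2$'' bullet is vacuous, since $f^1_2$ has degree three in $F_4$.
\item The contraction-based ``transfer'' of Lemma~\ref{lem:F4LRE} to $v$ is unjustified, because the contracted graph need not satisfy that lemma's hypotheses. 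It is also unnecessary, since $(5,3)$ gives the ``not both'' part directly.
\end{itemize}
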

\begin{proof}
	It is clear that \( v \) has at least two neighbors outside \( \eta(f^i_2 f^{3-i}_3) \). Moreover, by \((4,1)\), \((4,3)\), Lemma~\ref{lem:F4nooppo}, and Lemma~\ref{lem:F4noint}, any neighbor of \( v \) outside \( \eta(f^i_2 f^{3-i}_3) \) must be one of \( \eta(f^i_1) \), \( \eta(f^i_3) \), or \( \eta(f^i_4) \). By \((5,3)\), \( v \) cannot join to both \( \eta(f^i_1) \) and \( \eta(f^i_4) \). This establishes the lemma.
\end{proof}

\begin{lemma} \label{lem:F41<-24}
	Let \( G \) be a \(4\)-connected graph that contains neither \( K_{3,4} \) nor any graph obtained from \( F_4 \) by splitting a vertex as a minor.  
	Suppose \( G \) contains a spanning JT-subdivision \( \eta(F_4) \) of \( F_4 \), and $\eta(f^i_2)$ is adjacent to $\eta(f^i_j)$ with $i \in [2]$ and $j \in \{1,4\}$. Then \( \eta(f^i_{5-j}) \) is not adjacent to any vertex outside the closed domain of \( f^i_{5-j} \) except for \( \eta(f^i_j) \), \( \eta(f^{3-i}_3) \), and \( \eta(f^{3-i}_{5-j}) \).
\end{lemma}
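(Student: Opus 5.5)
By symmetry (the automorphisms of $F_4$ map $f^1_1$ to any of $f^1_1,f^1_4,f^2_1,f^2_4$), I will take $i=1$ and $j=1$. So $\eta(f^1_2)$ is adjacent to $\eta(f^1_1)$, and I must show that every neighbor $u$ of $\eta(f^1_4)$ outside the closed domain of $f^1_4$ is $\eta(f^1_1)$, $\eta(f^2_3)$ or $\eta(f^2_4)$. The plan is to sort the possible positions of $u$ according to the domain of $\eta(F_4)$ containing it. Each position will be excluded either by the structural lemmas already proved, or by exhibiting one of the graphs of Table~\ref{tab:2}, or by Lemma~\ref{lem:F4ind2}. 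Throughout I use Lemma~\ref{lem:F403} ($\eta(f^1)\eta(f^1_3)\in E(G)$) and the assumed edge $\eta(f^1_2)\eta(f^1_1)$. Together with Lemma~\ref{lem:F4LRE}, the latter gives that $\eta(f^1_2)$ has degree four and is not adjacent to $\eta(f^1_4)$.

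First I exclude the domains that are ruled out directly. Lemma~\ref{lem:F4nooppo} excludes the domain of $f^2$. Lemma~\ref{lem:F4noint} leaves only $\eta(f^1_3)$ and $\eta(f^2_3)$ from the segments at $f^1_3$ and $f^2_3$; $\eta(f^1_3)$ is inside the closed domain of $f^1_4$, so it is irrelevant. Next I treat the domain of $f^1_2$. The vertex $\eta(f^1_2)$ itself is excluded by Lemma~\ref{lem:F4LRE}. An internal vertex of $\eta(f^1 f^1_2)$ or of $\eta(f^1_2 f^2_3)$ is controlled by Lemmas~\ref{lem:F4int2} and~\ref{lem:F4int4}, which would make it adjacent to $\eta(f^1_4)$. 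This configuration is the case $u=f^1_2$ of Lemma~\ref{lem:F4ind2} after rerouting: I can treat that internal vertex as the new branch vertex, or contract the subpath to $\eta(f^1_2)$. Either way it gives $(5,3)$. The domain of $f^1$ (the internal vertices of $\eta(f^1f^1_1)$) is excluded by Lemma~\ref{lem:F4int}, since $\eta(f^1 f^1_4)$ and $\eta(f^1 f^1_1)$ cannot both carry internal vertices. The domain of $f^2_2$ is excluded by $(4,3)$, exactly as in Case~1 of the proof of Lemma~\ref{lem:F4int}.

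The remaining positions are the domain of $f^1_1$ other than $\eta(f^1_1)$ itself, and the domains of $f^2_1$ and $f^2_4$ other than $\eta(f^2_4)$. For internal vertices $v$ of $\eta(f^1_1 f^2_4)$, Lemma~\ref{lem:F4int3} says $v$ has degree four and is adjacent either to $\eta(f^1_3),\eta(f^1_4)$ or to $\eta(f^2_3),\eta(f^2_1)$. In the first alternative, the edge $v\eta(f^1_4)$ together with $\eta(f^1_2)\eta(f^1_1)$ is precisely the configuration of Lemma~\ref{lem:F4ind2} with $u=v$, so $G\succeq(8,2)$ gives a contradiction. The second alternative does not create a neighbor of $\eta(f^1_4)$. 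Neighbors $u$ in the domain of $f^2_1$ are ruled out as follows: $\eta(f^2_1)$ itself and internal vertices of $\eta(f^2_1 f^1_4)$ lie in the closed domain of $f^1_4$ or are covered by Lemma~\ref{lem:F4int3}. Internal vertices of $\eta(f^2f^2_1)$ are excluded by Lemma~\ref{lem:F4nooppo}. The internal vertices of $\eta(f^2_1 f^2_3)$ are empty by Lemma~\ref{lem:F4noint}. Finally, within the domain of $f^2_4$ I must exclude internal vertices of $\eta(f^2 f^2_4)$ (Lemma~\ref{lem:F4nooppo}) and of $\eta(f^2_4 f^2_2)$, which by symmetry with Lemma~\ref{lem:F4int4} are adjacent only to $f^2$-side vertices. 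An edge from such a vertex to $\eta(f^1_4)$ would, after contraction onto $\eta(f^2_2)$, give the case $u=f^2_2$ of Lemma~\ref{lem:F4ind2}, i.e.\ $(4,3)$.

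I expect the main obstacle to be the bookkeeping with internal vertices. Lemmas~\ref{lem:F4ind2} and the tables are stated for branch vertices, so each time I must check that contracting the relevant subpath, or re-choosing the subdivision as in Case~2.1 of Proposition~\ref{pro:E20}, really produces the listed minor while keeping the edge $\eta(f^1_2)\eta(f^1_1)$ and the edge $\eta(f^1)\eta(f^1_3)$. If some subcase does not reduce cleanly, I would fall back on the degree-four conclusions of Lemmas~\ref{lem:F4int}--\ref{lem:F4int4}. They fully determine the neighborhoods of internal vertices, so a leftover edge to $\eta(f^1_4)$ would contradict a degree count.
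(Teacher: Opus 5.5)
Your overall plan matches the paper's: sort the candidate neighbors of $\eta(f^1_4)$ by location and eliminate each using Lemmas~\ref{lem:F4nooppo}--\ref{lem:F4int4}, $(4,3)$, $(5,3)$ and $(8,2)$. However, one case is mishandled, and it is exactly a case where the hypothesis $\eta(f^1_2)\eta(f^1_1)\in E(G)$ is needed.

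\textbf{The gap: internal vertices of $\eta(f^1f^1_1)$.} These lie outside the closed domain of $f^1_4$, and Lemma~\ref{lem:F4int} does not exclude them. It says the opposite. If $\eta(f^1f^1_1)$ has an internal vertex $v$, then $\eta(f^1f^1_4)$ is a single edge, and $v$ has degree four and is \emph{adjacent to} $\eta(f^1_4)$ and $\eta(f^1_3)$.
\begin{itemize}
\item Such a $v$ is therefore precisely a forbidden neighbor. The observation that $\eta(f^1f^1_4)$ and $\eta(f^1f^1_1)$ cannot both carry internal vertices does not help.
\item Your degree-count fallback also fails here, because the edge $v\eta(f^1_4)$ is part of the forced degree-four neighborhood.
\end{itemize}

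\textbf{What is missing.} You need an argument that $\eta(f^1f^1_1)$ has no internal vertex at all, using the edge $\eta(f^1_2)\eta(f^1_1)$. One way is to make $v$ the new branch vertex for $f^1$, with new segments:
\begin{itemize}
\item the edge $v\eta(f^1_4)$;
\item the subpath of $\eta(f^1f^1_1)$ from $v$ to $\eta(f^1_1)$;
\item the path from $v$ back through the old $\eta(f^1)$ and along $\eta(f^1f^1_2)$ to $\eta(f^1_2)$.
\end{itemize}
The old $\eta(f^1)$ is now an internal vertex of the new $f^1$--$f^1_2$ segment, and it is adjacent to $\eta(f^1_4)$. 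Contract it onto $\eta(f^1_2)$. Then $\eta(f^1_2)$ is adjacent to both $\eta(f^1_1)$ and $\eta(f^1_4)$, while $v\eta(f^1_3)$ plays the role of the edge $f^1f^1_3$. This is the $(5,3)$ configuration excluded in Lemma~\ref{lem:F4LRE}. In the paper this is the segment $f^if^i_j$ in the list $\{f^if^i_j, f^if^i_2, f^i_jf^{3-i}_{5-j}, f^i_2f^{3-i}_3\}$. The paper first shows every internal neighbor of $\eta(f^i_{5-j})$ lies on one of these four segments and is adjacent to $\eta(f^i_3)$, then rules out all four at once via $(8,2)$ and $(5,3)$.

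\textbf{Smaller inaccuracies.}
\begin{itemize}
\item $F_4$ has no edge $f^2_4f^2_2$, since $f^2_4$ is adjacent only to $f^2$, $f^2_3$ and $f^1_1$. That part of your last paragraph concerns a nonexistent segment.
\item The segments $\eta(f^2_2f^1_3)$ and $\eta(f^1_2f^2_3)$ are not covered by Lemma~\ref{lem:F4noint}. They are covered by Lemma~\ref{lem:F4int4} and your domain-of-$f^1_2$ argument.
\item For an internal vertex of $\eta(f^1f^1_2)$ or $\eta(f^1_2f^2_3)$ adjacent to $\eta(f^1_4)$, contracting onto $\eta(f^1_2)$ gives the $(5,3)$ configuration of Lemma~\ref{lem:F4LRE} directly. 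It is not the case $u=f^1_2$ of Lemma~\ref{lem:F4ind2}, which involves an extra subdivision vertex on $f^1_1f^2_4$.
\end{itemize}
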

\begin{proof}	
By Lemmas~\ref{lem:F4noint},~\ref{lem:F4int}, \ref{lem:F4int2}, \ref{lem:F4int3}, and~\ref{lem:F4int4}, any internal vertex adjacent to $\eta(f^i_{5-j})$ must lie in some segment $\eta(e)$ with 
$e \in \{f^i f^i_j, f^i f^i_2, f^i_j f^{3-i}_{5-j}, f^i_2 f^{3-i}_3\}$ 
and must be adjacent to $\eta(f^i_3)$. However, since $\eta(f^i_2)\eta(f^i_j) \in E(G)$, it follows from Lemma~\ref{lem:F403}, $(8,2)$, and $(5,3)$ that no internal vertex is adjacent to $\eta(f^i_{5-j})$.

By Lemmas~\ref{lem:F4nooppo} and~\ref{lem:F4LRE}, and by $(4,3)$, any neighbor of $\eta(f^i_{5-j})$ outside the closed domain of $f^i_{5-j}$ must be $\eta(f^i_j)$, $\eta(f^{3-i}_3)$, or $\eta(f^{3-i}_{5-j})$.
\end{proof}

\begin{proposition} \label{pro:F4}
	Let \( G \) be a \(4\)-connected graph that contains neither \( K_{3,4} \) nor any graph obtained from \( F_4 \) by splitting a vertex as a minor.  
	Suppose \( G \) contains a spanning JT-subdivision \( \eta(F_4) \) of \( F_4 \).
Then one of the following holds:
\begin{itemize}
	\item \( G \) contains \( \mathfrak{D}_{s^1,s^2} \), with \( s^1, s^2 \ge 1 \) and \( s^1+s^2 = |V(G)|-8 \), as a spanning subgraph such that every vertex on either spine has degree four in $G$.
	\item \( G \) contains \( \mathfrak{E}_s \), with \( s = |V(G)|-9 \ge 1 \), as a spanning subgraph such that every vertex on the spine, as well as each of \( \varepsilon^3_1, \varepsilon^3_3, \varepsilon^4 \), has degree four in $G$.
	\item \( G \) contains \( \mathfrak{F} \) as a spanning subgraph such that the vertices \( \varphi^1, \varphi^1_1, \varphi^1_2, \varphi^1_4, \varphi^2, \varphi^2_1, \varphi^2_2, \varphi^2_4 \) have degree four in $G$.
\end{itemize}
\end{proposition}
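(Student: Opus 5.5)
We assemble the local information from Lemmas~\ref{lem:F403}--\ref{lem:F41<-24} into a global picture of $G$, following the pattern of the proof of Proposition~\ref{pro:E20}. By Lemma~\ref{lem:JT} every segment of $\eta(F_4)$ is induced. By Lemma~\ref{lem:F403} each $\eta(f^i)$ has degree four and is adjacent to $\eta(f^i_3)$. By Lemma~\ref{lem:F4noint} the six segments incident with $f^1_3$ or $f^2_3$ other than $\eta(f^i_2f^{3-i}_3)$ are single edges. By Lemma~\ref{lem:F4LRE} each $\eta(f^i_2)$ has degree four and is adjacent to exactly one of $\eta(f^i_1)$ and $\eta(f^i_4)$. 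Using the automorphisms of $F_4$ (the map $f^1_1\mapsto f^1_4$, and the swap of the superscripts $1$ and $2$), we split into two main cases: either the ``directions'' of the chords $\eta(f^1_2)\eta(f^1_j)$ and $\eta(f^2_2)\eta(f^2_{j'})$ are compatible, or they are crossed.

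For each case we locate the internal vertices. Lemmas~\ref{lem:F4int}, \ref{lem:F4int2}, \ref{lem:F4int3}, and~\ref{lem:F4int4} say that every internal vertex has degree four and is adjacent to some $\eta(f^i_3)$ and to one of $\eta(f^i_1),\eta(f^i_4)$ (for the appropriate $i$). Moreover, Lemma~\ref{lem:F41<-24} restricts the extra neighbours of $\eta(f^i_{5-j})$. Combining these with Lemma~\ref{lem:F4ind} (no two independent subdivided edges joined) and Lemma~\ref{lem:F4int} (at most one of $\eta(f^if^i_1)$ and $\eta(f^if^i_4)$ is subdivided), we show that, for each $i$, the internal vertices attached to $\eta(f^i_3)$ line up, together with suitable branch vertices, into a single induced path. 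Every vertex of this path is adjacent to $\eta(f^i_3)$ and to one fixed vertex among $\eta(f^i_1),\eta(f^i_4)$, or in the $\mathfrak{E}$ situation to the analogous pair. Such a path is exactly a spine. The pair of common neighbours of the spine plays the role of $\delta^i_1,\delta^i_3$ in $\mathfrak{D}$, or of $\varepsilon^1_1,\varepsilon^1_3$ in $\mathfrak{E}$.

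Next we identify the three outcomes.
\begin{itemize}
\item If both sides have an induced spine (each of length at least two, counting the $f^i$ and $f^i_2$ vertices, so $s^1,s^2\ge1$), then we match the labels with $\mathfrak{D}_{s^1,s^2}$. The remaining edges between the two sides are forced by $4$-connectivity: the leftover vertices $\eta(f^i_{5-j})$ must find neighbours, and Lemma~\ref{lem:F41<-24} permits only $\eta(f^{3-i}_3)$ and $\eta(f^{3-i}_{5-j})$.
\item If the chords are crossed and internal vertices exist, the spine passes through the $f^1_1f^2_4$ segment, as in Lemma~\ref{lem:F4int3}, and we obtain $\mathfrak{E}_s$ with $s=|V(G)|-9\ge1$.
\item If there are no internal vertices, the branch vertices together with the forced edges give $\mathfrak{F}$.
\end{itemize}
The degree-four claims for the spine vertices and for $\varepsilon^3_1,\varepsilon^3_3,\varepsilon^4$, respectively $\varphi^i,\varphi^i_1,\varphi^i_2,\varphi^i_4$, come directly from the lemmas above; in particular, Lemma~\ref{lem:F41<-24} bounds the degree of $\eta(f^i_{5-j})$. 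Where a statement is only available at a special branch vertex, we apply Proposition~\ref{pro:JT} to a re-rooted spanning JT-subdivision obtained from an automorphism of the identified subgraph, exactly as in Case~1.1 of Proposition~\ref{pro:E20}.

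The main obstacle is the middle step: proving that the internal vertices on different segments chain into one path with a consistent pair of common neighbours, and deciding which endpoint pair arises. This requires excluding mixed configurations, for example internal vertices on $\eta(f^if^i_2)$ attached to $\eta(f^i_1)$ while the chord from $\eta(f^i_2)$ goes to $\eta(f^i_4)$. I would first rule these out using Lemmas~\ref{lem:F4ind2} and~\ref{lem:F4ind3} together with the graphs $(5,3)$, $(8,2)$, and $(6,5)$ from Table~\ref{tab:2}. If some mixed configuration still survives, I would fall back on re-selecting the subdivision so that the problematic internal vertex becomes a branch vertex, and then apply Lemma~\ref{lem:F4LRE} again.
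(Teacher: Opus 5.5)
Your case split (compatible versus crossed chords at $\eta(f^1_2)$ and $\eta(f^2_2)$) agrees with the paper. However, the step that separates the three outcomes is missing, and the criterion you put in its place is wrong.

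First, locating the internal vertices is not the real obstacle; once the chord directions are fixed it is essentially immediate. In the compatible case, say $\eta(f^i_2)\eta(f^i_1)\in E(G)$ for both $i$. Then Lemma~\ref{lem:F41<-24} forbids any internal neighbour of $\eta(f^i_4)$. So Lemma~\ref{lem:F4int} leaves $\eta(f^if^i_1)$ unsubdivided, and Lemmas~\ref{lem:F4int}--\ref{lem:F4int4} force every other internal vertex to attach to $\eta(f^k_1)$ and $\eta(f^k_3)$ for the appropriate $k$. Lemmas~\ref{lem:F4ind2} and~\ref{lem:F4ind3} play no role here.

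What actually decides the outcome is the extra neighbour of the leftover branch vertex $\eta(f^i_{5-j})$, and here you misquote Lemma~\ref{lem:F41<-24}. Besides $\eta(f^{3-i}_3)$ and $\eta(f^{3-i}_{5-j})$, it also permits $\eta(f^i_j)$, and that option is exactly what closes a spine. In $\mathfrak{D}_{s^1,s^2}$ the vertex $\eta(f^i_{5-j})$ lies on a spine and is adjacent to $\eta(f^i_j)$, not to the other side. Without the edge $\eta(f^i_{5-j})\eta(f^i_j)$, the path through $\eta(f^i_2),\eta(f^i),\eta(f^i_{5-j})$ is not a spine. So your identification of $\mathfrak{D}$ and $\mathfrak{E}$ rests on an adjacency you never establish.

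Your rule ``no internal vertices gives $\mathfrak{F}$'' is also false. The graphs $\mathfrak{D}_{1,1}$, $\mathfrak{E}_1$, and $\mathfrak{F}$ all have ten vertices, and all three occur without internal vertices. The crossed case yields $\mathfrak{E}_s$ with $s\ge 1$ whether or not internal vertices exist.

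The missing step needs forbidden-minor checks that do not follow from the lemmas you cite.

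\textbf{Compatible case.} The paper uses $(6,4)$ to exclude $\eta(f^{3-i}_4)$ as a neighbour of $\eta(f^i_4)$. It then uses $(6,3)$ to show that $\eta(f^1_4)$ and $\eta(f^2_4)$ both have degree four and make the \emph{same} choice:
\begin{itemize}
\item both are adjacent to their own $\eta(f^i_1)$, giving $\mathfrak{D}_{s^1,s^2}$, with internal vertices allowed; or
\item both are adjacent to $\eta(f^{3-i}_3)$, giving $\mathfrak{F}$. In this branch, $(6,3)$ and $(5,2)$ exclude internal vertices, and $(5,1)$, $(5,2)$ give degree four for $\eta(f^i_1)$.
\end{itemize}

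\textbf{Crossed case.} Here $(5,1)$, $(5,2)$, $(6,1)$, $(6,5)$ together with Lemma~\ref{lem:F41<-24} force one spine-closing edge, say $\eta(f^1_4)\eta(f^1_1)$. Then $\eta(f^2_1)$ is adjacent to $\eta(f^1_1)$ or to $\eta(f^1_3)$, giving two symmetric subcases. Finally, $(6,3)$ gives degree four for the vertex playing $\varepsilon^3_1$.

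Your re-rooting fallback via Proposition~\ref{pro:JT} can at best supply degree bounds after the spanning subgraph has been identified. It cannot decide which of the three structures occurs.
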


\begin{proof}
Recall that for $i \in [2]$, $\eta(f^i)$ has degree four and $\eta(f^i)\eta(f^i_3) \in E(G)$ by Lemma~\ref{lem:F403}, and $\eta(f^i_2)$ has degree four and joins to exactly one of $\eta(f^i_1)$ and $\eta(f^i_4)$ by Lemma~\ref{lem:F4LRE}.

So it suffices to consider the cases where $\eta(f^1_2)\eta(f^1_1), \eta(f^2_2)\eta(f^2_1) \in E(G)$ and where $\eta(f^1_2)\eta(f^1_1), \eta(f^2_2)\eta(f^2_4) \in E(G)$.

\smallskip

\noindent\textbf{Case~1.} $\eta(f^1_2)\eta(f^1_1), \eta(f^2_2)\eta(f^2_1) \in E(G)$.

\smallskip

It follows from Lemmas~\ref{lem:F4int}, \ref{lem:F4int2}, \ref{lem:F4int3},~\ref{lem:F4int4}, and~\ref{lem:F41<-24} that, for each \( i \in [2] \), \( \eta(f^i f^i_1) \) contains no internal vertex, and every internal vertex in \( \eta(f^i f^i_2) \), \( \eta(f^i f^i_4) \), \( \eta(f^i_2 f^{3-i}_3) \), or \( \eta(f^i_4 f^{3-i}_1) \) has degree four and joins to \( \eta(f^i_1) \) and \( \eta(f^i_3) \). This determines the neighborhoods of all internal vertices, since every segment not mentioned above contains no internal vertex by Lemma~\ref{lem:F4noint}.

By Lemmas~\ref{lem:F4nooppo},~\ref{lem:F4LRE}, and~\ref{lem:F41<-24}, and by \((4,3)\) and \((6,4)\), it follows that for each \(i\in[2]\) any neighbor of \(\eta(f^i_4)\) outside the closed domain of \(f^i_4\) must be \(\eta(f^i_1)\) or \(\eta(f^{3-i}_3)\). Moreover, by \((6,3)\) one readily deduces that \(\eta(f^1_4)\) and \(\eta(f^2_4)\) have degree four, and that either \(\eta(f^1_4)\eta(f^1_1),\ \eta(f^2_4)\eta(f^2_1)\in E(G)\) or \(\eta(f^1_4)\eta(f^2_3),\ \eta(f^2_4)\eta(f^1_3)\in E(G)\).

\smallskip

\noindent\textbf{Case~1.1.} $\eta(f^1_4)\eta(f^2_3), \eta(f^2_4)\eta(f^2_3) \in E(G)$.

\smallskip

Then there is no internal vertex; otherwise $G$ would contain $(6,3)$ or $(5,2)$ as a minor. Hence $|V(G)| = |V(F_4)|$. In fact, $\mathfrak{F}$ is isomorphic to the subgraph of $G$ obtained from $\eta(F_4)$ by adding the edges $\eta(f^1 f^1_3)$, $\eta(f^1_1 f^1_2)$, $\eta(f^1_4 f^2_3)$, $\eta(f^2 f^2_3)$, $\eta(f^2_1 f^2_2)$, and $\eta(f^2_4 f^1_3)$, where $\varphi^1$, $\varphi^1_1$, $\varphi^1_2$, $\varphi^1_3$, $\varphi^1_4$, $\varphi^2$, $\varphi^2_1$, $\varphi^2_2$, $\varphi^2_3$, $\varphi^2_4$ correspond to $\eta(f^1)$, $\eta(f^1_4)$, $\eta(f^1_2)$, $\eta(f^1_3)$, $\eta(f^1_1)$, $\eta(f^2)$, $\eta(f^2_4)$, $\eta(f^2_2)$, $\eta(f^2_3)$, and $\eta(f^2_1)$, respectively.  

By this correspondence, the vertices $\varphi^1$, $\varphi^1_1$, $\varphi^1_2$, $\varphi^2$, $\varphi^2_1$, and $\varphi^2_2$, corresponding to $\eta(f^1)$, $\eta(f^1_4)$, $\eta(f^1_2)$, $\eta(f^2)$, $\eta(f^2_4)$, and $\eta(f^2_2)$, each have degree four. Furthermore, by $(5,1)$ and $(5,2)$, the vertices $\varphi^1_4$ and $\varphi^2_4$, corresponding to $\eta(f^1_1)$ and $\eta(f^2_1)$, also have degree four.

\smallskip

\noindent\textbf{Case~1.2.} $\eta(f^1_4)\eta(f^1_1), \eta(f^2_4)\eta(f^2_1) \in E(G)$.

\smallskip

It follows that $\mathfrak{D}_{s^1,s^2}$, with $s^1,s^2 \ge 1$ and $s^1+s^2 = |V(G)|-8$, is isomorphic to the subgraph of $G$ obtained from $\eta(F_4)$ by adding the edges $\eta(f^1 f^1_3)$, $\eta(f^1_1 f^1_2)$, $\eta(f^1_1 f^1_4)$, $\eta(f^2 f^2_3)$, $\eta(f^2_1 f^2_2)$, and $\eta(f^2_1 f^2_4)$, together with all edges incident with the internal vertices of $\eta(F_4)$. The vertices $\delta^1_1$, $\delta^1_2$, $\delta^1_3$, $\delta^1_4$, $\delta^2_1$, $\delta^2_2$, $\delta^2_3$, and $\delta^2_4$ correspond to $\eta(f^1_1)$, the neighbor of $\eta(f^2_3)$ in $\eta(f^1_2 f^2_3)$, $\eta(f^1_3)$, the neighbor of $\eta(f^2_1)$ in $\eta(f^1_4 f^2_1)$, the neighbor of $\eta(f^1_1)$ in $\eta(f^2_4 f^1_1)$, $\eta(f^2_3)$, the neighbor of $\eta(f^1_3)$ in $\eta(f^2_2 f^1_3)$, and $\eta(f^2_1)$, respectively.  
Moreover, for each $i \in [2]$, the union of $\eta(f^{3-i}_3 f^i_2)-\eta(f^{3-i}_3)$, $\eta(f^i_2 f^i)$, $\eta(f^i f^i_4)$, and $\eta(f^i_4 f^{3-i}_1)-\eta(f^{3-i}_1)$ forms the spine whose end-vertices are the neighbor of $\eta(f^{3-i}_3)$ in $\eta(f^i_2 f^{3-i}_3)$ and the neighbor of $\eta(f^{3-i}_1)$ in $\eta(f^i_4 f^{3-i}_1)$.

By Lemmas~\ref{lem:F403},~\ref{lem:F4int},~\ref{lem:F4LRE},~\ref{lem:F4int2},~\ref{lem:F4int3}, and~\ref{lem:F4int4}, together with the above discussion, every vertex on either spine has degree four.

\smallskip

\smallskip

\noindent\textbf{Case~2.} $\eta(f^1_2)\eta(f^1_1), \eta(f^2_2)\eta(f^2_4) \in E(G)$.  

\smallskip

By Lemma~\ref{lem:F41<-24}, $(5,1)$, $(5,2)$, $(6,1)$, and $(6,5)$, either $\eta(f^1_4)\eta(f^1_1) \in E(G)$ or $\eta(f^2_1)\eta(f^2_4) \in E(G)$. Without loss of generality, assume $\eta(f^1_4)\eta(f^1_1) \in E(G)$ and $\eta(f^2_1)\eta(f^2_4) \notin E(G)$. So $\eta(f^1_4)$ has degree four. 

By Lemmas~\ref{lem:F4noint},~\ref{lem:F4int}, \ref{lem:F4int2}, \ref{lem:F4int3},~\ref{lem:F4int4}, and~\ref{lem:F41<-24}, and by $(6,5)$, every internal vertex must lie in \(\eta(f^2_3 f^1_2)\), \(\eta(f^1_2 f^1)\), \(\eta(f^1 f^1_4)\), or
\(\eta(f^1_4 f^2_1)\), and be adjacent to
\(\eta(f^1_1)\) and \(\eta(f^1_3)\). 

Furthermore, by Lemma~\ref{lem:F41<-24} and the assumption that $\eta(f^2_1)\eta(f^2_4)\notin E(G)$, the vertex $\eta(f^2_1)$ has no neighbors among the internal vertices and is adjacent to at least one of $\eta(f^1_1)$ and $\eta(f^1_3)$.

\smallskip

\noindent\textbf{Case~2.1.} $\eta(f^2_1) \eta(f^1_1) \in E(G)$.

\smallskip

Then $\mathfrak{E}_s$, where $s = |V(G)| - 9$, is isomorphic to the graph obtained from $\eta(F_4)$ by adding the edges $\eta(f^1)\eta(f^1_3)$, $\eta(f^2)\eta(f^2_3)$, $\eta(f^1_2)\eta(f^1_1)$, $\eta(f^2_2)\eta(f^2_4)$, $\eta(f^1_4)\eta(f^1_1)$, and $\eta(f^2_1)\eta(f^1_1)$, together with all edges incident with the internal vertices of $\eta(F_4)$. More precisely, the vertices $\varepsilon^0$, $\varepsilon^1_1$, $\varepsilon^1_2$, $\varepsilon^1_3$,
$\varepsilon^2$, $\varepsilon^3_1$, $\varepsilon^3_2$, $\varepsilon^3_3$, and $\varepsilon^4$ correspond to the neighbor of $\eta(f^2_1)$ in $\eta(f^2_1 f^1_4)$, $\eta(f^1_1)$, the neighbor of $\eta(f^2_3)$ in $\eta(f^2_3 f^1_2)$, $\eta(f^1_3)$, $\eta(f^2_1)$, $\eta(f^2_4)$, $\eta(f^2_3)$, $\eta(f^2_2)$, and $\eta(f^2)$, respectively, and the union of \(\eta(f^2_3 f^1_2)-\eta(f^2_3)\), \(\eta(f^1_2 f^1)\), \(\eta(f^1 f^1_4)\), and \(\eta(f^1_4 f^2_1)-\eta(f^2_1)\) forms the spine, whose end-vertices are the neighbor of $\eta(f^2_1)$ in $\eta(f^2_1 f^1_4)$ and the neighbor of $\eta(f^2_3)$ in $\eta(f^2_3 f^1_2)$.

By Lemmas~\ref{lem:F403},~\ref{lem:F4int},~\ref{lem:F4LRE}, \ref{lem:F4int2}, \ref{lem:F4int3}, and~\ref{lem:F4int4}, together with the observation that $\eta(f^1_4)$ has degree four, every vertex in the spine has degree four. Moreover, by Lemmas~\ref{lem:F403} and~\ref{lem:F4LRE}, the vertices $\varepsilon^4$ and $\varepsilon^3_3$, corresponding to $\eta(f^2)$ and $\eta(f^2_2)$, also have degree four. Finally, since $\eta(f^2_4)$ is adjacent to neither any vertex on the spine nor to $\eta(f^2_1)$, it follows from $(6,3)$ that $\varepsilon^3_1$, corresponding to $\eta(f^2_4)$, has degree four.

\smallskip

\noindent\textbf{Case~2.2.} $\eta(f^2_1)\eta(f^1_3) \in E(G)$.

\smallskip

Observe that the graph obtained from $\eta(F_4)$ by adding the edges $\eta(f^1)\eta(f^1_3)$, $\eta(f^2)\eta(f^2_3)$, $\eta(f^1_2)\eta(f^1_1)$, $\eta(f^2_2)\eta(f^2_4)$, and $\eta(f^1_4)\eta(f^1_1)$, together with all edges incident to internal vertices, admits an automorphism that swaps $\eta(f^1_1)$ and $\eta(f^1_3)$, swaps $\eta(f^2_2)$ and $\eta(f^2_4)$, and fixes all other vertices. Consequently, we may proceed exactly as in Case~2.1, except that $\eta(f^1_3)$ and $\eta(f^2_2)$ now play the roles of $\eta(f^1_1)$ and $\eta(f^2_4)$, respectively.
\end{proof}

\subsubsection{Proof of Theorem~\ref{thm:4cnppK34mf}}\label{sec:nppolo}

In this section we complete the characterization of $4$-connected non-projective-planar graphs with no $K_{3,4}$ minor by showing that they are precisely the oloidal graphs. As in the previous sections, we assemble in Table~\ref{tab:3} a family of graphs, adopting the same conventions as before.

\begin{table}[ht!]
	\centering
	\caption{A collection of graphs, each containing $K_{3,4}$ as a minor.}
	\label{tab:3}
	\renewcommand{\arraystretch}{2} 
	\begin{tabular}{ccccccc}
		& {\footnotesize 1} & {\footnotesize 2} & {\footnotesize 3} & {\footnotesize 4} & {\footnotesize 5} & {\footnotesize 6} \\
		\noalign{\bigskip}
		{\footnotesize 9} &
		\includegraphics[scale=1, align=c]{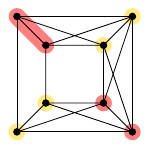} &
		\includegraphics[scale=1, align=c]{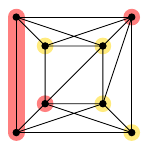} &
		\includegraphics[scale=1, align=c]{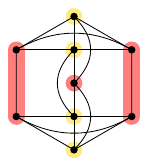} &
		\includegraphics[scale=1, align=c]{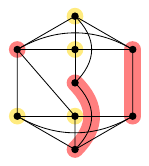} &
		\includegraphics[scale=1, align=c]{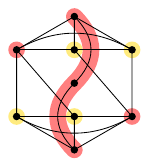} &
		\includegraphics[scale=1, align=c]{} \\
	\end{tabular}
\end{table}

\begin{proof}[Proof of Theorem~\ref{thm:4cnppK34mf}]
	The sufficiency follows from Proposition~\ref{pro:olo}. It therefore remains to prove the necessity.
	
	Let \(G\) be a \(4\)-connected non-projective-planar graph that contains no \(K_{3,4}\) minor.

It follows from Propositions~\ref{pro:DEF}, \ref{pro:E20}, and \ref{pro:F4} that every $4$-connected non-projective-planar graph with no $K_{3,4}$ minor falls into one of the six cases listed below. In each case we show that $G$ is oloidal.

\smallskip

\noindent\textbf{Case~1.} $G$ contains \( \mathfrak{D}_{0,0} \) as a spanning subgraph.

\smallskip

If $\delta^1_i \delta^2_j \in E(G) \setminus E(\mathfrak{D}_{0,0})$ for some distinct $i, j \in [4]$, then, by $(9,1)$ and $(9,2)$, no edge in $E(G) \setminus E(\mathfrak{D}_{0,0})$ is incident with $\delta^2_i$ or $\delta^1_j$. Consequently, there exist $i \in [2]$ and $j \in [4]$ such that $E(G) \setminus E(\mathfrak{D}_{0,0}) \subseteq \{ \delta^i_j \delta^{3-i}_k : k \in [4] \setminus \{j\} \}$, or there exist distinct $i, j \in [4]$ such that $E(G) \setminus E(\mathfrak{D}_{0,0}) \subseteq \{ \delta^1_i \delta^2_k, \delta^1_j \delta^2_k : k \in [4] \setminus \{i,j\} \}$.

By the symmetry of $\mathfrak{D}_{0,0}$, we can conclude that $G$ is isomorphic to $\mathfrak{D}_{0,0}+A$, where $A\subseteq \{ \delta^1_1 \delta^2_2,\delta^1_1 \delta^2_3,\delta^1_1 \delta^2_4 \}$ or $A\subseteq \{ \delta^1_1 \delta^2_2, \delta^2_2 \delta^1_3, \delta^1_3 \delta^2_4, \delta^2_4 \delta^1_1 \}$. Thus, $G$ is oloidal.

\smallskip

\smallskip

\noindent\textbf{Case~2.} \( G \) contains \( \mathfrak{D}_{0,1} \) as a spanning subgraph such that each of $\delta^1_2$, $\delta^1_4$, and \( \sigma^2_1 \) has degree four in $G$.

\smallskip

Using the arguments from Case~1, together with the assumption that each of $\delta^1_2$, $\delta^1_4$, and $\sigma^2_1$ has degree four in $G$, one can readily show that there exists $i \in \{1,3\}$ such that $E(G) \setminus E(\mathfrak{D}_{0,1}) \subseteq \{ \delta^1_i \delta^2_j : j \in [4] \setminus \{i\} \}$, or that $E(G) \setminus E(\mathfrak{D}_{0,1}) \subseteq \{ \delta^1_1 \delta^2_2, \delta^2_2 \delta^1_3, \delta^1_3 \delta^2_4, \delta^2_4 \delta^1_1 \}$.

By the symmetry of $\mathfrak{D}_{0,1}$, we can conclude that $G$ is isomorphic to $\mathfrak{D}_{0,1}+A$, where $A\subseteq \{ \delta^1_1 \delta^2_2,\delta^1_1 \delta^2_3,\delta^1_1 \delta^2_4 \}$ or $A\subseteq \{ \delta^1_1 \delta^2_2, \delta^2_2 \delta^1_3, \delta^1_3 \delta^2_4, \delta^2_4 \delta^1_1 \}$. Thus, $G$ is oloidal.

\smallskip

\smallskip

\noindent\textbf{Case~3.} \( G \) contains \( \mathfrak{D}_{s^1,s^2} \), with \( s^1+s^2 \ge 2 \), as a spanning subgraph such that every vertex on either spine has degree four in $G$.

\smallskip

As every vertex on either spine has degree four in $G$, we have $E(G) \setminus E(\mathfrak{D}_{s^1,s^2}) \subseteq \{ \delta^1_1 \delta^2_2, \delta^2_2 \delta^1_3, \delta^1_3 \delta^2_4, \delta^2_4 \delta^1_1 \}$.

We conclude that $G$ is isomorphic to $\mathfrak{D}_{s^1,s^2}+A$, where $A\subseteq \{ \delta^1_1 \delta^2_2, \delta^2_2 \delta^1_3, \delta^1_3 \delta^2_4, \delta^2_4 \delta^1_1 \}$, and hence oloidal.

\smallskip

\smallskip

\noindent\textbf{Case~4.} \( G \) contains \( \mathfrak{E}_0 \) as a spanning subgraph such that each of \( \varepsilon^0 \) and \( \varepsilon^4 \) has degree four in $G$.

\smallskip

It follows from $(9,3)$ and $(9,4)$ that each of $\varepsilon^1_2$ and $\varepsilon^3_1$ has degree four in $G$. Moreover, again by $(9,3)$ and $(9,4)$, if $\varepsilon^2$ is adjacent to one of $\varepsilon^1_3$ and $\varepsilon^3_3$, then the other has degree four in $G$. On the other hand, it follows from $(9,5)$ that $G$ does not contain both edges $\varepsilon^1_1 \varepsilon^3_3$ and $\varepsilon^1_3 \varepsilon^3_2$. Therefore, we conclude that $E(G) \setminus E(\mathfrak{E}_0)$ is a subset of $\{\varepsilon^1_1 \varepsilon^3_2, \varepsilon^1_1 \varepsilon^3_3, \varepsilon^3_3 \varepsilon^2\}$ or $\{\varepsilon^1_1 \varepsilon^3_2, \varepsilon^1_3 \varepsilon^3_2, \varepsilon^1_3 \varepsilon^2\}$.

It is straightforward to verify that $\mathfrak{E}_0+\{\varepsilon^1_1 \varepsilon^3_2, \varepsilon^1_1 \varepsilon^3_3, \varepsilon^3_3 \varepsilon^2\}\cong \mathfrak{E}_0+\{\varepsilon^1_1 \varepsilon^3_2, \varepsilon^1_3 \varepsilon^3_2, \varepsilon^1_3 \varepsilon^2\}$. Hence $G$ is isomorphic to $\mathfrak{E}_0+A$ for some $A\subseteq \{\varepsilon^1_1 \varepsilon^3_2, \varepsilon^1_3 \varepsilon^3_2, \varepsilon^1_3 \varepsilon^2\}$. Thus, $G$ is oloidal.

\smallskip

\smallskip

\noindent\textbf{Case~5.} \( G \) contains \( \mathfrak{E}_s \), with \( s \ge 1 \), as a spanning subgraph such that every vertex on the spine, as well as each of \( \varepsilon^3_1, \varepsilon^3_3, \varepsilon^4 \), has degree four in $G$.

\smallskip

By the assumption that every vertex on the spine, as well as each of $\varepsilon^3_1$, $\varepsilon^3_3$, and $\varepsilon^4$, has degree four in $G$, we conclude that
$E(G) \setminus E(\mathfrak{E}_s) \subseteq \{\varepsilon^1_1 \varepsilon^3_2, \varepsilon^1_3 \varepsilon^3_2, \varepsilon^1_3 \varepsilon^2\}$.

Therefore, $G$ is isomorphic to $\mathfrak{E}_s+A$ for some $A\subseteq \{\varepsilon^1_1 \varepsilon^3_2, \varepsilon^1_3 \varepsilon^3_2, \varepsilon^1_3 \varepsilon^2\}$, and is oloidal.

\smallskip

\smallskip

\noindent\textbf{Case~6.} \( G \) contains \( \mathfrak{F} \) as a spanning subgraph such that the vertices \( \varphi^1, \varphi^1_1, \varphi^1_2, \varphi^1_4, \varphi^2, \varphi^2_1, \varphi^2_2, \varphi^2_4 \) have degree four in $G$.

\smallskip

It is immediate that \(E(G)\setminus E(\mathfrak{F}) \subseteq \{\varphi^1_3 \varphi^2_3\}\). Hence, \(G\) is isomorphic to \(\mathfrak{F}+A\) for some \(A\subseteq \{\varphi^1_3 \varphi^2_3\}\), and therefore \(G\) is oloidal.
\end{proof}

\subsection{Proof of Theorem~\ref{thm:4cK34mf}}\label{sec:4cK34mf}

We now proceed to merge the results of the projective-planar and non-projective-planar cases, thereby establishing the proof of our principal characterization theorem.

\begin{proof}[Proof of Theorem~\ref{thm:4cK34mf}]
By Theorems~\ref{thm:MS} and~\ref{thm:4cnppK34mf}, a graph $G$ is $4$-connected and has no $K_{3,4}$ minor if and only if, depending on whether $G$ is planar, non-planar but projective-planar, or non-projective-planar, it is respectively a $4$-connected planar graph, a $4$-connected non-planar subgraph of a patch graph or isomorphic to $K_6$, or an oloidal graph. It remains to note that the class of $4$-connected subgraphs of patch graphs coincides with the class of $4$-connected subgraphs of reduced patch graphs. This completes the proof.
\end{proof}

\section{Generating all graphs with no $K_{3,4}$ minor}\label{sec:K34}

The $4$-connected graphs with no $K_{3,4}$ minor have been characterized in Theorem~\ref{thm:4cK34mf}. In this section, we show that general graphs with no $K_{3,4}$ minor can be constructed from the $4$-connected ones.

We first show how $3$-connected graphs with no $K_{3,4}$ minor can be reduced to the $4$-connected case. Then all graphs with no $K_{3,4}$ minor can be generated from the $3$-connected ones via the standard clique sum construction.

We begin with the following observation.

\begin{lemma}\label{lem:3conK34mf}
	Let $G$ be a $3$-connected graph with a $3$-cut $S$, and let $v_1, v_2 \in S$ be two non-adjacent vertices. If $G$ has no $K_{3,4}$ minor, then $G + v_1v_2$ also has no $K_{3,4}$ minor.
\end{lemma}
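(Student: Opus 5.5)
We argue by contradiction. Suppose $G+v_1v_2$ has a $K_{3,4}$ minor while $G$ does not. Write $S=\{v_1,v_2,w\}$, and let $C_1,\dots,C_m$ with $m\ge 2$ be the components of $G-S$. Since $G$ is $3$-connected, each $C_i$ has a neighbour at each of $v_1,v_2,w$. The strategy is to show that the edge $v_1v_2$ can be simulated inside $G$ by a path through some component, which would give a $K_{3,4}$ minor in $G$.

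Take a model $\mu$ of $K_{3,4}$ in $G+v_1v_2$ that uses the edge $v_1v_2$, and choose it with $\bigcup_x \mu(x)$ minimal. Write $X\cup Y$ for the bipartition of $K_{3,4}$. The new edge is used either inside a branch set $\mu(x)$, joining two pieces of that set, or as the path $\mu(xy)$ of an edge of $K_{3,4}$. In both cases it suffices to find a $v_1$--$v_2$ path $P$ in $G$ whose internal vertices lie in a single component $C_i$ that meets no branch set and no edge path. Replacing $v_1v_2$ by $P$ (adding the interior of $P$ to the branch set in the first case, or to the edge path in the second) then yields a $K_{3,4}$ model in $G$.

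The key step is the claim that such a free component exists, or that the model can be rerouted until one does. $K_{3,4}$ is $3$-connected and has $7$ vertices. So if every component $C_i$ met the model, consider the separation $(S\cup C_1,\ S\cup\bigcup_{i\ge2} C_i)$ of $G+v_1v_2$, which has order $3$. The branch sets would then induce a separation of $K_{3,4}$ of order at most $3$. The side containing only a small part of the model must be essentially a single vertex of degree $3$, that is, a vertex $y\in Y$ whose $\mu(y)$ lies in $C_1\cup S$, attached through the three vertices of $S$. In that situation the only reason the edge $v_1v_2$ would be needed is to connect branch sets across $S$. This can be handled directly: $\mu(y)$ contains vertices of $C_1$ adjacent to $v_1$ and $v_2$, so we either replace $y$'s role by routing through $C_1$, or re-choose $\mu(y)$ as a single vertex and free the rest of $C_1$ to carry a $v_1$--$v_2$ path.

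I expect this rerouting case analysis to be the main obstacle. It arises when every component carries part of the model, and the paths to $v_1$, $v_2$, $w$ must be counted carefully against the degrees of $K_{3,4}$, which are $3$ and $4$. The cleanest approach I see is to use the minimality of the model. Choose a component $C_j$ containing no vertex of $\mu(X)$. Such a component exists because $|S|=3$, there are at least two components, and a $3$-separation cannot have two branch sets of degree $4$ on both sides unless $S$ separates $X$. Then $C_j$ contains at most one branch vertex from $Y$, of degree $3$, together with pieces of edge paths. Contract that part to a single vertex adjacent to $v_1,v_2,w$; in $G$ it already plays the role of a common neighbour of all of $S$. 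Finally, check that in $K_{3,4}$ any model using a degree-$3$ vertex attached exactly to $S$ can use instead a path $v_1$--$C_j$--$v_2$ for the extra edge. This is done by reassigning $y$ to another component $C_k$, which also sees all of $S$, since the other components likewise attach to all three vertices of $S$.
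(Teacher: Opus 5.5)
\textbf{The gap.} Your reduction is sound: if some component of $G-S$ is untouched by the model, routing $v_1v_2$ through it gives a $K_{3,4}$ model in $G$. But you never show that such a component exists, and none of the three steps meant to supply one holds up.

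First, from ``every component meets the model'' you conclude that the small side of the $3$-separation is essentially a single degree-$3$ vertex $y$. This ignores that a component may contain no entire branch set at all. It may contain only pieces of the (at most three) branch sets through $S$, or segments of edge paths through $S$. That is exactly the situation in which no component is free.

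Second, the existence of a component containing no vertex of $\mu(X)$ does not follow from your separation remark. That remark at best controls branch sets disjoint from $S$. Nothing prevents a branch set $\mu(x)$, $x\in X$, containing $v_1$ (or $v_1,v_2$) from reaching into every component, e.g.\ to connect to $w$ inside one branch set. You give no argument that minimality of the union excludes this.

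Third, ``reassigning $y$ to another component $C_k$'' presupposes that $C_k$ is available, which is the very point at issue. In fact the case you single out is vacuous. If a whole $Y$-branch set lies in a component meeting no $X$-branch set, its connections to the three $X$-branch sets must use the three vertices of $S$ separately, and then the model cannot use $v_1v_2$.

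\textbf{What is missing.} You do not need an untouched component. You need one containing no entire branch set, and then you absorb it.

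Take a spanning model. Suppose two branch sets avoiding $S$ lay in different components. They would be non-adjacent, hence on the same side of $K_{3,4}$, with at least three common neighbours. Each such neighbour's branch set touches both components, so it meets $S$. This forces $v_1,v_2,w$ into three distinct branch sets on one side. That contradicts the use of $v_1v_2$, which requires $v_1,v_2$ to lie in one branch set, or in an $X$-branch set and a $Y$-branch set. Hence all branch sets avoiding $S$ lie in a single component $A$, and any other component $\tilde A$ meets only branch sets through $S$.

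Now move all of $V(\tilde A)$ into one branch set and delete it from the others. In the first case use the branch set containing $v_1,v_2$. In the $X$--$Y$ case use $\mu(x)$ or $\mu(y)$, chosen according to which branch set contains $w$. Since $\tilde A$ is connected and adjacent to every vertex of $S$, all branch sets stay connected and $v_1v_2$ is simulated. Every adjacency previously realized inside $\tilde A$ is also restored, so you get a $K_{3,4}$ model in $G$. This is the paper's argument.
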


\begin{proof}
	Suppose, to the contrary, that $G + v_1v_2$ has a $K_{3,4}$ minor. Let $\mu$ be a spanning model of $K_{3,4}$ in $G + v_1v_2$.
	
	Denote the vertex set of $K_{3,4}$ by $X \cup Y$, where $|X| = 3$ and $|Y| = 4$, so that two vertices are adjacent if and only if they belong to different partite sets.
	
	Since $G$ has no $K_{3,4}$ minor, there exists $w \in V(K_{3,4})$ such that $\{v_1,v_2\} \subseteq \mu(w)$, or, without loss of generality, there exist vertices $x \in X$ and $y \in Y$ such that $v_1 \in \mu(x)$ and $v_2 \in \mu(y)$.
	
	\smallskip
	\noindent\textbf{Case~1.} There exists $w \in V(K_{3,4})$ such that $\{v_1,v_2\} \subseteq \mu(w)$.
	
	\smallskip
	
	We claim that there exists a component $A$ of $G - S$ such that, for every vertex $v \in V(K_{3,4})$ with $\mu(v) \cap S = \emptyset$, we have $\mu(v) \subseteq V(A)$. Suppose otherwise. Then there exist two distinct components $A$ and $A'$ of $G - S$ and vertices $v, v' \in V(K_{3,4})$ such that $\mu(v) \subseteq V(A)$ and $\mu(v') \subseteq V(A')$, with neither intersecting $S$. Consequently, $v$ and $v'$ belong to the same partite set, either both in $X$ or both in $Y$. This is impossible, since $A$ and $A'$ are separated by $S$, while $\{v_1,v_2\} \subseteq \mu(w)$ for some $w \in V(K_{3,4})$. This proves the claim.
	
	We now define a spanning model $\mu'$ of $K_{3,4}$ in $G$. Choose an arbitrary component $\tilde{A}$ of $G - S$ distinct from $A$. For every vertex $v \in V(K_{3,4})$ with $\mu(v) \subseteq V(A)$, set $\mu'(v) := \mu(v)$. Let $z$ be the vertex of $K_{3,4}$ such that $\mu(z)$ contains the third vertex of $S$. Set $\mu'(w) := \mu(w) \cup V(\tilde{A})$, and, if $z \neq w$, set $\mu'(z) := \mu(z) \setminus V(\tilde{A})$. This yields a spanning model of $K_{3,4}$ in $G$, contradicting our assumption.
	
	\smallskip
	\noindent\textbf{Case~2.} There exist vertices $x \in X$ and $y \in Y$ such that $v_1 \in \mu(x)$ and $v_2 \in \mu(y)$.
	
	\smallskip
	
	We proceed similarly to Case~1. By the same argument, there exists a component $A$ of $G - S$ such that $\mu(v) \subseteq V(A)$ for every vertex $v \in V(K_{3,4})$ with $\mu(v) \cap S = \emptyset$.
	
We again derive a contradiction by defining a spanning model $\mu'$ of $K_{3,4}$ in $G$. Choose an arbitrary component $\tilde{A}$ of $G - S$ with $\tilde{A} \neq A$. For every vertex $v \in V(K_{3,4})$ with $\mu(v) \subseteq V(A)$, set $\mu'(v) := \mu(v)$. Let $z \in V(K_{3,4})$ be such that $\mu(z)$ contains the third vertex of $S$. If $x \neq z$ and $z \in X$, set $\mu'(x) := \mu(x) \setminus V(\tilde{A})$, $\mu'(y) := \mu(y) \cup V(\tilde{A})$, and $\mu'(z) := \mu(z) \setminus V(\tilde{A})$. If $x = z$, set $\mu'(x) := \mu(x) \cup V(\tilde{A})$ and $\mu'(y) := \mu(y) \setminus V(\tilde{A})$. The cases $y \neq z$ with $z \in Y$ and $y = z$ are treated analogously.
\end{proof}

A \emph{$k$-clique} of $G$ is a subgraph of $G$ on $k$ vertices in which every pair of vertices is adjacent. We say that a $3$-connected graph $G$ is \emph{clamped} if every $3$-cut of $G$ induces a $3$-clique. Lemma~\ref{lem:3conK34mf} immediately yields the following corollary.

\begin{corollary}\label{cor:3conK34mf}
	Every $3$-connected graph with no $K_{3,4}$ minor is a spanning subgraph of a clamped $3$-connected graph with no $K_{3,4}$ minor.
\end{corollary}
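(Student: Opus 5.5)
The plan is to start from a $3$-connected graph $G$ with no $K_{3,4}$ minor and repeatedly add missing edges inside $3$-cuts, using Lemma~\ref{lem:3conK34mf} at each step. Because the number of non-edges of $G$ is finite, this process must stop. I will show that the graph it stops at is the required clamped supergraph.

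Concretely, I define a sequence $G_0 := G, G_1, G_2, \dots$ as follows. Suppose $G_i$ is $3$-connected and has no $K_{3,4}$ minor. If $G_i$ is not clamped, then some $3$-cut $S$ of $G_i$ does not induce a $3$-clique. Choose two non-adjacent vertices $v_1,v_2 \in S$ and set $G_{i+1} := G_i + v_1v_2$.

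The inductive step needs three facts.
\begin{itemize}
\item $G_{i+1}$ has no $K_{3,4}$ minor, directly by Lemma~\ref{lem:3conK34mf}, since $S$ is a $3$-cut of the $3$-connected graph $G_i$.
\item $G_{i+1}$ is still $3$-connected. Adding an edge cannot create a cut of size at most two, and the vertex set is unchanged, so $|V(G_{i+1})| > 3$ still holds.
\item $G_i$ is a spanning subgraph of $G_{i+1}$, since only an edge was added.
\end{itemize}

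Each step adds a new edge on the fixed vertex set $V(G)$, so the sequence terminates after at most $\binom{|V(G)|}{2} - |E(G)|$ steps. When it terminates, the final graph $G_t$ has three properties. It is clamped, because otherwise another step would be possible. It is $3$-connected and has no $K_{3,4}$ minor, by the inductive step. Finally, $G$ is a spanning subgraph of $G_t$, since spanning-subgraph containment is transitive along the sequence.

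There is essentially no obstacle here, since all the minor-theoretic content is already in Lemma~\ref{lem:3conK34mf}. The only point needing care is that the $3$-cuts change as edges are added: a $3$-cut of $G_i$ need not be a $3$-cut of $G_{i+1}$, and the set of $3$-cuts can only shrink. This is harmless. The procedure always works with the current graph $G_i$, and the termination argument counts edges rather than cuts. So at the end, every $3$-cut of $G_t$ induces a triangle in $G_t$, which is exactly the definition of clamped.
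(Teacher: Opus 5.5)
Your proof is correct and is exactly the argument the paper has in mind when it says Lemma~\ref{lem:3conK34mf} ``immediately yields'' the corollary. You repeatedly add an edge between non-adjacent vertices of a non-clique $3$-cut, observe that $3$-connectivity and the spanning-subgraph relation are preserved, and terminate by counting edges on the fixed vertex set.
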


We follow the approach of~\cite{Maharry2000}, with a slight modification, to decompose a clamped $3$-connected graph into graphs that are either $4$-connected or isomorphic to $K_4$. We only give an outline of the framework and refer to~\cite{Maharry2000} for further details.

The decomposition is based on the following observation. Let $G$ be a clamped $3$-connected graph. If $G$ is neither $4$-connected nor isomorphic to $K_4$, then $G$ has a $3$-cut $S$, and there exist subgraphs $G_1$ and $G_2$ of $G$ such that $V(G_1)\cap V(G_2)=S$, $E(G_1)\cup E(G_2)=E(G)$, and $E(G_1)\cap E(G_2)=E(G[S])$. Since any $3$-cut of $G_1$ or $G_2$ is also a $3$-cut of $G$, both $G_1$ and $G_2$ are clamped $3$-connected. Therefore, any clamped $3$-connected graph that is neither $4$-connected nor isomorphic to $K_4$ can be decomposed into two smaller clamped $3$-connected graphs.

This motivates the following structure. Let $\mathcal{G}$ be the family of subgraphs of $G$ that are either edge-maximal $4$-connected, or isomorphic to $K_4$ and not contained in any edge-maximal $4$-connected subgraph of $G$. Let $\mathcal{S}$ be the family of $3$-cliques in $G$ that are induced by $3$-cuts.

Define $\mathcal{T}$ to be the graph with vertex set $\mathcal{G}\cup\mathcal{S}$ such that every edge has one end-vertex in $\mathcal{G}$ and the other in $\mathcal{S}$, and where $H\in\mathcal{G}$ is adjacent to $\Delta\in\mathcal{S}$ if and only if $H$ contains $\Delta$ as a subgraph of $G$. Then $\mathcal{T}$ is a tree, and we call $(\mathcal{T}, \mathcal{G}, \mathcal{S})$ the \emph{clamped tree decomposition} of $G$. The graph $G$ can be reconstructed by first taking the disjoint union of the graphs in $\mathcal{G}$ and then, for each $\Delta\in\mathcal{S}$, identifying the copies of $\Delta$ in those graphs from $\mathcal{G}$ that are adjacent to $\Delta$ in $\mathcal{T}$. For $H \in \mathcal{G}$, define the \emph{closure} of $H$, denoted by $\widehat{H}$, to be the graph obtained from $H$ by adding, for each $\Delta \in \mathcal{S}$ that is adjacent to $H$ in $\mathcal{T}$, a new vertex $v^H_{\Delta}$ adjacent to the three vertices of $\Delta$. Define $\widehat{\mathcal{G}} := \{ \widehat{H} : H \in \mathcal{G} \}$.

Since every cycle of length three in a $4$-connected planar graph is a facial cycle, it follows that for any $H \in \mathcal{G}$, the graph $H$ is planar if and only if its closure $\widehat{H}$ is planar.

We need the following two lemmas. The first is an immediate consequence of~\cite[(2.4)]{Robertson1990}, and the second follows from~\cite[Proposition~3.2]{Maharry2012}.

\begin{lemma}[\cite{Robertson1990}]\label{lem:rural}
	Let $G$ be a $3$-connected non-planar graph with a vertex $v$ of degree three. Then $G$ contains a subdivision of $K_{3,3}$ that contains $v$ and in which $v$ has degree three.
\end{lemma}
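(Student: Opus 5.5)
The plan is to fix an arbitrary $K_{3,3}$-subdivision and then reroute it through $v$ using a $3$-fan from $v$ together with the $3$-connectivity of $G$.

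\emph{Step 1: a $K_{3,3}$-subdivision exists.} Since $G$ is non-planar, it contains a subdivision of $K_5$ or of $K_{3,3}$. Since $v$ has degree three, $G \not\cong K_5$. It is classical that a $3$-connected non-planar graph other than $K_5$ contains a subdivision of $K_{3,3}$. One way to see this is Theorem~\ref{thm:Seymour} applied to a $K_5$-minor: a single split of $K_5$ already contains $K_{3,3}$, and $K_{3,3}$ is cubic, so a $K_{3,3}$-minor is a $K_{3,3}$-subdivision. Fix such a subdivision $H$. If $v$ is a branch vertex of $H$, we are done, because $v$ has degree three.

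\emph{Step 2: attach $v$ by a fan.} Let $N(v)=\{a,b,c\}$. By $3$-connectivity (the fan lemma), there are three paths from $v$ to $V(H)$ that share only $v$, end at distinct vertices $x_1,x_2,x_3$ of $H$, and are otherwise disjoint from $H$. If $v$ is an internal vertex of a segment of $H$, I would first modify $H$ so that $v$ is avoided, using the third neighbour of $v$. Alternatively, I would treat this as the degenerate case in which two of the $x_i$ are the $H$-neighbours of $v$. Among all choices of $H$ and of the fan, choose one minimizing the total length of the fan, or, more robustly, minimizing the portion of $H$ spanned by the $x_i$.

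\emph{Step 3: finite analysis on $K_{3,3}$.} For most positions of $x_1,x_2,x_3$ on $H$, the graph $H \cup \text{fan}$ directly contains a $K_{3,3}$-subdivision with $v$ as a branch vertex. Examples are two or three of the $x_i$ lying on segments incident with pairwise distinct branch vertices of one colour class, or the $x_i$ spread over different segments. In each such case, $v$ replaces a branch vertex $u$ of $H$, and the three edges at $u$ are rerouted through the $x_i$. This is a finite case analysis over the positions of three points on a subdivided $K_{3,3}$, using its edge-transitivity to reduce the cases.

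\emph{Step 4: the main obstacle.} The hard configurations are those in which $x_1,x_2,x_3$ all lie in the closed domain of a single segment, or more generally inside one ``planar-looking'' region of $H$, such as all on one segment $\eta(e)$. Then $H \cup \text{fan}$ has a $2$- or $3$-separation isolating $v$, and the check in Step 3 alone fails. To handle this, I would use $3$-connectivity of $G$ again: the bridge of $H$ containing $v$ cannot attach only to a subpath of one segment plus at most two further vertices. Hence there is a further path from this bridge, or from the interior of the relevant subpath, to another part of $H$. Using this path, I would either reroute $H$ to obtain a subdivision whose segment containing the $x_i$ is shorter, contradicting minimality, or move some $x_i$ onto another segment so that Step 3 applies. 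This bridge argument is exactly the content of~\cite[(2.4)]{Robertson1990} on rerouting subdivisions through a given vertex, and I expect it to be the main technical step; I would carry it out by choosing $H$ to minimize the segment-span of the attachments and deriving a contradiction.
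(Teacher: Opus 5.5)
The paper does not prove this lemma; it imports it as an immediate consequence of (2.4) in~\cite{Robertson1990}. Judged as a self-contained argument, your proposal has a genuine gap at Step~4.

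Steps~1 and~2 are sound. The splitter-theorem route to a $K_{3,3}$-subdivision is correct, and the fan lemma gives the three paths when $v\notin V(H)$. But these steps are routine. The only place where non-planarity of $G$ must be turned into information about $v$ is the configuration in which all attachments lie in one closed segment, and there you state an intention rather than an argument.

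You never specify a quantity that provably improves under rerouting, and the ones you name need not improve. Suppose the fan ends $x_1,x_2,x_3$ lie in this order on a segment $P$, and the path supplied by $3$-connectivity runs from an interior vertex of $P[x_1,x_3]$ to a vertex $y$ of $P$ outside $P[x_1,x_3]$. Replacing $P[x_1,x_3]$ by the two outer fan paths puts $v$ on the new segment, and its third path now ends at $y$ on that same segment. Neither the fan length nor the span of the attachments need have decreased, and you are back in the situation of Step~2 with $v$ on $H$. The bridge-rerouting case really needs a worked argument: either an iteration with a measure you prove monotone, or a three-terminal theorem for $G-v$ with terminals $N(v)$. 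Note that non-planarity of $G$ is equivalent to $G-v$ having no drawing in a disc with $N(v)$ on the boundary.

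If you instead defer Step~4 to the citation, as your last sentence does, Steps~1--3 do no work and the proposal reduces to the paper's one-line citation. Also, your description of (2.4) as a bridge-rerouting argument is an assumption you have not checked.

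Step~3 is likewise only asserted, and the mechanism you describe is not uniform. Let the branch vertices of $H$ have colour classes $\{a_1,a_2,a_3\}$ and $\{b_1,b_2,b_3\}$. Take $x_1,x_2,x_3$ to be interior vertices of the segments $\eta(a_1b_1)$, $\eta(a_2b_2)$, $\eta(a_3b_3)$, which correspond to a perfect matching. Then $H$ together with the fan is a subdivided Petersen graph, and $v$ cannot replace any single branch vertex of $H$. For example, if $v$ takes the place of $b_1$, its paths are forced through $x_2$ to $a_2$ and through $x_3$ to $a_3$. That destroys the segments from $b_2$ to $a_2$ and from $b_3$ to $a_3$. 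A $K_{3,3}$-subdivision with $v$ as a branch vertex does exist here, but it uses former subdivision vertices of $H$ as branch vertices. So the finite case analysis must be carried out in full; it does not reduce to rerouting the three edges at one branch vertex.
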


\begin{lemma}[\cite{Maharry2012}]\label{lem:MS}
	Let $H_1$ be a graph containing a $3$-clique $\Delta_1$, and let $H_2$ be a graph containing a $3$-clique $\Delta_2$. Suppose that the graph obtained from $H_1$ by adding a new vertex adjacent to all vertices of $\Delta_1$ is planar, and that the graph obtained from $H_2$ by adding a new vertex adjacent to all vertices of $\Delta_2$ has no $K_{3,4}$ minor. Then the graph obtained from the disjoint union of $H_1$ and $H_2$ by identifying $\Delta_1$ with $\Delta_2$ contains no $K_{3,4}$ minor.
\end{lemma}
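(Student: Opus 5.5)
We argue by contradiction, working with spanning or general models as in Section~\ref{sec:definitions}. Write $\Delta=\{a,b,c\}$ for the identified triangle and $G$ for the glued graph. The plan is to show that any $K_{3,4}$-model $\mu$ in $G$ yields either a $K_{3,4}$-model in $\widehat H_2:=H_2+v_2$, or a $K_{3,3}$-minor in the planar graph $\widehat H_1:=H_1+v_1$. Here $v_i$ denotes the new vertex joined to $\Delta_i$. Let $X\cup Y$ be the bipartition of $K_{3,4}$ with $|X|=3$ and $|Y|=4$. Call a branch set an \emph{$H_1$-set} if it lies in $H_1-\Delta$, and an \emph{$H_2$-set} if it lies in $H_2-\Delta$; every other branch set meets $\Delta$, so there are at most three of the latter.

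\textbf{Step 1 (no $H_1$-set).} Suppose no branch set lies in $H_1-\Delta$. We build a model in $\widehat H_2$ by deleting $H_1-\Delta$ and adding $v_2$ to a suitable branch set. The branch sets meeting $H_1-\Delta$ all meet $\Delta$. Consider a branch set whose restriction to $H_2$ becomes disconnected. Its pieces are joined only through $H_1-\Delta$, so they all meet $\Delta$, and adding $v_2$ to this set reconnects them. At most one branch set can require this, since a set with this problem meets at least two of $a,b,c$. An edge-path of $\mu$ that runs through $H_1-\Delta$ joins two branch sets meeting distinct vertices of $\Delta$. Such a path can therefore be replaced by a triangle edge, or routed through $v_2$ if $v_2$ is still unused. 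The point needing care is that each triangle edge is used at most once. I expect to handle this with a short case check according to how many of the (at most three) sets meeting $\Delta$ are distinct. The key observation is that $H_1-\Delta$ can only supply ``connections among three terminals'', and a claw centred at $v_2$ together with the triangle realises any such pattern. This contradicts the assumption on $\widehat H_2$.

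\textbf{Step 2 (some $H_1$-set exists).} Now let $B=\mu(w)$ be an $H_1$-set. Every neighbour of $w$ in $K_{3,4}$ has a branch set meeting $N(B)\subseteq \Delta$, so $w$ has degree at most $3$. Hence $w\in Y$, and the three branch sets of $X$ each contain exactly one of $a,b,c$. Then no $Y$-set meets $\Delta$, so each $Y$-set is an $H_1$-set or an $H_2$-set. Let $k\ge1$ be the number of $H_1$-sets. Each $X$-set meets $\Delta$ in a single vertex. Any excursion of an $X$-set into $H_2-\Delta$ must therefore leave and return through that same vertex, so its restriction to $H_1$ stays connected, and the same holds for its restriction to $H_2$. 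If $k=1$, exchange the roles of $H_1$ and $H_2$ and argue as in Step~1: contract the single $H_1$-set onto $v_2$, which is adjacent to the three $X$-sets through $a,b,c$, giving a $K_{3,4}$-model in $\widehat H_2$. If $k\ge2$, restrict the $X$-sets and the $H_1$-sets to $H_1$. If some $Y$-set lies in $H_2-\Delta$, represent it by $v_1$, which is adjacent to $a,b,c$. This produces a $K_{3,3}$-minor of $\widehat H_1$ when $k\in\{2,3\}$, and a $K_{3,4}$-minor of $\widehat H_1$ when $k=4$. Either way $\widehat H_1$ is non-planar, a contradiction.

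The main obstacle is the bookkeeping in Step~1. There we must check that rerouting the model's paths through $H_1-\Delta$ needs only the claw at $v_2$ and the three triangle edges, without reusing an edge or clashing with a branch set. I would organise this by classifying the branch sets meeting $\Delta$ into the partitions $\{a\},\{b\},\{c\}$; $\{a,b\},\{c\}$; and $\{a,b,c\}$, and treating each pattern separately.
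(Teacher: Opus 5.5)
\textbf{Gap in Step~2.} The inclusion $N(B)\subseteq\Delta$ is false. $B$ lies in $H_1-\Delta$, so its neighbours lie in $V(H_1)$, and some of them may belong to other $H_1$-sets. The separation by $\Delta$ only tells you that no $H_1$-set is adjacent to an $H_2$-set.

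As a result, the following three deductions are unjustified whenever there is no $H_2$-set:
\begin{itemize}
\item that $w$ has degree at most three, so $w\in Y$;
\item that the three $X$-sets each contain exactly one of $a,b,c$;
\item that $k\le 4$.
\end{itemize}
Concretely, you never treat configurations in which $H_1$-sets lie on both sides of the bipartition. The extreme case is all seven branch sets lying in $H_1-\Delta$. Your case split on $k\in\{1,2,3,4\}$ does not cover these configurations, so the contradiction is not derived for every model.

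\textbf{Repair.} Split according to which kinds of branch sets occur.
\begin{itemize}
\item If there is no $H_2$-set, run your Step~1 argument with the roles of $H_1$ and $H_2$ exchanged. This gives a $K_{3,4}$-minor in $H_1$, contradicting the planarity of $\widehat H_1$.
\item If both $H_1$-sets and $H_2$-sets exist, they are pairwise non-adjacent, so they all lie in one part of $K_{3,4}$. There are at least four of them, since at most three sets meet $\Delta$. So that part is $Y$, there are exactly four such sets, and the three $X$-sets meet $\Delta$ in one vertex each. This is precisely the configuration your remaining argument handles correctly. Your $k=1$ step yields $K_{3,4}$ in $\widehat H_2$, and $k\in\{2,3\}$ yields $K_{3,3}$ in $\widehat H_1$. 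Note that $k=4$ cannot occur here.
\end{itemize}

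\textbf{Step~1 is simpler than you expect.} With a spanning model, the bookkeeping you flag as the main obstacle does not arise.
\begin{itemize}
\item The restriction of each branch set to $H_2$ is connected. Two of its pieces can only be joined through $H_1-\Delta$, so each piece meets $\Delta$, and $\Delta$ is a clique in $H_2$.
\item Each edge of $K_{3,4}$ is witnessed by a single edge of $G$ between the two branch sets. If that edge lies in $H_1$ but not in $\Delta$, then both branch sets meet $\Delta$, because there are no $H_1$-sets. The triangle edge between them then serves as a witness instead.
\item Witnesses for distinct pairs of branch sets are automatically distinct, so no triangle edge is ever ``used twice''.
\end{itemize}
Thus $H_2$ itself already has a $K_{3,4}$-minor, and you need neither $v_2$ nor any case check on how $\Delta$ is partitioned. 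The same three-line argument, with $H_1$ and $H_2$ swapped, supplies the missing case above.
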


The following proposition characterizes the clamped $3$-connected graphs with no $K_{3,4}$ minor in terms of their clamped tree decompositions.

\begin{proposition}\label{pro:c3conK34mf}
	Let $G$ be a clamped $3$-connected graph, and let $(\mathcal{T}, \mathcal{G}, \mathcal{S})$ be its clamped tree decomposition. Then $G$ contains no $K_{3,4}$ minor if and only if every closure $\widehat{H}\in\widehat{\mathcal{G}}$ contains no $K_{3,4}$ minor and one of the following holds:
	\begin{itemize}
		\item Exactly one vertex of $\mathcal{S}$ has degree three in $\mathcal{T}$, all other vertices of $\mathcal{S}$ have degree two in $\mathcal{T}$, and every graph in ${\mathcal{G}}$ is planar.
		\item Every vertex of $\mathcal{S}$ has degree two in $\mathcal{T}$, and at most one graph in ${\mathcal{G}}$ is non-planar.
	\end{itemize}
\end{proposition}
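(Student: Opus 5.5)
We prove both directions separately, using the tree structure of $(\mathcal{T},\mathcal{G},\mathcal{S})$. Two preliminary facts are used throughout.

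First, each closure $\widehat{H}$ is a minor of $G$. Let $\Delta$ be adjacent to $H$ in $\mathcal{T}$. The branch of $\mathcal{T}-H$ containing $\Delta$ corresponds to a connected subgraph of $G-V(H)$ all of whose attachments lie in $\Delta$. Since $G$ is $3$-connected, this subgraph attaches to all three vertices of $\Delta$, so contracting it to a single vertex yields $v^H_\Delta$.

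Second, every $\Delta\in\mathcal{S}$ has degree at least two in $\mathcal{T}$, since $\Delta$ is induced by a $3$-cut.

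\textbf{Necessity.} Assume $G$ has no $K_{3,4}$ minor. Then every $\widehat{H}$ has none, by the first fact.

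Suppose some $\Delta\in\mathcal{S}$ has degree at least four. Each of four branches at $\Delta$ contracts to a vertex adjacent to all of $\Delta$, which gives $K_{3,4}$ directly. So every vertex of $\mathcal{S}$ has degree two or three.

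Suppose $\Delta$ has degree three. Contracting the three branches gives three vertices joined to $\Delta$, i.e.\ a $K_{3,3}$ together with the triangle on $\Delta$. We want a fourth vertex adjacent to all of $\Delta$, and we obtain it in two cases.
\begin{itemize}
\item If some $H\in\mathcal{G}$ is non-planar, let $\Delta$ lie in the branch containing $H$, or in $H$ itself. Since $\widehat{H}$ is non-planar, Lemma~\ref{lem:rural} applied to the degree-three vertex $v^H_{\Delta}$ (or to a contracted side) gives a $K_{3,3}$ subdivision through it. This $K_{3,3}$ should combine with the two other branches at $\Delta$ to produce $K_{3,4}$.
\item If two vertices $\Delta,\Delta'\in\mathcal{S}$ have degree at least three, contract the path in $\mathcal{T}$ between them. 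Using $3$-connectivity, route three disjoint paths from $\Delta$ to $\Delta'$. This turns the two degree-three branchings into a $K_{3,4}$: the two outer branches at each end, plus linkage, yield four vertices each adjacent to three branch sets.
\end{itemize}

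Finally, suppose all $\Delta$ have degree two but two members $H_1,H_2\in\mathcal{G}$ are non-planar. Take the $\Delta$ on the $\mathcal{T}$-path between them, with the adjacent side containing $H_1$. Lemma~\ref{lem:rural} on each side gives $K_{3,3}$ subdivisions through the attachment vertex. Gluing them along the three paths through $\Delta$ should produce $K_{3,4}$. I would verify this by checking that the resulting small graph (two $K_{3,3}$'s sharing a degree-three vertex's neighbourhood structure along a triangle) contains $K_{3,4}$. This is the step I expect to be the main obstacle: controlling exactly how the $K_{3,3}$ subdivision meets $\Delta$, possibly by a small case check on which of the three $\Delta$-vertices the subdivision uses.

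\textbf{Sufficiency.} Induct on $|\mathcal{S}|$ using Lemma~\ref{lem:MS}.

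In the second case, $\mathcal{T}$ is a path-like gluing. Let $H_0$ be the unique possibly non-planar member. Split $G$ at a $\Delta$ adjacent to a leaf side that avoids $H_0$. That side, with an added apex on $\Delta$, is a planar closure-type graph: gluing planar closures along triangles keeps planarity, because triangles in $4$-connected planar graphs are facial. The other side with apex is smaller and satisfies the hypothesis, so it has no $K_{3,4}$ minor by induction, and Lemma~\ref{lem:MS} finishes.

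In the first case, split at any degree-two $\Delta$, or at a branch of the degree-three $\Delta$. The branch side with apex is planar. The remaining side with apex has strictly fewer members of $\mathcal{S}$ and still satisfies the hypothesis. The base case, a single degree-three $\Delta$ with three planar branches, is handled the same way: peel off one branch and reduce to the second case with all members planar.
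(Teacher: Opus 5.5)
Your necessity direction follows the paper's argument. The step you flag as the main obstacle is routine. The apex $v_i$ added on $\Delta$ to the side containing $H_i$ has exactly the three vertices $d_1,d_2,d_3$ of $\Delta$ as neighbours, and it has degree three in the subdivision $\eta_i$ given by Lemma~\ref{lem:rural}. Hence each $d_j$ lies on its own segment of $\eta_i$ at $v_i$, and $\eta_1-v_1$, $\eta_2-v_2$ lie on opposite sides of $\Delta$. Delete $v_1,v_2$ and merge the two segments through $d_j$ into one branch set. This gives $K_{3,4}$: the three merged sets form one side, and the four branch vertices of $\eta_1,\eta_2$ lying on the same side as $v_1,v_2$ form the other. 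Your phrase ``or to a contracted side'', for a non-planar $H$ not adjacent to the degree-three triangle, also needs to be made precise, for instance via three disjoint paths as in the paper.

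The genuine gap is the sufficiency in the first case. Lemma~\ref{lem:MS} needs one side to be $K_{3,4}$-free \emph{after adding an apex on $\Delta$}, and that apex is itself a new branch at $\Delta$.
\begin{itemize}
\item Peeling a branch $B_3$ off the degree-three triangle $\Delta$ leaves $B_1\cup_\Delta B_2+v$. Here $\Delta$ still has degree three, with branches $B_1$, $B_2$ and the $K_4$ on $V(\Delta)\cup\{v\}$. So you are not in the second case, $|\mathcal{S}|$ has not dropped, and your base case reproduces itself.
\item Lemma~\ref{lem:MS} alone can never finish. The graph $B_1\cup_\Delta B_2+v$ has a $K_{3,3}$ minor, so at every split the side that is only required to be $K_{3,4}$-free again contains a triangle of degree three.
\end{itemize}
What is missing is a direct argument for this graph, which the paper supplies:
\begin{itemize}
\item Let $\widetilde{H}$ be obtained from $\widehat{H_1}$ and $\widehat{H_2}$ by identifying $\Delta$ and deleting the apex $v_2$ of $\widehat{H_2}$. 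Then $\widetilde{H}-v_1$ is planar, being a sum along facial triangles.
\item In a spanning $K_{3,4}$ model, $v_1$ is not a singleton branch set: it has degree three, and otherwise $K_{3,3}\preceq\widetilde{H}-v_1$.
\item So $v_1$ shares its branch set with a neighbour $u$. Since $N(v_1)$ is a triangle, $u$ sees every other neighbour of $v_1$, and $v_1$ can be dropped from the model.
\item This yields $K_{3,4}\preceq\widetilde{H}-v_1$, contradicting planarity.
\end{itemize}
Take $\widetilde{H}$ as the $K_{3,4}$-free seed, with its apex $v_1$ standing in for the third branch at $\Delta$. All remaining closures are planar, so Lemma~\ref{lem:MS} attaches them.

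A milder version of the same problem affects your second case. Peeling a leaf $H$ and adding an apex on its triangle does not reduce $|\mathcal{S}|$, and if $H\cong K_4$ the new graph is isomorphic to $G$. So the induction is not well-founded as stated. Build up instead: start from $\widehat{H_0}$ and merge the planar closures one at a time using Lemma~\ref{lem:MS}, deleting both apexes on each identified triangle, as the paper does.
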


\begin{proof}
	We first prove the necessity.
	
For every $H \in \mathcal{G}$, we have $\widehat{H} \preceq G$, since it is a subgraph of the graph obtained from $G$ by contracting every component of $G - V(H)$. Hence, $\widehat{H}$ contains no $K_{3,4}$ minor.
	
	If some $\Delta \in \mathcal{S}$ has degree at least four in $\mathcal{T}$, then the union $U$ of the neighbors of $\Delta$ in $\mathcal{T}$ forms a subgraph of $G$ that contains a $K_{3,4}$ minor. More precisely, one obtains $K_{3,4}$ by contracting each component of $U - V(\Delta)$ and deleting the edges of $\Delta$. Therefore, every vertex of $\mathcal{S}$ has degree two or three in $\mathcal{T}$.
	
	If there are distinct vertices $\Delta_1, \Delta_2 \in \mathcal{S}$ of degree three in $\mathcal{T}$, then the union of the neighbors of $\Delta_1$ and $\Delta_2$ together with the elements of $\mathcal{G}$ lying on the path of $\mathcal{T}$ joining $\Delta_1$ and $\Delta_2$ contains a $K_{3,4}$ minor (as there are three disjoint paths between $\Delta_1$ and $\Delta_2$ in $G$). Hence, there is at most one vertex of $\mathcal{S}$ of degree three in $\mathcal{T}$.
	
	\smallskip
	\noindent\textbf{Case~1.} Exactly one vertex of $\mathcal{S}$ has degree three in $\mathcal{T}$.
	
	\smallskip
	
	Let $\Delta_1 \in \mathcal{S}$ be this vertex. Suppose, to the contrary, that there exists a non-planar graph ${H} \in {\mathcal{G}}$. Let $\Delta_2$ be the neighbor of $H$ on the path in $\mathcal{T}$ joining $\Delta_1$ and $H$.
	
Let $\widetilde{H}$ be the subgraph of $G$ such that $H$ is a subgraph of $\widetilde{H}$, $\widetilde{H} - V(\Delta_2)$ is connected, and, subject to these conditions, $|E(\widetilde{H})|$ is maximized. Since ${H}$ is non-planar, the graph obtained from $\widetilde{H}$ by adding a vertex $v$ adjacent to the vertices of $\Delta_2$ is also non-planar and hence, by Lemma~\ref{lem:rural}, contains a subdivision $\eta(K_{3,3})$ of $K_{3,3}$ that contains $v$ and in which $v$ has degree three.

	Since there are three disjoint paths in $G$ joining $\Delta_1$ and $\Delta_2$, it follows that the union of these three paths, the elements of $\mathcal{G}$ containing $\Delta_1$, and $\eta(K_{3,3}) - v$ contains a $K_{3,4}$ minor, a contradiction. Therefore, every graph in ${\mathcal{G}}$ is planar.

	\smallskip
	
	\smallskip
	\noindent\textbf{Case~2.} No vertex of $\mathcal{S}$ has degree three in $\mathcal{T}$.
	
	\smallskip
	
Suppose, to the contrary, that there exist two non-planar graphs ${H_1}, {H_2} \in {\mathcal{G}}$. For $i \in [2]$, let $\Delta_i$ be the neighbor of $H_i$ on the path in $\mathcal{T}$ joining $H_1$ and $H_2$, and let $\widetilde{H_i}$ be the subgraph of $G$ such that $H_i$ is a subgraph of $\widetilde{H_i}$, $\widetilde{H_i} - V(\Delta_i)$ is connected, and, subject to these conditions, $|E(\widetilde{H_i})|$ is maximized. 

As before, Lemma~\ref{lem:rural} implies that the graph obtained from $\widetilde{H_i}$ by adding a vertex $v_i$ adjacent to the vertices of $\Delta_i$ contains a subdivision $\eta_i(K_{3,3})$ of $K_{3,3}$ that contains $v_i$ and in which $v_i$ has degree three. 

As there exist three disjoint paths in $G$ joining $\Delta_1$ and $\Delta_2$, it follows that the union of $\eta_1(K_{3,3}) - v_1$, $\eta_2(K_{3,3}) - v_2$, and these three paths contains a $K_{3,4}$ minor, a contradiction. Hence, at most one graph in ${\mathcal{G}}$ is non-planar.

	\smallskip
	
	\smallskip
	
	We now prove the sufficiency.
	
	By assumption, for each $H \in \mathcal{G}$, the closure $\widehat{H}$ contains no $K_{3,4}$ minor. Moreover, $\widehat{H}$ is planar if and only if $H$ is planar.
	
Recall that $G$ can be constructed by identifying $3$-cliques in $\mathcal{S}$ among the graphs $H \in \mathcal{G}$.

Equivalently, for each $3$-clique $\Delta\in\mathcal{S}$, one identifies the copies of $\Delta$ in the closures $\widehat{H}\in\widehat{\mathcal{G}}$ that contain $\Delta$, and then removes from each such $\widehat{H}$ the vertex adjacent to $\Delta$ that was added in forming the closure (denoted $v^H_\Delta$ in the definition). After all identifications are performed and the added vertices are removed, the closures are merged into the graph $G$. This reformulation is convenient for applying Lemma~\ref{lem:MS}.

It remains to show that, in each of the following two cases, the constructed graph $G$ has no $K_{3,4}$ minor.

	\smallskip
	\noindent\textbf{Case~1.} At most one graph in ${\mathcal{G}}$ is non-planar and every vertex of $\mathcal{S}$ has degree two in $\mathcal{T}$.
	
	\smallskip
	
Let $H_1 \in \mathcal{G}$ be such that every other graph in $\mathcal{G}$ has a planar closure. Order the graphs in $\mathcal{G}$ as $H_1, H_2, \dots, H_t$, where $t = |\mathcal{G}|$, so that for each $i \in [t]$ the union of $H_1, \dots, H_i$ forms a clamped $3$-connected subgraph of $G$. Starting from $\widehat{H_1}$, we successively merge $\widehat{H_2}, \dots, \widehat{H_t}$ in this order via $3$-clique identifications, deleting two degree three vertices at each step (since each vertex of $\mathcal{S}$ has degree two in $\mathcal{T}$). As $\widehat{H_1}$ contains no $K_{3,4}$ minor and $\widehat{H_2}, \dots, \widehat{H_t}$ are planar, Lemma~\ref{lem:MS} implies that the resulting graph $G$ contains no $K_{3,4}$ minor.
	
	\smallskip
	
	\smallskip
	\noindent\textbf{Case~2.} Every graph in ${\mathcal{G}}$ is planar, exactly one vertex $\Delta \in \mathcal{S}$ has degree three in $\mathcal{T}$, and all other vertices of $\mathcal{S}$ have degree two in $\mathcal{T}$.
	
	\smallskip
	
Let $H_1$ and $H_2$ be two neighbors of $\Delta$ in $\mathcal{T}$, and let $v_1$ and $v_2$ be the degree three vertices of $\widehat{H_1}$ and $\widehat{H_2}$, respectively, that were added in forming the closures and are adjacent to the vertices of $\Delta$.

	Let $\widetilde{H}$ be obtained from the disjoint union of $\widehat{H_1}$ and $\widehat{H_2}$ by identifying the copies of $\Delta$ and deleting $v_2$. 
	
	We claim that $\widetilde{H}$ contains no $K_{3,4}$ minor. Suppose otherwise, and let $\mu$ be a spanning model of $K_{3,4}$ in $\widetilde{H}$. Denote the vertex set of $K_{3,4}$ by $X \cup Y$, where $|X| = 3$ and $|Y| = 4$, and where two vertices are adjacent if and only if they belong to different partite sets $X$ and $Y$.

Since $\widetilde{H} - v_1$ is planar, we have $v_1 \in \mu(x)$ for some $x \in X$. As $v_1$ has degree three in $\widetilde{H}$ while $x$ has degree four in $K_{3,4}$, at least one neighbor of $v_1$ lies in $\mu(x)$. Let $u$ be such a neighbor. As every neighbor of $v_1$ other than $u$ is also a neighbor of $u$, it follows immediately that $\widetilde{H} - v_1$ has a $K_{3,4}$ minor, which is impossible.

	Now, as in Case~1, we may successively merge $\widetilde{H}$ with the graphs in $\widehat{\mathcal{G}}$ other than $\widehat{H_1}$ and $\widehat{H_2}$ to obtain $G$. By Lemma~\ref{lem:MS}, the resulting graph contains no $K_{3,4}$ minor, since $\widetilde{H}$ has no $K_{3,4}$ minor and all graphs in $\widehat{\mathcal{G}}$ are planar.
\end{proof}

Let $H_1, H_2$ be two graphs, each containing a $k$-clique. By identifying these $k$-cliques in the disjoint union of $H_1$ and $H_2$, we obtain a \emph{$k$-clique sum} of $H_1$ and $H_2$. The following proposition is straightforward (cf.~\cite{Maharry2012}); we omit the proof.

\begin{proposition}\label{pro:conK34mfcs}
	Let $H_1,H_2$ be graphs each containing a $k$-clique, where $k \in [2]$. Then $H_1$ and $H_2$ contain no $K_{3,4}$ minor if and only if any $k$-clique sum of $H_1$ and $H_2$ contains no $K_{3,4}$ minor.
\end{proposition}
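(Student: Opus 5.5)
The plan is to prove both directions for $k\in[2]$, writing $G$ for a $k$-clique sum of $H_1$ and $H_2$ along a clique $K$ with $|V(K)|=k$.

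\emph{If $G$ has no $K_{3,4}$ minor, then neither does $H_1$ nor $H_2$.} Each $H_i$ is a minor of $G$. Since $k\le 2$, the other side $H_{3-i}$ is connected on the clique or can be deleted. Concretely, delete $V(H_{3-i})\setminus V(K)$; this leaves $H_i$, together with possibly fewer edges of $K$, but the edges of $K$ are already in $H_i$. So $H_i\preceq G$, and minor-closedness gives the claim.

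\emph{If neither $H_1$ nor $H_2$ has a $K_{3,4}$ minor, then neither does $G$.} Suppose $G$ has a $K_{3,4}$ minor, and note that $K_{3,4}$ is $2$-connected. Then $V(K)$ is a separator of $G$ of size at most two, with sides $A_1=V(H_1)\setminus V(K)$ and $A_2=V(H_2)\setminus V(K)$. Take a model $\mu$ of $K_{3,4}$ in $G$.

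\emph{Case $k=1$, with cut vertex $c$.} At most one branch set contains $c$. The branch sets avoiding $c$ lie within $A_1$ or within $A_2$. If branch sets of vertices $u,w$ lay on different sides, a $u$–$w$ path in $K_{3,4}-\mu^{-1}(c)$ would give a contradiction: $K_{3,4}$ minus one vertex is connected, and every edge path must pass through $c$. Hence all branch sets avoiding $c$ lie on one side, say in $H_1$. Then intersecting every branch set with $V(H_1)$ yields a model in $H_1$. The branch set containing $c$ stays connected because $c$ is a cut vertex: its part in $A_2$ hangs off $c$. Edge paths entering $A_2$ must return through $c$, which is impossible, so they do not.

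\emph{Case $k=2$, with $K=\{a,b\}$ and $ab\in E$.} This is the same argument. $K_{3,4}$ minus at most two vertices is still connected, since $K_{3,4}$ is $3$-connected. So all branch sets disjoint from $\{a,b\}$ lie on one side, say $A_1$. Restrict the model to $H_1$. A branch set containing $a$ or $b$ loses its $A_2$-part, and any connection it had through $A_2$ between $a$ and $b$ is replaced by the edge $ab$. Any edge path through $A_2$ joins $a$ and $b$, so it is replaced by the edge $ab$. At most one such path can exist, since paths are internally disjoint and every path through $A_2$ has both endpoints in $\{a,b\}$. This gives a $K_{3,4}$ model in $H_1$, a contradiction.

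The only delicate step is this bookkeeping when $a$ and $b$ lie in different branch sets and some edge path routes through $A_2$. It is resolved because the clique edge $ab$ serves as a substitute, and $ab$ cannot already be used by another path: if $ab$ were used, both $a$ and $b$ would be endpoints of two distinct paths joining the same pair of branch sets, which a model of a simple graph does not allow.
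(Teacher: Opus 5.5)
Your route is the standard one, and it is presumably what the paper means when it calls the proposition straightforward and omits the proof. The same ``all free branch sets lie on one side'' argument appears in the paper's proof of Lemma~\ref{lem:3conK34mf}, where it is run with a \emph{spanning} model. (The easy direction needs no hypothesis on $k$: each $H_i$ is a subgraph of the clique sum.)

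One step is not justified as written: the claim that all branch sets avoiding $V(K)$ lie on one side. Let $Z$ be the set of vertices of $K_{3,4}$ whose branch sets meet $V(K)$. You derive the claim from connectivity of $K_{3,4}-Z$: a path in $K_{3,4}-Z$ between the two sides yields an edge path crossing $V(K)$. That is a contradiction only if the crossing vertex lies in a branch set. Your model is not assumed spanning, and you explicitly allow $c$, or $a$ and $b$, to lie in no branch set (``at most one branch set contains $c$'', and your rerouting of edge paths through $a$ and $b$). Such a vertex may legitimately be an internal vertex of one edge path, so a single crossing edge is no contradiction. For $k=1$ with $c$ in no branch set, connectivity of $K_{3,4}$ alone gives nothing.

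The repair is a count rather than connectivity. Since $K_{3,4}$ is $3$-connected, $K_{3,4}-Z$ is $(3-|Z|)$-connected, hence $(3-|Z|)$-edge-connected. So splitting its vertices by side produces at least $3-|Z|$ crossing edges. Each corresponding edge path has an internal vertex in $V(K)$ lying in no branch set, and these vertices are distinct by internal disjointness. Yet at most $k-|Z|\le 2-|Z|$ vertices of $V(K)$ avoid all branch sets; this is exactly where $k\le 2$ is used.

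Alternatively, pass to the component of $G$ containing the model. It either lies inside one $H_i$ or is a $k$-clique sum of the components of $H_1$ and $H_2$ containing $K$. Take a spanning model there; then every vertex of $V(K)$ lies in a branch set and your connectivity argument works verbatim.

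With either repair, the rest of your proof (restricting to $H_1$ and substituting the edge $ab$) is fine. Just note that the edge paths to reroute also include those starting at a vertex of $\mu(x)\cap A_2$ with $a\in\mu(x)$. Such a path must leave $A_2$ through $b$, and the same substitution handles it: start at $a$ and use $ab$.
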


Propositions~\ref{pro:conK34mfcs} and~\ref{pro:c3conK34mf} show how a connected graph without a $K_{3,4}$ minor can be reduced to $4$-connected graphs without a $K_{3,4}$ minor. Together with Theorem~\ref{thm:4cK34mf}, this yields a characterization of graphs with no $K_{3,4}$ minor.

\section{Applications}\label{sec:applications}

In this section we derive three consequences of Theorem~\ref{thm:4cK34mf} concerning edge density conditions forcing the presence of certain minors, hamiltonian-connectedness, and embeddability on the torus. In particular, we establish Theorems~\ref{thm:K34K6-}, \ref{thm:hc}, and \ref{thm:T} in Sections~\ref{sec:K34K6-}, \ref{sec:hc}, and \ref{sec:T}, respectively.

\subsection{Edge density and $K_{3,4}$ minors}\label{sec:K34K6-}

A classical problem in graph minor theory is to determine, for a graph class $\mathcal{G}$ and a graph $H$, a lower bound on the average degree (respectively, the minimum degree) that forces every graph in $\mathcal{G}$ to contain $H$ as a minor.

The following theorem is due to J{\o}rgensen~\cite{Jorgensen2001}, and concerns $K_{3,4}$ minors in the class of $3$-connected graphs.

\begin{theorem}[\cite{Jorgensen2001}]\label{thm:Jorgensen}
	Every $3$-connected graph $G$ with at least $3|V(G)|-3$ edges is either isomorphic to $K_6$ or contains $K_{3,4}$ as a minor. Moreover, every graph with minimum degree at least six contains $K_{3,4}$ as a minor.
\end{theorem}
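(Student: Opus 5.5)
The plan is a minor-minimal counterexample argument of Mader type, run on the edge count $e(G)-3|V(G)|$. Suppose the first assertion fails and choose a counterexample $G$: $3$-connected, with $|E(G)|\ge 3|V(G)|-3$, not isomorphic to $K_6$, and with no $K_{3,4}$ minor, taking $|V(G)|+|E(G)|$ minimum. Deleting an edge keeps the bound unless $|E(G)|=3|V(G)|-3$, so we may assume equality; otherwise we delete edges while $3$-connectivity is preserved, and treat the failure of that separately below. The key reduction is edge contraction. Contracting $uv$ loses $1+|N(u)\cap N(v)|$ edges and one vertex, so if $uv$ lies in at most two triangles, then $G/uv$ still has at least $3|V(G/uv)|-3$ edges. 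We then need $G/uv$ to be $3$-connected and not $K_6$ in order to contradict minimality. Hence, up to the contractions that destroy $3$-connectivity, every edge lies in at least three triangles, so $G[N(v)]$ has minimum degree at least $3$ for every $v$.

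The second step deals with $3$-connectivity. If every contraction of $uv$ (and every edge deletion) destroys $3$-connectivity, then $G$ has a $3$-cut $\{u,v,w\}$. We split $G$ along this cut into $G_1,G_2$, with the cut made a triangle (clamping as in Corollary~\ref{cor:3conK34mf}, which does not create $K_{3,4}$ by Lemma~\ref{lem:3conK34mf}). Edge counting gives $e(G_1)+e(G_2)=e(G)+3\ge 3(|G_1|+|G_2|)-9+3$, so one side, say $G_1$, satisfies $e(G_1)\ge 3|G_1|-3$. By minimality $G_1$ is $K_6$ or contains $K_{3,4}$. A $K_{3,4}$ minor in $G_1$ lifts to $G$, since the triangle on the cut is realized through $G_2$ by contracting a component. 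If $G_1\cong K_6$, then $K_6$ together with a fourth branch set formed by the other side yields $K_{3,4}$. To see this, take the cut as three vertices of $X$, the three remaining $K_6$ vertices and the contracted other side as $Y$; the edges among the $X$-vertices are not needed.

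The third step is the local analysis. Since $|E(G)|=3|V(G)|-3$, there is a vertex $v$ of degree at most $5$, and $3$-connectivity together with the triangle condition forces degree $4$ or $5$. If $d(v)=4$, then $N(v)$ is a $K_4$, so $v\cup N(v)$ is a $K_5$. If $d(v)=5$, then $G[N(v)]$ has minimum degree $3$ on five vertices, which forces it to contain $K_5^-$ or $W_4$. In both cases, $G-v-N(v)$ is nonempty because $G\ne K_6$, and by $3$-connectivity there are three disjoint paths from a component $C$ of $G-v-N(v)$ to $N(v)$. Contracting $C$ to a single vertex adjacent to three vertices of $N(v)$ and using $v$ should give $K_{3,4}$. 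Concretely, take $X$ to be the three attachment vertices and $Y=\{v,C\}$ together with the other one or two neighbors of $v$, with extra adjacencies supplied by the dense structure of $N(v)$. This case check, especially when $d(v)=5$ and the attachments of $C$ land badly, is the step I expect to be the main obstacle. There I would first try to use a second vertex of $C$ or a second component, since $\delta(N(v))\ge 3$ leaves some slack.

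For the second assertion, let $G$ have minimum degree at least $6$. Then $|E(G)|\ge 3|V(G)|$. We take a minor $H$ that is minimal subject to $e(H)\ge 3|V(H)|-2$, obtained by contracting and deleting. Then $H$ is $3$-connected, because splitting along a separation of order at most $2$ preserves the inequality on one side with room to spare. Since $K_6$ has only $15=3\cdot 6-3$ edges, $H\not\cong K_6$, and the first assertion gives a $K_{3,4}$ minor in $H$, hence in $G$.
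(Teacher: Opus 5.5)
The paper does not prove this theorem; it is quoted from J{\o}rgensen. Your argument therefore has to stand on its own, and it has genuine gaps.

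The most serious gap is in the second assertion. You use minimum degree six only to obtain $|E(G)|\ge 3|V(G)|$, but edge density alone cannot force a $K_{3,4}$ minor. Glue $k$ copies of $K_6$ in a chain, consecutive copies sharing an edge. The result has $4k+2$ vertices and $14k+1\ge 3(4k+2)$ edges once $k\ge 3$. By Proposition~\ref{pro:conK34mfcs} it has no $K_{3,4}$ minor. (Its minimum degree is five, so this refutes your reduction, not the theorem.) The false step is the claim that a minor-minimal $H$ with $e(H)\ge 3|V(H)|-2$ is $3$-connected. Across a $2$-cut $\{s,t\}$, with $st$ added to both sides $H_A,H_B$, you only get $e(H_A)+e(H_B)\ge e(H)+1\ge 3(|A|+|B|)-7$. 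This is compatible with $e(H_A)\le 3|A|-3$ and $e(H_B)\le 3|B|-3$, so there is a deficit rather than room to spare. For example, the $2$-clique sum of two copies of $K_6$ minus one non-shared edge has $10$ vertices and $28=3\cdot 10-2$ edges. It is minor-minimal with this property and has a $2$-cut. The degree hypothesis has to be used on an end piece of the separation structure of $G$ itself, where interior vertices keep degree at least six, and that argument is missing.

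For the first assertion there are two further gaps. Your second step is off by three: $e(G)+3\ge 3|V(G)|=3(|G_1|+|G_2|)-9$, not $-6$. In general, if the cut $S$ spans $k$ edges of $G$, then $e(G_1)+e(G_2)=e(G)+6-k$. Pigeonhole then gives a side with at least $3|G_i|-3$ edges only when $k\le 1$. For $k\ge 2$ both sides can have $3|G_i|-4$ or $3|G_i|-5$ edges. Nothing excludes such cuts; the $3$-cut coming from a non-contractible edge $uv$ already contains the edge $uv$. Closing this needs an extra idea. For instance, in that situation both sides have more than $3|G_i|-6$ edges and hence are non-planar. Two non-planar sides of a $3$-cut give $K_{3,4}$ via Lemma~\ref{lem:rural}, as in the proof of Proposition~\ref{pro:c3conK34mf}. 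Also, a $K_{3,4}$ minor of the clamped side transfers to $G$ because of Lemma~\ref{lem:3conK34mf}. It does not transfer because contracting a component of the other side realizes the triangle; that contraction only yields a vertex adjacent to all of $S$.

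Your third step is not yet an argument. For $d(v)=4$ your set $Y=\{v,C,d\}$ has only three elements, so you get $K_{3,3}$. Indeed $v$, the four vertices of $N(v)$ and one contracted component give only six branch sets. Once $G$ is $4$-connected, the case $d(v)=5$ does close: $C$ then sees at least four vertices of $N(v)$, $G[N(v)]$ contains $W_4$, and a short check produces $K_{3,4}$. The case $N[v]\cong K_5$ with $G-N[v]$ connected and attached to all of $N(v)$ is different. It requires extracting a seventh branch set from $G-N[v]$ using the global hypotheses: the edge count and every edge lying in at least three triangles. That is the core of J{\o}rgensen's theorem, and your proposal contains no idea for it.
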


Both statements in Theorem~\ref{thm:Jorgensen} are tight. As mentioned (without explicit construction) in~\cite{Jorgensen2001}, there exist infinitely many $3$-connected graphs $G$ with $3|V(G)|-4$ edges that contain no $K_{3,4}$ minor. Note also that there are planar graphs with minimum degree exactly five.

We complement Theorem~\ref{thm:Jorgensen} with the following result.

\begin{corollary}\label{cor:Jorgensen}
	There exist infinitely many $4$-connected graphs $G$ with $3|V(G)|-4$ edges that contain no $K_{3,4}$ minor. Moreover, every $4$-connected non-planar graph with minimum degree at least five is either isomorphic to $K_6$ or contains $K_{3,4}$ as a minor.
\end{corollary}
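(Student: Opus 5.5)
The corollary has two parts, and I would treat them separately: an explicit construction for the first, and the structure theorem plus a degree count for the second.

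\emph{First part.} I would use the oloidal family $\mathfrak{D}_{s^1,s^2}+A$ with $A=\{\delta^1_1 \delta^2_2, \delta^2_2 \delta^1_3, \delta^1_3 \delta^2_4, \delta^2_4 \delta^1_1\}$ and $s^1+s^2\ge 2$. By Proposition~\ref{pro:olo} these graphs are $4$-connected and have no $K_{3,4}$ minor. It remains to count edges.
\begin{itemize}
\item Going from $\mathfrak{D}_{s^1,s^2}$ to $\mathfrak{D}_{s^1+1,s^2}$ adds one spine vertex of degree four. Contracting one spine edge deletes exactly three edges, namely one spine edge and the two parallel edges to $\delta^1_1,\delta^1_3$. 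So each added vertex adds exactly three edges, and the quantity $|E|-3|V|$ is constant along the family.
\item Hence it suffices to check a single base case, say $\mathfrak{D}_{0,0}+A$ on $8$ vertices, and read off from Figure~\ref{fig:DEF4c} that it has $20=3\cdot 8-4$ edges.
\end{itemize}
If the count in the base case is off, I would instead try $\mathfrak{E}_s$ with the full three-edge set added, using the same increment argument. Since the graphs are $4$-connected and $K_{3,4}$-minor-free, $|E|\ge 3|V|-3$ is impossible for them by Theorem~\ref{thm:Jorgensen}. So checking the base case is the only real step.

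\emph{Second part.} Let $G$ be $4$-connected, non-planar, with minimum degree at least five, and suppose $G$ has no $K_{3,4}$ minor. By Theorem~\ref{thm:4cK34mf}, $G$ is $K_6$, a $4$-connected non-planar subgraph of a reduced patch graph, or an oloidal graph. I would rule out the last two classes as follows.
\begin{itemize}
\item \emph{Patch graphs.} By Lemma~\ref{lem:pg4d} every patch graph is $4$-degenerate, and so is every subgraph of one. A $4$-degenerate graph has a vertex of degree at most four, contradicting minimum degree five.
\item \emph{Oloidal graphs.} Here I would exhibit a vertex of degree four. A spine vertex, including the spine end-vertices such as $\delta^1_2$, is adjacent only to its spine neighbours and the two hub vertices. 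None of the edges in the allowed sets $A$ touches the spine end-vertices $\delta^1_2,\delta^1_4$ (respectively $\sigma^1_0$ or $\varepsilon^0$), so these keep degree four.
\item For $\mathfrak{F}+A$, the vertex $\varphi^1$ has degree four, since the only allowed extra edge is $\varphi^1_3\varphi^2_3$.
\item For $\mathfrak{D}_{0,s^2}$ with $A\subseteq\{\delta^1_1\delta^2_2,\delta^1_1\delta^2_3,\delta^1_1\delta^2_4\}$, the vertex $\delta^1_2$ is untouched by $A$ and has degree four.
\end{itemize}

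Therefore $G\cong K_6$. Conversely, $K_6$ has no $K_{3,4}$ minor since it has only six vertices, which is consistent with the statement.

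The main obstacle is verifying from the figures the exact degrees in the base graphs $\mathfrak{D}_{0,0}$, $\mathfrak{E}_0$ and $\mathfrak{F}$. Concretely, this means confirming that the vertices named above (the trivial-spine ends and $\varphi^1$) really have degree four, and getting the edge count $3|V|-4$ right.
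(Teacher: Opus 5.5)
Your proposal is correct and follows the paper's proof. You use the same family $\mathfrak{D}_{s^1,s^2}+\{\delta^1_1\delta^2_2,\delta^2_2\delta^1_3,\delta^1_3\delta^2_4,\delta^2_4\delta^1_1\}$ for the first part, and the base case does check out, since $\mathfrak{D}_{0,0}\cong D_{17}$ is $4$-regular with $16$ edges. For the second part you use Theorem~\ref{thm:4cK34mf} with Lemma~\ref{lem:pg4d}, and your explicit degree-four vertex in each oloidal graph takes the place of the paper's claim, stated as easily verified, that oloidal graphs are $4$-degenerate.

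There are two harmless slips. A spine end-vertex such as $\delta^1_2$ has only one spine neighbour, so besides the two hubs it has one more neighbour on the other side, namely $\delta^2_2$; its degree is still four and no edge of $A$ touches it. Also, your fallback, $\mathfrak{E}_s$ plus all three extra edges, is isomorphic to $\mathfrak{D}_{0,s+1}+\{\delta^1_1\delta^2_2,\delta^1_1\delta^2_3,\delta^1_1\delta^2_4\}$ and so has only $3|V|-5$ edges; it is not needed.
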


\begin{proof}
	For the first statement, consider $G := \mathfrak{D}_{s^1,s^2} + \{\delta^1_1 \delta^2_2, \delta^2_2 \delta^1_3, \delta^1_3 \delta^2_4, \delta^2_4 \delta^1_1\}$, with $s^1, s^2 \ge 0$. Then $G$ has no $K_{3,4}$ minor and has $3|V(G)| - 4$ edges.
	
	For the second statement, we apply Theorem~\ref{thm:4cK34mf}. It suffices to observe that both patch graphs and oloidal graphs are $4$-degenerate. The $4$-degeneracy of patch graphs follows from Lemma~\ref{lem:pg4d}, and the same property for oloidal graphs is easily verified.
\end{proof}

We now proceed to the proof of Theorem~\ref{thm:K34K6-}, using the following theorem of Mader~\cite{Mader1968}.

\begin{theorem}[\cite{Mader1968}]\label{thm:Mader}
	Every graph with minimum degree at least five contains $K_6^-$ or the icosahedron as a minor.
\end{theorem}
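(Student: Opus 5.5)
We plan a minimal-counterexample argument. Suppose the statement fails. Among all counterexamples choose a graph $G$ with minimum degree at least five and no minor of $K_6^-$ or of the icosahedron, minimal in the minor order. Then every proper minor of $G$ has a vertex of degree at most four. Applying this to deletions and contractions gives the following local constraints.
\begin{itemize}
\item Every edge $xy$ of $G$ has an endpoint of degree exactly five.
\item Contracting $xy$ must create a vertex of degree at most four. The merged vertex has degree $|N(x)\cup N(y)|-2 \ge 5$, so this must be a common neighbour $z$ of $x$ and $y$ with $\deg z = 5$.
\end{itemize}
In particular every edge lies in a triangle whose third vertex has degree five. A first step is to check that $G$ may be assumed $3$-connected, and in fact $4$-connected. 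Small separations can be handled by contracting one side onto the separator: this keeps the minimum degree at least five on the other side, and $K_6^-$ is $4$-connected, so a model of it lives essentially on one side. This contradicts minimality.

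The core of the argument is the local structure around a degree-five vertex $v$. Let $L = G[N(v)]$ be its link, a graph on five vertices. For each edge $vw$ we obtain a common neighbour of degree five, so $L$ has no isolated vertex. We then refine this, using the minimality condition on edges inside $N(v)$ and the $4$-connectivity, to show $\delta(L) \ge 2$. If $L$ contains $K_4^-$, or any structure with many edges, then $v$ together with $L$ and one further branch set gives a $K_6^-$ minor. That further branch set is obtained by contracting $G - N[v]$, which is connected and adjacent to enough vertices of $N(v)$ by $4$-connectivity. So $L$ must be exactly a $5$-cycle. Hence every degree-five vertex has a cyclic link, and $G$ is locally a triangulated disc around such vertices.

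We then propagate this structure. Start from a degree-five vertex $v$ with link $C$. Every vertex of $C$ also has degree five or lies in triangles forced by the edge condition. We aim to show that the neighbours of $v$ have degree five as well, using the first constraint on edges $vw$ together with the $K_6^-$ exclusion. Once all vertices have degree five with $5$-cycle links, $G$ is a $5$-regular graph whose neighbourhoods are $5$-cycles. Such a graph is a triangulated surface with all degrees five, which must be the icosahedron, a contradiction.

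The main obstacle is vertices of degree greater than five. They are not directly controlled by the minimality conditions, and they can sit next to degree-five vertices with cyclic links. I would first try a counting or discharging argument: every edge needs a degree-five apex, and the degree-five vertices have only $5$-cycle links. From this one can try to show that a high-degree vertex $u$ has a neighbourhood containing a long path of degree-five vertices. Contracting that path together with $u$ should then yield $K_6^-$. If this fails, the fallback is a case analysis on how the $5$-cycle links of the degree-five neighbours of $u$ fit together around $u$.
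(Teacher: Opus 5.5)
The paper gives no proof of this statement. It is quoted from Mader's 1968 paper and used only as a black box in the proof of Theorem~\ref{thm:K34K6-}. Judged as a stand-alone proof, your proposal has a genuine gap exactly where the difficulty of the theorem lies. You never show that a minimal counterexample has no vertex of degree greater than five, nor that every vertex has a $5$-cycle link. The text says ``we aim to show'' and then ``I would first try a counting or discharging argument \dots\ the fallback is a case analysis'', which is a plan, not an argument. Your final step is fine. A connected graph in which every $G[N(v)]$ is a $5$-cycle triangulates a closed surface with all degrees five. Euler's formula then gives $n/6=\chi\le 2$, and $n=6$ would force $G=K_6$, whose links are $K_5$, so $G$ is the icosahedron. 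But nothing before that step establishes its hypothesis.

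Several earlier steps also fail as written.

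(i) The reduction to $4$-connectivity does not work. Suppose you keep one side $A$ of a separation $(A,B)$ of order at most three and contract $B\setminus A$ onto $A\cap B$. The vertices of $A\cap B$ may lose most of their neighbours, so the resulting proper minor need not have minimum degree five, and minimality yields no contradiction. This already happens at a cut vertex. That $K_6^-$ is $4$-connected concerns the conclusion, not the hypothesis. To neutralise small separations you must strengthen the statement, e.g.\ allow low degree on a small separating clique, and then prove the stronger statement.

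(ii) The merged vertex of $G/xy$ can have degree exactly four, namely when $\deg x=\deg y=5$ and $N[x]=N[y]$. So ``every edge lies in a triangle with a degree-five apex'' needs this alternative.

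(iii) From $\delta(L)\ge 2$ and no $K_4^-$ you cannot conclude $L\cong C_5$: $K_{2,3}$ and the house ($C_5$ plus one chord) satisfy both. Consider the graph formed by $v$, a $K_{2,3}$ link, and one extra vertex adjacent to all five link vertices. It has no $K_6^-$ minor: it has $16$ edges, every edge lies in two triangles, and the minimum degree is four, so any contraction or vertex deletion leaves at most $13$ edges on six vertices. Moreover, for your $K_4^-$ argument the extra branch set must be connected, contain a neighbour of $v$, and touch all four vertices of the $K_4^-$. Since $G-N[v]$ can be disconnected when $\deg v=5$, $4$-connectivity does not provide such a set.
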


\begin{proof}[Proof of Theorem~\ref{thm:K34K6-}]
	By Theorem~\ref{thm:Mader} and Corollary~\ref{cor:Jorgensen}, it suffices to show that every $4$-connected non-planar graph $G$ containing the icosahedron as a minor also contains $K_6^-$ as a minor. Denote by $H$ the icosahedron. Then $G$ and $H$ are $4$-connected, with $G$ non-planar and $H$ planar. This permits the application of Theorem~\ref{thm:HT}.
	
	Theorem~\ref{thm:HT} lists seven possible extensions. Since all facial cycles of $H$ have length three, only three of these need consideration. Hence $G$ contains a minor $H'$ obtained from $H$ in one of the following ways:
	
	\begin{itemize}
		\item $H'$ arises from joining two non-cofacial vertices of $H$. By symmetry, $H'$ is isomorphic to one of the two graphs depicted in Figure~\ref{fig:ikosa12}.
		
		\item $H'$ arises from performing a non-planar split on $H$. In this case, $H'$ is isomorphic to the graph in Figure~\ref{fig:ikosa3}.
		
		\item There exist facial cycles $C_1$ and $C_2$ of $H$ sharing an edge $uv$, and $H'$ is obtained by splitting $u$ along $C_1$ into $u_1,u_2$ and $v$ along $C_2$ into $v_1,v_2$, with $u_1$ adjacent to $v_1$ in the intermediate graph, followed by joining $u_2$ to $v_2$. Then $H'$ contains the graph of Figure~\ref{fig:ikosa4} as a spanning subgraph.
	\end{itemize}
	In each of the three cases, $H'$ contains $K_6^-$ as a minor. This concludes the proof.
\end{proof}

\begin{figure}[!ht]
	\centering
	\includegraphics[scale=1]{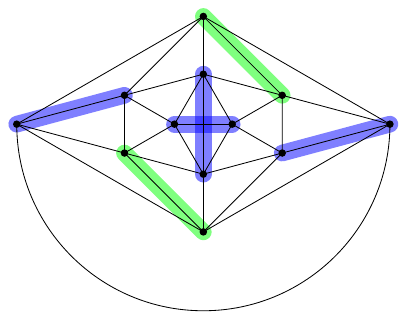}
	\hspace{30pt}
	\includegraphics[scale=1]{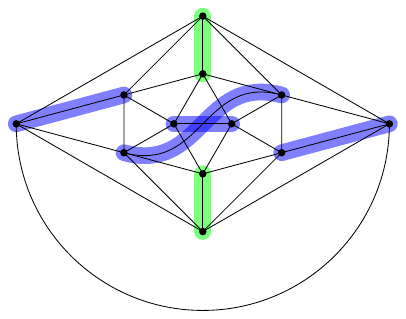}
	\caption{The two graphs obtained from the icosahedron by joining two non-cofacial vertices.}
	\label{fig:ikosa12}
\end{figure}

\begin{figure}[!ht]
	\centering
	\includegraphics[scale=1]{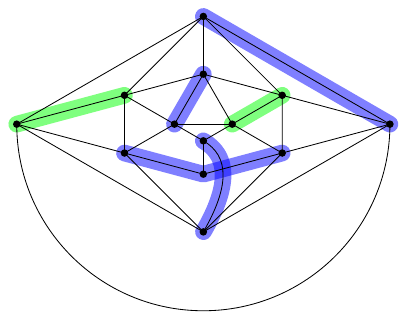}
	\caption{The graph obtained from the icosahedron by a non-planar split.}
	\label{fig:ikosa3}
\end{figure}

\begin{figure}[!ht]
	\centering
	\includegraphics[scale=1]{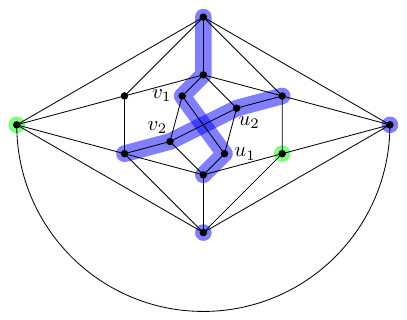}
	\caption{A common spanning subgraph of graphs obtained from the icosahedron by planar splits along facial cycles $C_1$ and $C_2$ sharing an edge $uv$, with $u$ split into $u_1,u_2$ and $v$ split into $v_1,v_2$, $u_1$ adjacent to $v_1$, and $u_2$ joined to $v_2$.}
	\label{fig:ikosa4}
\end{figure}

\subsection{Hamiltonian-connectedness}\label{sec:hc}

The following theorem is due to Kawarabayashi and Ozeki~\cite{Kawarabayashi2015}. Together with Theorem~\ref{thm:4cK34mf}, it yields a direct proof of Theorem~\ref{thm:hc}, for which we provide only a brief sketch.

\begin{theorem}[\cite{Kawarabayashi2015}]\label{thm:KO}
	Every $4$-connected projective-planar graph is hamiltonian-connected.
\end{theorem}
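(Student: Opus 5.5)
The plan is to reduce to the planar case and use Tutte paths, following Thomas and Yu's hamiltonicity proof for $4$-connected projective-planar graphs, with the extra bookkeeping that hamiltonian-connectedness requires (two prescribed end-vertices $x,y$).

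\textbf{Reduction.} Let $G$ be $4$-connected, embedded in the projective plane, and fix distinct $x,y\in V(G)$.
\begin{itemize}
\item If $G$ is planar, we are done: a $4$-connected planar graph has no $K_{3,3}$ minor, so Thomassen's theorem applies.
\item So assume the embedding is non-planar. We choose a non-contractible cycle $C$ of $G$, or more generally a non-contractible closed curve meeting $G$ in few vertices. We choose it to minimize the number of attachments across it, and so that $x$ and $y$ are positioned conveniently relative to it; for instance, we try to make $C$ pass through $x$, or to avoid both $x$ and $y$.
\item Cutting the projective plane along $C$ yields a disc. The corresponding plane graph $G'$ has outer walk consisting of two copies $C_1$ and $C_2$ of $C$, concatenated so that antipodal points of the boundary are identified.
\end{itemize}

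\textbf{Tutte-path step.} We invoke a strengthened Thomassen/Sanders-type Tutte path theorem for $2$-connected plane graphs. It gives a path $P$ between two prescribed vertices through a prescribed boundary edge, such that:
\begin{itemize}
\item every $P$-bridge has at most three attachments;
\item every bridge containing a boundary edge has at most two attachments.
\end{itemize}
We apply it in $G'$ with carefully chosen end-vertices on $C_1\cup C_2$, so that $P$ translates back into a path or cycle structure in $G$ from $x$ to $y$. The aim is that each vertex of $C$ is used by exactly one of its two copies, and that the segments of $P$ in the two halves match up under the antipodal identification.

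\textbf{From Tutte path to hamiltonian path.} Once we have such a path in $G$ whose bridges have at most three attachments, $4$-connectivity forces every bridge to be trivial, that is, a chord. Since a chord contains no vertices off the path, the path is hamiltonian.

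\textbf{Main obstacle.} The hardest step is the gluing. A Tutte path in $G'$ may visit both copies of a vertex of $C$. It may also break the antipodal matching, or produce bridges whose attachments split between $C_1$ and $C_2$ so that they merge into a bridge with four or more attachments in $G$. To handle this, I would first try the Thomas–Yu device:
\begin{itemize}
\item Choose $C$ to be a shortest non-contractible cycle, or handle face-width at most $2$ separately by a direct reduction. This gives control over how bridges can cross $C$.
\item Rather than a single path, find a system of two Tutte paths in the disc, one ending on each copy of $C$, with compatible endpoints, using a two-path version of the Tutte path lemma.
\end{itemize}
The remaining cases place $x$ or $y$ on $C$ versus off $C$, and presumably need a separate local analysis.
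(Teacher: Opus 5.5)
\textbf{Verdict: genuine gap.} The paper itself gives no proof of this statement. It is quoted from Kawarabayashi and Ozeki~\cite{Kawarabayashi2015} and used only as a black box in the sketch of Theorem~\ref{thm:hc}. Judged on its own, your proposal leaves open exactly the step that makes the theorem hard.

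Your planar case and your final deduction are fine in outline. For the final deduction, you should first make sure each nontrivial bridge misses some vertex of $P$, e.g.\ by having $|V(P)|\ge 4$, before invoking $4$-connectivity.

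The trouble is the middle step. The Sanders/Thomassen path $P'$ you propose to take in the cut-open disc graph $G'$ gives no control over the two copies $v',v''$ of a vertex $v\in V(C)$:
\begin{itemize}
\item If $P'$ contains both copies, its image in $G$ visits $v$ twice and is not a path.
\item If $P'$ contains neither, then $v'$ and $v''$ lie in $P'$-bridges that contain boundary edges, so each has at most two attachments. After re-identification, however, these bridges fuse into a single bridge of $G$ with up to four attachments, or more if fusions chain through several unused vertices of $C$. $4$-connectivity no longer forces such a bridge to be trivial.
\end{itemize}

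More fundamentally, a single path in $G'$ is the wrong target. A hamiltonian $x$--$y$ path of $G$ will in general pass through vertices of $C$ from one side to the other. Cutting along $C$ then turns it into a linear forest in $G'$ whose ends sit on $C_1\cup C_2$ in antipodal pairs.

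What your argument actually needs is a precisely stated and proved multi-path Tutte-type lemma for the disc, guaranteeing:
\begin{itemize}
\item prescribed, antipodally compatible boundary ends;
\item for each vertex of $C$, exactly one copy used, or both copies used as paired ends;
\item a boundary-bridge condition strong enough that no fused bridge in $G$ has four or more attachments.
\end{itemize}
You would also need a proof that the cutting curve can always be chosen so that $x$ and $y$ satisfy that lemma's hypotheses.

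Your text only gestures at this (``I would first try'', ``a two-path version'', ``presumably need a separate local analysis''). The two selection criteria you mention for $C$ are neither justified nor obviously compatible: being a shortest non-contractible cycle, versus passing through $x$ or avoiding both $x$ and $y$. As it stands this is a research plan rather than a proof. The missing lemma and the case analysis around $x,y$ are precisely the part of the argument the cited paper~\cite{Kawarabayashi2015} has to supply.
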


\begin{proof}[Sketch of the proof of Theorem~\ref{thm:hc}]
Patch graphs and $K_6$ are projective-planar. Hence, by Theorems~\ref{thm:KO} and~\ref{thm:4cK34mf}, it suffices to show that the edge-minimal oloidal graphs $\mathfrak{D}_{s^1,s^2}$, $\mathfrak{E}_s$, and $\mathfrak{F}$ are hamiltonian-connected. The verification for these cases is routine.
\end{proof}

\subsection{Toroidal embeddings without $K_{3,4}$ minors}\label{sec:T}

In what follows, we show that Theorem~\ref{thm:T} is a consequence of Theorem~\ref{thm:4cK34mf}.

\begin{proof}[Proof of Theorem~\ref{thm:T}]
	We apply Theorem~\ref{thm:4cK34mf}. 
	
	It is well known that $K_6$ embeds on the torus.
	
Next, every non-planar patch graph admits a toroidal embedding. Indeed, each such graph can be obtained from a planar graph bounded by a facial cycle of length four (corresponding to the facial walk of length four in the initial patch graph) by identifying each pair of non-consecutive vertices on that cycle. It then follows immediately that the resulting graph can be embedded on the torus.

	It remains to show that every oloidal graph embeds on the torus. Such embeddings are shown in Figure~\ref{fig:T} for the edge-maximal cases, namely $\mathfrak{D}_{s^1,s^2} + \{\delta^1_1 \delta^2_2, \delta^1_1 \delta^2_3, \delta^1_1 \delta^2_4\}$, $\mathfrak{D}_{s^1,s^2} + \{\delta^1_1 \delta^2_2, \delta^2_2 \delta^1_3, \delta^1_3 \delta^2_4, \delta^2_4 \delta^1_1\}$, $\mathfrak{E}_s + \{\varepsilon^1_1 \varepsilon^3_2, \varepsilon^1_3 \varepsilon^3_2, \varepsilon^1_3 \varepsilon^2\}$, and $\mathfrak{F} + \{\varphi^1_3 \varphi^2_3\}$.
	The embedding of $\mathfrak{E}_s + \{\varepsilon^1_1 \varepsilon^3_2, \varepsilon^1_3 \varepsilon^3_2, \varepsilon^1_3 \varepsilon^2\}$ is omitted, since $\mathfrak{E}_s + \{\varepsilon^1_1 \varepsilon^3_2, \varepsilon^1_3 \varepsilon^3_2, \varepsilon^1_3 \varepsilon^2\} \cong \mathfrak{D}_{0,s+1} + \{\delta^1_1 \delta^2_2, \delta^1_1 \delta^2_3, \delta^1_1 \delta^2_4\}$.
	
	This completes the proof.
\end{proof}

\begin{figure}[!ht]
	\centering
	\includegraphics[scale=1]{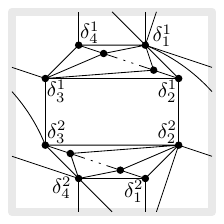}
	\hspace{30pt}
	\includegraphics[scale=1]{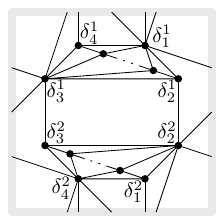}
	\hspace{30pt}
	\includegraphics[scale=1]{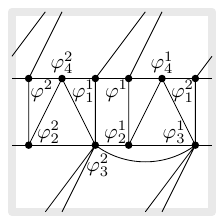}
\caption{Toroidal embeddings of $\mathfrak{D}_{s^1,s^2} + \{\delta^1_1 \delta^2_2, \delta^1_1 \delta^2_3, \delta^1_1 \delta^2_4\}$ (left), $\mathfrak{D}_{s^1,s^2} + \{\delta^1_1 \delta^2_2, \delta^2_2 \delta^1_3, \delta^1_3 \delta^2_4, \delta^2_4 \delta^1_1\}$ (middle), and $\mathfrak{F} + \{\varphi^1_3 \varphi^2_3\}$ (right), where opposite sides of the bounding squares are identified to form the torus.}
	\label{fig:T}
\end{figure}




\bibliographystyle{abbrv}
\bibliography{paper}

\end{document}